\documentclass[
%a4paper
letterpaper,12pt]
%{svjour3}
{amsart}
\usepackage{amssymb,latexsym}
\usepackage{mathrsfs}
\usepackage{euscript}
\usepackage{hyperref}
\usepackage{amsmath}
\usepackage{upgreek}

\usepackage[cjk]{kotex}

\newtheorem{theorem}{Theorem}[section]
\newtheorem{definition}[theorem]{Definition}
\newtheorem{lemma}[theorem]{Lemma}
\newtheorem{corollary}[theorem]{Corollary}
\newtheorem{remark}[theorem]{Remark}
\newtheorem{proposition}[theorem]{Proposition}
\newtheorem{notation}[theorem]{Notation}

\def\cal{\mathcal}

\def\R{{\mathbb R}}

\def\<{\left \langle |}
\def\>{\right>}

\def\<{\langle \hspace{-.04cm} | \hspace{.01cm}}
\def\>{\hspace{.01cm} | \hspace{-.04cm} \rangle}

\def\Im{{\rm Im}\,}
\def\Re{{\rm Re}\,}

\def\({( \hspace{-0.335em}(}
\def\){) \hspace{-0.335em})}
\def\supp{{\rm supp\,}}

\def\fu2{\frac{n}{2}}

\def\l({\left(}
\def\r){\right)}
\def\be{\begin{enumerate}}
\def\ee{\end{enumerate}}

%Paco:

\def \dist {{\rm dist}}

\def \rdist {{\rm \, rdist}}
\def \inrdist {{\rm \, inrdist}}

\def \ec {{\rm ec}}
\def \child{{\rm ch}}

\def \ch{{\rm ch}}

\def \Int{{\rm Int}}

\newcommand\smland{\raise.2ex\hbox{$\scriptstyle\land$}\hspace{.01cm}}
\newcommand\smlor{\raise.1ex\hbox{$\scriptstyle\lor$}}
%\def \sland{\hspace{-.1cm}\land \hspace{-.05cm}}
%\def \slor{\hspace{-.05cm}\lor \hspace{-.1cm}}

%opening
\title[New local $T1$ Theorems 
%for boundedness and 
compactness 
]{New local $T1$ Theorems\\ %for boundedness and compactness\\ %of Calder\'on-Zygmund operators\\ 
on non-homogeneous spaces}
\author{Paco Villarroya}
%\address{Centre for Mathematical Sciences, University of Lund, PO Box 118, 22100 Lund, Sweden}
\address{W. California Avenue, Sunnyvale, CA, 94086, USA}
\email{pieisiou@gmail.com}

\thanks{The author has been partially supported by Spanish Grant MTM2011-23164}

\subjclass[2010]{42B20, 42B25, 42C40}
\keywords{Calder\'on-Zygmund operator, compact operator, accretive function} 
%Double Layer Potential

%\date{\today}

\begin{document}

\begin{abstract}
We develop new local $T1$ theorems to characterize
%to characterise all 
Calder\'on-Zygmund operators that
extend boundedly or compactly on $L^{p}(\mathbb R^{n},\mu)$  %$L^{p}(\mathbb R^{n},\mu_{2})$, 
%for $1<p<\infty $
with %$1<p<\infty $ and %measures more general than
%a class of measures larger than the class of 
%linear 
$\mu$ 
a measure of power growth.

The results, whose proofs do not 
require 
%the use 
random grids,
allow the use of 
%shows that both boundedness and compactness can be checked with 
a countable collection
%a system containing   
%(non-accretive) 
%countably many 
of testing functions.
% and also testing functions supported on cubes of different dimensions. 
%that do not satisfy the accretivity condition (the requirement of accretivity).  
%And also that the class of available testing functions to check boundedness is larger than previously proven. 
%We apply our results to the study of the double layer potential operator for which we prove compactness on a new class of domains.
%Furthermore, the result also provides the conditions under which boundedness of a singular integral operator 
%the traditional $T(b)$ Theorem 
%can be characterised
%by means of sequences of non-accretive testing functions.

As a corollary, we describe the measures $\mu$ of the complex plane 
for which 
the Cauchy integral defines a compact operator on $L^p(\mathbb C,\mu)$.
% and provide new measures for which the operator is bounded. 
%. It also provides new information on measures for which a Calder\'on-Zygmund operator is bounded. 

%Furthermore, we provide generalisations of the classical $T(b)$ Theorem that also 
%enlarge the class of available testing functions for boundedness.
% to check boundedness of singular integral operators. 
\end{abstract}

\maketitle

\section{Introduction}

%The $T(1)$ Theorem, proved by
%G. David and J. L. Journ\'e \cite{DJ} in 1984, was seen at its time as the culmination of the long-term project initiated in the sixties by A. Calder\'on and A. Zygmund in the search for a suitable criterium for $L^{2}$ boundedness of singular integral operators. But, as often happens, it was also the start point for an equally long project in
%the search of every time more refined and powerful criteria.
%% an equally long and ambitious (deep) project.
%
%The seminal work of David and Journ\'e was soon generalized into a $T(b)$ Theorem in which the boundedness criterium relies in the action of the operator over more general functions than the function $1$.
%The first proofs appear as soon as 1985 when McIntosh and I. Meyer \cite{MM} proved the result in the special cancellation case $T(b_{1})=T^{*}(b_{2})=0$, and G. David, J. L. Journ\'e and S. Semmes \cite{DJS} solved the general case. The main interest to obtain this extensions was the fact that, whereas the $T(1)$ Theorem showed
%boundedness of the Cauchy transform over Lipschitz graphs with small enough Lipschitz constant, the new
%$T(b)$ Theorem proved this result in full generality. It is worth mentioning that the partial result of McIntosh and Meyer was strong enough
%to prove boundedness
%of the double layer potential operator and thus to solve the Dirichlet problem associated with
%the Laplacian equation on bounded open sets whose boundary is a Lipschitz surface (see \cite{ke}).
The $T1$ theorem characterizes boundedness of Calder\'on-Zygmund operators $T$ in terms of the functions 
$T1$ and $T^*1$. 
%over two 
%accretive 
%testing functions 
%$b^{i}$ 
%satisfying 
%a non-degeneracy accretive condition, namely, 
%such that the
%averages $|\langle b^{i}\rangle_{Q}|$ on all dyadic cubes  are uniformly bounded below.
On the other hand, the local $T1$ theorem attains similar characterization
using the action of $T$ and $T^*$ 
over a system of indicator functions $(\chi_{Q})_{Q\in \mathcal Q}$ of all cubes
with edges parallel to the coordinate axes. 
%$(b^{2}_{Q})_{Q\in \mathcal D}$ 
%such that the average of 
%each function $b^{i}_{Q}$ on the parametrising cube $Q$ is uniformly bounded below.
%%parametrised by dyadic cubes and 
%with a uniform lower bound for the averages $|\langle b_{Q}^{i}\rangle_{Q} |$.
%, the average of $b^{i}_{Q}$ on the parametrising cube $Q$. 
%% the accretivity condition  
%%is required to satisfy similar accretive condition 

The idea of a local $T1$ theorem was first introduced in 1990 by M. Christ \cite{Christ2} 
%M. Christ introduced in \cite{Christ2} in connection with 
in connection with the geometric description of removable compact sets for bounded analytic functions (known as Pailenv\'e's problem).
%(motivated by)
%certain questions arising in the Theory of Analytic capacity, in particular the
%the Painlev\'e's problem about the geometric (or metric) characterization of compact sets of the complex plane
%which are removable for bounded analytic functions supported on them.
%to apply this criteria  to study some questions related to the solution of Painlev\'e's problem about the characterization of... analytic capacity of sets of the complex plane.
%His idea was to develop 
%criterium for boundedness in which the action of the operator is not tested over a globally defined testing function (or a pair) but a family of functions $(b_{Q})_{Q}$ compactly supported on cubes $Q$ and satisfying the non-degeneracy condition only on their corresponding cube. The reason for that change is that, 
His motivation was that, in principle,
finding a system of local testing functions should be easier than identifying a single function over which the operator behaves well globally.
%As a consequence, this theorems better suited for applications
%and therefore, his methods allowed Bla and Bls 
%to obtain boundedness of the Cauchy operator on Ahlfors-David curves (see \cite{}) and a proof of T. Murai's Theorem which characterized compact homogeneous sets of finite length on the plane for which the Cauchy operator is bounded.
This approach was shown to be right 
at the turn of the century 
when 
F. Nazarov, S. Treil and A. Volberg proved 
%subsequent 
the first local $T1$ and $Tb$ theorems for non-doubling measures
\cite{NTV4} and \cite{NTVTb} 
(see also \cite{NTV}  and \cite{NTV3}). 
%obtained by
%by G. David \cite{} and 
%These results  
%were latter used 
%fundamental (turned out to be crucial) 
%in 2003 
%by X. Tolsa \cite{To} in the resolution of Painlev\'e's problem for compact sets of finite length (Vitushkin's conjecture).
% and A. Volberg \cite{} independently.
% in 2002.
 
%X. Tolsa, L2 boundedness of the Cauchy integral operator for continuous measures, Duke Math. J., 98:2(1999), 269-304.
%
%F. Nazarov, S. Treil, and A. Volberg, Cauchy Integral and Caldero?n-Zygmund operators on nonhomogeneous spaces, International Math. Research Notices, 1997, No 15, 103-726.
%Strictly speaking, the new local theorem can not be considered as an extension of the $T(b)$ theorem in the sense that the global can can be deduced as a particular case of the local one.
%On the contrary, in many cases the classical $T(b)$ Theorem is part of their proof.
%Whereas  
% Global $Tb$ theorems showed boundedness of the Cauchy integral on Lipschitz curves and the Double Layer Potential 
% operator on Lipschitz domains. On the other hand, local $Tb$ theorems proved boundedness of the Cauchy integral for non-doubling measures of linear growth and 
% %it was used to solve Kato's conjecture on boundedness of 
% %were used to 
% solved Kato's conjecture on boundedness of 
% the square root of accretive second order elliptic partial differential operators (\cite{AHLMT}).
Since then,  
research work on this subject has been continuously growing with special focus on  
more general criteria of boundedness (\cite{AHMTT}, \cite{HyNa},  \cite{HyMa}), variants that apply to new settings (\cite{Hy3}, \cite{Hy}, \cite{HyVa}) and applications %to different areas, especially 
to PDEs (\cite{Ho2b}, \cite{Ho2}). 
%Previous papers, 
The articles
\cite{Hof}, \cite{Ho}, \cite{AR}, \cite{LaVa} 
 %as a very small sample of the vast literature on the last topic. 
and the books \cite{Christ}, \cite{David}, \cite{ke}, \cite{To2} 
 provide detailed accounts of the evolution of this theory. 
Few years ago, papers \cite{V}, \cite{V2}  
presented global $T1$ and $Tb$ theorems characterizing  compactness of Calder\'on-Zygmund operators.
These results can be used to prove 
compactness of many double layer potential operators
(see \cite{V2}). In turn, this allows 
via Fredholm Theory (see \cite{Fabes}) to deduce invertibility of the Laplacian on a large class of domains. 
% The latter work
% %The main novelty in the latter work was showing that, 
% %compactness, and in some cases boundedness, of singular integral operators can be checked by means of non-accretive testing functions. 
% showed that an appropriate control of the kernel decay allows the possibility to check compactness, and sometimes boundedness, with testing functions that do not satisfy the hypothesis of accretivity. 
% %due to the better properties of the kernel of a compact 
% %Calder\'on-Zygmund operators, the hypothesis of accretivity for the testing functions can be relaxed to a large extend.
%The goal now 
%of this project 
Following this line of research, the current paper 
%is to 
introduces a local $T1$ Theorem for non-doubling measures, that is, a
criterion of boundedness and compactness that relies on the action of the operator over 
a family of 
%testing functions defined by the 
indicator functions of dyadic cubes 
%$(\chi_{Q})_{Q\in \mathcal D}$ 
(Theorems \ref{Mainresultrestrictedbddness2} and   
\ref{Mainresult2}). 

All known proofs of $T1$ theorems 
%%boundedness of 
on non-doubling spaces employ randomization methods 
to deal with the fact that, when 
using the kernel decay, estimates of the dual pair 
$\langle T\chi_I, \chi_J\rangle $ 
grow like the logarithms of both the distance between the cubes $I,J$ and the ratio between their side lengths. To overcome this issue, the method considers grids of general cubes rather than only the grid of dyadic cubes. 
In the space of all these grids, cubes with close boundaries and
very different sizes are rare and thus, they can be assigned a small probability. Then, by averaging among all grids, the contribution of such cubes can be made arbitrarily small. 
%This way, one can conclude that there is no need to deal with these problematic cubes. 
%the technique Random grids is a powerful technique that
%allows 
%This approach is so successful that 
%other hand,  
The costs of this method include 
%some extra technical work like 
a delicate technique of decomposition called surgery and
%, more importantly, 
%the hypotheses of assuming the existence of a
the requirement by hypothesis of a non-contable family of testing functions (one per each cube in $\mathbb R^n$).

% This powerful technique has been successfully applied to many problems not only in Harmonic Analysis.
%%the study of singular integrals. 
But there is a catch. Once randomization is applied and the proof of boundedness is complete, the 
estimate $|\langle T\chi_I, \chi_J\rangle |
\leq \|T\| \mu(I)^{\frac{1}{2}}\mu(J)^{\frac{1}{2}}$ shows that the dual pair with any two cubes
never blows up. 
%is never singular. % and so,
%never blows up. 
%develops 
%singular. %Initially one did not know how to deal with their contribution, but 
%There never was a 
%singularities% that require random grids to be tamed.
%by means of random grids. %Randomization was extremely useful, but not indispensable. 
Random cubes were used to tame ghost singularities that 
%never existed.
did not exist. 
%Although extremely useful, randomization is not an indispensable tool for this problem.

We introduce a %different 
new 
approach that does not use random grids and so, it
%and directly addresses
% the contribution of
% nearby %eccentric 
% cubes. 
% %and even nested cubes 
% (Proposition \ref{realbound}). %and \ref{realbound2}).
%and thus, it 
avoids many technicalities of randomization.
 %In addition, all these proofs make a crucial 
%like 
%the distinction between good and bad cubes and a 
%like a delicate technique of decomposition called surgery, 
%nd, more importantly, 
The method 
allows the use of 
a countable family of testing functions. 
%and
%, more importantly, 
%opens the possibility to extend the results to new settings such as manifolds and fractal sets. 
%The new procedure avoids the technicalities of randomization 
 %In addition, all these proofs make a crucial 
%like 
%the distinction between good and bad cubes and a 
%a delicate technique of decomposition called surgery. More importantly, 
%and allows the use of a countable family of testing functions.
%in the hypotheses.
%, instead of 
%needing a non-contable family of testing functions (one per each cube in $\mathbb R^n$) required by the use of random grids.
%The new approach allows the use of a countable family of testing functions. 
%We address the experienced reader to section to be convinced that the approach is also valid to characterize boundedness of singular integral operators. 
% the intrinsic complexities of the method have been an obstacle to apply $Tb$-type theorems to other settings 
% %%environments 
% like for example to multi-parameter singular integral operators or spaces.... 
% The new approach, 
% which also requires an averaging process but not with respect random grids, 
% allows the possibility of extending
% the $Tb$ theory to more general classes of operators and spaces.

Regarding compactness, we provide an application to the Cauchy integral operator with a non-doubling measure, which is defined by
$$
C_{\mu }f(z)=\int_{\mathbb{C}} \frac{f(w)}{w -z} \, d\mu (w).
$$
It is known that if the measure is defined by the indicator function of a unit line segment $S$, that is
$d\mu_S= \chi_{S}d\mathcal H^{1}$ with $\mathcal H^{1}$ the one-dimensional Hausdorff measure in $\mathbb C$,
then $C_{\mu_S}$ 
%defined over the Lebesgue measure $m$ in $\mathbb R$, which is the Hilbert transform, 
is bounded but not compact on
$L^{2}(\mu_S)$.
%The same happens when it is defined over any Lipschitz rectifiable curve, although this time the proof is not so simple.
On the other hand, if the measure is defined by the indicator function of a unit square $Q$, that is
$d\mu_Q = \chi_{Q}dm$ with $m$ the Lebesgue measure in $\mathbb C$, then $C_{\mu_Q}$ is compact on 
$L^{2}(\mu_Q)$. 
Theorem \ref{Cauchytheo} describes for which measures $\mu$ of the complex plane the Cauchy integral operator $C_{\mu }$ can be compactly extended on 
$L^{2}(\mu)$.

The outline of the paper is as follows. In section 2, 3, and 4 we introduce some notation, 
define the class of operators under study and state 
%provide the definitions that allow us to state 
the main results respectively. In section 5, we study a smooth truncation of the kernel, while  
%in section 3.
%In section 4 we extend the testing condition to nearby cubes. 
%some properties of the auxiliary functions used to characterise compactness and
in section 6 we describe the Haar wavelet system.  Section 7 focuses on obtaining estimates for the action of the operator over Haar wavelets.  
%and construct the functions $T(b_{1})$ and $T^{*}(b_{2})$ respectively.
In section 8 we deal with the paraproducts and   
section 9 is devoted to the proof of the main result Theorem \ref{Mainresult2}. 
% Finally, in an appendix at the end of the paper, we 
% %The following two sections are devoted to
% show estimates for the action of the operator over indicator functions with nearby or overlapping supports.  
%Finally, we prove their necessity in the last section \ref{necessity section}.

I want to express my appreciation to 
%for encoraging this project and providing very useful suggestions during 
%a provided excellent materials 
%for our work. 
%We also want to thank Andreas
%Ros\'en for the excellent notes 
%of his 
%%his brilliantly written notes of the 
%doctorate course 
%'Fredholm Theory, Singular Integrals and $T(b)$-Theorems'. The manuscript provided 
%abundant working materials that have been very useful
%% which turned out to be extremely useful 
%for our project. 
%The author has been supported by 
%the Center for Mathematical Sciences at Lunds Universitet, Sweden, and 
%the 
\hspace{-.3cm}
\begin{CJK}{UTF8}{mj}
^^ea^^b9^^80^^ec^^9e^^90^^ec^^98^^81
%^^ec^^82^^ac
%자
%영
\end{CJK}
\hspace{-.3cm} 
%Labs 
%Center 
in Sunnyvale, USA. I would also like to thank Professors  Oscar Blasco, Anthony Carbery, Neil Lyall, and Gerardo Mendoza for their unreserved support.

\section{Notation}\label{ecandrdist}
%We introduce some notation which will be frequently used throughout the paper.
%The set $I=\prod_{i=1}^{n}[a_{i},b_{i})$ is a cube in $\mathbb R^{n}$
%if the quantity $|b_{i}-a_{i}|$ is constant when varying the index $i$. 
%We call this constant quantity the side length of $I$.
%A cube $I$ is dyadic if $I=2^{j}\prod_{i=1}^{n}[k_{i},k_{i}+1)$ for some $j,k_{1}, \ldots ,k_{n}\in \mathbb Z$. 
\subsection{\bf Cubes and dyadic cubes}\label{grids}
Let ${\mathcal C}$ be the family of cubes in $\mathbb R^n$ defined by tensor products of intervals of the same length, namely, 
$I=\prod_{i=1}^{n}[a_{i},a_{i}+l)$ with $a_i,l\in \mathbb R$. 

For each cube $I\in \mathcal C$, we denote its center by $c(I)$, its side length by $\ell(I)$ and its boundary in the euclidean topology of $\mathbb R^n$ by
$\partial I$.
%and 
%$c(I)=((a_{i}+b_{i})/2)_{i=1}^{n}$, its side length by $\ell(I)=|b_{i}-a_{i}|$ and
%its volume by $|I|=\ell(I)^{n}$. 

% $I=\prod_{i=1}^{n}\{a_{i},a_{i}+l\}$ with $a_i,l\in \mathbb R$, 
% where the bracket $\{$ denotes either $[$ or $($, while the bracket $\}$ denotes either $]$ or $)$. This means that the intervals defining the cube may contain none, one or both  endpoints, and that this containment may change from component to component and from cube to cube. 

Let $\mathcal D_1$ be the family of dyadic cubes $I=\prod_{i=1}^{n}2^{-k}[j_{i},j_{i}+1)$ with $j_i,k\in \mathbb Z$. 
Let $\tilde{\mathcal D}_1$ be the family of open dyadic cubes $I=\prod_{i=1}^{n}2^{-k}(j_{i},j_{i}+1)$ with $j_i, k\in \mathbb Z$. 

Now, let $\lambda_1=0$ and $\lambda_2,\ldots, \lambda_n \in \mathbb R$ such that $\lambda_i\in \mathbb R\setminus (\cup_{j=1}^{i-1}(\lambda_j+\mathbb Q))$.
Let $a_i=\lambda_i (1,\ldots, 1)\in \mathbb R^n$. 
For $i\in\{ 1,\ldots, n\}$, 
we define the families of cubes
\begin{equation}\label{grids}
\mathcal{T}_i\mathcal{D}=a_i+\mathcal{D}_1=\{ a_i+I :  
I\in \mathcal{D}_1\},
\end{equation}
with $a_i+I\in \mathcal{C}$ such that 
$c(a_i+I)=a_i+c(I)$ and $\ell(a_i+I)=\ell(I)$. 

%$I=\prod_{i=1}^{n}[a_{i},a_{i}+l)$.
%, with $b_i-a_i=l$. 

%Since we think it can not cause any confusion, 
We write any particular instance 
of the families of cubes $\mathcal{T}_i\mathcal{D}$ (and 
$\mathcal D_{\partial}^{r}$ defined later in this section) simply as $\mathcal D$. And we will often denote any
particular instance 
of the families of cubes $\mathcal{T}_i\tilde{\mathcal{D}}$ (defined in similar way as $\mathcal{T}_i\mathcal{D}$) simply by $\tilde{\mathcal D}$.

%We now define three families of dyadic cubes. 
% Let $\mathcal D$ be the usual family of dyadic cubes $I=\prod_{i=1}^{n}2^{-k}[j_{i},j_{i}+1)$ with $j_i, k\in \mathbb Z$. 
%For every fixed $k$, the cubes tile $\mathbb R^n$.  
Given a measurable set $\Omega \subset \mathbb R^{n}$, let $\mathcal D(\Omega )$ be the family of dyadic cubes $I\in \mathcal D$ such that $I\subsetneq \Omega$.

% A dyadic rectangle $R$ is defined as  
% the union of two adjacent dyadic cubes of the same side length.
% Let $\mathcal {DR}$ be the family of all dyadic cubes and all dyadic rectangles.

%\subsection{Cubes}

For $\lambda >0$, we write $\lambda I$ for the cube such that $c(\lambda I)=c(I)$ and $\ell(\lambda I)=\lambda \ell(I)$. %However, when $I$ is a dyadic cube, we will denote by $2^{k}I$ the unique dyadic cube such that 
%$I\subset 2^{k}I$ and $\ell(2^{k}I)=2^{k}\ell(I)$.  
We write $\mathbb B=[-1/2,1/2)^{n}$ and $\mathbb B_{\lambda }=\lambda \mathbb B$.
We also denote by $\lambda \mathcal D$ the family of cubes
$\lambda I$ with $I\in \mathcal D$. 
%We denote by $|\cdot |_{\infty }$ the $l^{\infty }$-norm in $\mathbb R^{n}$ and by $|\cdot |$ the modulus of a complex number. %Hopefully, this latter notation will not cause any confusion with the one used for the 
%volume of a cube. 

For $I\in \mathcal D$, we denote by 
$\child(I)$ the family of dyadic cubes $I'\subset I$ such that $\ell(I')=\ell(I)/2$, and 
by $I_{p}\in \mathcal D$ the parent of $I$, the 
only dyadic cube such that $I\in \child(I_{p})$.  
If $\Omega \in \mathcal D$ and 
$I\in \mathcal D(\Omega)$ then $I_p\subseteq \Omega$.

We also write % $\fam (I)=\child (I)\cup \{I\}$ and 
$I^{\rm fr}$ for the collection of cubes $J\in {\mathcal D}$ such that $\ell(J)=\ell(I)$ and $\dist(I,J)=0$, 
where $\dist(I,J)$ denotes the set distance between $I$ and $J$.

\subsection{Pairs of cubes: eccentricity and relative distances}
Given $I,J\in \mathcal{C}$,
if $\ell(J)\leq \ell(I)$ we write $I\smland J=J$, $I\smlor J=I$,
%$\ma=1$ and $\mi=2$.
%$r_{I\smland J}=r_{2}$, $r_{I\smlor J}=r_{1}$
while if $\ell(I)<\ell(J)$, we write
$I\smland J=I$, $I\smlor J=J$. 
%$r_{I\smland J}=r_{1}$, $r_{I\smlor J}=r_{2}$
%$\ma=2$ and $\mi=1$.

We define $\langle I,J\rangle$ as the only cube containing $I\cup J$ with the smallest possible side length and 
%Whenever there is not a unique cube satisfying these conditions, we just choose the one 
%whose center has the smallest possible first coordinate.
%(or 
such that  $\sum_{i=1}^{n}c(I)_{i}$ is minimum. 
%is $(c(I)+c(J))/2$. 
%We write its side length by $\diam(I\cup J)$. 
We note that
%if $I=\prod_{i=1}^{n}I_{i}$, $J=\prod_{i=1}^{n}J_{i}$, with 
%$I_{i},J_{i}$ intervals in $\mathbb R$,  
%we have $\diam(I\cup J)=\max_{i}\diam(I_{i}\cup J_{i})$, where $\diam(I_{i}\cup J_{i})$
%is the length of $\langle I_{i},J_{i}\rangle $, the smallest interval containing $I_{i}$ and $J_{i}$. 
%Therefore, we have 
%the following equivalences
%\begin{eqnarray*}
$
\ell(\langle I,J\rangle )  
%\approx  \ell(I)/2+|c(I)-c(J)|+\ell(J)/2
%& \approx &\max(\ell(I),\ell(J))+|c(I)-c(J)|_{\infty }\\
\approx  \dist(I,J)+\ell(I\smlor J),
%& \approx & \max(\ell(I),\ell(J))+\dist_{\infty }(I,J)
%\end{eqnarray*}
$
where $\dist(I,J)$ denotes the euclidean set distance between $I$ and $J$.
%calculated using the norm $| \cdot |_{\infty }$. 
%More explicitly, it holds that
%\begin{align*}
%\frac{1}{2} %\frac{\ell(I)}{2}+|c(I)-c(J)|_{\infty }+\frac{\ell(J)}{2}
%\leq 
%\frac{\frac{\ell(I) + \ell(J)}{2} + |c(I)-c(J)|_{\infty } }{\diam(I\cup J)}
%\leq 
%1, %2\Big(\frac{\ell(I)}{2}+|c(I)-c(J)|_{\infty }+\frac{\ell(J)}{2}\Big)
%\end{align*}
%\begin{align*}
%	1 %2^{-1}(\ell(I)+\dist_{\infty }(I,J)+\ell(J))
%	\leq 
%	\frac{\ell(I) + \ell(J)+\dist_{\infty }(I,J)}{\diam(I\cup J)}
%	\leq 2, %\ell(I)+\dist_{\infty }(I,J)+\ell(J)
%\end{align*}
%%and we remind that
%%%\begin{align*}
%%$
%%|c(I)-c(J)|_{\infty }\leq |c(I)-c(J)|_{p} \leq d^{1/p}|c(I)-c(J)|_{\infty }.
%%$
%%%\end{align*}

We  
define $[I,J]$ as the unique cube satisfying $\ell([I,J])=\dist(I,J)$, $\lambda [I,J]\cap I\neq \emptyset$  and
$\lambda [I,J]\cap J\neq \emptyset$ for any $\lambda >0$, and such that $\sum_{i}c([I,J])_{i}$ is mininum. 
%Let also  
%$$
%\Dist(I,J)=\dist(I,J)+\ell(I\smland J)
%$$

We define the eccentricity and the %large and small 
relative distance of $I$ and $J$ as
%$$
%\ec(I,J)=\frac{\min(\ell(I),\ell(J))}{\max (\ell(I),\ell(J))}
%$$
$$
\ec(I,J)=\frac{\ell(I\smland J)}{\ell(I\smlor J)},
%\hskip30pt
%\ec_{\mu}(I,J)=\frac{\mu_{\mi}(I\smland J)}{\mu_{\ma }(I\smlor J)},
\hskip20pt 
\rdist(I,J)=
%\frac{\ell(\langle I,J\rangle )}{\ell(I\smlor J)}\approx 
1+\frac{\dist(I,J)}{\ell(I\smlor J)}.
%\hskip30pt
%\srdist(I,J)=\frac{\ell(\langle I,J\rangle )}{\ell(I\smland J)}.
$$
%which is comparable 
%to $\max(1,k)$ where $k$ is    
%the smallest number of times the larger cube needs to be shifted a distance equal to its side length 
%%along the line passing through the centers of both cubes 
%so that the translated cube contains the smaller one. Note that
%\begin{align*}
%\rdist(I,J) & \approx  1+\frac{|c(I)-c(J)|_{\infty }}{\ell(I\smlor J)}
%\approx  1+\frac{\dist_{\infty }(I,J)}{\ell(I\smlor J)}.
%\end{align*}
%We also set 
%$$
%\rdist(x,I)=1+\frac{\dist(x,I)}{\ell(I)}.
%$$
%More explicitly, 
%$\frac{1}{2}(1+\frac{\dist(I,J)}{\ell(I\smlor J)})
%\leq 
%\rdist(I,J)\leq 2(1+\frac{\dist(I,J)}{\ell(I\smlor J)})
%$.

We define
%Let ${\mathfrak D}_{I}$ the set where $h_{I}^{i}$ is discontinuous, that is, 
the inner boundary of $I$ as $\mathfrak{D}_{I}=
\cup_{I'\in \child(I)}\partial I'$, and 
%When $J\subset 3I$, 
the inner relative distance 
of $J$ and $I$ by
%$$
%\inrdist(I,J)=1+\frac{|c(I)-c(J)|_{\infty }}{\min(\ell(I),\ell(J))}
%$$
%which is comparable 
%to $\max(1,k)$ where $k$ is now 
%the smallest number of times the smaller cube needs to be shifted a distance equal to its side length 
%%along the line passing through the centers of both cubes 
%so that it is not contained in the larger one. 
$$
\inrdist(I,J)=1+\frac{\dist(I\smland J,{\mathfrak D}_{I\smlor J})}{\ell(I\smland J)}.
$$
%which is comparable 
%to $\max(1,k)$ where $k$ is now 
%the smallest number of times the smaller cube needs to be shifted a distance equal to its side length 
%%along the line passing through the centers of both cubes 
%so that the translated cube intersects ${\mathfrak D}_{I}$.
%%$(J+(k+\frac{1}{2})\ell(J)v)\cap {\mathfrak D}_{I}\neq \emptyset$ for some $v\in \mathbb R^{n}$. 
%where $\dist(I,J)$ denotes the set distance between $I$ and $J$.
% calculated using the norm $| \cdot |_{\infty }$

%We define $[I,J]$ as the unique cube satisfying $\ell([I,J])=\dist(I,J)$, $\lambda [I,J]\cap I\neq \emptyset$  and
%$\lambda [I,J]\cap J\neq \emptyset$ for any $\lambda >0$, and such that $\sum_{i}c([I,J])_{i}$ is mininum. 

% Given a measure $\mu $ and a function $\nu: [0,\infty )\rightarrow \mathbb R$, for every $I\in \mathcal C$, we define %$\dens_{\mu,\nu}(I)=\frac{\mu(I)}{\nu(\ell(I))}$
% %$\dens_{\mu}(I)=\frac{\mu(I)}{|I|}$. 
% For 
% $\mu_{1}, \mu_{2}$ measures, $\nu$ as before and fixed 
% $1<r_{1},r_{2}<\infty $ we write
% \begin{align*}
% w_{\mu}(I,J,K)&=\frac{\mu_{1}(I)^{\frac{1}{r_{1}}}\mu_{2}(J)^{\frac{1}{r_{2}}}}{\nu(\ell(K))}
% %\\
% %w_{\dens}(I,J,K)&=\frac{\dens_{\mu_{1}}(I)^{\frac{1}{r_{1}}}\dens_{\mu_{2}}(J)^{\frac{1}{r_{2}}}}{\dens_{\nu}(K)}.
% \end{align*}

%\end{notation}

\subsection{Lagom cubes}
%\begin{definition} \label{Imdef}
%Let ${\mathcal C}$ and ${\mathcal D}$ be the families of all cubes and all dyadic cubes in $\mathbb R^{n}$, respectively.
For $M\in \mathbb N$, we define ${\cal C}_{M}$ as the family of cubes in $\mathcal C$ such that
$2^{-M}\leq \ell(I)\leq 2^{M}$ and
$\rdist(I,\mathbb B_{2^{M}})\leq M$.  
%For every fixed $M$, 
We name the cubes in ${\cal C}_{M}$ 
%and ${\cal D}_{M}$ 
as lagom cubes.
%and dyadic lagom cubes respectively.

We write ${\cal D}_{M}={\cal C}_{M}\cap {\mathcal D}$, 
${\cal D}_{M}^{c}={\cal D}\backslash {\cal D}_{M}$,
${\cal D}_{M}(\Omega )={\cal D}_{M}\cap {\mathcal D(\Omega )}$, and 
${\cal D}_{M}^c(\Omega )={\cal D}_{M}^c\cap {\mathcal D(\Omega )}$.

\subsection{Grids of dyadic cubes of lower  dimensions}\label{grids2}
We write $\mathcal D^n_\partial =\mathcal D_1$. 
Let $\partial \mathcal D^{n}$ the set defined by 
the union of $\partial I$ for all $I\in \mathcal D_1$. 
This set is the union of countably many affine euclidean spaces of dimension $n-1$. Then 
let $\mathcal D_{\partial}^{n-1}$ be the family of dyadic cubes in $\partial \mathcal D^{n}$, namely, the cubes of dimension $n-1$ of the form 
$$
I=\prod_{i=1}^{l-1}2^{-k}[j_{i},j_{i}+1)\times \alpha \times \prod_{i=l+1}^{n}2^{-k}[j_{i},j_{i}+1)
$$ 
where $l\in \{1, \ldots ,n\}$, $j_i, k \in \mathbb Z$ and $\alpha \in \{ 2^{-k}j_{l}, 2^{-k}(j_{l}+1)\}$, with the convention that if $b<a$ then 
$\prod_{i=a}^{b}2^{-k}[j_{i},j_{i}+1)=\emptyset $.

We now continue recursively. 
For $0<r<n$, we define
$\partial \mathcal D^{r+1}$ as the union of $\partial I$ for all $I\in \mathcal D_{\partial}^{r+1}$, where here 
$\partial I$ denotes the border of $I$ in the euclidean topology of $\mathbb R^{r+1}$. 
This way $\partial \mathcal D^{r+1}$ is the union of countably many affine euclidean spaces of dimension $r$.
Finally then, we define
$\mathcal D_{\partial}^{r}$ as the family of $r$-dimensional dyadic cubes in $\partial \mathcal D^{r+1}$.

\section{Measure, kernel, and operator.}

\subsection{Non-homogeneous measure}

We describe the class of measures for which the theory applies. 
% and the class of testing functions 
% %compatible with a given operator $T$ so 
% that can be used to characterize compactness of Calder\'on-Zygmund operators. 

\begin{definition}\label{suitable}
Let $\mu $ be a Radon measure on $\mathbb R^n$, which without loss of generality we assume to be positive. 

We say that $\mu$ has power growth if there is $0<\alpha \leq n$ such that 
%for boundedness 
$\mu(I)\lesssim \ell(I)^\alpha$ for 
all $I\in \mathcal{C}$.

We now define three densities of the measure: 
for $I\in \mathcal{C}$, let 
%$\Phi_{1}(\lambda )=\sup_{0<\lambda <\ell(I),x\in %I}\frac{\mu_{1}(I\cap B(x,\lambda))}{\nu(\lambda)}$
% with $\phi_{1}(\lambda )=\mu_{1}(\{t \in I: \dist(t,I')<\lambda \} $
$$
\rho(I)=\frac{\mu(I)}{\ell(I)^\alpha},
$$
$$
\displaystyle{\rho_{\rm in}(I)
=\sup_{\substack{t\in I\\ 0<\lambda<\ell(I)}}
 \frac{\mu(I\cap B(t,\lambda))}{\lambda^{\alpha}}
 =\sup_{\substack{t\in I\\ \lambda>0}}
 \frac{\mu(I\cap B(t,\lambda))}{\lambda^{\alpha}}}
 $$
where $B(t,\lambda)=\{ x\in \mathbb R^n/|t-x|<\lambda \} $, 
and given $0<\delta \leq 1$,  
$$
\rho_{\rm out}(I)=%\sup_{J\subset I}
\sum_{m\geq 1}\frac{\mu(mI)}{\ell(mI)^{\alpha }}
\frac{1}{m^{\frac{\delta}{2}+1}}.
%+\sum_{k\geq 0}2^{-k\frac{\delta}{2}}
%\frac{\mu(2^kI)}{\ell(2^kI)^{\alpha }}.
$$
With them, we denote 
\begin{equation}\label{suitable2}
\rho_{\mu}(I)=\rho_{\rm in}(I)+\rho_{\rm out}(I). 
\end{equation}
% and
% \begin{equation}\label{suitable2}
% \rho_{\mu, s}(x)=\sup_{\substack{I\in \mathcal D\\
% \ell(I)=x}}\rho_{\mu}(I). 
% \end{equation}
% \begin{equation}\label{suitable2}
% \rho_{\mu, d}(x)=\sup_{\substack{I\in \mathcal D\\
% \rdist(I,\mathbb B)=x}}\rho_{\mu}(I).
% \end{equation}
\end{definition}

\begin{remark}\label{measureconstants} In the definition of $\rho_{\rm in}$ one can substitute the balls $B(t,\lambda)$ by dyadic cubes just by taking the smallest $Q\in \mathcal D(I)$ with $B(t,\lambda)\subset Q$.

The sum in the definition of $\rho_{\rm out}$ is comparable to 
$$
\int_{1}^\infty \frac{\mu(tI)}{\ell(tI)^{\alpha }}
\frac{dt}{t^{\frac{\delta}{2}+1}}
\approx 
\sum_{k\geq 0}2^{-k\frac{\delta}{2}}
\frac{\mu(2^kI)}{\ell(2^kI)^{\alpha }}.
$$
% In the study of compactness these densities may satisfy that
%  \begin{equation}\label{measurelimits}
%  \lim_{M\rightarrow \infty }\sup_{I\in \mathcal D_M^c}\rho(I)
%  %=\lim_{M\rightarrow \infty }\sup_{I\in \mathcal D_M^c}\rho_{\rm in}(I)
%  =0.
%  \end{equation} 
% If $\rho $ satisfies \eqref{measurelimits}, then 
% any dilation 
% ${\mathcal D}_{\lambda}\rho(I)=\rho(\lambda I)$ also satisfies \eqref{measurelimits}. Then we will often omit universal constants.

% We also note that if $\rho$ satisfies \eqref{measurelimits}, then so do both  
% $\rho_{\rm in}$ and, 
% by Lebesgue's Domination Theorem, $\rho_{\rm out}$. 
% %also satisfies \eqref{measurelimits}.
\end{remark}

% We say that $\mu$ is suitable for compactness if $\Phi(I)\lesssim 1$ for 
% all $I\in \mathcal{D}$ and
% $
% \displaystyle{\lim_{M\rightarrow \infty }
% \sup_{I\in \mathcal{D}_M^{c}}
% \Phi(I)=0}$.

%$$
%\lim_{\ell(I)\rightarrow \infty }\Phi(I)
%=\lim_{\ell(I)\rightarrow 0}\Phi(I)
%=\lim_{c(I)\rightarrow \infty }\Phi(I)=0.
%$$

If $\mu$ satisfies the power growth for $n$ dimensional cubes on $\mathcal D$, then it satisfies the same power growth for $r$ dimensional cubes on $\mathcal D_{\partial}^r$. To show this, 
we note that each $r$ dimensional dyadic cube $I\in \mathcal D_{\partial}^r$ is in the border of an $n$-dimensional dyadic cube $Q\in \mathcal D$ with the same side length and then
$
\mu(I)\leq \mu(Q)\lesssim \ell(Q)^{\alpha}=\ell(I)^{\alpha}
$.

\subsection{Compact Calder\'on-Zygmund kernel and its  associated operator} We now describe the class of kernels and operators for which the theory applies. 
%We provide some notation to explicitly state the function $F_\mu$ obtained in the proof of our main results. 

%We now define the function $F_\mu$ as
%$F_{K}(I,J)=\tilde{F}(I\smland J, I\smland J, \langle I,J\rangle)+...$.
%or
% \begin{align*}
% F_\mu (I,J)&=%\frac{\mu(I)^{\frac{1}{2}}\mu(J)^{\frac{1}{2}}}{\ell(\langle I,J\rangle)^\alpha }
% L(\ell([I,J])S(\ell(I\smland J))D(\rdist(\langle I,J\rangle,\mathbb B)) 
% \Phi_{\rm out}(I\smlor J )
% \\
% &+L(\ell(I\smland J)S(\ell(I\smland J))\tilde D_\mu(\inrdist(I,J)I\smland J) .
% \end{align*}
%where
%$w_{\mu}(I,J)=\frac{\mu(I)^{\frac{1}{2}}%\mu(J)^{\frac{1}{2}}}{\ell(\langle %I,J\rangle)^\alpha }$.

\begin{definition}\label{prodCZoriginal}
Let $\mu$ be a positive Radon measure on $\mathbb R^n$ with power growth $0<\alpha \leq n$.

A function $K:(\mathbb R^{n}\times \mathbb R^{n}) \setminus 
\{ (t, x)\in \mathbb R^{n}\times \mathbb R^{n} : t=x\} \to \mathbb C$ is a Calder\'on-Zgymund kernel if 
%be a function such that 
it is bounded on compact sets of its domain
and
there exist $0<\delta \leq 1$ 
%and $0<\alpha \leq n$ 
%and a function $F$
and bounded functions $L,S,D:[0,\infty)\rightarrow [0,\infty )$ 
%satisfying Definition \ref{LSDF}
%like in Definition \ref{LSDF}
satisfying 
%$$
%\begin{array}{l}
%|K(t,x)|\le C {\displaystyle \frac{1}{|t-x|}} \\
\begin{equation}\label{smoothcompactCZ}
|K(t,x)-K(t',x')|
\lesssim
%\frac{1}{(|x|+|t|)^{\theta}} 
\Big(\frac{(|t-t'|+|x-x'|)}{|t-x|}\Big)^{\delta}\frac{F(t,x)}{|t-x|^\alpha },
\end{equation}
with $F(t,x)=L(|t-x|)S(|t-x|)D(|t+x|)$, 
whenever $2(|t-t'|+|x-x'|)<|t-x|$.
%$\leq |t'-x'|$
%(careful here). 

% We  
% say that the measure $Kd\mu\times d\mu$ is a Calder\'on-Zygmund kernel
% if the smoothness condition \eqref{smoothcompactCZ} holds and
% \begin{equation}\label{Fbound}
% \sup_{I,J\in \mathcal{D}}F_{K}(I,J)
% %(\rho_{\mu}(I\smland J)+
% \rho_{\mu}(I\smlor J)\lesssim 1. 
% \end{equation}

We 
say that $Kd\mu\times d\mu$ is a compact Calder\'on-Zygmund kernel
if \eqref{smoothcompactCZ} holds 
%and \eqref{Fbound} hold  
% \begin{equation}\label{Fbound}
% \displaystyle{\sup_{I,J\in \mathcal{D}}F_\mu (I,J)\lesssim 1}, 
% \end{equation}
and
%for all $I,J\in \mathcal D$, 
% \begin{align*}
% \lim_{|t-x|\rightarrow \infty }F (t,x)\rho_{\mu, s}(|t-x|)
% &=\lim_{|t-x|\rightarrow 0}F (t,x)\rho_{\mu, s}(|t-x|)
% \\
% &=\lim_{|t+x|\rightarrow \infty }F(t,x)\rho_{\mu, d}(|t+x|)=0.
% \end{align*}
\begin{align}\label{limits}
\lim_{\ell(I)\rightarrow \infty }L(\ell(I))\rho_{\mu}(I)
&=\lim_{\ell(I)\rightarrow 0}S(\ell(I))\rho_{\mu}(I)
\\
\nonumber
&=\lim_{\rdist(I,\mathbb B)\rightarrow \infty }D(\rdist(I,\mathbb B))\rho_{\mu}(I)=0.
\end{align}

%$$
%\lim_{\ell(\tiny I\smlor J)\rightarrow \infty }\eta (I\times J)=
%\lim_{\ell(\tiny I\smland J)\rightarrow 0 }\eta (I\times J)=
%\lim_{\ell(\langle I,J\rangle )\rightarrow \infty }\eta (I\times J) = 0
%$$
%$
%|\eta (I\times J)|\lesssim F_{K}(I,J)
%$
%for all $I,J\in \mathcal D$ 
%with $F_K$ such that
% $$
% \lim_{\ell(\tiny I\smlor J)\rightarrow \infty }F_\mu (I,J)=
% \lim_{\ell(\tiny I\smland J)\rightarrow 0 }F_\mu(I,J)=
% \lim_{c(\langle I,J\rangle )\rightarrow \infty }F_\mu(I,J)= 0
% $$
% \begin{equation}\label{Fdecay}
% \displaystyle{\lim_{M\rightarrow \infty }
% \sup_{\substack{I,J\in \mathcal{D}_M^{c}\\
% (I,J)\in \mathcal F_M}}
% F_K (I,J)
% \rho_{\mu}(I\smlor J)
% %+\rho_{\mu}(I\smland J))
% =0},
% %(\rho_{\mu}(I)+\rho_{\mu}(J))=0},
% \end{equation}
% where the supremum is calculated over the family $\mathcal F_{M}$ of ordered pairs of cubes $I,J\in {\mathcal D}_{M}^{c}$ such that either $\ell(I\smland J)>2^{M}$,
% $\ell(I\smland J)<2^{-M}$, or $\rdist(\langle I,J\rangle ,\mathbb B)>M^{\theta}$ for some $\theta \in (0,1)$.

%with $0<\delta \leq 1$ and the function $F_{K}$ is given by 
%with
%\begin{align*}
%F_{K}(t,x)&
%%=F(|t-x|_{\infty },|t-x|_{\infty },|t+x|_{\infty })
%%\\&
%=\frac{1}{\nu(|t-x|_{\infty })}L(|t-x|_{\infty })S(|t-x|_{\infty })D(|t+x|_{\infty }).
%\end{align*}
%%and $|x|+|t|>M$.
\end{definition}

\begin{remark}\label{constants}
Since a dilation of a function satisfying any of the limits in (\ref{limits}) satisfies the same limit, namely
${\mathcal D}_{\lambda}(L\rho_{\mu})(a)=L(\lambda^{-1}a)\rho_{\mu}(\lambda^{-1}a)$ also satisfies the first limit, we omit universal constants in the argument of the functions.
\end{remark}

\begin{notation}\label{LSDF}
% Let 
% %$F_{K}: \mathbb R^{n}\times \mathbb R^{n}\rightarrow [0,\infty )$
% $L,S,D: 
% %{\mathcal C}
% [0,\infty)\rightarrow [0,\infty )$ be three bounded functions.
%is appropriate to describe compactness if 
%it is bounded and
%be bounded functions. 
% In results about compactness these functions satisfy 
% the following limits
%  \begin{equation}\label{limits}
%  \lim_{x\rightarrow \infty }L(x)\rho_{\mu, s}(x)
%  =\lim_{x\rightarrow 0}S(x)\rho_{\mu, s}(x)
%  =\lim_{x\rightarrow \infty }D(x)\rho_{\mu, d}(x)=0.
%  \end{equation} 

Given three cubes $I_{1},I_{2},I_{3}\in \mathcal C$, 
we denote
$$
F(I_{1}, I_{2}, I_{3})=L(\ell(I_{1}))S(\ell(I_{2}))D(\rdist(I_{3},\mathbb B))
$$
and $F(I)=F(I,I,I)$.
Then the limits in \eqref{limits} can be written as
$$
\lim_{M\rightarrow \infty }
\sup_{I\in \mathcal \mathcal D_M^{c}}
F(I)\rho_{\mu}(I)=0.
$$

Given $0<\delta \leq 1$, we define
\begin{equation}\label{Dtilde}
\tilde{D}%\rho}_{\mu,d}
(I)=
%\Big(
\sum_{k\geq 0}2^{-k\frac{\delta}{2}}
%\frac{\mu(2^{k}I)}{(2^{k}\ell(I) )^\alpha}
%\rho(2^kI)
D(\rdist(2^{k}I,\mathbb B)).
%^2\Big)^{\frac{1}{2}}
\end{equation}

We note that if $D$ satisfies \eqref{limits}, then by Lebesgue's Domination Theorem so does $\tilde{D}$.
%also satisfies \eqref{limits}.

\end{notation}

In \cite{V} it was proved, only for the one-dimensional case and when $\alpha =n=1$, that the smoothness condition
%%(\ref{LSD}) 
\eqref{smoothcompactCZ} and the mild 
assumption $\lim_{|t-x|\rightarrow \infty} K(t,x)=0$ 
%testing condition
imply the following pointwise decay condition:  
\begin{equation}\label{kerneldecay}
|K(t,x)|
\lesssim
%%\frac{1}{(|x|+|t|)^{\theta}}
\frac{F(t, x)}{|t-x|^{\alpha}},
\end{equation}
with $F(t,x)=L(|t-x|)S(|t-x|)D(|t+x|)$.
%1+\frac{|t+x|}{1+|t-x|})$.
This is also the case when
$F\equiv 1$.

\begin{definition}\label{intrep}
%Let $Kd\mu\times d\mu$ be a Calder\'on-Zygmund kernel.
%satisfying Definition \ref{prodCZoriginal} 
%and $T$ be a linear operator. 
%bounded on $L^{2}(\mu)$. 
%We say that 
A linear operator $T$ is associated with 
a Calder\'on-Zygmund kernel $K$ if the following representation holds
\begin{equation}\label{kernelrep}
Tf(x) =\int_{\R^{n}} f(t) K(t,x)\, d\mu(t) 
\end{equation}
for all functions $f$ bounded and compactly supported,  
and $x\notin \supp f$.
\end{definition}
%By assuming $K$ to be bounded, 
By 
\eqref{kerneldecay} and the properties of $f$ and $x$, 
the double integral is absolutely convergent with
$$
\int_{\R^{n}} |f(t) K(t,x)\, |d\mu(t) \lesssim 
\|f\|_{L^\infty (\mu)}\frac{\mu(\supp f)}{\dist(x,\supp f)^\alpha}.
$$

\section{Statements of main results}\label{statemainres}
% \begin{definition}
% We say that a linear operator $T$ satisfies the testing condition for boundedness if 
% \begin{equation}\label{restrictcompact1-2bddness}
% \| \chi_IT\chi_I\|_{L^{2}(\mu)}
% +\| \chi_IT^{*}\chi_I\|_{L^{2}(\mu)}
% %|\langle T_{b}(\chi_I),\chi_{I}\rangle |
% \lesssim 
% %\Big(\frac{\ell(Q)}{\ell(I)}\Big)^{n(\frac{1}{q_{1}}+\frac{1}{q_{2}})}
% \mu(I)^{\frac{1}{2}}
% %|B_{q_{1}}^{b_{1}}(I)B_{q_{2}}^{b_{2}}(I).
% \end{equation}
% for some families of cubes $I$. 

% We say that $T$ satisfies the testing condition for compactness if \eqref{restrictcompact1-2bddness} holds and \begin{equation}\label{b-averageandbound1limit}
% \| \chi_IT\chi_I\|_{L^{2}(\mu)}
% +\| \chi_IT^{*}\chi_I\|_{L^{2}(\mu)}\lesssim 
% \mu(I)^{\frac{1}{2}}F_T(I)
% \end{equation}
% for some families of cubes $I$, 
% with $F_T$ bounded and satisfying 
% $$
% \lim_{M\rightarrow \infty }\sup_{I
% \in (\mathcal T_i\mathcal D)_{M}^{c}\cup (2\mathcal T_i\mathcal D)_{M}^{c}}\hspace{-.2cm}
% F_T(I)=0.
% $$

% \end{definition}
%This condition works only for $q_{i}=\infty $. Otherwise, 
%By taking $F_{K}\mu=F_{T}\equiv 1$, condition \eqref{b-averageandbound0bddness}  implies that the testing functions are accretive. 
%See \cite{David} for more information about accretive functions. 
%Then, the proof developed also demonstrates the following result.
We denote $\delta(x)=1$ if $x=0$ and 
$\delta(x)=0$ otherwise. 

\begin{theorem}\label{Mainresultrestrictedbddness2}
Let $\mu$ be a positive Radon measure on $\mathbb R^n$ with
%,  namely satisfying Definition \ref{suitable}. 
power growth $0<\alpha \leq n$.
%such that 
%for boundedness 
%$\mu(I)\lesssim \ell(I)^\alpha$ for 
%all $I\in \mathcal{C}$. 
Let
%$K$ be a Calderon-Zygmund kernel with parameter $\delta$ and
$T$ be a linear operator with a  Calder\'on-Zygmund kernel and measure $\mu $ as in \eqref{kernelrep}. 
Let $1<p<\infty$, $k=n-[\alpha ]+\delta(\alpha -[\alpha])$.
%with parameters $0<\delta\leq 1$ and $1\leq r_{i}\leq \infty$. 
%with associated kernel $K$ and
%satisfying the mixed WB-CZ condition.
% $\mathcal A=\mathcal T_i\mathcal D\cup 2\mathcal T_i\mathcal D\cup \mathcal T_i\mathcal {\tilde D}\cup 2\mathcal T_i\mathcal {\tilde D}$ and 
% $\mathcal B= \mathcal{D}^r_\partial\cup 2\mathcal D^r_\partial\cup \mathcal{\tilde D}^r_\partial\cup 2\mathcal {\tilde D}^r_\partial$.

Then the following statements are equivalent:

a) $T$ extends to a bounded operator on $L^p(\mu )$ 

b) there exist $k$ grids of $n$-dimensional cubes, $\mathcal T_i\mathcal D$  
as in \eqref{grids} 
with $i\in \{1,2, \ldots, k\}$,  
such that the testing condition 
%there exist $1\leq q_{1},q_{2}\leq \infty $ and $b_{1},b_{2}$ locally integrable accretive functions 
%compatible with $T$ and such that $T_{b}$
%satisfies
%the restricted boundedness condition:
\begin{equation}\label{restrictcompact1-2bddness}
\| \chi_IT\chi_I\|_{L^{2}(\mu)}
+\| \chi_IT^{*}\chi_I\|_{L^{2}(\mu)}
%|\langle T_{b}(\chi_I),\chi_{I}\rangle |
\lesssim 
%\Big(\frac{\ell(Q)}{\ell(I)}\Big)^{n(\frac{1}{q_{1}}+\frac{1}{q_{2}})}
\mu(I)^{\frac{1}{2}}
%|B_{q_{1}}^{b_{1}}(I)B_{q_{2}}^{b_{2}}(I).
\end{equation}
% \begin{equation}\label{restrictcompact1-2bddness2}
% \| \chi_{2I}T\chi_{2I}\|_{L^{2}(\mu)}
% +\| \chi_{2I}T^{*}\chi_{2I}\|_{L^{2}(\mu)}
% \lesssim 
% \mu(2I)^{\frac{1}{2}}
% \end{equation}
holds for all $I\in \mathcal T_i\mathcal D\cup 2\mathcal T_i\mathcal D$. 
%for $i\in \{ 0, n-[\alpha ]\}$.
%and $T(b_{1}), T^{*}(b_{2})\in \BMO_{b}(\mathbb R^{n})$.

%and the cancellation conditions
%$b_{2}T(b_{1}), b_{1}T^{*}(b_{2}) \in \CMO(\mathbb R^{n})$. 

c) for %, $T$ extends to a bounded operator on $L^p(\mu )$ if and only if 
the $k$ grids $\mathcal{D}^r_\partial$ of $r$-dimensional cubes as defined in \ref{grids2}
with $r\in \{n, n-1, \ldots, n-k+1\}$, $T$ 
satisfies \eqref{restrictcompact1-2bddness}
% \begin{equation}\label{b-averageandbound1-2}
% \| \chi_IT\chi_I\|_{L^{2}(\mu)}+\| \chi_IT^{*}\chi_I\|_{L^{2}(\mu)}\lesssim \mu(I)^{\frac{1}{2}}
% \end{equation}
for all $I\in \mathcal{D}^r_\partial\cup 2\mathcal D^r_\partial$. 
\end{theorem}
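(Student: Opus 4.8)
## Proof strategy for Theorem \ref{Mainresultrestrictedbddness2}

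\textbf{Overall architecture.} The plan is to prove the chain of implications $a)\Rightarrow b)$, $a)\Rightarrow c)$ (both trivial), and then the substantive direction, which is that the restricted testing conditions in $b)$ or $c)$ imply boundedness on $L^p(\mu)$. For the easy directions: if $T$ extends boundedly on $L^p(\mu)$, then since $\chi_I\in L^p(\mu)$ with $\|\chi_I\|_{L^p(\mu)}=\mu(I)^{1/p}$, and because $T$ (being a Calder\'on-Zygmund operator) is in fact bounded on $L^2(\mu)$ by standard Calder\'on-Zygmund theory on spaces of power growth, the quantity $\|\chi_I T\chi_I\|_{L^2(\mu)}\le \|T\chi_I\|_{L^2(\mu)}\lesssim \|\chi_I\|_{L^2(\mu)}=\mu(I)^{1/2}$, and symmetrically for $T^*$. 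This gives $a)\Rightarrow b)$ and $a)\Rightarrow c)$ at once, since the cubes appearing in $b)$ and $c)$ are particular instances of the generic grid $\mathcal D$.

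\textbf{The hard direction.} The real work is $b)\Rightarrow a)$ (and $c)\Rightarrow a)$, which is handled identically since the grids $\mathcal D^r_\partial$ are just another admissible choice of the $k$ grids $\mathcal T_i\mathcal D$). Here I would first reduce from the restricted local testing condition over a finite family of dyadic grids to a \emph{full} local testing condition $\|\chi_Q T\chi_Q\|_{L^2(\mu)}+\|\chi_Q T^*\chi_Q\|_{L^2(\mu)}\lesssim \mu(Q)^{1/2}$ for \emph{all} cubes $Q\in\mathcal C$. The number $k=n-[\alpha]+\delta(\alpha-[\alpha])$ is exactly the codimension needed so that the boundaries $\partial Q$ of an arbitrary cube $Q$, which can intersect a fixed dyadic grid badly, are controlled: any cube $Q$ whose testing condition one wants can be captured (up to dilation by $2$, which is why $2\mathcal T_i\mathcal D$ appears) by cubes from one of the $k$ shifted grids, using the genericity of the shifts $\lambda_i\in\mathbb R\setminus\bigcup_j(\lambda_j+\mathbb Q)$ to avoid resonances between the boundary hyperplanes of $Q$ and those of the grids. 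The $\rho_{\rm in}$ density and the passage to lower-dimensional grids $\mathcal D^r_\partial$ enter precisely because the measure $\mu$ can concentrate on lower-dimensional affine subspaces, so one must test on cubes of those dimensions as well. After this reduction one runs the author's randomization-free machinery: expand $f,g$ in the Haar wavelet system of Section 6, estimate $\langle T\psi_I,\psi_J\rangle$ via the kernel smoothness \eqref{smoothcompactCZ} and the pointwise decay \eqref{kerneldecay} (Sections 5 and 7), split the bilinear form into a ``near-diagonal'' part, a ``paraproduct'' part (Section 8) controlled by the full local testing condition, and a ``far'' part controlled by the decay $F/|t-x|^\alpha$ together with the density $\rho_\mu$; finally sum using the auxiliary function $\tilde D$ of \eqref{Dtilde} and a Schur-test / almost-orthogonality argument to obtain $L^2(\mu)$ boundedness, and then $L^p(\mu)$ for all $1<p<\infty$ by Calder\'on-Zygmund interpolation and duality.

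\textbf{Main obstacle.} The principal difficulty is the reduction from the countable testing family on the $k$ fixed grids to a testing condition on \emph{all} cubes, carried out \emph{without} random grids. Whereas the classical approach hides the logarithmic growth of $|\langle T\chi_I,\chi_J\rangle|$ in the size and eccentricity of $I,J$ by averaging over random grids (and pays for it with surgery and a non-countable testing family), here one must instead extract, from testing on just $k$ arithmetically-independent shifted dyadic grids, enough information to reconstruct the action of $T$ on an arbitrary cube. The key technical point is a covering/comparison lemma: given any $Q\in\mathcal C$, one finds cubes $I$ in some $\mathcal T_i\mathcal D$ (or in $2\mathcal T_i\mathcal D$) with $\ell(I)\approx\ell(Q)$, $I$ close to $Q$, and $\partial Q$ meeting $I^{\circ}$ in a controlled way, so that $\chi_Q$ can be written as a bounded combination of $\chi_I$'s plus error terms supported near $\partial Q$ that are absorbed using the $\rho_{\rm in}$ density and the lower-dimensional grids. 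Making this quantitative — in particular tracking that only finitely many grids ($k$ of them, with the stated value) suffice and that the boundary errors genuinely vanish in the relevant limits of \eqref{limits} — is where the bulk of the argument lies; once it is in place, the wavelet estimates and the paraproduct decomposition proceed along now-standard non-homogeneous lines.
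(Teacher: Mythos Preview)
Your easy directions are fine, but the hard direction contains a genuine misconception about where the $k$ grids enter.

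You propose to \emph{first} reduce the $k$-grid testing condition to the full testing condition on all cubes $Q\in\mathcal C$, and \emph{then} run the wavelet/paraproduct machinery.  The paper never performs such a reduction, and it is far from clear that one can: writing an arbitrary $\chi_Q$ as a bounded combination of indicator functions from a fixed countable family of dyadic cubes, plus ``boundary errors'' that are absorbed by $\rho_{\rm in}$ and lower-dimensional grids, is exactly the kind of step that usually forces either randomization or an uncountable testing family.  Your covering lemma is asserted but not proved, and the boundary of an \emph{arbitrary} cube $Q$ bears no a priori relation to the boundaries of the fixed grids $\mathcal T_i\mathcal D$; this is the obstruction, not a technicality.

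What the paper actually does is use the $k$ grids \emph{sequentially inside the proof}, not as a preliminary reduction.  One fixes the first grid $\mathcal D=\mathcal T_1\mathcal D$, expands in its Haar system, and decomposes $\langle TP_{2M}^\perp f,P_M^\perp g\rangle$ into distant, nested, paraproduct, and borderline pieces $D_1,D_2,N_2,N_3,P_4,P_5,B_6$.  The first six are handled by the bump estimates of Propositions \ref{twobumplemma1}--\ref{twobumplemma2} and the paraproduct Proposition \ref{paraproducts}.  The borderline term $B_6$ is telescoped (via the truncated operators $T_{\gamma,Q}$, which kill the diagonal) down to pairings supported on \emph{open} cubes very close to $\mathfrak D_I$; Lemma \ref{zeroborder} shows these regions carry arbitrarily small $\mu$-mass.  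What remains is the contribution of $g_{1,\partial}=(P_M^\perp g)\chi_{\partial\mathcal D}$, supported on the actual dyadic skeleton $\partial\mathcal D$, which may have positive $\mu$-measure.  For that piece one switches to the \emph{second} grid $\mathcal T_2\mathcal D$ (or, in version $c)$, to the $(n{-}1)$-dimensional grid $\mathcal D_\partial^{n-1}$) and repeats.  After $k$ iterations the residual functions are supported on an affine set of dimension $n-k=[\alpha]-\delta(\alpha-[\alpha])<\alpha$, which has $\mu$-measure zero by the power-growth hypothesis, and the argument terminates (Proposition \ref{Ttrunc2T} passes the uniform bounds from $T_{\gamma,Q}$ back to $T$).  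The role of the arithmetic independence of the shifts $\lambda_i$ is precisely that $\partial(\mathcal T_i\mathcal D)\cap\partial(\mathcal T_j\mathcal D)$ drops one dimension at each step---not that arbitrary cubes can be captured by one of the grids.
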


% \begin{remark}
% We note that each $r$-dimensional dyadic cube $I\in \mathcal D_{\partial}^r$ with $0<r\leq n$ is in the border of an $n$-dimensional dyadic cube $Q$ with the same side length and so
% $
% \mu(I)\leq \mu(Q)\lesssim \ell(Q)^{\alpha}=\ell(I)^{\alpha}
% $.
% \end{remark}
%Instead of proving Theorem %\ref{Mainresultrestrictedbddness2}, we will only prove the next theorem. 
Theorem \ref{Mainresultrestrictedbddness2} follows from the proof of the following theorem. 
\begin{theorem}\label{Mainresult2}
Let $\mu$ be a positive Radon measure on $\mathbb R^n$ with
%,  namely satisfying Definition \ref{suitable}. 
power growth $0<\alpha \leq n$.
Let
%$K$ be a Calderon-Zygmund kernel with parameter $\delta$ and
$T$ be a linear operator with a  Calder\'on-Zygmund kernel and measure $\mu $ as in \eqref{kernelrep}. 
Let $1<p<\infty$,  
%We denote $\delta(x)=1$ if $x=0$ and 
%$\delta(x)=0$ otherwise. 
$k=n-[\alpha ]+\delta(\alpha -[\alpha])$. 

Then the following statements are equivalent: 

a) $T$ extends to a compact operator on $L^p(\mu )$ 

b) $Kd\mu\times d\mu$ 
is a compact Calder\'on-Zygmund kernel
%and there exist two testing systems  $(b_{Q}^{1})_{Q\in \mathcal %D},(b_{Q}^{2})_{Q\in \mathcal D}$ 
and there exist $k$ grids of $n$-dimensional cubes, $\mathcal T_i\mathcal D$ as in \eqref{grids} with $i\in \{1, 2, \ldots, k\}$, 
such that 
%\eqref{restrictcompact1-2bddness} holds and
%We say that $(b^{i}_{Q})_{Q}$ is compatible with $T$
 %if there exist $C>0$ such that the following conditions hold:
% \begin{equation}\label{b-averageandbound0}
% %\sup_{\tiny \begin{array}{c}I,J\in {\mathcal D}%\\ I'\in \fam(I), J'\in \fam(J)
% %\end{array}}
% \sup_{I,J\in \mathcal{D}^i}F_\mu(I,J)\lesssim 1,
% \end{equation}
% \begin{equation}\label{b-averageandbound0limit}
% \lim_{M\rightarrow \infty }\sup_{\substack{I,J
% \in ({\mathcal D}_{M}^i)^{c}\\ (I,J)\in \mathcal F_{M}
% %\\ I'\in \fam(I), J'\in \fam(J)
% }}
% F_\mu(I,J)=0,
% \end{equation}
% and
% \begin{equation}\label{b-averageandbound1}
% %\sup_{\tiny \begin{array}{c}I\in {\mathcal D}
% %\end{array}}
% \sup_{I\in \mathcal{D}^i\cup 2\mathcal D^i}\mu(I)^{-\frac{1}{2}}(\| \chi_IT\chi_I\|_{L^{2}(\mu)}+\| \chi_IT^{*}\chi_I\|_{L^{2}(\mu)})\lesssim 1,
% %\lesssim \mu(I)^{\frac{1}{2}}F_T(I)
% %T\! B(I)+T^{*}\! B(I)\lesssim 1,
% \end{equation}
% \begin{equation*}
% %\label{b-averageandbound12}
% %\sup_{\tiny \begin{array}{c}I\in {\mathcal D}
% %\end{array}}
% \sup_{I\in \mathcal{D}}
% \langle T^*(\chi_IT\chi_I)\rangle_{I}+ \langle T(\chi_IT^*\chi_I)\rangle_{I}\lesssim 1
% %\lesssim \mu(I)^{\frac{1}{2}}F_T(I)
% %T\! B(I)+T^{*}\! B(I)\lesssim 1,
% \end{equation*}
% \begin{equation}\label{b-averageandbound1limit}
% \lim_{M\rightarrow \infty }\hspace{-.1cm}\sup_{\tiny \begin{array}{c}I
% \in ({\mathcal D}_{M}^i)^{c}\end{array}}
% F_\mu(I)=0,
% \end{equation}
\begin{equation}\label{b-averageandbound1limit}
\| \chi_IT\chi_I\|_{L^{2}(\mu)}
+\| \chi_IT^{*}\chi_I\|_{L^{2}(\mu)}\lesssim 
\mu(I)^{\frac{1}{2}}F_T(I)
\end{equation}
for all $I\in \mathcal T_i\mathcal D\cup 2\mathcal T_i\mathcal D$, with $F_T$ bounded and satisfying 
\begin{align}\label{FT}
\lim_{M\rightarrow \infty }\sup_{I
\in (\mathcal T_i\mathcal D)_{M}^{c}\cup (2\mathcal T_i\mathcal D)_{M}^{c}}\hspace{-.2cm}
F_T(I)=0.
\end{align}
% where the supremum in \eqref{b-averageandbound0limit} is calculated over the family $\mathcal F_{M}$ of ordered pairs of cubes $I,J\in {\mathcal D}_{M}^{c}$ such that either $\ell(I\smland J)>2^{M}$,
% $\ell(I\smland J)<2^{-M}$, or $\rdist(\langle I,J\rangle ,\mathbb B)>M^{\theta}$ for some $\theta \in (0,1)$.

%If we further assume that $b_{Q}^{i}\in L^{\infty }(\mu_{i})$, 
%then the same two conditions characterize compactness of $T_{b}$.

c) $Kd\mu\times d\mu$ 
is a compact Calder\'on-Zygmund kernel
and for the $k$ grids of $r$-dimensional cubes, ${\mathcal D}_{\partial}^r$ as  
in \ref{grids2} with
$r\in \{n,n-1, \ldots, n-k+1\}$, we have that $T$ 
satisfies \eqref{b-averageandbound1limit}
for all $I\in \mathcal{D}^r_\partial\cup 2\mathcal D^r_\partial$, 
%\eqref{b-averageandbound1-2}  and 
% \begin{equation}\label{b-averageandbound1-2}
% \sup_{I\in \mathcal{D}^r_\partial\cup 2\mathcal D^r_\partial}\mu(I)^{-\frac{1}{2}}(\| \chi_IT\chi_I\|_{L^{2}(\mu)}+\| \chi_IT^{*}\chi_I\|_{L^{2}(\mu)})\lesssim 1,
% \end{equation}
% \begin{equation}\label{b-averageandbound1limit-2}
% \| \chi_IT\chi_I\|_{L^{2}(\mu)}+\| \chi_IT^{*}\chi_I\|_{L^{2}(\mu)}\lesssim \mu(I)^{\frac{1}{2}}F_T(I)
% \end{equation}
with $F_T$ bounded and satisfying 
$$
\lim_{M\rightarrow \infty }\sup_{I
\in ({\mathcal D}_{\partial}^r)_{M}^{c}\cup (2{\mathcal D}_{\partial}^r)_M^{c}}F_T(I)=0. 
$$
\end{theorem}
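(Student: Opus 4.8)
\textbf{Proof plan for Theorem \ref{Mainresult2}.}
The plan is to prove the chain of implications (a) $\Rightarrow$ (c) $\Rightarrow$ (b) $\Rightarrow$ (a), since (c) is formally the strongest testing hypothesis (it uses specific boundary grids) and (b) the weakest. The implication (c) $\Rightarrow$ (b) is essentially a matter of bookkeeping: the boundary grids $\mathcal D_\partial^r$ are themselves particular instances of the translated dyadic families allowed in (b) once one identifies an $r$-dimensional dyadic cube with the $n$-dimensional cube of the same side length sitting against it, and the power-growth transfer remark in Section 3 guarantees the measure hypothesis survives the passage; the function $F_T$ and its vanishing condition \eqref{FT} carry over verbatim. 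For (a) $\Rightarrow$ (c), if $T$ is compact on $L^p(\mu)$ then it is bounded, so $\|\chi_I T\chi_I\|_{L^2(\mu)}\lesssim \mu(I)^{1/2}$ trivially with $F_T\equiv 1$; the real content is (i) that compactness forces the kernel $Kd\mu\times d\mu$ to be a compact Calderón--Zygmund kernel, i.e.\ forces the limits \eqref{limits}, and (ii) that one may take $F_T$ genuinely decaying on lagom-complement cubes. Point (i) should follow by testing $T$ against normalized bumps supported on far-apart or very eccentric cubes and using that a compact operator maps weakly null sequences to norm-null sequences; point (ii) should follow by extracting from the same weak-null/compactness argument a quantitative modulus of decay, packaged into $F_T$.

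The heart of the paper is (b) $\Rightarrow$ (a), and this is where the announced new non-random method must do its work. First I would pass to a smooth truncation of the kernel (Section 5) so that all sums converge absolutely, and expand $f$ and $g$ in the Haar wavelet basis adapted to the grid (Section 6), reducing $\langle Tf,g\rangle$ to a double sum over pairs of Haar wavelets $\psi_I,\psi_J$ of matrix coefficients $\langle T\psi_I,\psi_J\rangle$. The classical obstruction is that the naive kernel estimate on $\langle T\chi_I,\chi_J\rangle$ loses a factor $\log \rdist(I,J) + \log \ec(I,J)^{-1}$, which is exactly what randomization was invented to kill. The new idea, as the introduction insists, is to observe that the logarithm is a phantom: once boundedness is in hand one has $|\langle T\chi_I,\chi_J\rangle|\le \|T\|\mu(I)^{1/2}\mu(J)^{1/2}$ with no log at all. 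So the plan is to \emph{avoid ever producing the log}: one splits the bilinear form into a near-diagonal part (comparable eccentricities and bounded relative distance), handled by the local testing condition \eqref{b-averageandbound1limit} together with the inner/outer densities $\rho_{\rm in},\rho_{\rm out}$, and a far part, where instead of the crude kernel bound one inserts the wavelet cancellation against \emph{both} $\psi_I$ and $\psi_J$ and uses the full smoothness \eqref{smoothcompactCZ} with the gain $F(t,x)=L\,S\,D$. Because $L$, $S$, $\tilde D$ each carry the vanishing \eqref{limits}, summing the resulting geometric-type series over scales and distances produces not a divergent log but a convergent bound with an overall factor that tends to $0$ as the cubes leave every $\mathcal C_M$ — which is precisely compactness rather than mere boundedness.

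The two remaining structural pieces are the paraproducts (Section 8) and the assembly (Section 9). The testing functions are only indicators of dyadic cubes, so $T1$ is not literally available; one manufactures the paraproduct corrections from the local data $\chi_I T\chi_I$ and $\chi_I T^*\chi_I$, showing the associated paraproduct operators are bounded (indeed compact) because their symbols lie in a Carleson-type class whose Carleson constant over a cube $R$ is controlled by $\sup_{I\subseteq R}F_T(I)^2$, which vanishes as $R$ exits the lagom family by \eqref{FT}. Subtracting the paraproducts reduces matters to the ``perfect dyadic'' situation where $\langle T\psi_I,\psi_J\rangle$ already enjoys full cancellation on the smaller cube, and there the near/far decomposition above closes. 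Finally one undoes the smooth truncation, checking the error is compact by \eqref{limits}. \emph{The main obstacle I expect} is making the far-interaction estimate genuinely log-free with a uniformly summable decay: one must choose the splitting between ``near'' and ``far'' so that the boundary-straddling, wildly-eccentric pairs — the ones randomization used to bury — are absorbed into the near part and controlled by $\rho_{\rm in}$ via the inner relative distance $\inrdist(I,J)$, rather than appearing in the far sum where they would reintroduce the logarithm; getting the exponents in $\rho_{\rm out}$ (the $\delta/2+1$) and in $\tilde D$ to line up so every series converges with room to spare for the vanishing factor is the delicate point.
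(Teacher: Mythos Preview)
Your plan captures the overall architecture (truncate, Haar expand, split into distant/nested/paraproduct pieces) but misses the specific mechanism by which the paper eliminates randomization, and consequently misidentifies where the $k$ grids enter.

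First, a structural point: the paper does not prove (c) $\Rightarrow$ (b). Conditions (b) and (c) use genuinely different families of testing cubes ($k$ irrationally translated $n$-dimensional grids versus a descending chain of boundary grids $\mathcal D_\partial^n,\ldots,\mathcal D_\partial^{n-k+1}$), and your ``bookkeeping'' passage---identifying an $r$-cube with an adjacent $n$-cube---does not transfer the testing condition. The paper instead proves (b) $\Rightarrow$ (a) and (c) $\Rightarrow$ (a) by two parallel runs of the same argument; necessity is deferred to \cite{V2}.

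The real gap is your treatment of the borderline term. You propose to absorb the pairs $(I,J)$ with $\inrdist(I_p,J_p)-1\le \ec(I,J)^{-\theta}$ into a ``near part controlled by $\rho_{\rm in}$''. The paper does nothing of the sort, and it is precisely here that the new idea lives. After separating the distant ($D_1,D_2$), nested ($N_2,N_3$), and paraproduct ($P_4,P_5$) pieces---all of which are indeed handled by the bump estimates with the $F_\mu$ decay you anticipate---there remains a term $B_6$ collecting all borderline pairs. For $B_6$ the paper \emph{telescopes the Haar sum back down} to pairings $\langle T\chi_{I'},\chi_J\rangle$ with $J$ at the finest retained scale $2^{-N}\ell(Q)$. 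Because the kernel of the smooth truncation $T_{\gamma,Q}$ vanishes on $|t-x|<\gamma$, these finest-scale pairings are zero once $2^{-N}\ell(Q)\ll\gamma$; what survives involves only the top $N_0\approx\log(\ell(Q)/\gamma)$ scales in $I'$. Those finitely many terms are then bounded using the \emph{non-uniform} operator norm $\|T_{\gamma,Q}\|\lesssim \mu(Q)\gamma^{-\alpha}$, which is harmless because the $J$-cubes in $B_6$ are confined to a set $\tilde S$ of points close to the inner boundaries $\mathfrak D_{I'}$ of those $\lesssim 2^{N_0 n}$ big cubes, and Lemma~\ref{zeroborder} shows $\mu(\tilde S)$ can be made arbitrarily small by pushing $N$ large.

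This is where the $k$ grids enter, and why $k=n-[\alpha]+\delta(\alpha-[\alpha])$. The smallness-of-$\mu(\tilde S)$ argument requires $\tilde S$ to be an \emph{open} set, so one must first strip off the part of $g$ (respectively $f$) supported on the dyadic skeleton $\partial\mathcal D^1(Q)$, writing $g=g_1+g_{1,\partial}$. The piece $g_1$ is handled as above; the piece $g_{1,\partial}$ lives on a union of $(n-1)$-dimensional affine subspaces, and one repeats the entire argument with the second grid. After $k$ iterations the residual $g_{k,\partial}$ is supported on a set of dimension $n-k<\alpha$, which has $\mu$-measure zero by the power-growth hypothesis. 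Your proposal never invokes this dimension-reduction iteration, so as written it gives no role to the $k$ grids in the hypothesis and no explanation for the value of $k$.
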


For the proof of Theorem \ref{Mainresult2}, we provide the following notation: 
\begin{notation}
Let
% \begin{align*}
% \hat D(I,J)&=
% D(\rdist(\langle I,J\rangle,\mathbb B)) 
% \big(\rho_{\rm out}(I\smlor J )+\rho_{\rm in}(I\smlor J )\big)
% \\ 
% &
% +
% \tilde D(\inrdist(I,J)I\smland J)
% \rho_{\rm out}(\inrdist(I,J)I\smland J)
% \end{align*}
% and
% We define the measure 
% $\mu_F$ by 
% $d\mu_F (t, x)=\frac{F(t, x)}{|t-x|^\alpha }d\mu(t)d\mu(x)$, and the function 
% $F_\mu(I,J)=\mu_F(I\times J)\Phi(I)\Phi(J)$ (nope). 
% \vskip5pt
\begin{align}\label{defFmu}
F_K (I,J)&=%\frac{\mu(I)^{\frac{1}{2}}\mu(J)^{\frac{1}{2}}}{\ell(\langle I,J\rangle)^\alpha }
%L(\ell([I,J])S(\ell(I\smland J))
L(\ell(I\smland J)S(\ell(I\smland J))
(D(\rdist(\langle I,J\rangle,\mathbb B)) 
\\
&
\nonumber 
+\tilde D(\inrdist(I,J)I\smland J)).
%\hat D(I,J).
% \Big(D(\rdist(\langle I,J\rangle,\mathbb B)) 
% (\rho_{\rm out}(I\smlor J )+\rho_{\rm in}(I\smland J ))
% \\ 
% &
% \nonumber
% +
% %L(\ell(I\smland J)S(\ell(I\smland J))
% \tilde D(\inrdist(I,J)I\smland J)
% \rho_{\rm out}(\inrdist(I,J)I\smland J)\Big).
\end{align}

With  $\rho_{\mu}$, $F_K$, $F_T$ as defined in  \eqref{suitable2}, \eqref{defFmu}, and \eqref{FT} respectively, we now denote
$$
F_{\mu}(I,J)=\sup_{\substack{R\subset I\\ S\subset J}} F_K(R,S)\rho_{\mu}(R\smlor S)+F_T(R)+
F_T(S)
$$
and $F_{\mu}(I)=F_{\mu}(I,I)$. 
\end{notation}

When applied to the Cauchy integral operator, 
$$
C_{\mu }(f)(z)=\int \frac{f(w)}{w -z} \, d \mu (w)
$$
we obtain the following result. 
\begin{theorem}\label{Cauchytheo}
Let $\mu$ a positive Radon measure on the complex plane $\mathbb C$ such that 
$\mu(I)\lesssim \ell (I)$
for each $I\in \mathcal{D}$. Let $1<p<\infty $. 
Then the following statements are equivalent:

a) $C_{\mu }$ is bounded on $L^{p}(\mu)$, 

b) there exist two grids of $2$-dimensional cubes, $\mathcal T_i\mathcal D$ with $i\in \{1, 2\}$ as defined in \eqref{grids}, 
such that the testing condition 
\begin{equation}\label{Cmu}
%\rho(I)\lesssim 1
%for all $x\in \mathbb C$, $r>0$
%\hskip40pt {\rm and} \hskip40pt
\| \chi_{I}C_{\mu}\chi_{I}\|_{L^2(\mu)}\lesssim \mu(I)^{\frac{1}{2}},
\end{equation}
holds for all $I\in \mathcal T_i\mathcal D\cup 2\mathcal T_i\mathcal D$.

c) for the grids of dyadic squares $\mathcal{D}^2_\partial$ and line segments $\mathcal{D}^1_\partial$
%$r$-dimensional cubes, $\mathcal{D}^r_\partial$ with $r\in \{2,1\}$ 
as defined in \ref{grids2}, we have that 
\eqref{Cmu} holds
%For every dyadic cube or dyadic interval 
% \begin{equation}\label{Cmu2}
% %\rho(I)\lesssim 1
% %for all $x\in \mathbb C$, $r>0$
% %\hskip40pt {\rm and} \hskip40pt
% \| \chi_{I}C_{\mu}\chi_{I}\|_{L^2(\mu)}\lesssim \mu(I)^{\frac{1}{2}},
% \end{equation}
for all $I\in \mathcal{D}^r_\partial \cup 2\mathcal D^r_\partial$ with $r\in \{2,1\}$.

\vskip5pt
Furthermore, the following statements are also equivalent:

a) $C_{\mu }$ is compact on $L^{p}(\mu)$, 

b) there exist two grids of $2$-dimensional cubes, $\mathcal T_i\mathcal D$ with $i\in \{1, 2\}$ as defined in \eqref{grids}, 
such that \eqref{Cmu} holds and 
%If in addition %for $i\in \{1, 2\}$
$$
\lim_{M\rightarrow \infty }
%\sup_{\substack{I,J\in (\mathcal{D}^i)_M^{c}\\
%(I,J)\in \mathcal F_M}}
%F_\mu (I,J)
\sup_{I\in (\mathcal T_i\mathcal D)^{c}_M}
\rho_{\mu}(I)
%\frac{\mu(I)}{\ell(I)}
=\lim_{M\rightarrow \infty }
\sup_{I\in (\mathcal T_i\mathcal D)^{c}_M\cup (2\mathcal T_i\mathcal D)_M^{c}}
\frac{\| \chi_{I}C_{\mu}\chi_{I}\|_{L^2(\mu)}}{ \mu(I)^{\frac{1}{2}}}
=0,
$$
%where $F_\mu$ is defined in \eqref{defFmu}, 

c) for the grids of dyadic squares $\mathcal{D}^2_\partial$ and line segments $\mathcal{D}^1_\partial$
%for the two grids of $r$-dimensional cubes,  $\mathcal{D}^r_\partial$ with $r\in \{2,1\}$ 
as defined in \ref{grids2}, we have
that \eqref{Cmu} holds and 
%for all $I\in \mathcal{D}^r_\partial \cup 2\mathcal D^r_\partial$ with 
$r\in \{2,1\}$
%If in addition %for $i\in \{1, 2\}$
$$
\lim_{M\rightarrow \infty }
%\sup_{\substack{I,J\in (\mathcal{D}^i)_M^{c}\\
%(I,J)\in \mathcal F_M}}
%F_\mu (I,J)
\sup_{I\in ({\mathcal D}_{\partial}^r)_{M}^{c}}
\rho_{\mu}(I)
%\frac{\mu(I)}{\ell(I)}
=\lim_{M\rightarrow \infty }\sup_{I\in ({\mathcal D}_{\partial}^r)^{c}_M\cup (2{\mathcal D}_{\partial}^r)^{c}_M}
\frac{\| \chi_{I}C_{\mu}\chi_{I}\|_{L^2(\mu)}}{ \mu(I)^{\frac{1}{2}}}
=0.
$$
% $$
% \displaystyle{\lim_{\ell(I)\rightarrow \infty }\rho_{\mu}(I)
% =\lim_{\ell(I)\rightarrow 0 }\rho_{\mu}(I)=
% %0}
% %$
% %for all $x\in \mathbb C$
% %\item 
% %$
% %\displaystyle{
% \lim_{|c(I)|\rightarrow \infty }\rho_{\mu}(I)=0}
% $$
% %for all $r>0$
% %\item 
% %$\displaystyle{c_{I}^{-2(1-\frac{1}{q_{1}})}\frac{\mu(I)}{\ell(I)}\lesssim 1}$
% %\item
% %$
% %\displaystyle{ \lim_{M\rightarrow \infty }\sup_{I\in {\mathcal D}_{M}^{c}}c_{I}^{-2(1-\frac{1}{q_{1}})}\frac{\mu(I)}{\ell(I)}=0}
% %$
% %\item 
% %$\displaystyle{c_{I}^{-2}\frac{\mu(I)}{\ell(I)}\lesssim 1}, 
% %\hskip60pt \displaystyle{ \lim_{M\rightarrow \infty }\sup_{I\in {\mathcal D}_{M}^{c}}c_{I}^{-2}\frac{\mu(I)}{\ell(I)}=0}
% %$
% and \\
% $
% \displaystyle{
%  \lim_{M\rightarrow \infty }\sup_{I\in {\mathcal D}_{M}^{c}}%c_{I}^{-2}
%  \mu(I)^{-\frac{1}{2}}\| \chi_{I}C_{\mu}\chi_{I}\|_{L^2(\mu)}
% =0}
% $

\end{theorem}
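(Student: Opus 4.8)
The plan is to obtain Theorem~\ref{Cauchytheo} as a direct corollary of Theorems~\ref{Mainresultrestrictedbddness2} and~\ref{Mainresult2} by specializing the general framework to the Cauchy kernel on $\mathbb R^2\cong\mathbb C$. Here $n=2$, and the hypothesis $\mu(I)\lesssim\ell(I)$ is power growth with $\alpha=1$; since $[\alpha]=1$ and $\delta(\alpha-[\alpha])=\delta(0)=1$, the relevant integer is $k=n-[\alpha]+\delta(\alpha-[\alpha])=2$, which accounts for the two grids $\mathcal T_i\mathcal D$ and for the dimensions $r\in\{2,1\}$ in part~c). First I would check that $K(w,z)=\frac{1}{w-z}$ — i.e.\ $K(t,x)=\frac{1}{t-x}$ in the notation of~\eqref{kernelrep} — is a Calder\'on-Zygmund kernel in the sense of Definition~\ref{prodCZoriginal} with $\delta=1$ and $L\equiv S\equiv D\equiv1$, so that $F\equiv1$: it is continuous off the diagonal, hence bounded on compact subsets of its domain; $|K(t,x)|=|t-x|^{-1}=|t-x|^{-\alpha}$ gives~\eqref{kerneldecay}; and writing $\frac{1}{t-x}-\frac{1}{t'-x'}=\frac{(t'-t)-(x'-x)}{(t-x)(t'-x')}$ and using that $2(|t-t'|+|x-x'|)<|t-x|$ forces $|t'-x'|>\tfrac12|t-x|$, one gets $|K(t,x)-K(t',x')|\lesssim\big(\frac{|t-t'|+|x-x'|}{|t-x|}\big)^{\delta}|t-x|^{-\alpha}$, which is~\eqref{smoothcompactCZ}. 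Moreover $C_\mu$ is associated to $K$ through~\eqref{kernelrep}.

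Next I would unwind the two specialized conditions in the main theorems. Since $F\equiv1$, the reformulation of the limit condition given in Notation~\ref{LSDF}, namely $\lim_{M\to\infty}\sup_{I\in\mathcal D_M^c}F(I)\rho_\mu(I)=0$, becomes exactly $\lim_{M\to\infty}\sup_{I\in\mathcal D_M^c}\rho_\mu(I)=0$; hence ``$K\,d\mu\times d\mu$ is a compact Calder\'on-Zygmund kernel'' is, for the grids in the theorem, precisely the displayed condition on $\rho_\mu$ in parts~b) and~c) of the compactness statement. For the testing hypotheses, which in the general theorems involve both $T=C_\mu$ and $T^*=C_\mu^*$, I would use the antisymmetry $K(t,x)=-K(x,t)$: the kernel of $C_\mu^*$ in~\eqref{kernelrep} is $\overline{K(x,t)}$, so $C_\mu^*\chi_I(x)=\overline{\int_I\tfrac{1}{x-t}\,d\mu(t)}=-\overline{C_\mu\chi_I(x)}$, whence $\chi_IC_\mu^*\chi_I=-\overline{\chi_IC_\mu\chi_I}$ and therefore $\|\chi_IC_\mu^*\chi_I\|_{L^2(\mu)}=\|\chi_IC_\mu\chi_I\|_{L^2(\mu)}$ for every cube $I$; the kernel of $C_\mu^*$ also satisfies Definition~\ref{prodCZoriginal} with the same parameters. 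Thus~\eqref{restrictcompact1-2bddness} for $C_\mu$ is equivalent to~\eqref{Cmu}, and~\eqref{b-averageandbound1limit} is equivalent to~\eqref{Cmu} together with the boundedness and vanishing of $F_T(I)=\mu(I)^{-1/2}\|\chi_IC_\mu\chi_I\|_{L^2(\mu)}$, the vanishing~\eqref{FT} being the second displayed limit of the compactness statement.

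With these identifications, the boundedness equivalence a)$\Leftrightarrow$b)$\Leftrightarrow$c) of Theorem~\ref{Cauchytheo} is the $\alpha=1$, $n=2$, $k=2$ instance of Theorem~\ref{Mainresultrestrictedbddness2} applied to $C_\mu$ (with the grids $\mathcal T_i\mathcal D$ for b) and $\mathcal D_\partial^r$, $r\in\{2,1\}$, for c)), and the compactness equivalence is the corresponding instance of Theorem~\ref{Mainresult2}. I do not expect a genuine obstacle here: all the analytic content is contained in the two main theorems. The only points requiring care — and they are short — are the verification that the Cauchy kernel and its adjoint kernel fit Definition~\ref{prodCZoriginal} with $\delta=1$ and $F\equiv1$, and the bookkeeping that collapses the two-sided hypotheses (the compact-kernel condition plus the $F_T$-testing on $C_\mu$ and $C_\mu^*$) into the single-operator statements~\eqref{Cmu} and the $\rho_\mu$-limit as written in the theorem.
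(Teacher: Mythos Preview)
Your proposal is correct and matches the paper's intent: Theorem~\ref{Cauchytheo} is presented in the paper as a direct application of Theorems~\ref{Mainresultrestrictedbddness2} and~\ref{Mainresult2} (the paper writes ``When applied to the Cauchy integral operator \ldots\ we obtain the following result'' and does not give a separate proof). Your verification that $n=2$, $\alpha=1$, $k=2$, that the Cauchy kernel satisfies Definition~\ref{prodCZoriginal} with $\delta=1$ and $L\equiv S\equiv D\equiv 1$, and that antisymmetry reduces the two-sided testing to the one-sided condition~\eqref{Cmu}, is exactly the bookkeeping needed. One cosmetic remark: the paper uses the bilinear pairing $\langle f,g\rangle=\int fg\,d\mu$ rather than the Hermitian one (cf.\ Lemma~\ref{densityinL2}), so the adjoint kernel is simply $K(x,t)=-K(t,x)$ and $C_\mu^*=-C_\mu$ without complex conjugation; your conclusion $\|\chi_I C_\mu^*\chi_I\|_{L^2(\mu)}=\|\chi_I C_\mu\chi_I\|_{L^2(\mu)}$ is unaffected.
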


\section{The truncated operators}\label{truncated} In this section 
we define and study the properties of a particular smooth truncation of Calder\'on-Zygmund operators. 
We start the section with a technical result. 
\begin{lemma}\label{integrable}
Let 
%$Q=[-2^{N},2^{N}]^n$ with $N\geq 0$, 
$I\in \mathcal D$, and $x\in I$. Then
\begin{align*}
%\frac{1}{\ell(Q)}
\int_{I}
\frac{1}{|t-x|^{\alpha -1}}d\mu(t)
\lesssim \ell(I)\rho_{\rm in}(I).
\end{align*}
\end{lemma}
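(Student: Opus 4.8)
The plan is to estimate the integral $\int_I |t-x|^{-(\alpha-1)}\,d\mu(t)$ by decomposing the cube $I$ into dyadic annuli centered at the point $x$. First I would set $r_0=\ell(I)$ and, for $k\geq 0$, let $A_k=\{t\in I: r_0 2^{-k-1}\leq |t-x|<r_0 2^{-k}\}$ be the intersection of $I$ with the dyadic annulus at scale $2^{-k}\ell(I)$ around $x$; since $x\in I$ every $t\in I$ satisfies $|t-x|<\sqrt{n}\,\ell(I)$, so (up to a harmless universal constant absorbed in $\lesssim$) the sets $A_k$ cover $I$. On each $A_k$ the integrand is controlled by $(r_0 2^{-k-1})^{-(\alpha-1)}\approx (2^{-k}\ell(I))^{-(\alpha-1)}$, so
$$
\int_I \frac{d\mu(t)}{|t-x|^{\alpha-1}} \lesssim \sum_{k\geq 0} \big(2^{-k}\ell(I)\big)^{-(\alpha-1)}\,\mu(A_k).
$$

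Next I would bound $\mu(A_k)$. Since $A_k\subset I\cap B(x,2^{-k}\ell(I))$ with $x\in I$, the definition of $\rho_{\rm in}(I)$ gives directly
$$
\mu(A_k)\leq \mu\big(I\cap B(x,2^{-k}\ell(I))\big)\leq \rho_{\rm in}(I)\,\big(2^{-k}\ell(I)\big)^{\alpha}.
$$
Substituting this into the sum yields
$$
\int_I \frac{d\mu(t)}{|t-x|^{\alpha-1}} \lesssim \rho_{\rm in}(I)\sum_{k\geq 0} \big(2^{-k}\ell(I)\big)^{-(\alpha-1)}\big(2^{-k}\ell(I)\big)^{\alpha} = \rho_{\rm in}(I)\,\ell(I)\sum_{k\geq 0} 2^{-k},
$$
and the geometric series converges (this is where the exponent $\alpha-1$ rather than $\alpha$ is essential: the power of $2^{-k}$ left over is exactly $2^{-k\cdot 1}$), giving the claimed bound $\lesssim \ell(I)\rho_{\rm in}(I)$.

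The only genuine subtlety, and the step I would be most careful about, is the case $\alpha=1$, where the integrand becomes $|t-x|^{0}=1$ and the inequality reduces to $\mu(I)\lesssim \ell(I)\rho_{\rm in}(I)$; this still follows from the $k=0$ term alone since $\mu(I)\leq \mu(I\cap B(x,\sqrt n\,\ell(I)))\lesssim \rho_{\rm in}(I)\ell(I)^{\alpha}=\rho_{\rm in}(I)\ell(I)$, so no separate argument is really needed. A second minor point is the innermost behavior as $|t-x|\to 0$: there is no lower cutoff, but the dyadic annulus decomposition handles arbitrarily small scales uniformly because the exponent bookkeeping above is scale-invariant, and the finiteness of $\rho_{\rm in}(I)$ (a hypothesis implicit in the measure being of power growth, via Remark~\ref{measureconstants}) controls each shell. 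Alternatively, one could write the integral via the layer-cake formula $\int_I |t-x|^{-(\alpha-1)}d\mu = (\alpha-1)\int_0^\infty \lambda^{-\alpha}\mu(\{t\in I:|t-x|<\lambda\})\,d\lambda$ (for $\alpha>1$) and bound the inner measure by $\min(\mu(I),\rho_{\rm in}(I)\lambda^{\alpha})$, splitting the $\lambda$-integral at $\lambda=\ell(I)$; this is essentially the continuous version of the same computation and I would use whichever is cleaner in context.
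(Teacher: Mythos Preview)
Your proof is correct and follows essentially the same approach as the paper's: decompose $I$ into dyadic annuli $A_k=S_k\setminus S_{k+1}$ around $x$ and control each shell using the definition of $\rho_{\rm in}(I)$. The only difference is that the paper writes the sum as $\sum_k 2^{(\alpha-1)k}(\mu(S_k)-\mu(S_{k+1}))$ and then applies Abel's summation formula before invoking $\mu(S_k)\le \rho_{\rm in}(I)(2^{-k}\ell(I))^{\alpha}$, whereas you bound $\mu(A_k)\le \mu(S_k)\le \rho_{\rm in}(I)(2^{-k}\ell(I))^{\alpha}$ directly and skip the Abel step; your version is slightly more streamlined but the underlying idea is identical.
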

\begin{proof}
For $k\geq 0$, let 
$S_k=\{t\in I/ |t-x|\leq 2^{-k}\ell(I)\}$ and
$C_k=S_{k}\setminus S_{k+1}=\{t\in I/ 2^{-(k+1)}\ell(I)<|t-x|\leq 2^{-k}\ell(I)\}$. Then
\begin{align*}
\int_{I}
\frac{1}{|t-x|^{\alpha -1}}d\mu(t)
&\leq \sum_{k\geq 0}\frac{2^{(\alpha -1) (k+1)}}{\ell(I)^{\alpha-1}}
\mu(S_{k}\setminus S_{k+1})
\\
&\lesssim \ell(I)^{1-\alpha}\sum_{k\geq 0}2^{(\alpha -1) k}
(\mu(S_{k})-\mu(S_{k+1})).
\end{align*}
We write $a_{k}=\mu(S_k)$ and 
evaluate the previous expression using 
Abel's formula: for large $R$ we have 
\begin{align*}
\ell(I)^{1-\alpha}
\sum_{k=0}^{R} 
2^{(\alpha -1) k}(a_{k}-a_{k+1})
&=\ell(I)^{1-\alpha}\Big(a_{0}
-
a_{R+1}2^{(\alpha -1) R}
\\
&
+
\sum_{k=1}^{R}
a_{k}(2^{(\alpha -1) k}-2^{(\alpha -1) (k-1)})\Big).
\end{align*}

Since $a_{0}\leq \mu(I)=
\rho(I)\ell(I)^{\alpha}\leq \rho_{\rm in}(I)\ell(I)^{\alpha}$,  
%\ell(I\cap 2Q)^{\alpha}
%\leq \rho_{\rm in}(I) \ell(I\cap 2Q)^{\alpha}$, 
for the first term we have
%\begin{align*}\label{integrable}
$\ell(I)^{1-\alpha}a_{0}
%&
%\lesssim \rho(I)\frac{\ell(I\cap 2Q)^{1-\alpha}}{\ell(Q)}\ell(I\cap 2Q)^{\alpha}
%=\rho(I)\frac{\ell(I\cap 2Q)}{\ell(Q)}
\leq \ell(I)\rho_{\rm in}(I).
$
%\end{align*}

Similarly, 
since $$a_{R+1}=\mu(S_{R+1})\leq 
\mu(I\cap B(x, 2^{-(R+1)}\ell(I)))\leq 
\rho_{\rm in} (I) 2^{(R+1)\alpha }\ell(I)^{\alpha},$$
the absolute value of the second term can be bounded by
$$
\ell(I)^{1-\alpha}a_{R+1}2^{(\alpha -1) R}
\lesssim \ell(I)\rho_{\rm in}(I) 2^{-R}\leq \ell(I)\rho_{\rm in}(I) .
 $$
%which tends to zero when $R$ tends to infinity. 

Meanwhile, since since $a_{k}=\mu(S_{k})\leq 
\rho_{\rm in}(I) 2^{k\alpha }\ell(I)^{\alpha}$,
the absolute value of the last term is bounded by 
\begin{align*}
\ell(I)^{1-\alpha}
&
\sum_{k=1}^{R}
a_{k}2^{(\alpha -1) k}(1-2^{-(\alpha-1)})
%\lesssim \rho(I)\sum_{k=1}^{R}
%2^{-\alpha k}2^{(\alpha -1) k}
\lesssim \ell(I)\rho_{\rm in}(I)\sum_{k=1}^{R}
2^{-k}\lesssim \ell(I)\rho_{\rm in}(I).
\end{align*}
\end{proof}

We now define the following smooth truncation of an operator associated with a Calder\'on-Zygmund kernel. 
\begin{definition}\label{TgammaQ}
Let $\phi$ be a smooth function such that 
$0\leq \phi(x)\leq 1$, $\sup \phi\subset [-2,2]$,  
$\phi(x)=1$ for all $|x|< 1$ and $0\leq \phi'(x)\leq 2$.

Let $Q=[-2^N,2^N]^n$ with $N\geq 0$
%be the union of the $2^n$ cubes $I\in \mathcal{D}$ such that $\ell(I)=2^{N}$ and $\dist(I,0)=0$. 
and $0<\gamma \leq 1$. 
We define the kernel 
% \begin{align*}
% K_{\gamma,Q}(t,x)&=K(t,x)\phi(\frac{t}{\ell(Q)})
% \phi(\frac{x}{\ell(Q)})\phi(\frac{|t-x|}{\ell(Q)})
% \phi(\frac{|t-x|}{4\gamma})
% \\
% &+\frac{1}{\gamma}\phi(\frac{|t-x|}{4\gamma}+\frac{1}{4})
% \end{align*}
\begin{align*}
K_{\gamma,Q}(t,x)&=%\Big(
K(t,x)
(1-\phi(\frac{|t-x|}{\gamma}))
% +\frac{F(t,x)}{\gamma^\alpha}
% %K(t', x')
% \phi(\frac{|t-x|}{\gamma})\Big)
\phi(\frac{4|t|}{\ell(Q)})
\phi(\frac{4|x|}{\ell(Q)}).
\end{align*}
%with $t'=t+\frac{\gamma -|t-x|}{2|t-x|}(t-x)$ and 
%$x'=x-\frac{\gamma -|t-x|}{2|t-x|}(t-x)$. 
Let 
$T_{\gamma, Q}$ the operator with kernel $K_{\gamma,Q}$. 
\end{definition}

In the next two results, we prove that the truncated operators are bounded, have a Calder\'on-Zygmund kernel and satisfy the testing condition uniformly on $\gamma $ and $Q$. 
%thanks to the following smooth truncation lemma. 
%Unlike in \cite{NTV4} where the truncations are assumed by hypothesis to satisfy the testing condition uniformly, we deduce that same property 
%from the testing condition on the non-truncated operator. 
\begin{lemma}\label{truncaT}
%Let $f,g$ be bounded and compactly supported. 
%By dividing the support into $2^n$ parts if necessary, we can assume that both supports are contained in the same quadrant of $\mathbb R^n$. Then let $S\in \mathcal{D}$ such that $\sup f\cup \sup g\subset S$.  
% Let $\phi$, be a smooth function such that 
% $0\leq \phi(x)\leq 1$, $\sup \phi\subset [1/4,1]$ and 
% $\phi(x)=1$ for all $|x-1/2|< 1/8$.
The operator $T_{\gamma, Q}$ is bounded with bounds  depending on $\gamma $ and $Q$. Moreover,  
$K_{\gamma,Q}$ is a Calder\'on-Zygmund kernels with parameter $0<\delta\leq 1$ and constant independent of $\gamma $ and $Q$. 
\end{lemma}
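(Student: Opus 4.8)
The plan is to establish the two assertions separately. For the boundedness of $T_{\gamma,Q}$, note that its kernel $K_{\gamma,Q}$ vanishes unless $|t-x|\geq\gamma$, $|t|\leq\ell(Q)/2$ and $|x|\leq\ell(Q)/2$; on that compact subset of $\{t\neq x\}$ the function $K$ is bounded, say by $M(\gamma,Q)<\infty$, so $|K_{\gamma,Q}|\leq M(\gamma,Q)\,\chi_{B}(x)$ with $B=\{|x|\leq\ell(Q)/2\}$, and $\mu(B)<\infty$ since $\mu$ is Radon. For $f\in L^{p}(\mu)$ the function $T_{\gamma,Q}f$ is supported in $B$, and Hölder's inequality gives $\|T_{\gamma,Q}f\|_{L^{\infty}(\mu)}\leq M(\gamma,Q)\,\mu(B)^{1/p'}\|f\|_{L^{p}(\mu)}$, hence $\|T_{\gamma,Q}f\|_{L^{p}(\mu)}\leq M(\gamma,Q)\,\mu(B)\,\|f\|_{L^{p}(\mu)}$, a bound depending on $\gamma$ and $Q$ as claimed.

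For the second assertion write $K_{\gamma,Q}=K\,\Psi_{\gamma,Q}$ with $\Psi_{\gamma,Q}(t,x)=(1-\phi(|t-x|/\gamma))\,\phi(4|t|/\ell(Q))\,\phi(4|x|/\ell(Q))$, which satisfies $0\leq\Psi_{\gamma,Q}\leq 1$; in particular $|K_{\gamma,Q}|\leq|K|$, so $K_{\gamma,Q}$ is bounded on compact subsets of $\{t\neq x\}$. To verify \eqref{smoothcompactCZ} for $K_{\gamma,Q}$ with a constant independent of $\gamma$ and $Q$, fix $t,t',x,x'$ with $2(|t-t'|+|x-x'|)<|t-x|$, so that $|t-x|/2<|t'-x'|<3|t-x|/2$, and split
\begin{align*}
K_{\gamma,Q}(t,x)-K_{\gamma,Q}(t',x')
&=\bigl(K(t,x)-K(t',x')\bigr)\Psi_{\gamma,Q}(t',x')\\
&\quad+K(t,x)\bigl(\Psi_{\gamma,Q}(t,x)-\Psi_{\gamma,Q}(t',x')\bigr).
\end{align*}
The first term is controlled by \eqref{smoothcompactCZ} for $K$ together with $|\Psi_{\gamma,Q}|\leq 1$. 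For the second, I will combine the pointwise bound $|K(t,x)|\lesssim F(t,x)|t-x|^{-\alpha}$ from \eqref{kerneldecay} with the Lipschitz-type estimate
$$
\bigl|\Psi_{\gamma,Q}(t,x)-\Psi_{\gamma,Q}(t',x')\bigr|\lesssim\frac{|t-t'|+|x-x'|}{|t-x|},
$$
valid with an implied constant independent of $\gamma$ and $Q$; since this ratio is $<1$ and $\delta\leq 1$, it is dominated by its $\delta$-th power, and \eqref{smoothcompactCZ} for $K_{\gamma,Q}$ follows with the same $F$.

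The core of the argument is thus the displayed Lipschitz bound, which I would prove by the mean value theorem along the segment $(t_{s},x_{s})=(t,x)+s\bigl((t',x')-(t,x)\bigr)$, $s\in[0,1]$. Since $|(t_{s}-x_{s})-(t-x)|\leq|t-t'|+|x-x'|<|t-x|/2$, every point of the segment obeys $|t_{s}-x_{s}|\approx|t-x|$. Differentiating the product $\Psi_{\gamma,Q}$ and using $0\leq\phi\leq1$, $0\leq\phi'\leq 2$, one bounds $|\nabla\Psi_{\gamma,Q}(t_{s},x_{s})|$ by a term of size $\gamma^{-1}$ supported on $\{\gamma\leq|t_{s}-x_{s}|\leq2\gamma\}$ plus two terms of size $\ell(Q)^{-1}$, each supported where $|t_{s}|\leq\ell(Q)/2$ and $|x_{s}|\leq\ell(Q)/2$. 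On the first support $\gamma\approx|t_{s}-x_{s}|\approx|t-x|$, so $\gamma^{-1}\lesssim|t-x|^{-1}$; on the latter supports $|t_{s}-x_{s}|\leq|t_{s}|+|x_{s}|\leq\ell(Q)$, so $\ell(Q)^{-1}\leq|t_{s}-x_{s}|^{-1}\lesssim|t-x|^{-1}$. Hence $|\nabla\Psi_{\gamma,Q}|\lesssim|t-x|^{-1}$ everywhere along the segment, and integrating yields the claim. I expect this to be the main obstacle: one must exploit the hypothesis $2(|t-t'|+|x-x'|)<|t-x|$ to see that the large gradients $\gamma^{-1}$ and $\ell(Q)^{-1}$ of the truncating factors are felt only where $|t-x|$ is comparable to $\gamma$, resp.\ bounded by $\ell(Q)$, so that the $\gamma$- and $Q$-dependence disappears; the remaining verifications are routine.
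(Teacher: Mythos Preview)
Your proof is correct and follows essentially the same approach as the paper. The paper treats the smoothness condition one variable at a time (bounding $|K_{\gamma,Q}(t,x)-K_{\gamma,Q}(t',x)|$) and splits the product $\Psi_{\gamma,Q}$ into its three factors, applying the mean value theorem to each $\phi$ separately; your version handles both increments at once and bounds $|\Psi_{\gamma,Q}(t,x)-\Psi_{\gamma,Q}(t',x')|$ by a single mean value argument along the segment, but the underlying mechanism---that the gradients of size $\gamma^{-1}$ and $\ell(Q)^{-1}$ only appear where $|t-x|\lesssim\gamma$, resp.\ $|t-x|\leq\ell(Q)$---is identical.
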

\begin{proof}
We first show that $K_{\gamma,Q}$ is a bounded function: by 
\eqref{kerneldecay}, 
$$
|K_{\gamma,Q}(t,x)|\lesssim 
\frac{1}{|t-x|^\alpha}(1-\phi(\frac{|t-x|}{\gamma}))
%+\frac{1}{\gamma^\alpha}
\leq \frac{1}{\gamma^\alpha}.
$$
The last inequality is due to the fact that when $|t-x|\leq \gamma $ we have $\phi(\frac{|t-x|}{\gamma})=1$ and so, the second factor is zero. Then, since 
$K_{\gamma,Q}$ is supported on $Q\times Q$, by Cauchy-Schwarz, we have for $f,g\in L^2(\mu)$ 
%be bounded and compactly supported on $Q$, 
\begin{align*}
|\langle T_{\gamma, Q}f,g\rangle|
&=|\int \int K_{\gamma ,Q}(t,x)f(t)g(x)d\mu(t)d\mu(x)|
\\
&\leq \Big(\int_{Q} \int_{Q} |K_{\gamma ,Q}(t,x)|^2d\mu(t)d\mu(x)\Big)^{\frac{1}{2}}
\| f\|_{L^2(\mu)}\| g\|_{L^2(\mu)}
\\
&\lesssim \frac{\mu(Q)}{\gamma^{\alpha}}
\| f\|_{L^2(\mu)}\| g\|_{L^2(\mu)}.
\end{align*}
This proves that the operators $T_{\gamma, Q}$  are bounded.

We now show that $K_{\gamma,Q}$ is a Calder\'on-Zygmund kernel. We prove the appropriate estimate for 
 $A=|K_{\gamma,Q}(t,x)-K_{\gamma,Q}(t',x)|$ being similar the work for  $|K_{\gamma,Q}(t,x)-K_{\gamma,Q}(t,x')|$. 
 Let $t,t',x$ such that $2|t-t'|<|t-x|$. We note that this inequality implies
$|t-x|\leq |t'-x|+|t-t'|\leq |t'-x|+|t-x|/2$
and so, 
$|t-x|\leq 2|t'-x|$.
 Then
\begin{align*}
A
%|K_{\gamma,Q}(t,x)-K_{\gamma,Q}(t',x)|
% &\lesssim 
% |K(t,x)\phi(\frac{2|t|}{\ell(Q)})
% \phi(\frac{2|x|}{\ell(Q)})(1-\phi(\frac{|t-x|}{\gamma}))
% \\&
% -K(t',x)\phi(\frac{2|t'|}{\ell(Q)})
% \phi(\frac{2|x|}{\ell(Q)})(1-\phi(\frac{|t'-x|}{\gamma}))|
% % \\&
% % +\frac{1}{\gamma^\alpha}\phi(\frac{|t-x|}{\gamma})
% % \phi(\frac{2|t|}{\ell(Q)})
% % \phi(\frac{2|x|}{\ell(Q)})
%\\
&
\leq |K(t,x)-K(t',x)|\phi(\frac{4|t|}{\ell(Q)})
\phi(\frac{4|x|}{\ell(Q)})(1-\phi(\frac{|t-x|}{\gamma}))
\\&
+|K(t',x)| |\phi(\frac{4|t|}{\ell(Q)})-\phi(\frac{4|t'|}{\ell(Q)})|
\phi(\frac{4|x|}{\ell(Q)})(1-\phi(\frac{|t-x|}{\gamma}))
\\
&
+|K(t',x)| \phi(\frac{4|t'|}{\ell(Q)})
\phi(\frac{4|x|}{\ell(Q)})
|\phi(\frac{|t-x|}{\gamma})-\phi(\frac{|t'-x|}{\gamma})|.
% \\
% &
% +\frac{1}{\gamma^\alpha}|\phi(\frac{|t-x|}{\gamma})
% -\phi(\frac{|t'-x|}{\gamma})|
% \phi(\frac{2|t|}{\ell(Q)})
% \phi(\frac{2|x|}{\ell(Q)})
%  \\
%  &
%  +\frac{1}{\gamma^\alpha}\phi(\frac{|t'-x|}{\gamma})
%  |\phi(\frac{2|t|}{\ell(Q)})-\phi(\frac{2|t'|}{\ell(Q)})|
%  \phi(\frac{2|x|}{\ell(Q)})
\end{align*}
Since $2|t-t'|<|t-x|$ we can use the kernel smoothness condition \eqref{smoothcompactCZ} and that $\phi$ is bounded, to bound the first term by a constant times
\begin{align*}
\Big(\frac{|t-t'|}{|t-x|}\Big)^{\delta}\frac{F(t,x)}{|t-x|^\alpha }=\frac{|t-t'|^{\delta}}{|t-x|^{\alpha +\delta}}F(t,x).
\end{align*}
The second term is non zero if $x\in Q$ and $t\in Q$ or $ t'\in Q$. 
Then if $t\in Q$, we have
$|t-x|<|t|+|x|\leq \ell(Q)$.
Meanwhile if $t'\in Q$, we get 
$|t-x|\leq 2|t'-x|\leq 2(|t'|+|x|)
\leq 2\ell(Q)$.
Then, by the kernel decay\eqref{kerneldecay}, the fact that $\phi$ is bounded, and the Mean Value Theorem on $\phi$ with bounded derivative, the second term is bounded by a constant times
\begin{align*}
\frac{F(t,x)}{|t-x|^\alpha }
\frac{|t-t'|}{\ell(Q)}
\lesssim \frac{F(t,x)}{|t-x|^\alpha }
\frac{|t-t'|}{|t-x|}
\leq \frac{|t-t'|^{\delta}}{|t-x|^{\alpha+\delta} }F(t,x).
\end{align*}
The third term is non zero if $t',x\in Q$ and 
$|t-x|<2\gamma$ or $|t'-x|<2\gamma$. In 
the latter case we have  
$|t-x|\leq 2|t'-x|<4\gamma $.
Then, using again the kernel decay, that $\phi $ is bounded and the Mean Value Theorem on $\phi$, we can estimate 
this third term by a constant times
\begin{align*}
\frac{F(t,x)}{|t-x|^\alpha }
\frac{||t-x|-|t'-x||}{\gamma}
\lesssim \frac{F(t,x)}{|t-x|^\alpha }
\frac{|t-t'|}{|t-x|}
\leq \frac{|t-t'|^{\delta}}{|t-x|^{\alpha+\delta} }F(t,x).
\end{align*}

% The fourth term is non zero if $x,t\in Q$, and 
% $|t-x|<2\gamma$ or $|t'-x|<2\gamma$. But as before 
% $|t'-x|\leq 2|t-x|<4\gamma$. %Then  $|t-t'|\leq |t-x|\leq 4\gamma $.
% %$2|x|\leq |t-x|+|t'-x|+|t-t'|\leq 8\gamma $ and $|t|\leq |t-x|+|x|\leq 10\gamma $. 
% Then by 
% using again that $\phi$ is bounded with bounded derivative, 
% and $|t+x|\leq \ell(Q)$, we can estimate the third term by a constant times
% \begin{align*}
% \frac{1}{\gamma^\alpha}\frac{|t-t'|}{\gamma}
% &\leq \frac{|t-t'|}{|t-x|^{\alpha+1} }
% \leq \frac{|t-t'|^{\delta'}}{|t-x|^{\alpha+\delta'} }
% \frac{|t-t'|^{1-\delta'}}{|t-x|^{1-\delta'} }
% \chi_{[0,1]}(\frac{|t+x|}{\ell(Q)})
% \\
% &
% \leq \frac{|t-t'|^{\delta'}}{|t-x|^{\alpha+\delta'} }
% F(t,x),
% \end{align*}
% with $F(t,x)=L(|t-x|)S(|t-t'|)D(|x+t|)
% =\frac{|t-t'|^{1-\delta'}}{|t-x|^{1-\delta'} }
% \chi_{[0,1]}(\frac{|t+x|}{\ell(Q)})$
% and $0<\delta'<\delta \leq 1$.

% Finally, the fifth term is non zero if 
% $|t'-x|<2\gamma $, 
% $x\in Q$, and $t\in Q$ or $t'\in Q$. Then 
% $|t-x|\leq 2|t'-x|<4\gamma $. Moreover, if $t\in Q$, 
% $|t-x|\leq \ell(Q)$, while if $t'\in Q$, 
% $|t-x|\leq 2|t'-x|\leq 2\ell(Q)$. 
% With this, the fifth term is bounded by a constant times
% \begin{align*}
% \frac{1}{\gamma^\alpha}\frac{|t-t'|}{\ell(Q)}
% &\leq \frac{|t-t'|}{|t-x|^{\alpha+1} }
% \leq \frac{|t-t'|^{\delta'}}{|t-x|^{\alpha+\delta'} }
% F(t,x)
% \end{align*}
% as we did for the forth term. 

\end{proof}

\begin{lemma}\label{truncaT2}
The operator $T_{\gamma, Q}$ satisfies the testing condition with bounds independent of $\gamma $ and $Q$. 
\end{lemma}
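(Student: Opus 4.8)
The plan is to transfer the testing condition from $T$ to $T_{\gamma,Q}$, exploiting that $K_{\gamma,Q}$ agrees with $K$ off the diagonal up to the $O(\ell(Q)^{-1})$‑Lipschitz cut‑offs $\phi_Q(y)=\phi(4|y|/\ell(Q))$, and that under power growth each of the three densities in $\rho_{\mu}$ is bounded by a dimensional constant, so that the localization errors will be $O(1)$ pointwise. Since the transpose kernel of $T_{\gamma,Q}$ is the analogous smooth truncation of the Calder\'on--Zygmund kernel $K^{*}(t,x)=K(x,t)$ of $T^{*}$, and since $K^{*}$ satisfies \eqref{smoothcompactCZ} with the same constants (the weight $F$ being symmetric in its two points) while $T^{*}$ satisfies the testing condition by hypothesis, it suffices to bound $\|\chi_I T_{\gamma,Q}\chi_I\|_{L^{2}(\mu)}$; the bound for $T_{\gamma,Q}^{*}$ follows by running the same argument with $T^{*}$ in place of $T$.

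First I would dispose of the trivial configurations. If $\gamma\geq\sqrt n\,\ell(I)$ then $|t-x|\leq\sqrt n\,\ell(I)\leq\gamma$ for all $t,x\in I$, so $1-\phi(|t-x|/\gamma)=0$ and $\chi_I T_{\gamma,Q}\chi_I=0$; likewise if $\phi_Q$ vanishes identically on $I$. Since $\phi_Q$ is supported in $B(0,\ell(Q)/2)$, the function $\chi_I T_{\gamma,Q}\chi_I$ is unchanged when $I$ is replaced by $I\cap B(0,\ell(Q)/2)$; covering the latter set by $O(1)$ cubes of the testing family $\bigcup_i(\mathcal T_i\mathcal D\cup 2\mathcal T_i\mathcal D)$ of side $\approx\min(\ell(I),\ell(Q))$ and expanding $\chi_I T_{\gamma,Q}\chi_I$ accordingly (the off‑diagonal cross terms being controlled by the uniform Calder\'on--Zygmund estimates of Lemma~\ref{truncaT}, as in Section~7) reduces us to the range $0<\gamma\lesssim\ell(I)\lesssim\ell(Q)$.

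In this range, for $x\in I$ and $B=B(x,2\gamma)$, using that $\phi(|t-x|/\gamma)=0$ on $I\setminus B$, applying the kernel representation \eqref{kernelrep} to $T(\chi_{I\setminus B})(x)$ (legitimate since $x\notin\overline{I\setminus B}$) and using linearity, one gets the identity
\begin{equation*}
T_{\gamma,Q}\chi_I(x)=\phi_Q(x)^{2}\,T\chi_I(x)+\phi_Q(x)E_1(x)+\phi_Q(x)E_2(x)-\phi_Q(x)^{2}E_3(x),
\end{equation*}
where $E_1(x)=\int_{I\setminus B}K(t,x)(\phi_Q(t)-\phi_Q(x))\,d\mu(t)$, $E_2(x)=\int_{I\cap B}K(t,x)(1-\phi(|t-x|/\gamma))\phi_Q(t)\,d\mu(t)$ and $E_3(x)=T(\chi_{I\cap B})(x)$. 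The first term is bounded in $L^{2}(I,\mu)$ by $\|\chi_I T\chi_I\|_{L^{2}(\mu)}\lesssim\mu(I)^{1/2}$ directly from the hypothesis, because $0\leq\phi_Q\leq 1$. For $E_2$, the pointwise decay \eqref{kerneldecay} with $L,S,D$ bounded together with $\mu(B)\lesssim\gamma^{\alpha}$ gives $|E_2(x)|\lesssim\gamma^{-\alpha}\mu(B)\lesssim 1$. For $E_1$, using $|\phi_Q(t)-\phi_Q(x)|\lesssim|t-x|/\ell(Q)$, $\ell(I)\lesssim\ell(Q)$ and Lemma~\ref{integrable} one gets $|E_1(x)|\lesssim\ell(Q)^{-1}\int_I|t-x|^{-(\alpha-1)}\,d\mu(t)\lesssim\ell(I)\ell(Q)^{-1}\rho_{\rm in}(I)\lesssim 1$, since $\rho_{\rm in}(I)\lesssim 1$ under power growth. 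Hence the $E_1$ and $E_2$ contributions are each $\lesssim\mu(I)^{1/2}$ in $L^{2}(I,\mu)$.

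The term $E_3(x)=T(\chi_{I\cap B(x,2\gamma)})(x)$ is the crux, and the main obstacle: $x$ lies in the support of the argument, $T$ has no pointwise meaning there, and the naive kernel bound for $T_{\gamma,Q}\chi_I$ only gives the logarithmically divergent $\log(\ell(I)/\gamma)$ — this is exactly the phantom near‑diagonal singularity that earlier proofs tamed by averaging over random grids, and which here must instead be absorbed into the $L^{2}$ testing hypothesis. The plan is: for each $x\in I$ choose a cube $J_x$ of the testing family $\bigcup_i(\mathcal T_i\mathcal D\cup 2\mathcal T_i\mathcal D)$ with $\ell(J_x)\approx\gamma$, $B(x,2\gamma)\subseteq J_x$ and $x$ well inside $J_x$ — the several offset and dilated dyadic systems assumed in (b)/(c) are precisely what makes such a selection available at every point and scale; write $\chi_{I\cap B(x,2\gamma)}=\chi_{J_x}-\chi_{J_x\setminus(I\cap B(x,2\gamma))}$, observe that (apart from an $O(\gamma)$‑collar of $\partial I$, which is handled separately by the same device at scale $\gamma$) the remainder is supported at distance $\gtrsim\gamma$ from $x$, so that $T$ of it is given by \eqref{kernelrep} and is $O(1)$ by a single dyadic annulus estimate; and finally estimate $\int_I|T\chi_{J_x}(x)|^{2}\,d\mu(x)$ by grouping the integral according to the value of $J_x$: since $x\in J_x$ and, as $x$ ranges over $I$, $J_x$ runs over an $O(1)$‑fold family of tilings at scale $\approx\gamma$ whose relevant members tile a bounded dilate of $I$, the testing hypothesis applied to each $J$ gives $\int_I|T\chi_{J_x}(x)|^{2}\,d\mu(x)\leq\sum_J\|\chi_J T\chi_J\|_{L^{2}(\mu)}^{2}\lesssim\sum_J\mu(J)\lesssim\mu(I)$. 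Assembling the four pieces yields $\|\chi_I T_{\gamma,Q}\chi_I\|_{L^{2}(\mu)}\lesssim\mu(I)^{1/2}$ with constants depending only on the Calder\'on--Zygmund and power‑growth data, hence independent of $\gamma$ and $Q$; the delicate bookkeeping, as indicated, is entirely concentrated in the treatment of $E_3$ and of the points of $I$ lying within $O(\gamma)$ of $\partial I$.
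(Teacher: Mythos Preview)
Your four–piece decomposition $T_{\gamma,Q}\chi_I=\phi_Q^{2}\,T\chi_I+\phi_Q E_1+\phi_Q E_2-\phi_Q^{2}E_3$ and the handling of each piece is the paper's argument reorganised: the paper likewise reduces the main term to the testing hypothesis on $T$, bounds the annulus $\gamma\le|t-x|\le 2\gamma$ pointwise via \eqref{kerneldecay} and $\rho_{\rm in}$, controls the $\phi_Q$–variation with Lemma~\ref{integrable}, and deals with the near–diagonal contribution by reducing to testing on cubes of side $\approx\gamma$. For that last step the paper does not select $J_x$ pointwise; instead it covers the diagonal band $D=\{(t,x)\in I\times I:|t-x|\le 2\gamma\}$ by an alternating family of bi–cubes $I_{i,p}\times I_{i,p}$ and $2I_{i,p}\times 2I_{i,p}$ with $I_{i,p}\in\mathcal D$, $2I_{i,p}\in 2\mathcal D$, plus an annular remainder, and dualises against an arbitrary $\Phi_I$. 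So only the single system $\mathcal D\cup 2\mathcal D$ enters; the extra grids $\mathcal T_i\mathcal D$ are \emph{not} what makes the scale–$\gamma$ cube available here (they are used only in the dimension–reduction iteration of Section~\ref{L2}). Your $J_x$ selection can be carried out with $2\mathcal D$ alone, but the bi–cube covering sidesteps the $\partial I$–collar bookkeeping you defer.

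One step does not go through as written. In your reduction to $\ell(I)\lesssim\ell(Q)$ you cover $I\cap B(0,\ell(Q)/2)$ by $O(1)$ testing cubes and claim the cross terms are controlled by the Calder\'on--Zygmund data ``as in Section~7''. But for \emph{adjacent} sub–cubes $I_j,I_k$ the kernel bound yields only $|T_{\gamma,Q}\chi_{I_k}(x)|\lesssim\sum_{\gamma\le 2^j\lesssim\ell(Q)}1\approx\log(\ell(Q)/\gamma)$, which is not uniform in $\gamma$, and the bump estimates of Section~\ref{bump} require either positive distance or large inner relative distance, neither available for touching cubes of equal size. The paper avoids a covering entirely: when $I\not\subset 2^{-1}Q$ it passes to the single dyadic cube $I'\subset I$ minimal with $I\cap Q\subset I'$, applies the mean value theorem to $\phi(4|t|/\ell(Q))$ to write $\phi_Q(t)=\phi_Q(x)+O(|t-x|/\ell(Q))$, and absorbs the error with Lemma~\ref{integrable}, landing back in the already–treated case for $I'\in\mathcal D$. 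Replacing your covering step by this device repairs the argument.
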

\begin{proof}
%We now show that 
%$T_{\gamma ,Q}$ satisfies the testing condition. 
From the symmetric expression of the kernel $K_{\gamma ,Q}$ it is clear that the same ideas used to prove the testing condition on $T_{\gamma, Q}$ also work for $T^*_{\gamma ,Q}$. Therefore, we write the computations only for 
$T_{\gamma, Q}$. We are going to show that for 
all $I\in \mathcal D\cup 2\mathcal D$ we have 
$\| \chi_IT_{\gamma ,Q}\chi_I\|_{L^2(\mu)}^2
\lesssim F_{\mu}(I)\mu(I)^{\frac{1}{2}}$ with 
$F_{\mu}$ bounded and such that  
$\lim_{M\rightarrow \infty }\sup_{I
\in \mathcal D_{M}^{c}\cup 2\mathcal D_{M}^{c}}\hspace{-.2cm}
F_{\mu}(I)=0$.

Since $0\leq \phi(x)\leq 1$ we have 
that $\| \chi_IT_{\gamma ,Q}\chi_I\|_{L^2(\mu)}^2$ is bounded by
\begin{align*}
%&\| \chi_IT_{\gamma ,Q}\chi_I\|_{L^2(\mu)}^2
% \leq \int_I(\int_I \frac{F(t,x)}{\gamma^\alpha}
% \phi(\frac{|t-x|}{\gamma})
% %\phi(\frac{2|t|}{\ell(Q)})
% d\mu(t))^2d\mu(x)
% %phi(\frac{2|x|}{\ell(Q)})^2
% \\
%&
%\leq 
\int_I|\int_I K(t,x)(1-\phi(\frac{|t-x|}{\gamma}))
\phi(\frac{4|t|}{\ell(Q)})
d\mu(t)|^2
\phi(\frac{4|x|}{\ell(Q)})^2
d\mu(x)={\rm Int}
%=T_1+T_2
\end{align*}

% We deal first with $T_1$.
% % \begin{align*}
% % \int_I(\int_I \frac{1}{\gamma^\alpha}\phi(\frac{|t-x|}{\gamma})
% % d\mu(t))^2d\mu(x)
% % %\lesssim \int_I |\chi_{I}(0)|^2d\mu(x)
% % %\lesssim \mu(I)
% % \end{align*}
% Since $0\leq \phi(x)\leq 1$ and $\sup \phi\subset [-2,2]$, we have that 
% $\phi(\frac{|t-x|}{\gamma})
% \leq \chi_{[0,2]}(\frac{|t-x|}{\gamma})$. Then 
% by Lemma \ref{integrable}
% \begin{align*}
% T_1&\leq \int_I(\int_{\substack{t\in I\\
% |t-x|<2\gamma }}
% \frac{F_K(I)^2}{\gamma^\alpha}d\mu(t))^2
% %\phi(\frac{2|x|}{\ell(Q)})^2
% d\mu(x)
% \lesssim F_K(I)^2\int_I \frac{\mu(I\cap B(x,2\gamma ))^2}{\gamma^{2\alpha} }
% %\phi(\frac{2|x|}{\ell(Q)})^2
% d\mu(x)
% \\
% &
% \leq F_K(I)^2\int_I\rho_{\rm in}(I)^2
% %\phi(\frac{2|x|}{\ell(Q)})^2
% d\mu(x)
% %=\rho_{\rm in}(I)^2\mu(I)
% \lesssim F_{\mu}(I)^2\mu(I). 
% \end{align*}

%To deal with $T_2$, 
We first assume that $I\subset 2^{-1}Q$. In this case, 
since  
$\phi(\frac{4|t|}{\ell(Q)})=1$ for all $t\in 2^{-1}Q$, we have 
\begin{align}\label{T1}
{\rm Int}&=\int_I|\int_I K(t,x)(1-\phi(\frac{|t-x|}{\gamma}))
d\mu(t)|^2\phi(\frac{4|x|}{\ell(Q)})^2d\mu(x)
\\
\nonumber
&\lesssim \int_I(\int_{\substack{t\in I\\ \gamma \leq |t-x|\leq 2\gamma }}
|K(t,x)|
d\mu(t))^2d\mu(x)
\\
\nonumber
&+\int_I|\int_{\substack{t\in I\\ 2\gamma \leq |t-x|}}
K(t,x)
d\mu(t)|^2d\mu(x)
% &\lesssim \int_I|\int_I K(t,x)
% d\mu(t)|^2d\mu(x)
% \\
% \nonumber
% &
% +\int_I|\int_I K(t,x)\phi(\frac{|t-x|}{\gamma})
% d\mu(t)|^2d\mu(x)
\end{align}
We note than from here we do not use 
$I\subset 2^{-1}Q$ anymore, but only $I\in \mathcal D$. 

By the kernel decay, the first term is bounded by a constant times 
\begin{align}\label{diago}
\int_I &(\int_{\substack{t\in I\\ \gamma<|t-x|\leq 2\gamma}} \frac{F(t,x)}{|t-x|^\alpha}
d\mu(t))^2d\mu(x)
\\
\nonumber
&\leq F_K(I)^2\int_I \frac{\mu(I\cap B(x,2\gamma ))^2}{\gamma^{2\alpha} }d\mu(x)
\\
\nonumber
&\leq F_K(I)^2\rho_{\rm in}(I)^2\mu(I)\leq F_{\mu}(I)^2\mu(I)
\end{align}

To deal with the second term, 
we denote
$D_x=\{t\in I/ |t-x|\leq 2\gamma \}$ 
and 
$D_x^c=I\setminus D_x$. Since 
$\chi_{D_x^c}=\chi_I-\chi_{D_x}$, 
the second term can be written as 
\begin{align*}
\int_I|T(\chi_{D_x^c})(x)|^2d\mu(x)
\lesssim 
\int_I|T(\chi_{I})(x)|^2d\mu(x)
+
\int_I|T(\chi_{D_x})(x)|^2d\mu(x)
\end{align*}

The new first term equals 
\begin{align*}
\| \chi_IT\chi_I\|_{L^2(\mu)}^2
\lesssim F_{T}(I)\mu(I)
\leq F_{\mu}(I)\mu(I)
\end{align*}
by the testing condition on $T$. On the other hand, 
% by using that 
% %Since $0\leq \phi(x)\leq 1$, 
% $\sup \phi\subset [-2,2]$ and 
% $\phi(x)=1$ for all $|x|< 1$, we have 
% that the second term equals
% \begin{align*}
% \int_I|\int_{\substack{t\in I\\ |t-x|\leq \gamma }} K(t,x)
% d\mu(t)|^2d\mu(x)
% +\int_I(\int_{\substack{t\in I\\ \gamma <|t-x|\leq 2\gamma }} |K(t,x)|
% d\mu(t))^2d\mu(x)
% \end{align*}
% By the kernel decay, the new second term is bounded by a constant times 
% \begin{align}\label{diago}
% \int_I &(\int_{\substack{t\in I\\ \gamma<|t-x|\leq 2\gamma}} \frac{F(t,x)}{|t-x|^\alpha}
% d\mu(t))^2d\mu(x)
% \\
% \nonumber
% &\leq F_K(I)^2\int_I \frac{\mu(I\cap B(x,2\gamma ))^2}{\gamma^{2\alpha} }d\mu(x)
% \\
% \nonumber
% &\leq F_K(I)^2\rho_{\rm in}(I)^2\mu(I)\leq F_{\mu}(I)^2\mu(I)
% \end{align}

We now denote
$D=\{(t,x)\in I\times I/ |t-x|\leq 2\gamma \}$. 
Then
we can rewrite the second term as 
\begin{align*}
\| \chi_I\int & K(t,\cdot )
\chi_{D}(t,\cdot )
d\mu(t)\|_{L^2(\mu)}^2.
\end{align*}
We are going to prove that 
\begin{align*}
\| \chi_I\int & K(t,\cdot )
\chi_{D}(t,\cdot )
d\mu(t)\|_{L^2(\mu)}
\lesssim F_{\mu}(I)\mu(I)^{\frac{1}{2}}
\end{align*}
or, equivalently, that 
for every $\Phi_I\in L^2(\mu)$ with support on $I$ and 
$\| \Phi_I\|_{L^2(\mu)}\leq \mu(I)^{\frac{1}{2}}$, we have
\begin{align*}
|\langle \Phi_I, \int & K(t,\cdot )
\chi_{D}(t,\cdot )
d\mu(t)\rangle |\lesssim F_{\mu}(I)\mu(I).
\end{align*}

Let $I_i\in \mathcal D(I)$ such that 
$I_i\times I_i\subset D$ is maximal inside $D$ with respect the inclusion. Therefore, $\ell(I_i)$ is the same for all cubes and comparable to $\gamma $. 
Let $I_{i,p}$ the parent of $I_i$ and let $2I_{i,p}\in \mathcal C$ such that $c(2I_{i,p})=c(I_{i,p})$ and 
$\ell(2I_{i,p})=2\ell(I_{i,p})$. 
% Let also the bi-cubes $Q_i=I_i\times I_i$, 
% $Q_{i,p}=I_{i,p}\times I_{i,p}$
% and $2Q_{i,p}=2I_{i,p}\times 2I_{i,p}$. 
Then there are alternating sub-collections of bi-cubes 
$I_{i,p}\times I_{i,p}$ and $2I_{i,p}\times 2I_{i,p}$
such that 
completely cover $D$ and a set $D'\subset \{(t,x)\in I\times I)/ \gamma <|t-x|\leq 
8\gamma \}$,  
%for some universal constant $C>0$, 
%the cubes $I_{i,p}$ are pairwise disjoint, 
the cubes $2I_{i,p}$ are pairwise disjoint, 
and the intersection of consecutive $I_{i,p}$ and $2I_{i,p}$ is contained on  $I_i$. Therefore, we can write
\begin{align*}
\chi_{D}(t,x)&=\sum_{i\in \mathcal O}\chi_{I_{i,p}}(t)\chi_{I_{i,p}}(x)+\sum_{i\in \mathcal E}\chi_{2I_{i,p}}(t)\chi_{2I_{i,p}}(x)
\\
&
-
\hspace{-.3cm}\sum_{\substack{i / I_i\subset I_{j,p}\\ j\in  \mathcal O}}\hspace{-.3cm}\chi_{I_i}(t)
\chi_{I_i}(x)-\chi_{D'}(t,x).
\end{align*}
where $\mathcal O$, $\mathcal E$ denote sets of indexes in $\mathbb Z^n$. 
With this,
we have 
\begin{align*}
 | \langle \Phi_I, \int K(t,\cdot )
 &\chi_{D}(t,\cdot )
 d\mu(t)\rangle |
\leq \sum_{i\in \mathcal O}|\langle \Phi_I\chi_{I_{i,p}}, T\chi_{I_{i,p}}\rangle |
\\
&
 +\sum_{i\in \mathcal E}|\langle \Phi_I\chi_{2I_{i,p}}, T\chi_{2I_{i,p}}\rangle |
 +\sum_{\substack{i / Q_i\subset Q_{j,p}\\ j\in \mathcal O}}
 |\langle \Phi_I\chi_{I_{i}}, T\chi_{I_{i}}\rangle |
 \\&
 +\| \Phi_I\|_{L^2(\mu)}\|\int K(t,\cdot )\chi_{D'}(t,\cdot )d\mu(t)\|_{L^2(\mu)}.
\end{align*}
Now, as we did in \eqref{diago}, we can estimate the 
last term by a constant times
\begin{align*}
\mu(I)^{\frac{1}{2}}\Big(\int_I(\int_{\substack{t\in I\\ 2\gamma<|t-x|\leq 8\gamma}}
|K(t,x)|d\mu(t))^2d\mu(x)\Big)^{\frac{1}{2}}
\lesssim 
F_{\mu}(I)\mu(I).
\end{align*}
On the other hand, by the testing condition for $T$ on the cubes $I_{i,p}$ and Cauchy's inequality, the first term is bounded by  
\begin{align*}
\sum_{i\in \mathcal O}
\| \Phi_I&\chi_{I_{i,p}} \|_{L^2(\mu)}
\| \chi_{I_{i,p}}T\chi_{I_{i,p}}\|_{L^2(\mu)}
\\
&\lesssim \sum_{i\in \mathcal O}
\| \Phi_I\chi_{I_{i,p}}  \|_{L^2(\mu)}
F_{T}(I_{i,p})\mu(I_{i,p})^{\frac{1}{2}}
\\
&\leq \Big( \sum_{i\in \mathcal O}
\| \Phi_I\chi_{I_{i,p}}  \|_{L^2(\mu)}^2\Big)^{\frac{1}{2}}
\Big(\sum_{i\in \mathcal O}
\mu(I_{i,p})\Big)^{\frac{1}{2}}F_{\mu}(I)
\leq F_{\mu}(I)\mu(I).
\end{align*}
The last inequality is due to the fact that, since the cubes 
$I_{i,p}\subset I$
have the same size length, they are pairwise disjoint. 
Then
$$
\sum_{i \in \mathcal O}
\| \Phi_I\chi_{I_{i,p}} \|_{L^2(\mu)}^2
=\sum_{i \in \mathcal O}\int_{I_{i,p}}|\Phi_I(x)|^2d\mu(x)
\leq \| \Phi_I \|_{L^2(\mu)}^2\leq \mu(I).
$$
Similar computations using the testing condition for $T$ on the cubes $2I_{i,p}$ (which are also pairwise disjoint) and $I_i$ respectively, show the same inequality for the second and third terms. 

This finishes the proof under the assumption that 
$I\subset 2^{-1}Q$. For the general case, since 
$K_{\gamma ,Q}$ is supported on $Q$ we assume without loss of generality that $I\cap Q\neq \emptyset $. 
Let $I'\in \mathcal D$ the smallest dyadic cube such that
$I\cap Q\subset I'$. Since $I\in \mathcal D$, by minimality we have 
$I'\subset I$. Let also $R$ the quadrant 
of $\mathbb R^n$ such that $I\subset R$ and 
let $Q'\in \mathcal D$ the smallest dyadic cube such that $Q\cap R\subset Q'$. Clearly $\ell(Q')\leq 2\ell(Q)$. 
Moreover, 
since 
$I\cap Q\subset R\cap Q\subset Q'$ we have by minimaliy that 
$\ell(I')\leq \ell(Q')\leq 2\ell(Q)$. 

Now, by the Mean Value Theorem, there exists $\xi \in (0,\frac{4||t|-|x||}{\ell(Q)})$ 
such that 
 \begin{align}
 \phi(\frac{4|t|}{\ell(Q)})
 &=(\phi(\frac{4|x|}{\ell(Q)})-\phi'(\xi)
 \frac{4||t|-|x||}{\ell(Q)})\chi_{I'}(t),
 %\leq \phi(\frac{2|x|}{\ell(Q)})-\phi'(\xi)
 %\frac{|t-x|}{\ell(Q)}
 %
 \end{align}
 since $\phi(\frac{4|t|}{\ell(Q)})\leq \chi_{I'}(t)$ 
for $t\in I\cap Q$. 
Then we can write 
%Then
%we divide $T_1$ into three parts:
\begin{align*}
{\rm Int}&=
\int_{I}|\int_{I} K(t,x)(1-\phi(\frac{|t-x|}{\gamma}))
\phi(\frac{4|t|}{\ell(Q)})
d\mu(t)|^2\phi(\frac{4|x|}{\ell(Q)})^2
d\mu(x)
\\
&
\lesssim \int_{I'}|\int_{I'} 
K(t,x)(1-\phi(\frac{|t-x|}{\gamma}))d\mu(t)|^2
\phi(\frac{2|x|}{\ell(Q)})^3d\mu(x)
\\
&+\int_{I'}|\int_{I'} 
K(t,x)(1-\phi(\frac{|t-x|}{\gamma}))
\phi'(\xi)
 \frac{4||t|-|x||}{\ell(Q)}
d\mu(t)|^2
d\mu(x)
% \\
% &+ \int_{I}| \int_{(I\cap 2Q)\setminus Q}
% K(t,x)(1-\phi(\frac{|t-x|}{\gamma}))\phi(\frac{2|t|}{\ell(Q)})d\mu(t)|^2
% d\mu(x)
\end{align*}
The first term is bounded by 
$$
\int_{I'}|\int_{I'} 
K(t,x)(1-\phi(\frac{|t-x|}{\gamma}))d\mu(t)|^2
d\mu(x),
$$
which satisfies the right estimates as it was proved in the previous case, when $I\subset Q$, starting at \eqref{T1}. We remind that 
\eqref{T1} we did not use $I'\subset 2^{-1}Q$ anymore, but 
$I'\in \mathcal D$. 
%with $I$ substituted by $I'=I\cap Q\in \mathcal D$. (?)

On the other hand, 
since $1\leq \phi(x)\leq 1$, $0\leq \phi'(x)\leq 2$, $\ell(Q)\geq 1$ and 
$\phi(x)=1$ for $|x|\leq 1$, the second term is bounded by a constant times 
\begin{align*}
\int_{I'}(
&\int_{I'}
|K(t,x)||1-\phi(\frac{|t-x|}{\gamma})|
\frac{||t|-|x||}{\ell(Q)}d\mu(t))^2
d\mu(x)
\\
&\lesssim \int_{I'}(
\frac{1}{\ell(Q)}\int_{\substack{t\in I'\\ |t-x|>\gamma }}
|K(t,x)||t-x|d\mu(t))^2
d\mu(x)
\\
&\lesssim \int_{I'}(
\frac{1}{\ell(Q)}\int_{\substack{t\in I'\\ |t-x|>\gamma }}
\frac{F(t,x)}{|t-x|^{\alpha }}|t-x|d\mu(t))^2
d\mu(x)
\\
&\lesssim F_K(I)^2\int_{I'}(
\frac{1}{\ell(Q)}\int_{\substack{t\in I\cap 2Q\\ |t-x|>\gamma }}
\frac{1}{|t-x|^{\alpha -1}}d\mu(t))^2
d\mu(x)
\end{align*}     
By Lemma \ref{integrable}, the last expression is bounded by a constant times 
\begin{align*}
F_K(I)^2\int_{I'}(\frac{1}{\ell(Q)}\rho_{\rm in}(I')\ell(I'))^2
d\mu(x)
&\lesssim F_K(I)^2\rho_{\rm in}(I)\mu(I)
\\
&
\lesssim F_\mu(I)^2\mu(I),
\end{align*}     
where we used that $\ell(I')\lesssim \ell(Q)$ and 
that, since $I'\subset I$, we have 
$\rho_{\rm in}(I')\leq \rho_{\rm in}(I)$. 

\end{proof}

\section{Haar wavelet systems and the characterization of compactness.}

\subsection{The Haar wavelet system}
%Averaging and Difference operators. Carleson's embedding Theorem and the Square function estimate.}
%
%Following \cite{NTV}, given $\mathcal S\subseteq \mathcal D$ and 
%a cube $I\in \mathcal D$, we denote by $I^{a}$ its immediate ancestor in $\mathcal S$, that is, the smallest
%cube in $\mathcal S$ containing $I$. 
%Given $I, I_{p}\in \mathcal D$ with 
%$I\in \child(I_{p})$, 
%we write $I_{p}^{a}$ for %$(I_{p})^{a}$, that is, 
%the ancestor of $I_{p}$ (not to be confused with the parent of %$I^{a}$).
%
%Let $(b_{Q})_{Q\in \mathcal D}$ be a system of locally integrable functions such that 
%$\mu(Q)\langle b_{Q}\rangle_{Q}\neq 0$ for all $Q\in \mathcal D$.
% 
\begin{definition} 
%for every locally integrable function $f$. For
Let $\mu$ be a measure on $\mathbb R^n$. 
For $Q\in \mathcal D\cup \tilde{\mathcal D}$ with $\mu(Q)\neq 0$
we denote the average $\langle f\rangle_Q=\mu(Q)^{-1}\int_Qf(x)d\mu(x)$. 
For $Q\in \mathcal D$ with $\mu(Q)=0$, we set
$\langle f\rangle_Q=0$.

We define the averaging operator by $
E_{Q}f=\langle f\rangle _{Q}\chi_{Q}
$ and  the difference operator by
\begin{equation}\label{Deltaincoord}
\Delta_{Q}f=\Big(\sum_{I\in \child(Q)}E_{I}f\Big)-E_{Q}f
=\hspace{-.3cm}\sum_{I\in \child(Q)}\Big(\langle f\rangle _{I}
-\langle f\rangle _{Q}\Big)\chi_{I}.
\end{equation}

For $k\in \mathbb Z$, 
%let $\mathcal D_{k}$ be the family of cubes $Q\in \mathcal D$ such that $\ell(Q)=2^{-k}$. W
we define
$$
{\displaystyle E_{k}f=\hspace{-.3cm}
\sum_{\substack{Q\in \mathcal D\\\ell(Q)=2^{-k}}}
\hspace{-.2cm}E_{Q}f}
,
\hskip15pt {\it and } 
\hskip20pt
{\displaystyle \Delta_{k}f=E_{k}f-E_{k-1}f
=\hspace{-.3cm}\sum_{\substack{Q\in \mathcal D\\\ell(Q)=2^{-k}}}\hspace{-.2cm}\Delta_{Q}f}. 
$$
\end{definition}

\begin{definition}[Haar wavelets]\label{defpsi}
%Let $(b_{Q})_{Q\in \mathcal D}$ be a system of locally integrable functions such that 
%$\mu(Q)\langle b_{Q}\rangle _{Q}\neq 0$ for every $Q\in \mathcal D$. 
%Given $I,J\in \mathcal D$, $J\subset I$, 
%%we denote by $I_{p}$ the only dyadic cube such that  $I\in %\child(I_{p})$. Then, 
%we write $\alpha_{I,J}=(\mu(J)\langle b_{I^{a}}\rangle _{J})^{-1}$ %and 
%$\alpha_{I}=\alpha_{I,I}$.
%$\alpha_{I}=(\mu(I)\langle b^{1}_{I^{a}}\rangle _{I})^{-1}$, $\alpha_{I_{p}}=(\mu(I_{p})\langle b^{1}_{I_{p}^{a}}\rangle _{I_{p}})^{-1}$ and
%$\alpha_{I,I_{p}}=(\mu(I)\langle b^{1}_{I_{p}^{a}}\rangle _{I})^{-1}$.
%$
%h_{I}
%=\mu(I)^{\frac{1}{2}}\big(\frac{1}{\mu(I)\langle b_{I_{p}^{a}}\rangle _{I}}\chi_{I}-\frac{1}{\mu(I_{p})\langle b_{I_{p}^{a}}\rangle _{I_{p}}}\chi_{I_{p}}\big) 
%$.
Let $I\in \mathcal {D}\cup \tilde{\mathcal D}$. For $\mu(I)\neq 0$ we define the Haar wavelet function associated with $I$ by
%\begin{enumerate}
%\item[]\hspace{-1.4cm} {\rm 1)} $\psi_{1,I}=\mu(I)^{\frac{1}{2}}\big(\alpha_{I}\chi_{I}-\alpha_{I_{p}}\chi_{I_{p}}\big)b_{I_{p}^{a}}$,
%%\hskip40pt
% $\tilde{\psi}_{1,I}
%=\mu(I)^{\frac{1}{2}}\big(\alpha_{I}\chi_{I}-\alpha_{I_{p}}\chi_{I_{p}}\big) \langle b_{I_{p}^{a}}\rangle _{I}
%$
%%We define the functions
%\item[]\hspace{-1.4cm} {\rm 2)}
%%\begin{align*}
%$
%\psi_{2,I}
%%&
%=\mu(I)^{\frac{1}{2}}\big(\alpha_{I}b_{I^{a}}\chi_{I}-\alpha_{I_{p}}b_{I_{p}^{a}}\chi_{I_{p}}\big)$, 
%$\tilde{\psi}_{2,I}= \tilde{\psi}_{1,I}$
%%\\
%%\tilde{\psi}_{I}
%%&=\mu(I)^{\frac{1}{2}}\langle b_{I_{p}^{a}}\rangle _{I}
%%\Big(\frac{1}{\mu(I)\langle b_{I_{p}^{a}}\rangle _{I}}\chi_{I}-\frac{1}{\mu(I_{p})\langle b_{I_{p}^{a}}\rangle _{I_{p}}}\chi_{I_{p}}\Big)
%%%=\tilde{\Psi}_{I}%\langle b_{I_{p}^{a}}\rangle _{I}h_{I}
%%\end{align*}
%%$$
%%\Psi_{I}^{b,i}
%%=\frac{\langle b_{\pi_{S_{r}}(I)}\rangle _{I_{i}}}{\langle b_{\pi_{S_{r}}(I_{i})}\rangle _{I_{i}}}b_{\pi_{S_{r}}(I_{i})}\chi_{I_{i}}-b_{\pi_{S_{r}}(I)}\chi_{I}
%%$$
%%$$
%%\tilde{\Phi}_{I}^{b,i}
%%=\frac{1}{|I|\langle b_{\pi_{S_{r}}(I)}\rangle _{I}}\chi_{I}
%%$$
%\item[]\hspace{-1.4cm} {\rm 3)}
%%\begin{align*}
%$
%\psi_{3,I}
%%&
%=\mu(I)^{\frac{1}{2}}\big(\alpha_{I}b_{I^{a}}-\alpha_{I_{p}}b_{I_{p}^{a}}\big)\chi_{I}$, 
%%\\
%$\tilde{\psi}_{3,I}
%%&
%%=\mu(I)^{-\frac{1}{2}}\frac{\mu(I)\langle b_{I_{p}^{a}}\rangle _{I}}{\mu(I_{p})\langle b_{I_{p}^{a}}\rangle _{I_{p}}}\chi_{I_{p}}
%=\mu(I)^{-\frac{1}{2}}\frac{\alpha_{I_{p}}}{\alpha_{I}}\chi_{I_{p}}
%%\end{align*}
%$
%\end{enumerate}
\begin{align*} 
\psi_{I}&=\mu(I)^{\frac{1}{2}}\big(%\alpha_{I_{p},I}
\frac{1}{\mu(I)}\chi_{I}-%\alpha_{I_{p}}
\frac{1}{\mu(I_p)}\chi_{I_{p}}\big),
\end{align*}
where $I_p\in \mathcal{D}$ is such that $I\in \child(I_p)$.
For $\mu(I)=0$ we set $\psi_I=0$.

\end{definition}

% \begin{definition}[Alternative Haar wavelets]\label{defpsi}
% Let $I\in \mathcal {D}^2$ and $J\in \mathcal D^3$ such that 
% $J\subset I$, $\ell(J)=\ell(I)/2$, and $\mu(J)\neq 0$. 
% We define a new Haar wavelet function associated with $I$ by
% \begin{align*} 
% \psi_{J}'&=\mu(J)^{\frac{1}{2}}\big(%\alpha_{I_{p},I}
% \frac{1}{\mu(J)}\chi_{I}-%\alpha_{I_{p}}
% \frac{1}{\mu(I)}\chi_{I}\big)
% \end{align*}
% %where $I_p\in \mathcal{D}$ is such that $I\in \child(I_p)$.
% If $\mu(J)=0$, we set $\psi_J'=0$.  
% \end{definition}

%Then
%$\psi_{2,I}=\mu(I)^{\frac{1}{2}}(\varphi_{I}b_{I^{a}}-\varphi_{I_{p}}b_{I_{p}^{a}})$.
% and $h_{I}=\mu(I)^{\frac{1}{2}}(\varphi_{I}-\varphi_{I_{p}})$. 
%of Lemma \ref{Haar},
%for every $i\in \{1, \ldots ,2^{n}\}$ and $I_{i}\in \child(I)$. Then, 

%We have the following result:
\begin{lemma}\label{adaptedwavelets} For $Q\in \mathcal{D}\cup \tilde{\mathcal D}$ and $f, g$ 
locally integrable, we have
%bounded and compactly supported
\begin{align*}
\Delta_{Q}f 
&=\sum_{I\in \ch(Q)}\langle f, \psi_{I}\rangle 
\psi_{I}
\end{align*}
almost everywhere with respect to $\mu$. 
% \begin{align*}
% \int \Delta_{Q}f(x) g(x)d\mu(x) 
% &=\int \sum_{I\in \ch(Q)}\langle f, \psi_{I}\rangle 
% \psi_{I}(x)g(x)d\mu(x),
% %+\langle f, \tilde{\uppsi}_{I}\rangle \uppsi_{I},
% %\\
% %&=\sum_{I\in \ch(Q)\backslash \mathcal S}\langle f, \tilde{\psi}_{I}\rangle \psi_{I}
% %+\sum_{I\in  \ch(I)\cap \mathcal S}\langle f, \tilde{\uppsi}_{I}\rangle \uppsi_{I}
% %+\langle f, \tilde{\Psi}_{I}\rangle \Psi_{I}
% %\\
% %&=\sum_{I\in \ch(Q)}\langle f, \tilde{\psi}_{I}\rangle \psi_{I}
% %%+\sum_{I\in  \ch(I)\cap \mathcal S}\langle f, \tilde{\uppsi}_{I}\rangle \uppsi_{I}
% %+\langle f, \tilde{\chi}_{I}\rangle \Psi_{I},
% \end{align*}
% where $\langle f,g\rangle =\int_{\mathbb R^{n}}f(x)g(x)d\mu(x)$. 
\end{lemma}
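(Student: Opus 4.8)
The plan is to establish the identity by a direct expansion of each summand $\langle f,\psi_I\rangle\psi_I$ followed by a telescoping cancellation; the content is elementary algebra once the degenerate cases are set aside. I would fix $Q\in\mathcal D\cup\tilde{\mathcal D}$ and first dispose of the case $\mu(Q)=0$: then every child $I\in\ch(Q)$ satisfies $I\subset Q$, hence $\mu(I)=0$, so $\psi_I=0$ and, by convention, $\langle f\rangle_Q=0$, while $\langle f\rangle_I\chi_I=0$ holds $\mu$-almost everywhere; thus both sides vanish $\mu$-a.e.\ and the identity is trivial. So assume $\mu(Q)>0$.

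For a child $I\in\ch(Q)$ with $\mu(I)>0$ one has $I_p=Q$, so Definition~\ref{defpsi} gives $\psi_I=\mu(I)^{1/2}\bigl(\mu(I)^{-1}\chi_I-\mu(Q)^{-1}\chi_Q\bigr)$, whence $\langle f,\psi_I\rangle=\mu(I)^{1/2}\bigl(\langle f\rangle_I-\langle f\rangle_Q\bigr)$ and therefore
\[
\langle f,\psi_I\rangle\psi_I=\bigl(\langle f\rangle_I-\langle f\rangle_Q\bigr)\Bigl(\chi_I-\tfrac{\mu(I)}{\mu(Q)}\chi_Q\Bigr).
\]
Children with $\mu(I)=0$ contribute $0$ to every term $\mu$-a.e.\ (both $\psi_I$ and $\chi_I$ vanish $\mu$-a.e., and $\langle f\rangle_I\chi_I=0$), so they may be harmlessly retained in all sums. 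Summing over $I\in\ch(Q)$ I would then write
\[
\sum_{I\in\ch(Q)}\langle f,\psi_I\rangle\psi_I
=\sum_{I\in\ch(Q)}\bigl(\langle f\rangle_I-\langle f\rangle_Q\bigr)\chi_I
-\frac{\chi_Q}{\mu(Q)}\sum_{I\in\ch(Q)}\mu(I)\bigl(\langle f\rangle_I-\langle f\rangle_Q\bigr).
\]
Since the children of $Q$ partition $Q$ up to a set of $\mu$-measure zero, $\sum_{I\in\ch(Q)}\mu(I)\langle f\rangle_I=\sum_{I\in\ch(Q)}\int_I f\,d\mu=\int_Q f\,d\mu=\mu(Q)\langle f\rangle_Q$ and $\sum_{I\in\ch(Q)}\mu(I)\langle f\rangle_Q=\mu(Q)\langle f\rangle_Q$; the two cancel, so the last term is $0$. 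Hence $\sum_{I\in\ch(Q)}\langle f,\psi_I\rangle\psi_I=\sum_{I\in\ch(Q)}\bigl(\langle f\rangle_I-\langle f\rangle_Q\bigr)\chi_I$, which equals $\Delta_Q f$ by \eqref{Deltaincoord}, completing the proof $\mu$-a.e.

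I do not expect a genuine obstacle here; the only points requiring care are bookkeeping ones: the $\mu(Q)=0$ and $\mu(I)=0$ degeneracies, the interpretation of the identity only $\mu$-almost everywhere, and the fact that $\chi_Q=\sum_{I\in\ch(Q)}\chi_I$ holds $\mu$-a.e.\ — immediate for the half-open cubes of $\mathcal D$, and by the analogous convention for the open cubes of $\tilde{\mathcal D}$ — which is already the content of \eqref{Deltaincoord}.
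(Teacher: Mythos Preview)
Your proof is correct and follows essentially the same approach as the paper: both are direct computations from the definitions of $\psi_I$ and $\Delta_Q$, using the identity $\sum_{I\in\ch(Q)}\mu(I)\langle f\rangle_I=\mu(Q)\langle f\rangle_Q$ and handling the degenerate cases $\mu(Q)=0$, $\mu(I)=0$ in the same way. Your organization is slightly more direct---you expand the right-hand side and collapse it to $\Delta_Q f$, whereas the paper starts from $\Delta_Q f$, rewrites it as $\sum_I\mu(I)^{1/2}\langle f\rangle_I\psi_I$, and then identifies the coefficients with $\langle f,\psi_I\rangle$ via $\langle\Delta_Q f\rangle_I$---but the content is the same.
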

%\begin{remark}
% According to Remark \ref{waveletsrelations}, we also have
% \begin{align}\label{multiple}
% %\nonumber
% \Delta_{I_{p}}f
% %&=\sum_{I\in \ch(Q)\backslash \mathcal S}\langle f, \tilde{\psi}_{1,I}\rangle \psi_{1,I}
% %+\sum_{I\in  \ch(I)\cap \mathcal S}\sum_{i=2,3}\langle f, \tilde{\psi}_{i,I}\rangle \psi_{i,I}
% %%+\langle f, \tilde{\uppsi}_{I}\rangle \uppsi_{I}
% %\\
% &=\sum_{I\in \ch(I_{p})}\langle f, \tilde{\psi}_{1,I}\rangle \psi_{1,I}
% +\sum_{I\in  \ch(I_{p})\cap \mathcal S}\langle f, \tilde{\chi}_{I}\rangle (\psi_{2,I}-\psi_{1,I}),
% %\\
% %&=\sum_{I\in \ch(Q)\backslash \mathcal S}\langle f, \tilde{\psi}_{1,I}\rangle \psi_{1,I}
% %+\sum_{I\in  \ch(I)\cap \mathcal S}\langle f, \tilde{\chi}_{I}\rangle \psi_{2,I}
% %+\langle f, \tilde{\psi}_{3,I}\rangle \psi_{1,I}
% \end{align}
% We will often use this form of the decomposition. 
% \end{remark}
\proof
If $\mu(Q)=0$ then $\mu(I)=0$ for every 
$I\in \ch(Q)$. Witht this, both $\Delta_{Q}=0$ and 
$\psi_I=0$ and so, the equality is trivial.

%We denote $I_{p}=Q$ the parent cube. 
For $\mu(Q)\neq 0$, 
from \eqref{Deltaincoord} 
and %the equality 
${\displaystyle \mu(Q)\langle f\rangle _{Q}=
\sum_{I\in \ch(Q)}\mu(I)\langle f\rangle _{I}}$ 
we have
%$\Delta_{I_{p}}f$ equals
%$$
%\Delta_{I_{p}}f
%=\sum_{i=1}^{2^{n}}\langle f\rangle _{I_{i}}\frac{1}{\langle b\rangle _{I_{i}}}\chi_{I_{i}}b
%-\sum_{i=1}^{2^{n}}\frac{1}{\langle b\rangle _{I}}\frac{|I_{i}|}{|I|}\sum_{j=1}^{2^{n}}\langle f\rangle _{I_{j}}\chi_{I_{i}}b
%$$
%$$
\begin{align}\label{delta}
\nonumber
\Delta_{Q}f
&=\sum_{I\in \ch(Q)}\langle f\rangle _{I}\chi_{I}-\langle f\rangle_{Q}\chi_{Q}
=\sum_{I\in \ch(Q)}
\langle f\rangle _{I}\Big(\chi_{I}
-\frac{\mu(I)}{\mu(Q)}\chi_{Q}\Big)
\\
&
=\sum_{I\in \ch(Q)}\mu(I)^{\frac{1}{2}}\langle f\rangle _{I}\psi_{I},
\end{align}
where the last equality holds even for those terms for which $\mu(I)=0$ since in that case $\langle f\rangle _{I}=0$. 

Also from \eqref{Deltaincoord} we have for each $I\in \child(Q)$, 
\begin{equation}\label{Deltaaverage}
\langle \Delta_{Q}f\rangle_{I}=\langle f\rangle _{I}-\langle f\rangle _{Q}
\end{equation}
and so 
\begin{align}
\nonumber
\Delta_{Q}f&
=\sum_{I\in \ch(Q)}\mu(I)^{\frac{1}{2}}\langle \Delta_{Q}f\rangle_{I}\psi_{I}
+\langle f\rangle _{Q}\sum_{I\in \ch(Q)}\mu(I)^{\frac{1}{2}}\psi_{I}.
\end{align}
For the first term, we compute the coefficients: 
for $\mu(I)=0$, we have $\psi_I=0$ and so, 
$\mu(I)^{\frac{1}{2}}\langle \Delta_{Q}f\rangle_{I}\psi_{I}=0=\langle f, \psi_{I}\rangle \psi_{I}$. Meanwhile, 
for $\mu(I)\neq 0$, we can use \eqref{Deltaaverage} to write
$$
\mu(I)^{\frac{1}{2}}\langle \Delta_{Q}f\rangle _{I}
=\mu(I)^{\frac{1}{2}}\int f(x) \Big(\frac{\chi_{I}(x)}{\mu(I)}-\frac{\chi_{Q}(x)}{\mu(Q)}\Big)d\mu(x)
=\langle f, \psi_{I}\rangle .
$$

We now denote by $Q'$ the union of cubes $I\in \ch(Q)$ such that $\mu(I)\neq 0$. Then for the second term we have
%$\displaystyle{ \sum_{I\in \ch(Q)}\mu(I)\langle b_{Q^{a}}\rangle _{I}=\mu(Q)\langle b_{Q^{a}}\rangle _{Q}}$, 
%\eqref{delta} and 
%\eqref{Deltaincoord}
 \begin{align}\label{aezero}
 \nonumber
 \sum_{I\in \ch(Q)}\mu(I)^{\frac{1}{2}}\psi_{I}
 &=
\sum_{\substack{I\in \ch(Q)\\\mu(I)\neq 0}}\mu(I)^{\frac{1}{2}}
 \psi_{I}
 =
 \sum_{\substack{I\in \ch(Q)\\\mu(I)\neq 0}}\Big(\chi_{I}-
 \frac{\mu(I)}{\mu(Q)}\chi_{Q}\Big) 
 \\
 &
   =\chi_{Q'}-\frac{\mu(Q')}{\mu(Q)}\chi_{Q}
 =-\chi_{Q\setminus Q'}=0
 \end{align}
almost everywhere since $\mu(Q\setminus Q')=0$. 

\begin{lemma}\label{densityinL2}
%Let $I\in \mathcal D$ and $\mathcal D(Q)$ be the family of dyadic cubes $J$ such that $J\subset I$.  
%Let $b$ be a locally integrable function. 
Let $f$ bounded, compactly supported 
and with mean zero with respect to $\mu$. Then 
%the equality 
\begin{equation}\label{representationoff}
\int f(x)g(x)d\mu(x)=
\lim_{M\rightarrow \infty }\int 
\sum_{\substack{I\in \mathcal D\\ 2^{-M}\leq \ell(I)\leq  2^{M}}}\langle f, \psi_{I}\rangle 
\psi_{I}(x)g(x)d\mu(x)
%\sum_{I\in \mathcal D}\langle f, \tilde{\psi}_{I}\rangle \psi_{I}+
%\langle f,\tilde{\uppsi}_{I}\rangle \uppsi_{I}
\end{equation}
%holds  
%almost everywhere 
for 
$g$ bounded and compactly supported. 

%Furthermore, 
%if $b$ satisfies that for every dyadic cube $I\in \mathcal D$ there exist constants $M_{0}\in \mathbb N$ and $S_{I}>0$ such that 
%%\eqref{todensity} 
%\begin{align}\label{todensity} 
%\sup_{M>M_{0}}\sum_{\tiny \begin{array}{c}J\in \mathcal D(I)\\ \ell(J)=2^{-M}\end{array}}
%\frac{[b]_{I,2}^{2}}{|\langle b\rangle_{J}|^{2}}|J|\leq S_{I},
%\end{align}
%then, \eqref{representationoff}
%holds with convergence in $L^{2}(\mathbb R^{n})$. 
%for every bounded, compactly supported function $f$ with mean zero. 
\end{lemma}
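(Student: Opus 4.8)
The plan is to expand $\Delta_Q f$ via Lemma \ref{adaptedwavelets}, sum it over all scales in a dyadic window, and identify the telescoping remainder; the whole argument reduces to controlling two "tail" averaging operators using that $f$ is bounded, has compact support, and has mean zero.

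\medskip
\noindent\textbf{Step 1: Telescoping identity.} First I would fix a large $M$ and write, using Lemma \ref{adaptedwavelets} scale by scale,
$$
\sum_{\substack{I\in \mathcal D\\ 2^{-M}\leq \ell(I)\leq 2^{M}}}\langle f,\psi_I\rangle \psi_I
=\sum_{\substack{Q\in \mathcal D\\ 2^{-M+1}\leq \ell(Q)\leq 2^{M}}}\Delta_Q f
=\sum_{k=-M+1}^{M}\Delta_k f
=E_M f-E_{-M}f,
$$
where the first equality just re-indexes children by parents (a cube $I$ with $2^{-M}\le \ell(I)\le 2^M$ is a child of a cube $Q$ with $2^{-M+1}\le\ell(Q)\le 2^{M+1}$, but the top scale $\ell(Q)=2^{M+1}$ contributes children of size $2^M$ which are included, so one keeps $\ell(Q)\le 2^M$ after care — I would state the indexing precisely) and the last equality is the definition of $\Delta_k=E_k-E_{k-1}$ telescoping. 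Here $E_M f=\sum_{\ell(Q)=2^{-M}}\langle f\rangle_Q\chi_Q$ and $E_{-M}f=\sum_{\ell(Q)=2^{M}}\langle f\rangle_Q\chi_Q$. So it suffices to show
$$
\int (E_M f)(x)g(x)\,d\mu(x)\longrightarrow \int f(x)g(x)\,d\mu(x),
\qquad
\int (E_{-M}f)(x)g(x)\,d\mu(x)\longrightarrow 0
$$
as $M\to\infty$.

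\medskip
\noindent\textbf{Step 2: The fine-scale term $E_M f\to f$.} Since $f$ and $g$ are bounded with compact support, the integral lives on a fixed compact set $K$. On each dyadic cube $Q$ of side $2^{-M}$, $E_Q f=\langle f\rangle_Q\chi_Q$ is the average of $f$; I would invoke the Lebesgue differentiation theorem for the Radon measure $\mu$ (valid since $\mu$ has power growth and dyadic cubes shrink nicely — this is the standard dyadic martingale convergence / differentiation statement) to get $E_M f\to f$ $\mu$-a.e., together with the uniform bound $|E_M f|\le \|f\|_{L^\infty(\mu)}$ and support inside a fixed neighborhood of $\supp f$. Dominated convergence (dominating function $\|f\|_\infty\|g\|_\infty\chi_{K'}$ for a slightly enlarged compact $K'$) then gives $\int E_M f\,g\,d\mu\to\int f g\,d\mu$.

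\medskip
\noindent\textbf{Step 3: The coarse-scale term $E_{-M}f\to 0$.} This is where the mean-zero hypothesis on $f$ is used. Fix $M$ large enough that $\supp f\subset \mathbb B_{2^{M}}$. A dyadic cube $Q$ of side $2^M$ either is disjoint from $\supp f$, in which case $\langle f\rangle_Q=0$, or it contains $\supp f$; in the latter case $\langle f\rangle_Q=\mu(Q)^{-1}\int_Q f\,d\mu=\mu(Q)^{-1}\int f\,d\mu=0$ because $f$ has mean zero and $\supp f\subset Q$. Hence $E_{-M}f\equiv 0$ for all $M$ with $\supp f\subset \mathbb B_{2^M}$ (one should note $\supp f$ meets at most finitely many — in fact one — side-$2^M$ dyadic cubes once $2^M$ exceeds the diameter of $\supp f$), so the coarse term vanishes identically for large $M$ and its limit is trivially $0$.

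\medskip
\noindent\textbf{Main obstacle.} The only non-formal point is Step 2, the $\mu$-a.e.\ convergence $E_M f\to f$: one needs the dyadic Lebesgue differentiation theorem for a possibly non-doubling measure of power growth. This is standard for a fixed dyadic lattice (the dyadic maximal function is weak-$(1,1)$ with respect to any locally finite Borel measure, purely by the nesting structure, with no doubling needed), so it goes through; I would cite it rather than reprove it. Everything else — the re-indexing in Step 1 and the dominated-convergence passage — is routine bookkeeping.
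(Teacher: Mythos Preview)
Your overall strategy---telescope the Haar sum via Lemma \ref{adaptedwavelets} into a difference of averaging operators, handle the fine-scale tail by dyadic Lebesgue differentiation plus dominated convergence, and kill the coarse-scale tail using the mean-zero hypothesis---is exactly the paper's approach, and your Step 2 is correct as stated (the dyadic maximal inequality needs only nesting, not doubling). The re-indexing in Step 1 is slightly off (parents of cubes with $2^{-M}\le\ell(I)\le 2^M$ range over $2^{-M+1}\le\ell(Q)\le 2^{M+1}$, so the telescope yields $E_Mf-E_{-M-1}f$ rather than $E_Mf-E_{-M}f$), but this is harmless in the limit.

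The genuine gap is in Step 3. Your claim that a dyadic cube $Q$ of side $2^M$ ``either is disjoint from $\supp f$ \ldots\ or it contains $\supp f$'' is false: if $\supp f$ meets more than one quadrant (for instance if it contains the origin), then \emph{no} cube in the standard dyadic grid $\mathcal D$ contains all of $\supp f$, because dyadic cubes never cross the coordinate hyperplanes. Your parenthetical that $\supp f$ meets ``in fact one'' side-$2^M$ dyadic cube is therefore wrong---it can meet up to $2^n$ of them at every scale, and on each such $Q$ the average $\langle f\rangle_Q$ need not vanish even though $\int f\,d\mu=0$ globally. The paper deals with precisely this point at the outset: it first reduces, by splitting into at most $2^n$ pieces and using linearity, to the case where $\supp f\cup\supp g$ lies in a single quadrant, and only then chooses a dyadic $S\supset\supp f\cup\supp g$ so that the coarse average $\langle f\rangle_I$ vanishes once $S\subset I$. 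You need that reduction (or some equivalent device) before your Step 3 can go through.
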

%\proof
%A.e. pointwise convergence follows by summing a telescoping sum and applying Lebesgue's Differentiation Theorem as we briefly indicate. 
%The right hand side of \eqref{representationoff} is understood as the limit when $M$ tends to infinity of 
%\begin{align*}%\label{pointconv}
%\sum_{\tiny \begin{array}{c}I\in \mathcal D\\ 2^{-M}\leq \ell(I)< 2^{M}\end{array}}
%&\sum_{i=1}^{2^{n}}\langle f,\tilde{\psi}_{I}^{b,i}\rangle \psi_{I}^{b,i}
%=\sum_{-M\leq k< M}\Delta_{k}^{b}f(x)
%\\
%&\hspace{-.5cm}=E_{M}^{b}f(x)-E_{-M}^{b}f(x)
%=E_{M}^{b}f(x)
%=\frac{\langle f\rangle_{J}}{\langle b\rangle_{J}}\chi_{J}(x)b(x)
%\end{align*}
%with $x\in J$ and $\ell(J)=2^{-M}$. Note that $E_{-M}^{b}f(x)=0$ for $Mn>|\supp f|$, due to the mean zero of $f$.
%
%Convergence in $L^{2}(\mathbb R^{n})$ is a direct consequence of Vitali's Convergence Theorem. We omit the details. 
\begin{proof}
By dividing $f$ into up to $2^n$ functions if necessary, we can assume that $\sup f$ and $\sup g$ are contained in one quadrant of $\mathbb R^n$. Then
we can choose $S\in {\mathcal D}$ with $\sup f\cup \sup g\subset S$.

We first note that the function inside the integral in the right hand side of \eqref{representationoff} can be written as 
$$
\sum_{\substack{I\in \mathcal D
\\ 2^{-M}\leq \ell(I)\leq 2^{M}}}\langle f, \psi_{I}\rangle 
\psi_{I}
=\sum_{\substack{Q\in \mathcal D
\\ 2^{-(M-1)}\leq \ell(Q)\leq 2^{M+1}}}\sum_{I\in \ch(Q)}\langle f, \psi_{I}\rangle 
\psi_{I}.
$$

By Lemma \ref{adaptedwavelets}, for every $g$ bounded and compactly supported on $S$, we have $\mu$-almost everywhere that 
$$
\sum_{I\in \ch(Q)}\langle f, \psi_{I}\rangle 
\psi_{I}
=\Delta_{Q}f.
%\sum_{I\in \ch(Q)}\langle f, \tilde{\psi}_{I}\rangle \psi_{I}
%+\langle f, \tilde{\uppsi}_{I}\rangle \uppsi_{I}
$$ 
% $$
% \int \sum_{I\in \ch(Q)}\langle f, \psi_{I}\rangle 
% \psi_{I}(x) g(x) d\mu(x)
% =\int \Delta_{Q}f(x)g(x) d\mu (x)
% %\sum_{I\in \ch(Q)}\langle f, \tilde{\psi}_{I}\rangle \psi_{I}
% %+\langle f, \tilde{\uppsi}_{I}\rangle \uppsi_{I}
% $$ 
Then
%the right hand side of \eqref{representationoff} is 
we can write the right hand side of \eqref{representationoff} as 
\begin{align*}
\lim_{M\rightarrow \infty } 
&\int 
\sum_{\substack{Q\in \mathcal D(S)\\ 2^{-(M-1)}\leq \ell(Q)\leq 2^{M+1}}}
\sum_{I\in \ch(Q)}\langle f, \psi_{I}\rangle 
\psi_{I}(x)g(x)d\mu(x) 
\\
&
=\lim_{M\rightarrow \infty } \int 
\sum_{-M\leq k\leq M}\Delta_{k}f(x)g(x)
d\mu(x).
\end{align*}
Now 
we can choose $M\in \mathbb N$ such that
$2^{-M}\leq \ell(S)\leq 2^{M+1}$. 
% For almost every $x\in Q$, we know 
% there exists a cube $K_{x}$ of minimal size in $\mathcal S$ such that $J^{a}=K_{x}$ for all $J$ with $x\in J\subset K_{x}$. 
And for $x\in S$,
we select $I,J\in \mathcal D$ such that $x\in J\subset S\subset I$, 
$\ell(J)=2^{-M}$, and $\ell(I)=2^{M+1}$. Then, by summing a telescopic series, we get 
\begin{align*}
\sum_{-M\leq k\leq M}\hspace{-.5cm}\Delta_{k}f(x)
&=E_{M}f(x)-E_{-(M+1)}f(x)
\\
&
=\langle f\rangle_{J}\chi_{J}(x)-\langle f\rangle_{I}\chi_{I}(x)
=\langle f\rangle_{J}\chi_{J}(x).
\end{align*}
%Since $\supp f\subset S\subset I$ and $f$ is integrable,  $|\langle f\rangle_{I}|\leq \mu(I)^{-1}\|f\|_{L^1(\mu)}$, which tends to zero when $M$ tends to infinity (NOPE if $\mu$ is finite). 
since $S\subset I$ and $f$ has mean zero.

With this
\begin{align*}
\lim_{M\rightarrow \infty } 
\int 
\sum_{-M\leq k\leq M}\Delta_{k}f(x) g(x)
d\mu
&
=\lim_{M\rightarrow \infty } \int_S 
E_{M}f(x) g(x)
%\sum_{\substack{J\in \mathcal D(S)\\ \ell(J)=2^{-M}}}
%\langle f\rangle_{J}\chi_{J}(x)
d\mu(x).
\end{align*}
Since $f$ is locally integrable, by Lebesgue's Differentiation Theorem we have that 
$E_{M}f$
%$\langle f\rangle_{J}\chi_{J}(x)$ 
converges to $f$
pointwise almost everywhere with respect to $\mu$ when $M$ tends to infinity. %(NOT GUARANTEED YET)
Moreover, 
since $|E_{M}f(x) g(x)|\lesssim \| f\|_{L^\infty (\mu)}
\| g\|_{L^\infty (\mu)}\chi_{S}(x)$, 
we can use Lebesgue's Dominated Convergence Theorem to 
conclude the result. 
\end{proof}

Similar work shows the validity of the following result:
\begin{lemma}\label{densityinL22}
%Let $I\in \mathcal D$ and $\mathcal D(Q)$ be the family of dyadic cubes $J$ such that $J\subset I$.  
%Let $b$ be a locally integrable function. 
We remind that $\tilde{\mathcal D}$ denotes the family of open dyadic cubes and $\partial \mathcal D$ denotes the union of the borders of all dyadic cubes. 
For $I\in \mathcal D$, we denote
$\tilde I=I\setminus \partial I\in \tilde{\mathcal D}$. 

Let $f$ be integrable, compactly supported, and with mean zero with respect to $\mu$.
We denote $f_1=f-f\chi_{\partial \mathcal D}$. Then  
the equality 
%\begin{equation}\label{representationoff2}
\begin{align*}
f_1=\sum_{I\in \tilde{\mathcal D}}\langle f, \psi_{I}\rangle
\psi_{\tilde I}
%\sum_{I\in \mathcal D}\langle f, \tilde{\psi}_{I}\rangle \psi_{I}+
%\langle f,\tilde{\uppsi}_{I}\rangle \uppsi_{I}
%\end{equation}
\end{align*}
holds  
$\mu$-almost everywhere.
%for 
%and such that $f(x)=0$ for all 
%$x\in \partial \mathcal D$. 
\end{lemma}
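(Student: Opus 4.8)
The plan is to mimic the proof of Lemma \ref{densityinL2} but keep track of which points lie on the dyadic boundary $\partial\mathcal D$, replacing the closed-cube Haar system by the open-cube one. First I would reduce to one quadrant and pick $S\in\mathcal D$ containing $\supp f$, as before; since everything is happening $\mu$-a.e.\ and we only care about $f_1=f-f\chi_{\partial\mathcal D}$, the values of $f$ on $\partial\mathcal D$ are irrelevant on the left side, and on the right side the functions $\psi_{\tilde I}$ all vanish on $\partial\mathcal D$ (they are supported on open cubes), so both sides are supported on $\mathbb R^n\setminus\partial\mathcal D$ and it suffices to test against $g$ bounded, compactly supported, and supported away from $\partial\mathcal D$.

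Next I would run the telescoping/martingale argument: group the terms $\langle f,\psi_I\rangle\psi_{\tilde I}$ over $I\in\tilde{\mathcal D}$ by their parent $Q$, and identify $\sum_{I\in\ch(Q)}\langle f,\psi_I\rangle\psi_{\tilde I}$ with $\Delta_Q f$ restricted to the open cubes — this is exactly Lemma \ref{adaptedwavelets} applied with $Q\in\tilde{\mathcal D}$ (or, equivalently, observing that $\psi_I=\psi_{\tilde I}$ $\mu$-a.e.\ because $\mu(\partial I\cap I')=0$ is not needed; rather $\psi_{\tilde I}$ and $\psi_I$ agree off $\partial\mathcal D$, which is where we are working). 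Summing the telescoping series $\sum_{-M\le k\le M}\Delta_k f$ over dyadic scales gives $E_M f-E_{-(M+1)}f$, and since $f$ has $\mu$-mean zero and $\supp f\subset S$, for $M$ large enough the coarse term $E_{-(M+1)}f$ vanishes on $S$, leaving $E_M f$. Then Lebesgue's differentiation theorem gives $E_M f\to f$ pointwise $\mu$-a.e., and the uniform bound $|E_Mf\cdot g|\lesssim\|f\|_\infty\|g\|_\infty\chi_S$ lets us pass to the limit by dominated convergence, yielding $\int f_1 g\,d\mu=\int f g\,d\mu$ for all such $g$; since this holds for a dense enough class of $g$, the claimed $\mu$-a.e.\ identity $f_1=\sum_{I\in\tilde{\mathcal D}}\langle f,\psi_I\rangle\psi_{\tilde I}$ follows.

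The main obstacle, and the only place the open-cube version genuinely differs, is the bookkeeping around $\partial\mathcal D$: one must check that replacing $\psi_I$ by $\psi_{\tilde I}$ changes nothing $\mu$-a.e.\ off $\partial\mathcal D$ (immediate, since $I$ and $\tilde I$ differ only by $\partial I\subset\partial\mathcal D$), that the coefficients $\langle f,\psi_I\rangle$ are unaffected by modifying $f$ on $\partial\mathcal D$ (this is where $f_1$ rather than $f$ appears — but in fact $\langle f,\psi_I\rangle=\langle f_1,\psi_I\rangle$ only if $\mu(\partial\mathcal D\cap I)$ contributes nothing, which need not hold; so one should instead argue directly that the telescoping identity produces $E_M f$, and that $E_M f\to f$ a.e.\ while $E_M f$ is itself supported off $\partial\mathcal D$ in the limit in the sense that its limit equals $f_1$ $\mu$-a.e.), and that the countable sum over $\tilde{\mathcal D}$ converges in the required sense. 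I expect the cleanest route is to observe that each averaging operator $E_M$ (built from closed dyadic cubes) satisfies $E_M f=E_M f_1$ $\mu$-a.e., because any atom of $\mu$ on $\partial\mathcal D$ is split among cubes consistently, and then the argument of Lemma \ref{densityinL2} goes through verbatim with $f$ replaced by $f_1$ and the $\psi_I$ replaced by $\psi_{\tilde I}$.
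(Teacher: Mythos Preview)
Your overall approach matches the paper's: the paper gives no proof for this lemma beyond the sentence ``Similar work shows the validity of the following result,'' referring back to Lemma~\ref{densityinL2}, and your proposal is precisely to rerun that telescoping/Lebesgue-differentiation argument with the open-cube wavelets.

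One claim in your write-up is incorrect and should be fixed: the functions $\psi_{\tilde I}$ do \emph{not} vanish on all of $\partial\mathcal D$. The support of $\psi_{\tilde I}$ is the open parent $\widetilde{I_p}$, and an open cube still contains the boundaries of its dyadic subcubes, which lie in $\partial\mathcal D$. Likewise your fallback claim $E_Mf=E_Mf_1$ $\mu$-a.e.\ fails whenever $\mu$ charges $\partial\mathcal D$. Neither error is fatal to the strategy, but the correct bookkeeping is this: for $x\notin\partial\mathcal D$ the open and half-open indicator values agree at $x$ at every scale, so the telescoped partial sum equals $E_Mf(x)$ exactly as in Lemma~\ref{densityinL2} and converges to $f(x)=f_1(x)$; for $x\in\partial\mathcal D$ there is a coarsest scale $k_0$ at which $x$ lies on a dyadic face, and for every open cube $\tilde J$ with $\ell(J)\le 2^{-k_0}$ one has $x\notin\tilde J$, so the contributions from scales $k\ge k_0+1$ vanish identically and the partial sums stabilise (and one checks directly, using the mean-zero hypothesis to kill the coarse-scale term, that the limit agrees with $f_1(x)=0$). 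With this correction your sketch goes through as the paper intends.
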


\subsection{A variation of the Haar wavelet system}
We now define a new Haar wavelet system and show that the analog of lemma \ref{adaptedwavelets} 
%and \ref{densityinL2} 
holds.
These wavelets will be used when dealing with the paraproducts.
\begin{definition}\label{fullwavelet}
%We define 
%$$
%\hat E_{Q}f=\frac{\langle f\rangle _{Q}}{\langle b_{Q^{a}}\rangle _{Q}}b_{Q^{a}}
%$$
%for every locally integrable function $f$, and for every $k\in \mathbb Z$,
%$$
%\hat E_{k}f=\sum_{\tiny \begin{array}{c}I\in \mathcal D\\ \ell(Q)=2^{-k}\end{array}}\hat E_{Q}f .
%$$
%We also define their corresponding difference operators
%$$
%\hat \Delta_{k}f=E_{k+1}f-E_{k}f
%=\sum_{\tiny \begin{array}{c}Q\in \mathcal D\\ \ell(Q)=2^{-k}\end{array}}\hat \Delta_{Q}f ,
%$$
%where
% Given $I\in \mathcal{D}$, 
% we denote the $L^{1}$-normalized indicator function by $\varphi_{I}=%\frac{1}{\mu(I)\langle b_{I^{a}}\rangle _{I}}
% \frac{\chi_{I}}{\mu(I)}$, 
% and $c_I=c(I)$. 
Let $Q, J_p\in \mathcal{D}$ and we denote  $c_{J_p}=c(J_p)$. For $I\in \mathcal D$ with 
$\mu(I)\neq 0$, we define
$$
\psi_{I,J_p}^{{\rm full}}(t)=\mu(I)^{\frac{1}{2}}\Big(
\frac{\chi_{I}(c_{J_p})}{\mu(I)} %\chi_{I}(t)
-\frac{\chi_{I_p}(c_{J_p})}{\mu(I_p)} %\chi_{I_{p}}(t)
\Big)\chi_{Q}(t). 
$$
If $\mu(I)=0$, we define $\psi_{I,J_p}^{{\rm full}}\equiv 0$. 

We omit the dependence of $\psi_{I,J_p}^{{\rm full}}$ on the cube $Q$.
%and $J$. 
We note that $\psi_{I,J_p}^{{\rm full}}=0$ if $J_p\cap I_p=\emptyset $, and that 
$\psi_{I,J_p}^{{\rm full}}\chi_{I}=\psi_I\chi_{I}$ when  $J_p\subseteq I$.
%and $t\in I_p$. 
% or 
% $I_p\subsetneq J$ for some $J\in \child(J_p)$. (nope to this line)

% Then the only cases when $0\neq \psi_{I}^{{\rm full}}\neq \psi_I$ happen when $\ell(J_p)=\ell(I)$ or $\ell(J_p)=\ell(I_p)$. nope

% and 
% $$
% \Psi_{I}^{{\rm full}}(t)=\mu(I)^{\frac{1}{2}}\Big(\frac{1}{\mu(I)}\chi_{I}
% -\frac{1}{\mu(I)}\chi_{I_{p}}\Big) \chi_{I}(c_{J_{p}})
% $$

% $$
% \tilde{\Psi}_{I}(t)
% =\mu(I)^{-\frac{1}{2}}
% \frac{\mu(I)}{\mu(I_p)}\chi_{I_p}(t)
% $$
%We omit in the notation the dependence of $\psi_{I}^{{\rm full}}$ on the cube $J_p$. 

We define 
the localized averaging operators by 
%for $f$ compactly supported such that 
%$\supp f\subset Q$, 
$
\hat E_{R}(f)=\langle f\rangle _{R}\chi_{R}(c_{J_{p}})\chi_{Q}
$
%for every locally integrable function $f$, and for every $k\in \mathbb Z$,
%$$
%E_{k}f=\sum_{\tiny \begin{array}{c}I\in \mathcal D\\ \ell(Q)=2^{-k}\end{array}}E_{Q}f .
%$$
%We also define their corresponding difference operators
%$$
%\hat \Delta_{k}f=\hat E_{k+1}f-\hat E_{k}f
%=\sum_{\tiny \begin{array}{c}Q\in \mathcal D\\ \ell(Q)=2^{-k}\end{array}}\hat \Delta_{Q}f ,
%$$
%where
and the corresponding localized differences
\begin{align*}%\label{Deltaincoordhat}
\nonumber
\hat \Delta_{R}f&=\Big(\sum_{I\in \child(R)}\hat E_{I}f\Big)-\hat E_{R}f
\\
&=\Big(\sum_{I\in \child(R)}\langle f\rangle_{I} \chi_{I}(c_{J_p})%\chi_{I}
\Big)\chi_{Q}
-\langle f\rangle_{R}\chi_{R}(c_{J_p})\chi_{Q}.
\end{align*}

% Let $(f_I)_{I\in \mathcal{D}}$ a family of functions parametrized by dyadic cubes. Then we define
% \begin{equation}\label{Deltaincoordhat2}
% \hat \Delta_{I_{p}}((f_I)_{I\in \mathcal{D}})
% %=\Big(\sum_{I\in \child(I_{p})}E_{I}f\Big)-E_{I_{p}}f
% =\sum_{I\in \child(I_{p})}\Big(\langle f_I\rangle _{I}
% -\langle f_I\rangle _{I_{p}}\Big) \chi_{I}(c_{J_{p}})\chi_{Q}.
% \end{equation}
% \end{definition}
% This operator is not a difference of average operators, but it still satisfies the following cancellation property (nope):
% \begin{align*}
% \sum_{\substack{Q\in \mathcal{D}\\2^{-N}< \ell(Q)\leq 2^{M}}}
% \hspace{-.5cm}
% \hat \Delta_{Q}((f_I)_{I\in \mathcal{D}})
% &=\sum_{\substack{I\in \mathcal{D}\\\ell(I)= 2^{-N}}}
% \hspace{-.3cm}
% \langle f_I\rangle _{I}\chi_{I}(c_{J_{p}})\chi_{I}
% %\\
% %&
% -\sum_{\substack{I\in \mathcal{D}\\\ell(I)= 2^{M}}}
% \hspace{-.3cm}
% \langle f_I\rangle _{I_{p}}\chi_{I}(c_{J_{p}})\chi_{I_p}.
% \end{align*}

\end{definition}

%By similar calculations to the ones appearing in 
The following result is the analog of Lemma \ref{adaptedwavelets} for the localized difference operator. 
%, we have that
\begin{lemma}\label{decompfull}
%Let $(f_I)_{I\in \mathcal{D}}$ a family of functions parametrized by dyadic cubes. Then 
Let 
$R\in {\mathcal D}$ and $J_p\in \mathcal D$ with $\ell(J_p)< \ell(R)$ and $\mu(J_p)\neq 0$.
%such that $c(J_p)\notin R\setminus R'$.
Then 
$$
\hat\Delta_{R}(f) 
=\sum_{I\in \ch(R)}\langle f, \psi_{I}\rangle 
\psi_{I,J_p}^{{\rm full}}
%-\langle f\rangle_{R}\chi_{R\setminus R'}(c_{J_p})\chi_{Q}
 %\tilde{\Psi}_{I}\rangle 
 % \Psi_{I}^{{\rm full}}
$$
% $$
% \hat\Delta_{R}(f) 
% =\sum_{I\in \ch(R)}\langle f, \psi_{I}\rangle 
% \psi_{I, J_p}^{{\rm full}}
% $$
% $\mu$-almost everywhere  
for $f$ bounded, compactly supported and with mean zero. \end{lemma}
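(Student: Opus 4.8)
The plan is to mimic the proof of Lemma~\ref{adaptedwavelets}, carefully tracking where the evaluation of characteristic functions at the point $c_{J_p}$ and the multiplication by $\chi_Q$ enter. First I would dispose of the degenerate cases. If $\mu(R)=0$, then $\mu(I)=0$ for every $I\in\ch(R)$, so both $\hat\Delta_R f=0$ and every $\psi_{I,J_p}^{{\rm full}}=0$, and the identity is trivial. So assume $\mu(R)\neq 0$. Also note that if $c_{J_p}\notin R$, then since $\ell(J_p)<\ell(R)$ the center $c_{J_p}$ lies in neither $R$ nor any child of $R$ (the point determines a unique dyadic cube of each scale $\le\ell(R)$, and $J_p$ sits inside exactly one such cube; if that cube is not $R$ it is disjoint from $R$), so again both sides vanish. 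Hence we may assume $c_{J_p}\in R$, and then $c_{J_p}$ lies in exactly one child $I^\ast\in\ch(R)$ of $R$.

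Next I would rewrite the left side. From the definition, $\hat\Delta_R f=\big(\sum_{I\in\ch(R)}\langle f\rangle_I\chi_I(c_{J_p})\big)\chi_Q-\langle f\rangle_R\chi_R(c_{J_p})\chi_Q$. Using $\chi_I(c_{J_p})=\delta_{I,I^\ast}$ and $\chi_R(c_{J_p})=1$ this collapses to $\big(\langle f\rangle_{I^\ast}-\langle f\rangle_R\big)\chi_Q$. On the right side, I would use the identity $\mu(R)\langle f\rangle_R=\sum_{I\in\ch(R)}\mu(I)\langle f\rangle_I$ exactly as in the proof of Lemma~\ref{adaptedwavelets}, together with the computation $\langle f,\psi_I\rangle=\mu(I)^{1/2}(\langle f\rangle_I-\langle f\rangle_R)$ valid whenever $\mu(I)\neq 0$ (and both sides vanish when $\mu(I)=0$, since then $\psi_I=0$). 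Plugging in,
\begin{align*}
\sum_{I\in\ch(R)}\langle f,\psi_I\rangle\psi_{I,J_p}^{{\rm full}}
&=\sum_{\substack{I\in\ch(R)\\ \mu(I)\neq 0}}(\langle f\rangle_I-\langle f\rangle_R)\,\mu(I)^{1/2}\cdot\mu(I)^{1/2}\Big(\frac{\chi_I(c_{J_p})}{\mu(I)}-\frac{\chi_{R}(c_{J_p})}{\mu(R)}\Big)\chi_Q,
\end{align*}
where I have used that the parent of $I\in\ch(R)$ is $R$. Since $\chi_R(c_{J_p})=1$ and $\chi_I(c_{J_p})=\delta_{I,I^\ast}$, the bracket equals $\mu(I)^{-1}\delta_{I,I^\ast}-\mu(R)^{-1}$, so the sum becomes
\begin{align*}
\Big[(\langle f\rangle_{I^\ast}-\langle f\rangle_R)\Big]\chi_Q-\frac{1}{\mu(R)}\Big[\sum_{\substack{I\in\ch(R)\\\mu(I)\neq 0}}\mu(I)(\langle f\rangle_I-\langle f\rangle_R)\Big]\chi_Q.
\end{align*}
The inner sum over $I$ with $\mu(I)\neq 0$ can be extended to all $I\in\ch(R)$ (the added terms have $\mu(I)=0$), and then equals $\sum_{I\in\ch(R)}\mu(I)\langle f\rangle_I-\langle f\rangle_R\sum_{I\in\ch(R)}\mu(I)=\mu(R)\langle f\rangle_R-\langle f\rangle_R\mu(R)=0$. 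Hence the right side reduces to $(\langle f\rangle_{I^\ast}-\langle f\rangle_R)\chi_Q$, matching the left side, which finishes the proof.

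I do not expect a genuine obstacle here; the content is bookkeeping. The one point that needs care—and which I would state explicitly—is the geometric fact that $c_{J_p}\in R$ forces $c_{J_p}$ into a unique child of $R$, which uses $\ell(J_p)<\ell(R)$ (so $\ell(I)=\ell(R)/2\ge\ell(J_p)$ for children $I$, hence $c_{J_p}$, being the center of the dyadic cube $J_p$, is an interior-or-corner point that still determines a unique dyadic cube at scale $\ell(R)/2$; with the half-open convention for $\mathcal D$ this is clean). The hypothesis $\mu(J_p)\neq 0$ is not actually used in the computation above; it is presumably there so that $\psi_{J}$ for children $J$ of $J_p$ make sense in the application, and I would simply keep it in the statement without invoking it. The mean-zero and compact-support hypotheses on $f$ are likewise not needed for this pointwise algebraic identity (only local integrability is, so that all the averages exist); they are carried along for consistency with how the lemma is used with the paraproducts.
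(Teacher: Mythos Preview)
Your computation is essentially correct and follows the same algebraic route as the paper (which also mimics Lemma~\ref{adaptedwavelets}), but there is one genuine gap hidden in your side remark that the hypothesis $\mu(J_p)\neq 0$ ``is not actually used.'' It is used, and precisely at the step where you extract the $I^\ast$ term from the sum
\[
\sum_{\substack{I\in\ch(R)\\\mu(I)\neq 0}}(\langle f\rangle_I-\langle f\rangle_R)\,\mu(I)\Big(\frac{\delta_{I,I^\ast}}{\mu(I)}-\frac{1}{\mu(R)}\Big)\chi_Q
\]
to obtain $(\langle f\rangle_{I^\ast}-\langle f\rangle_R)\chi_Q$ plus the remainder. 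That extraction is valid only if $I^\ast$ actually appears in the restricted sum, i.e.\ only if $\mu(I^\ast)\neq 0$. If $\mu(I^\ast)=0$ the right side collapses to $0$, while the left side $\hat\Delta_R f$ equals $(\langle f\rangle_{I^\ast}-\langle f\rangle_R)\chi_Q=(0-\langle f\rangle_R)\chi_Q$ by the convention $\langle f\rangle_{I^\ast}=0$, and the identity fails in general.

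The hypothesis $\mu(J_p)\neq 0$ rules this out: since $\ell(J_p)<\ell(R)$ gives $\ell(J_p)\le \ell(I^\ast)$, and $c_{J_p}\in J_p\cap I^\ast$, the dyadic nesting forces $J_p\subset I^\ast$, hence $\mu(I^\ast)\ge \mu(J_p)>0$. This is exactly the argument the paper gives at the end of its proof (phrased there via the set $R\setminus R'$). So add one line justifying $\mu(I^\ast)\neq 0$ from $\mu(J_p)\neq 0$ and drop the remark that the hypothesis is superfluous; with that, your proof is complete and matches the paper's approach.
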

% \begin{remark}
% Note that $\mu(R\setminus R')=0 $ and so, 
% \end{remark}
\begin{proof} 
%The proof is similar to the demonstration of Lemma \ref{adaptedwavelets}. 
If $\mu(R)=0$ then both sides of the equality are zero. If $\mu(R)\neq 0$, we reason as follows. 
Since
$$
\mu(R)\langle f\rangle _{R}=\int_{R}f d\mu
=\sum_{\substack{I\in \ch(R)\\ \mu(I)\neq 0}}
\int_{I}f d\mu
%=\sum_{I\in \ch(R)}\mu(I)\langle f\rangle _{I}
=\sum_{\substack{I\in \ch(R)\\ \mu(I)\neq 0}}\mu(I)\langle f\rangle _{I},
$$ 
we have
\begin{align*}
\hat \Delta_{R}(f)
%=\sum_{i=1}^{2^{n}}\langle f\rangle _{I_{i}}\frac{1}{\langle b\rangle _{I_{i}}}\chi_{I_{i}}b
%-\sum_{i=1}^{2^{n}}\frac{1}{\langle b\rangle _{I}}\frac{|I_{i}|}{|I|}\sum_{j=1}^{2^{n}}\langle f\rangle _{I_{j}}\chi_{I_{i}}b
%$$
%$$
&=\! \sum_{\substack{I\in \ch(R)\\ \mu(I)\neq 0}}\! \langle f\rangle _{I}\Big(\chi_{I}(c_{J_p})
-\frac{\mu(I)}{\mu(R)}\chi_{R}(c_{J_p})\Big)
\chi_{Q}
=\! \sum_{\substack{I\in \ch(R)\\ \mu(I)\neq 0}}\mu(I)^{\frac{1}{2}}\! \langle f\rangle _{I}\psi_{I,J_p}^{\rm full}.
\end{align*}
Now, by \eqref{Deltaaverage}, we have
%\begin{equation}\label{Deltaaverage2}
$
\langle f\rangle_{I}
=\langle \Delta_{R}f\rangle_{I} +\langle f\rangle _{R}
$
%\end{equation}
and so,
\begin{align*}
\hat \Delta_{R}(f)
= \sum_{I\in \ch(R)}\mu(I)^{\frac{1}{2}} \langle \Delta_{R}f\rangle _{I}\psi_{I,J_p}^{\rm full}
+\langle f\rangle_R\sum_{\substack{I\in \ch(R)\\ \mu(I)\neq 0}}\mu(I)^{\frac{1}{2}} \psi_{I,J_p}^{\rm full}.
\end{align*}
We have as before that 
$\langle \Delta_{R}f\rangle _{I}=\langle f, \psi_{I}\rangle$. 

On the other hand, 
let now $R'$ be the union of cubes $I\in \child(R)$ such that $\mu(I)\neq 0$. Then
%since $ \sum_{I\in \ch(I_{p})}\mu(I)=\mu(I_{p})$,
\begin{align*}
%\label{noLI2}
%\nonumber
\sum_{\substack{I\in \ch(R)\\\mu(I)\neq 0}}\mu(I)^{\frac{1}{2}}
%\Big(\frac{1}{\langle b\rangle _{I_{i}}}\chi_{I_{i}}-\frac{|I_{i}|}{|I|}\frac{1}{\langle b\rangle _{I}}\chi_{I}\Big)b
\psi_{I,J_p}^{{\rm full}}
&= \sum_{\substack{I\in \ch(R)\\ \mu(I)\neq 0}}
\Big(\chi_{I}(c_{J_p})
%\chi_I
-\frac{\mu(I)}{\mu(R)}
\chi_{R}(c_{J_p})
%\chi_{I_p}
\Big)\chi_{Q}
\\
%\nonumber
&=
(\chi_{R'}(c_{J_p})
%\chi_{I}
-\chi_{R}(c_{J_p}))\chi_{Q}
%\chi_{I_p}
=-\chi_{R\setminus R'}(c_{J_p})\chi_{Q}.
% \\
% \nonumber
% &=
% \sum_{I\in \ch(I_{p})}\big(\chi_{I}
% -\chi_{I_p}\big)\chi_{I}(c_{J_{p}})
% \\
% \nonumber
% &=
% \sum_{I\in \ch(I_{p})}
% \mu(I)
% \Big(\frac{1}{\mu(I)}\chi_{I}
% -\frac{1}{\mu(I)}\chi_{I_{p}}\Big) \chi_{I}(c_{J_{p}})
% \\
% &= \sum_{I\in \ch(I_{p})}\mu(I)^{\frac{1}{2}}\Psi_{I}^{{\rm full}}
\end{align*}
% The last equality is due to the fact that, since $c_{J_p}\notin R\setminus R'$, we have that either $c_{J_p}\notin R$ or $c_{J_p}\in R'\subset R$. In either case, $\chi_{R'}(c_{J_p})=\chi_{R}(c_{J_p})$. 

With this, 
\begin{align*}
\hat \Delta_{R}(f)
&=\sum_{I\in \ch(I_{p})}\mu(I)^{\frac{1}{2}}
\langle f, \psi_{I}\rangle
%\Big(\frac{1}{\langle b\rangle _{I_{i}}}\chi_{I_{i}}-\frac{|I_{i}|}{|I|}\frac{1}{\langle b\rangle _{I}}\chi_{I}\Big)b
\psi_{I,J_p}^{\rm full}
-\langle f\rangle_{R}\chi_{R\setminus R'}(c_{J_p})\chi_{Q}.
% +%\sum_{I\in \ch(I_{p})\cap \mathcal S}
% \mu(I)^{\frac{1}{2}}\langle f_I\rangle _{I_{p}}
% \Psi_{I}^{\rm full}
% \\
% &
% =\sum_{I\in \ch(I_{p})}\mu(I)^{\frac{1}{2}}\langle \Delta_{R}f\rangle_{I}
% \psi_{I}^{\rm full}
% -\langle f\rangle_{R}\chi_{R\setminus R'}(c_{J_p})\chi_{Q}.
\end{align*}
% while 
% $
% \mu(I)^{\frac{1}{2}}\langle f_I\rangle_{I_{p}}=\langle f_I, \tilde{\Psi}_{I}\rangle 
% $.
%Finally, since $\mu(R\setminus R')=0$, we conclude the result. 
%
% \begin{align*}
% &=\sum_{I\in \ch(I_{p})}\langle f, \psi_{I}\rangle
% %\Big(\frac{1}{\langle b\rangle _{I_{i}}}\chi_{I_{i}}-\frac{|I_{i}|}{|I|}\frac{1}{\langle b\rangle _{I}}\chi_{I}\Big)b
% \psi_{I}^{\rm full}
% +%\sum_{I\in \ch(I_{p})\cap \mathcal S}
% \langle f, \tilde{\psi}_{3,I}\rangle
% \Psi_{I}^{\rm full}
% \end{align*}

If $\chi_{R\setminus R'}(c_{J_p})\neq 0$ then 
$c(J_p)\in R\setminus R'$. With this, 
since $R\cap J_p\neq \emptyset $ and $\ell(J_p)< \ell(R)$, 
we have $J_p\subsetneq R$. Moreover, since 
$\mu(J_p)\neq 0$, we also have  
$J_p\subset I\in \child{(R)}$ with $\mu(I)\neq 0$, that is, $J_p\subset R'$. But this is contradictory since it implies  
$\chi_{R\setminus R'}(c_{J_p})=0$.

\end{proof}

\subsection{Orthogonality and Bessel inequality of the Haar wavelet systems}
% The results in this section hold for both $(\psi_I)_{I\in \mathcal D}$ and $(\psi'_I)_{I\in \mathcal D_3}$ although we mostly write them for the first system.

The following lemma summarizes the orthogonality properties of the Haar wavelets. 
\begin{lemma}\label{psiortho} Let $I,J\in \mathcal D$ or $I,J\in \tilde{\mathcal D}$. Then  
$
\int \psi_{I}(x)d\mu(x)%=\int \uppsi_{I}(x)d\mu(x)
=0. 
$
If $\mu(I)=0$ then $\langle \psi_{I},\psi_{J}\rangle=0$,
while if $\mu(I)\neq 0$ then
%if $\mu(I)\neq 0$ or $\mu(J)\neq 0$
% %\begin{equation}\label{psiortho1}
% $
% \langle \psi_{2,I},\tilde{\psi}_{2,J}\rangle =0
% %=\int \tilde{\psi}_{I_{i}}^{b}(x)\tilde{\psi}_{J_{j}}^{b}(x)dx =0
% $
% %\end{equation}
% when either $I_{p}\cap J_{p}=\emptyset$; or $I_{p}\subsetneq J_{p}$; or $J_{p}\subsetneq I_{p}$ with
% %\begin{align*}
% %\langle \psi_{I}^{b,i},\tilde{\psi}_{J}^{b,j}\rangle 
% %&
% %=|I_{i}|^{\frac{1}{2}}|J_{j}|^{\frac{1}{2}}[b_{\pi(I)}]_{J_{j}}
% %\Big(\frac{1}{|I_{i}|[b_{\pi(I_{i})}]_{I_{i}}}
% %\Big(\frac{|I_{i}\cap J_{j}|[b_{\pi(I_{i})}]_{I_{i}\cap J_{j}}}{|J_{j}|[b_{\pi(J)}]_{J_{j}}}-\frac{|I_{i}\cap J|[b_{\pi(I_{i})}]_{I_{i}\cap J}}{|J|[b_{\pi(J)}]_{J}}\Big)
% %\\
% %&\hskip100pt -\frac{1}{|I|[b_{\pi(I)}]_{I}}
% %\Big(\frac{|I\cap J_{j}|[b_{\pi(I)}]_{I\cap J_{j}}}{|J_{j}|[b_{\pi(J)}]_{J_{j}}}-\frac{|I\cap J|[b_{\pi(I)}]_{J}}{|J|[b_{\pi(J)}]_{J}}\Big) \Big)
% %\end{align*}
% %This shows that $\langle \psi_{I}^{b,i},\tilde{\psi}_{J}^{b,j}\rangle =0$ if 
% $J_{p}^{a}=I_{p}^{a}$. On the other hand, when $I_{p}=J_{p}$, 
% \begin{equation}\label{psiortho2}
% \langle \psi_{I},\psi_{J}\rangle =%\frac{\langle b_{I_{p}^{a}}\rangle_{J}}{\langle b_{J^{a}}\rangle_{J}}
% \delta(I_p,J_p)\Big(\delta (I,J)-
% \frac{\mu(I)^{\frac{1}{2}}\mu(J)^{\frac{1}{2}}}{\mu(I_{p})}
% \Big),
% %\frac{\mu(J)\langle b_{I_{p}^{a}}\rangle_{J}}{\mu(I_{p})\langle b_{I_{p}^{a}}\rangle_{I_{p}}}.
% %=\int \tilde{\psi}_{I_{i}}^{b}(x)\tilde{\psi}_{J_{j}}^{b}(x)dx =0
% \end{equation}
\begin{equation}\label{psiortho2}
\langle \psi_{I},\psi_{J}\rangle =%\frac{\langle b_{I_{p}^{a}}\rangle_{J}}{\langle b_{J^{a}}\rangle_{J}}
\delta(I_p,J_p)\mu(I)^{\frac{1}{2}}\mu(J)^{\frac{1}{2}}
\Big(\frac{\delta (I,J)}{\mu(I)}-
\frac{1}{\mu(I_{p})}
\Big),
%\frac{\mu(J)\langle b_{I_{p}^{a}}\rangle_{J}}{\mu(I_{p})\langle b_{I_{p}^{a}}\rangle_{I_{p}}}.
%=\int \tilde{\psi}_{I_{i}}^{b}(x)\tilde{\psi}_{J_{j}}^{b}(x)dx =0
\end{equation}
%Moreover, 
%$\langle \tilde{\psi_{I}}^{b,i},\tilde{\psi}_{J}^{b,j}\rangle =[b_{I_{i}}]\langle \psi_{I}^{b,i},\tilde{\psi}_{J}^{b,j}\rangle $
%and
%Notice that $\langle \psi_{I}^{b,i},\tilde{\psi}_{I}^{b,i}\rangle \neq 0$.
where we denote $\delta(I,J)=1$ if $I=J$ and zero otherwise.
% where $B_{I,q}=([b_{I^{a}}]_{I,q}+[b_{I_{p}^{a}}]_{I_{p},q})$.
In addition, if $\mu(I)\neq 0$ we have 
$
\| \psi_{I}\|_{L^{q}(\mu)}
\lesssim \mu(I)^{-\frac{1}{2}+\frac{1}{q}}
$. 
% For all $I',J'\in \mathcal D^3$,
% the same result holds for 
% $\langle \psi_{I'}',\psi_{J'}'\rangle$ while $|\langle \psi_I,\psi_{J'}'\rangle|\leq 2\delta(I_p,J_p)$, and 
% $
% \| \psi_{I'}'\|_{L^{q}(\mu)}
% \lesssim \mu(I')^{-\frac{1}{2}+\frac{1}{q}}
% $.
\end{lemma}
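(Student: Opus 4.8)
The plan is to prove Lemma \ref{psiortho} by direct computation from the definition of the Haar wavelets in Definition \ref{defpsi}, handling the three assertions in turn: the mean-zero property, the inner product formula \eqref{psiortho2}, and the $L^q$ bound. Throughout, the key structural facts are that $\psi_I$ is supported on $I_p$, is constant on each child of $I_p$, and that two wavelets $\psi_I,\psi_J$ can only fail to be orthogonal when their parents coincide (since otherwise one is supported where the other is constant, and the mean-zero property kicks in) --- this is the source of the factor $\delta(I_p,J_p)$.

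First I would establish $\int \psi_I \, d\mu = 0$. For $\mu(I)=0$ this is trivial since $\psi_I=0$. For $\mu(I)\neq 0$, integrating the defining expression $\psi_I=\mu(I)^{1/2}\big(\mu(I)^{-1}\chi_I - \mu(I_p)^{-1}\chi_{I_p}\big)$ against $d\mu$ gives $\mu(I)^{1/2}\big(\mu(I)^{-1}\mu(I) - \mu(I_p)^{-1}\mu(I_p)\big)=0$, using $\mu(I_p)\neq 0$ (which holds since $I\subset I_p$ and $\mu(I)\neq 0$). Next, for the orthogonality relation: if $\mu(I)=0$ then $\psi_I=0$ and $\langle\psi_I,\psi_J\rangle=0$; assume $\mu(I)\neq 0$. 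If $I_p\neq J_p$, I would argue that either $I_p\cap J_p=\emptyset$ (so the supports are disjoint and the inner product vanishes), or one is strictly contained in the other --- say $I_p\subsetneq J_p$, so $I_p$ lies inside a single child of $J_p$, on which $\psi_J$ is constant; then $\langle\psi_I,\psi_J\rangle$ equals that constant times $\int\psi_I\,d\mu=0$. This yields the factor $\delta(I_p,J_p)$. When $I_p=J_p$, I would expand $\langle\psi_I,\psi_J\rangle = \mu(I)^{1/2}\mu(J)^{1/2}\int\big(\mu(I)^{-1}\chi_I-\mu(I_p)^{-1}\chi_{I_p}\big)\big(\mu(J)^{-1}\chi_J-\mu(I_p)^{-1}\chi_{I_p}\big)d\mu$ and compute the four resulting integrals using that distinct children are disjoint, $\chi_I\chi_J=\delta(I,J)\chi_I$, $\chi_I\chi_{I_p}=\chi_I$, and $\int\chi_{I_p}d\mu=\mu(I_p)$; collecting terms gives $\mu(I)^{1/2}\mu(J)^{1/2}\big(\delta(I,J)\mu(I)^{-1}-\mu(I_p)^{-1}\big)$, as claimed.

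Finally, for the $L^q$ bound with $\mu(I)\neq 0$: from the defining formula and the triangle inequality in $L^q(\mu)$,
\begin{align*}
\|\psi_I\|_{L^q(\mu)} \le \mu(I)^{\frac{1}{2}}\Big(\frac{\|\chi_I\|_{L^q(\mu)}}{\mu(I)}+\frac{\|\chi_{I_p}\|_{L^q(\mu)}}{\mu(I_p)}\Big)
=\mu(I)^{\frac{1}{2}}\Big(\mu(I)^{-1+\frac{1}{q}}+\mu(I_p)^{-1+\frac{1}{q}}\Big),
\end{align*}
and since $\mu(I)\le\mu(I_p)$ and $-1+\frac1q<0$, the second term is dominated by the first, giving $\|\psi_I\|_{L^q(\mu)}\lesssim \mu(I)^{-\frac12+\frac1q}$.

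None of these steps is genuinely hard; the only point requiring a little care is the case analysis for $I_p\neq J_p$ in the orthogonality claim, where one must observe that two dyadic cubes of possibly different sizes are either nested or disjoint, and that in the nested case the smaller parent sits entirely within one child of the larger, so that the mean-zero property of the smaller wavelet forces the inner product to vanish. I expect this dyadic-nesting bookkeeping to be the main (mild) obstacle; everything else is bookkeeping with indicator functions.
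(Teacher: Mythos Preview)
Your proposal is correct and follows essentially the same approach as the paper: mean-zero by direct integration, the inner product formula by the dyadic case analysis (disjoint/nested parents versus $I_p=J_p$) and a four-term expansion, and the $L^q$ bound via the triangle inequality together with $\mu(I)\le\mu(I_p)$. The only cosmetic difference is that the paper writes the $L^q$ computation with the dual exponent $1/q'$, which makes the inequality valid for all $q\ge 1$ without the side remark that $-1+\tfrac1q\le 0$.
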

\begin{proof} The first equality is trivial.
Equality \eqref{psiortho2} is also 
%We also show that 
trivial when $I_{p}\cap J_{p}= \emptyset$.  
% Now, we note that 
% $$
% \langle \psi_{I},\psi_{J}\rangle
% %=\int \psi_{I}(x)\tilde{\psi}_{J}^{b,j}(x)dx
% =\int \psi_{I}(x)h_{J}(x)d\mu(x),
% $$
% which is zero when $I_{p}\cap J_{p}= \emptyset$. 
When $I_{p}\subsetneq J_{p}$, $\psi_{J}$ is constant on the support of $\psi_{I}$ and so the dual pair is zero due to the mean zero of $\psi_{I}$. Symmetrically, we have the same result when $J_{p}\subsetneq I_{p}$.

%This way, when $I\cap J\neq \emptyset$ and $I\neq J$, we can assume  by symmetry that $J\subset I$ and so, $h_{I}^{b,i}$ is constant on the support of $h_{J}^{b,j}$. With this, 
%$$
%\int h_{I}^{b,i}(x)h_{J}^{b,j}(x)b(x)dx
%=h_{I}^{b,i}(c(J_{j}))\int h_{J}^{b,j}(x)dx=0
%$$
%by the mean zero of $h_{J}^{b,j}$ with respect to $b$.
%
%
%

%We omit the easy proof of equality \eqref{psiortho2} because we never use the actual result. 
On the other hand, for $J_{p}=I_{p}$, we have
\begin{align*}
\langle \psi_{I},\psi_{J}\rangle 
&=\mu(I)^{\frac{1}{2}}\mu(J)^{\frac{1}{2}} 
\int\Big(\frac{\chi_{I}(x)}{\mu(I)}
-\frac{\chi_{I_{p}}(x)}{\mu(I_{p})}\Big)
\Big(\frac{\chi_{J}(x)}{\mu(J)}-\frac{\chi_{J_{p}}(x)}{\mu(J_{p})}\Big) d\mu(x)
\\
&=\mu(I)^{\frac{1}{2}}\mu(J)^{\frac{1}{2}}
\frac{1}{\mu(I)}
\Big(\frac{\mu(I\cap J)}{\mu(J)}
-\frac{\mu(I)}{\mu(I_{p})}\Big).
%-\frac{1}{\mu(I_{p})}
%\Big(\frac{\mu(J)}{\mu(J)}-1\Big) \Big)
\end{align*}
% where $\mu(I)[f]_{I}=\int_{J}f=0$ if $I=\emptyset $. Now, if $J_{p}\subsetneq I_{p}$ and $J_{p}\cap I=\emptyset $, we get
% $$
% \langle \psi_{2,I},\tilde{\psi}_{2,J}\rangle
% =\mu(I)^{\frac{1}{2}}\mu(J)^{\frac{1}{2}}\langle b_{J_{p}^{a}}\rangle_{J}\frac{-1}{\mu(I_{p})\langle b_{I_{p}^{a}}\rangle_{I_{p}}}
% \Big(\frac{\langle b_{I_{p}^{a}}\rangle_{J}}{\langle b_{J_{p}^{a}}\rangle_{J}}-\frac{\langle b_{I_{p}^{a}}\rangle_{J_{p}}}{\langle b_{J^{a}_{p}}\rangle_{J_{p}}}\Big),
% $$
% which is zero if $J_{p}^{a}=I_{p}^{a}$. If $J_{p}\subseteq I$, 
% \begin{align*}
% \langle \psi_{I},\tilde{\psi}_{J}\rangle
% &=\mu(I)^{\frac{1}{2}}\mu(J)^{\frac{1}{2}}\langle b_{J_{p}^{a}}\rangle_{J}
% \Big(\frac{1}{\mu(I)\langle b_{I^{a}}\rangle_{I}}
% \Big(\frac{\langle b_{I^{a}}\rangle_{J}}{\langle b_{J_{p}^{a}}\rangle_{J}}
% -\frac{\langle b_{I^{a}}\rangle_{J_{p}}}{\langle b_{J^{a}_{p}}\rangle_{J_{p}}}\Big)
% \\
% &\hskip100pt -\frac{1}{\mu(I_{p})\langle b_{I_{p}^{a}}\rangle_{I_{p}}}
% \Big(\frac{\langle b_{I_{p}^{a}}\rangle_{J}}{\langle b_{J_{p}^{a}}\rangle_{J}}
% -\frac{\langle b_{I_{p}^{a}}\rangle_{J_{p}}}{\langle b_{J^{a}_{p}}\rangle_{J_{p}}}\Big)\Big)
% \end{align*}
% which is also zero if $J_{p}^{a}=I_{p}^{a}$ since then $J_{p}^{a}=I^{a}=I^{a}_{p}$. 

For $I\neq J$, since $I\cap J= \emptyset$, 
we have
\begin{align*}
\langle \psi_{I},\psi_{J}\rangle
%&=\mu(I)^{\frac{1}{2}}\mu(J)^{\frac{1}{2}}
%\frac{1}{\mu(I)}
%\Big(-\frac{\mu(I)}{\mu(I_{p})}\Big)
%\\
%&\hskip20pt -\frac{1}{\mu(I_{p})\langle b_{I_{p}^{a}}\rangle_{I_{p}}}
%\Big(\frac{\langle b_{I_{p}^{a}}\rangle_{J}}{\langle b_{J_{p}^{a}}\rangle_{J}}-1\Big) \Big)
%\\
&=-\frac{\mu(I)^{\frac{1}{2}}\mu(J)^{\frac{1}{2}}}{\mu(I_{p})},
%\frac{\langle b_{I_{p}^{a}}\rangle_{J}}{\langle b_{J^{a}}\rangle_{J}}
%\\
%&\frac{-1}{\mu(I)\langle b_{I^{a}}\rangle_{I}}
%\frac{\mu(I)\langle b_{I^{a}}\rangle_{I}}{\mu(I_{p})\langle b_{I^{a}_{p}}\rangle_{I_{p}}}
%=-\frac{\mu(I)}{\mu(I_{p})}\frac{\langle b_{I^{a}_{p}}\rangle_{I}}{\langle b_{I^{a}_{p}}\rangle_{I_{p}}}
\end{align*}
while for $I=J$, we get 
\begin{align*}
\langle \psi_{I},\psi_{J}\rangle 
&
%=\mu(I)
%\frac{1}{\mu(I)}
%\Big(1-\frac{\mu(I)}{\mu(I_{p})}\Big)
%\\
%&\hskip80pt -\frac{1}{\mu(I_{p})\langle b_{I_{p}^{a}}\rangle_{I_{p}}}
%\Big(\frac{\langle b_{I_{p}^{a}}\rangle_{I}}{\langle b_{I^{a}}\rangle_{I}}
%-1\Big) \Big)
%\\
%&=\mu(I)\langle b_{I_{p}^{a}}\rangle_{I}
%\Big(\frac{1}{\mu(I)\langle b_{I^{a}}\rangle_{I}}
%-\frac{1}{\mu(I_{p})\langle b_{I_{p}^{a}}\rangle_{I_{p}}}\frac{\langle b_{I_{p}^{a}}\rangle_{I}}{\langle b_{I^{a}}\rangle_{I}}\Big) 
=%\frac{\langle b_{I_{p}^{a}}\rangle_{I}}{\langle b_{I^{a}}\rangle_{I}}
\mu(I)(\frac{1}{\mu(I)}-\frac{1}{\mu(I_{p})}).
%=1-\frac{|I_{i}|[b_{\pi(I)}]_{I_{i}}}{|I|[b_{\pi(I)}]_{I}}.
\end{align*}
%which in general is non-zero, although it might be.

%$$
%=|I_{j}|[b_{\pi(I)}]_{I_{j}}\int\Big(-\frac{\chi_{I_{i}}(x)}{|I_{i}|[b]_{I_{i}}|I|[b]_{I}}
%-\frac{\chi_{I_{j}}(x)}{|I_{j}|[b]_{I_{j}}|I|[b]_{I}}
%+\frac{\chi_{I}(x)}{(|I|[b]_{I})^{2}}\Big) b(x)dx
%$$
%$$
%=|I_{j}|[b]_{I_{j}}\Big(-\frac{1}{|I|[b]_{I}}-\frac{1}{|I|[b]_{I}}
%+\frac{1}{|I|[b]_{I}}\Big)
%=-\frac{|I_{j}|[b]_{I_{j}}}{|I|[b]_{I}}.
%$$
%%
%%
%%
%%
%%
%This proves the first case of equality \eqref{psiortho2}. When $i=j$,  
%%from para-accretivity and the $q$ condition, we can prove that
%$$
%%\| \psi_{I_{i}}^{b}\|_{L^{2}(b)}^{2}
%\langle \psi_{I}^{b,i},\tilde{\psi}_{I}^{b,i}\rangle 
%%=[b]_{I_{i}}\int h_{I}^{b,i}(x)^{2}b(x)dx 
%=|I_{i}|[b]_{I_{i}}\int\Big(\frac{\chi_{I_{i}}(x)}{|I_{i}|[b]_{I_{i}}}-\frac{\chi_{I}(x)}{|I|[b]_{I}}\Big)^{2} b(x)dx
%$$
%$$
%=|I_{i}|[b]_{I_{i}}\int\Big(\frac{\chi_{I_{i}}(x)}{(|I_{i}|[b]_{I_{i}})^{2}}
%-\frac{2\chi_{I_{i}}(x)}{|I_{i}|[b]_{I_{i}}|I|[b]_{I}}
%+\frac{\chi_{I}(x)}{(|I|[b]_{I})^{2}}\Big) b(x)dx
%$$
%$$
%=|I_{i}|[b]_{I_{i}}\Big(\frac{1}{|I_{i}|[b]_{I_{i}}}-\frac{2}{|I|[b]_{I}}
%+\frac{1}{|I|[b]_{I}}\Big)
%%=|I_{i}|[b]_{I_{i}}^{2}\Big(\frac{1}{|I_{i}|[b]_{I_{i}}}-\frac{1}{|I|[b]_{I}}\Big)
%=1-\frac{|I_{i}|[b]_{I_{i}}}{|I|[b]_{I}} .
%%=2^{n}\sum_{Q\in {\mathcal I}}\frac{1}{[b]_{Q}^{-1}}
%%\approx \min| [b]_{Q}|
%$$
%%$$
%%=\frac{1}{2^{n}}[b]_{I_{i}}\frac{\sum_{j\neq i}[b]_{I_{j}}}{[b]_{I}}
%%$$
%%ending the result. 

On the other hand, for $\mu(I)\neq 0$, 
%$
%\| \psi_{I}^{b,i}\|_{L^{p}(\mathbb R^{n})}
%%=\| h_{I}^{b,i}b\|_{L^{p}(\mathbb R^{n})}
%\leq \| h_{I}^{b,i}\|_{L^{\infty }(I)}\| b\|_{L^{p}(I)}
%\lesssim 
%%C_{I}^{b,i}|I|^{-\frac{1}{2}}\| b\|_{L^{p}(I)}
%%=
%C_{I}^{b,i}|I|^{\frac{1}{p}-\frac{1}{2}}[b]_{I,p}
%$.
\begin{align*}
\| \psi_{I}\|_{L^{q}(\mu)}
&\leq \mu(I)^{\frac{1}{2}}\Big(\frac{1}{\mu(I)}\| \chi_{I}\|_{L^{q}(\mu)}
+\frac{1}{\mu(I_{p})}\| \chi_{I_{p}}\|_{L^{q}(\mu)}\Big)
\\
&= \mu(I)^{\frac{1}{2}}\Big(\frac{1}{\mu(I)^{\frac{1}{q'}}}
+\frac{1}{\mu(I_{p})^{\frac{1}{q'}}}\Big)
%$$
%\leq B_{I,q}\mu(I)^{\frac{1}{2}-\frac{1}{p'}}\Big(1
%+\frac{\mu(I)^{\frac{1}{p'}}}{\mu(I_{p})^{\frac{1}{p'}}}\Big)
%$$
\leq 2\mu(I)^{-\frac{1}{2}+\frac{1}{q}}
\end{align*}
since $\mu(I)\leq \mu(I_{p})$. 
\end{proof}

Despite the Haar wavelets we have chosen do not constitute an orthogonal system of functions, they still safisfy Parseval's identity as we see in the next lemma. 
\begin{lemma}\label{Planche}  
For $f\in L^2(\mu)$
$$
\sum_{I\in \mathcal D}
%|\langle f,\tilde{\psi}_{I}^{b_{\pi_{S}(I)},i}\rangle |^{2}
|\langle f,\psi_{I}\rangle |^{2}
%+
%\sum_{I\in \mathcal S}
%|\langle f,\tilde{\chi}_{I}\rangle |^{2}
= \| f\|_{L^{2}(\mu)}^{2}.
$$
% For $f\in L^2(\mu)$ such that 
% $f(x)=0$ for all $x\in \partial D$ 
% $$
% \sum_{I\in \tilde{\mathcal D}}
% |\langle f,\psi_{I}\rangle |^{2}
% = \| f\|_{L^{2}(\mu)}^{2}
% $$
\end{lemma}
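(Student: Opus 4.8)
The plan is to prove Parseval's identity $\sum_{I\in\mathcal D}|\langle f,\psi_I\rangle|^2=\|f\|_{L^2(\mu)}^2$ by first establishing it on a dense subclass and then passing to the limit. First I would reduce to $f$ bounded, compactly supported, and with mean zero with respect to $\mu$: such functions are dense in the subspace of $L^2(\mu)$ orthogonal to constants, and since every $\psi_I$ has mean zero (Lemma \ref{psiortho}), both sides are insensitive to adding a constant (on the support); more precisely one works on a large dyadic cube $S$ containing $\supp f$ and treats $f-\langle f\rangle_S\chi_S$. So it suffices to prove the identity for such $f$.

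For $f$ bounded, compactly supported, and mean zero, I would use the telescoping/martingale structure already exploited in Lemma \ref{densityinL2}. Fix a quadrant and a dyadic cube $S\supset\supp f$, and for $M$ large consider the partial sum $S_Mf=\sum_{2^{-M}\le\ell(I)\le 2^M}\langle f,\psi_I\rangle\psi_I$. By Lemma \ref{adaptedwavelets}, grouping the children of each $Q$, this equals $\sum_{-M\le k\le M}\Delta_k f=E_Mf-E_{-(M+1)}f$, and for $M$ chosen with $2^{-M}\le\ell(S)\le 2^{M+1}$ this is just $E_Mf$ (the coarse term vanishes by mean zero, exactly as in the proof of Lemma \ref{densityinL2}). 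Now I compute $\|S_Mf\|_{L^2(\mu)}^2=\langle S_Mf,f\rangle$: indeed $\langle E_Mf,f\rangle=\langle E_Mf,E_Mf\rangle$ since $E_M$ is a self-adjoint projection on $L^2(\mu)$ (averages over the level-$M$ partition), so $\|S_Mf\|_2^2=\|E_Mf\|_2^2$. On the other hand, expanding $\langle S_Mf,f\rangle=\sum_{2^{-M}\le\ell(I)\le 2^M}\langle f,\psi_I\rangle\langle\psi_I,f\rangle=\sum_{2^{-M}\le\ell(I)\le 2^M}|\langle f,\psi_I\rangle|^2$. Hence $\sum_{2^{-M}\le\ell(I)\le 2^M}|\langle f,\psi_I\rangle|^2=\|E_Mf\|_{L^2(\mu)}^2$ for all large $M$.

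Finally I let $M\to\infty$. By Lebesgue's differentiation theorem $E_Mf\to f$ pointwise $\mu$-a.e., and $|E_Mf|\le\|f\|_{L^\infty(\mu)}\chi_S$, so by dominated convergence $\|E_Mf\|_{L^2(\mu)}^2\to\|f\|_{L^2(\mu)}^2$; simultaneously the left side increases to $\sum_{I\in\mathcal D}|\langle f,\psi_I\rangle|^2$ (all terms with $\ell(I)<2^{-M}$ or $\ell(I)>2^M$ eventually included, and by the telescoping identity the truncated sums are exactly the monotone partial sums). This yields the identity for the dense class, and then a standard approximation argument extends it to all $f\in L^2(\mu)$: writing $f=c+g$ with $c$ constant on a large cube is not quite global, so instead I would note that $S_M$ is a uniformly bounded family of operators (each $S_Mf=E_Mf-E_{-(M+1)}f$ with $\|E_j\|\le 1$) and that the quadratic form $f\mapsto\sum_I|\langle f,\psi_I\rangle|^2$ is therefore continuous on the dense class, hence the Bessel-type inequality $\sum_I|\langle f,\psi_I\rangle|^2\le\|f\|_{L^2(\mu)}^2$ holds for all $f$, and equality passes to the closure by continuity of both sides. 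The main obstacle is the bookkeeping in the density/limit step — specifically making sure the coarse-scale term $E_{-(M+1)}f$ genuinely disappears (which needs the mean-zero reduction and the containment $\supp f\subset S\subset$ a single cube of scale $2^{M+1}$), and handling $L^2$ functions that are not compactly supported so that no single dyadic cube contains their support; this is where the uniform boundedness of the partial-sum operators does the work.
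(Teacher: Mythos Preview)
Your proposal is correct and follows essentially the same route as the paper: both arguments apply the telescoping identity behind Lemma~\ref{densityinL2} with $g=f$ to get $\langle S_Mf,f\rangle=\sum_{2^{-M}\le\ell(I)\le 2^M}|\langle f,\psi_I\rangle|^2$, and then pass to the limit using dominated convergence of $E_Mf$. The only difference is that you spell out the reduction to bounded, compactly supported, mean-zero $f$ and the subsequent density extension to all of $L^2(\mu)$, which the paper's proof leaves implicit.
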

% \begin{remark}\label{realPlanche} 
% Did I avoid using the doubling condition?

% The proof shows that the following inequality also holds  
% $$
% \sum_{I\in \mathcal D}
% \Big(\frac{[b_{I_{p}^{a}}]_{I_{p},2}}{|\langle b_{I_{p}^{a}}\rangle_{I_{p}}|}\Big)^{-2}
% |\langle f,\tilde{\psi}_{1,I}\rangle |^{2}+
% \sum_{I\in \mathcal S}
% |\langle f,\tilde{\chi}_{I}\rangle |^{2}
% %\| \tilde{\psi}_{I'}^{b}\|_{2}^{2}\Big(\frac{1}{|[b]_{I_{i}}|}+\frac{2^{-n/2}}{|[b]_{I}|}\Big)B_{1}(I)B_{2}(J)G(I')^{\theta}
% \lesssim \| f\|_{L^{2}(\mu)}^{2}.
% $$
% \end{remark}

\begin{proof} %We prove both equalities simultaneously by only working with the first one. 
We have from Lemma \ref{densityinL2} 
%and \eqref{psiortho2}
\begin{align*}
\|f\|_{L^2(\mu)}^2
&=\int_{\mathbb R^n}f(x)f(x)d\mu(x)
\\
&
=\lim_{M\rightarrow \infty }\int f(x)
\sum_{\substack{I\in \mathcal{D}\\ 2^{-M}\leq \ell(I)\leq 2^{M}}}
\langle f, \psi_{I}\rangle \psi_{I}(x)d\mu(x)
\\
&
=\lim_{M\rightarrow \infty }\sum_{\substack{I\in \mathcal{D}\\ 2^{-M}\leq \ell(I)\leq 2^{M}}}
\langle f, \psi_{I}\rangle 
\langle f, \psi_{I}\rangle 
=\sum_{I\in \mathcal{D}}
|\langle f, \psi_{I}\rangle|^2 .
\end{align*}

% \begin{align*}
% \|f\|_{L^2(\mu)}^2
% &=\int_{\mathbb R^n}f(x)f(x)d\mu(x)
% =\int f(x)
% \sum_{J\in \mathcal{D}}
% \langle f, \psi_{J}\rangle \psi_{J}(x)d\mu(x)
% \\
% &
% =\sum_{J\in \mathcal{D}}
% \langle f, \psi_{J}\rangle 
% \langle f, \psi_{J}\rangle 
% =\sum_{I\in \mathcal{D}}
% \sum_{J\in \mathcal{D}}\langle f, \psi_{I}\rangle
% \langle f, \psi_{J}\rangle \langle \psi_I, \psi_{J}\rangle
% \\
% &=\sum_{I\in \mathcal{D}}
% \sum_{J\in \mathcal{D}}\langle f, \psi_{I}\rangle
% \langle f, \psi_{J}\rangle 
% \delta(I_p,J_p)(\delta (I,J)-
% \frac{\mu(I)^{\frac{1}{2}}\mu(J)^{\frac{1}{2}}}{\mu(I_{p})})
% \\
% &
% =\sum_{I\in \mathcal{D}}
% \langle f, \psi_{I}\rangle^2
% (1-\frac{\mu(I)}{\mu(I_{p})})
% \\
% &
% \hskip50pt
% -\sum_{I\in \mathcal{D}}
% \sum_{\substack{J\in \child(I_p)\\J\neq I}}\langle f, \psi_{I}\rangle
% \langle f, \psi_{J}\rangle 
% \frac{\mu(I)^{\frac{1}{2}}\mu(J)^{\frac{1}{2}}}{\mu(I_{p})}
% \\
% &=\sum_{I\in \mathcal{D}}\langle f, \psi_{I}\rangle^2
% -\sum_{I\in \mathcal{D}}
% \langle f, \psi_{I}\rangle
% \frac{\mu(I)^{\frac{1}{2}}}{\mu(I_p)}
% \sum_{J\in \mathcal{D}(I_p)}
% \langle f, \psi_{J}\rangle 
% \mu(J)^{\frac{1}{2}}.
% \end{align*}
% But now from \eqref{noLI}, we have
% \begin{align}\label{gozero}
% \sum_{J\in \mathcal{D}(I_p)}
% \langle f, \psi_{J}\rangle 
% \mu(J)^{\frac{1}{2}}
% =\langle f,\sum_{J\in \mathcal{D}(I_p)} 
% \mu(J)^{\frac{1}{2}}\psi_{J}\rangle =0.
% \end{align}
\end{proof}

\subsection{Characterization of compactness}\label{charaofcomp} In this section, 
we explain how to use the Haar wavelets to characterize compactness on $L^2(\mu)$ of Calder\'on-Zygmund operators.

%As in \cite{} and \cite{}, we use the following result
%%characterization of compact operators in a Banach space with a Schauder basis 
%(see for example \cite{Fab}) to
%prove the main results on compact singular integral operators:
\begin{definition}\label{lagom}
%Let $E$ be a Banach space of functions with domain in $\mathbb R^{n}$. 
Let $(\psi_{I})_{I\in {\mathcal D}}$ be a Haar wavelet system of $L^2(\mu)$. 
%and 
%$(\tilde{\psi}_{I})_{I\in {\mathcal D}}$ its dual system.
For every $M\in \mathbb N$
we define the lagom projection operator by
$$
P_{M}f=\sum_{I\in {\cal D}_{M}}\langle f, \psi_{I}\rangle \psi_{I},
$$
where $\langle f,\psi_{I}\rangle =\int_{\mathbb R^{n}}f(x)\psi_{I}(x)d\mu(x)$. 
We also define $P_{M}^{\perp }f=f-P_{M}f$.  

We note that $P_{M}^*f=P_Mf$. 
%and $\| P_Mf\|_{L^2(\mu)}\lesssim \|f\|_{L^2(\mu)}$. 
\begin{remark}
When we deal with boundedness, we can consider 
$M=0$ and so, $P_{M}f=0$ and $P_{M}^{\perp }f=f$. 
\end{remark}
\begin{lemma}\label{PMortho} For $f\in L^2(\mu)$
\begin{equation*}
\|P_M^{\perp }f\|_{L^{2}(\mu)}^2
\leq \sum_{I\in \mathcal D_{M}^c}|\langle f,\psi_I\rangle|^2.
\end{equation*}
\end{lemma}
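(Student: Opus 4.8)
The plan is to prove the inequality $\|P_M^\perp f\|_{L^2(\mu)}^2 \le \sum_{I\in\mathcal D_M^c}|\langle f,\psi_I\rangle|^2$ by exploiting the Parseval identity of Lemma \ref{Planche} together with the fact that $P_M$ is, essentially, a partial sum of the wavelet expansion restricted to the lagom cubes. First I would write $P_M^\perp f = f - P_M f$ and expand $f$ in the Haar system. By Lemma \ref{densityinL2} (and its consequence used to prove Lemma \ref{Planche}) we can represent $f$ as an $L^2(\mu)$-limit of partial sums $\sum_{I:\,2^{-M'}\le \ell(I)\le 2^{M'}} \langle f,\psi_I\rangle\psi_I$; subtracting off $P_M f = \sum_{I\in\mathcal D_M}\langle f,\psi_I\rangle\psi_I$ leaves the tail sum $\sum_{I\in\mathcal D_M^c}\langle f,\psi_I\rangle\psi_I$. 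So the goal reduces to showing $\bigl\|\sum_{I\in\mathcal D_M^c}\langle f,\psi_I\rangle\psi_I\bigr\|_{L^2(\mu)}^2 \le \sum_{I\in\mathcal D_M^c}|\langle f,\psi_I\rangle|^2$.

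The key step is then a (quasi-)orthogonality estimate: although the chosen $\psi_I$ do not form an orthonormal system, Lemma \ref{psiortho} shows $\langle\psi_I,\psi_J\rangle=0$ unless $I_p=J_p$, and when $I_p=J_p$ with $I\ne J$ the cross term is $-\mu(I)^{1/2}\mu(J)^{1/2}/\mu(I_p)$, while the diagonal term is $\mu(I)(1/\mu(I)-1/\mu(I_p))=1-\mu(I)/\mu(I_p)$. Thus for any finite set $\mathcal F\subset\mathcal D_M^c$ and scalars $a_I=\langle f,\psi_I\rangle$,
\begin{align*}
\Bigl\|\sum_{I\in\mathcal F}a_I\psi_I\Bigr\|_{L^2(\mu)}^2
&=\sum_{Q}\Bigl(\sum_{\substack{I\in\mathcal F\\ I_p=Q}}|a_I|^2\bigl(1-\tfrac{\mu(I)}{\mu(Q)}\bigr)
-\sum_{\substack{I\ne J\in\mathcal F\\ I_p=J_p=Q}}a_I\bar a_J\tfrac{\mu(I)^{1/2}\mu(J)^{1/2}}{\mu(Q)}\Bigr)\\
&\le \sum_{Q}\sum_{\substack{I\in\mathcal F\\ I_p=Q}}|a_I|^2
=\sum_{I\in\mathcal F}|a_I|^2,
\end{align*}
where for each fixed parent $Q$ the bracket equals $\sum_{I}|a_I|^2-\mu(Q)^{-1}\bigl|\sum_I a_I\mu(I)^{1/2}\bigr|^2 \le \sum_I|a_I|^2$ (dropping a nonnegative term; this is exactly the statement that the Gram matrix of the normalized $\{\mu(I)^{1/2}\psi_I\}_{I_p=Q}$ is $\mathrm{Id}$ minus a rank-one positive semidefinite piece). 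Passing $\mathcal F$ to an exhaustion of $\mathcal D_M^c$ and using that the full tail sum converges in $L^2(\mu)$ (again via Lemma \ref{densityinL2}/\ref{Planche}) gives the claim.

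Finally I would assemble the pieces: $\|P_M^\perp f\|_{L^2(\mu)}^2 = \bigl\|\sum_{I\in\mathcal D_M^c}a_I\psi_I\bigr\|_{L^2(\mu)}^2 \le \sum_{I\in\mathcal D_M^c}|a_I|^2 = \sum_{I\in\mathcal D_M^c}|\langle f,\psi_I\rangle|^2$, which is the asserted bound. The main obstacle is the non-orthogonality of the $\psi_I$: one must carefully organize the double sum by common parent $Q$ and observe that within each such block the interference term only subtracts mass, so the block contributes at most $\sum_{I_p=Q}|a_I|^2$; combined with the (genuine) orthogonality across distinct parents from Lemma \ref{psiortho}, the full sum telescopes into $\sum_{I\in\mathcal D_M^c}|a_I|^2$. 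A secondary technical point is justifying the interchange of limit and norm, i.e. that restricting the convergent wavelet expansion of $f$ to $\mathcal D_M^c$ still converges in $L^2(\mu)$ to $P_M^\perp f$ — this follows because $P_M f$ is a finite-rank operator (a finite sum over $\mathcal D_M$) and $f$ itself is the $L^2(\mu)$-limit of its wavelet partial sums by Lemma \ref{densityinL2} together with density of bounded, compactly supported, mean-zero functions.
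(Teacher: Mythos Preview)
Your approach is correct and proceeds differently from the paper. The paper applies Parseval's identity (Lemma \ref{Planche}) directly to $P_M^\perp f$ and then computes each coefficient $\langle P_M^\perp f,\psi_I\rangle$ explicitly via Lemma \ref{psiortho} and the identity $\sum_{J\in\ch(I_p)}\mu(J)^{1/2}\psi_J=0$ a.e.\ from \eqref{aezero}; this yields the exact equality $\|P_M^\perp f\|_{L^2(\mu)}^2=\sum_{I\in\mathcal D_{M+1}^c}|\langle f,\psi_I\rangle|^2$, and the lemma follows from $\mathcal D_{M+1}^c\subset\mathcal D_M^c$. You instead represent $P_M^\perp f$ as the tail series $\sum_{I\in\mathcal D_M^c}\langle f,\psi_I\rangle\psi_I$ and bound its norm directly through the observation that on each parent block the Gram matrix of $(\psi_I)_{I\in\ch(Q)}$ equals $\mathrm{Id}$ minus a rank-one positive semidefinite piece. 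Your route is conceptually clean (it makes transparent why the frame inequality holds with constant $1$) but yields only the inequality; the paper's coefficient calculation gives the sharper identity, which is reused in the subsequent Remark to bound $\|P_M\|$. One small point deserves care: Lemma \ref{densityinL2} is stated as a weak convergence result, so to justify the $L^2(\mu)$ representation $P_M^\perp f=\sum_{I\in\mathcal D_M^c}\langle f,\psi_I\rangle\psi_I$ you should first observe that your own block-orthogonality estimate combined with Lemma \ref{Planche} makes the partial sums Cauchy in $L^2(\mu)$, identify the $L^2$ limit with $f$ for bounded, compactly supported, mean-zero $f$ via the weak convergence of Lemma \ref{densityinL2}, and then extend by density as you indicate.
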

\begin{proof}
By Parseval's identity of Lemma \ref{Planche}
we have 
\begin{align*}
\|P_M^{\perp }f\|_{L^{2}(\mu)}^2&=
\|f- P_Mf\|_{L^{2}(\mu)}^2
=\sum_{\substack{I\in \mathcal D\\ \mu(I)\neq 0}}|\langle f- P_Mf, \psi_I\rangle|^2
\end{align*}
For $I\in \mathcal D$ with $\mu(I)\neq 0$, by the definition of $P_M$, 
\begin{align*}
\langle P_Mf, \psi_I\rangle
&=\sum_{J\in \mathcal D_M}\langle f, \psi_J\rangle \langle \psi_J,\psi_I\rangle .
% \\
% &
% =\langle f,\psi_I\rangle-\sum_{J\in \mathcal D_M}\langle f, \psi_J\rangle
% \delta(I_p,J_p)(\delta (I,J)-
% \frac{\mu(I)^{\frac{1}{2}}\mu(J)^{\frac{1}{2}}}{\mu(I_{p})})
\end{align*}

From \eqref{psiortho2}, we know that $\langle \psi_J,\psi_I\rangle =0$ if $I_p\neq J_p$. Now we see that 
$J\in \mathcal D_M$ and $I_p=J_p$ imply $I\in \mathcal D_{M+1}$. 

Since $J\in \mathcal D_M$, we have
$2^{-M}\leq \ell(J)\leq 2^{M}$ and 
$\rdist(J,\mathbb B_{2^M})\leq M$. 
Moreover, $I_p=J_p$ implies $\ell(I)=\ell(J)$ and 
$\dist(I,J)=0$. With this,  
we have $2^{-M}\leq \ell(I)\leq 2^{M}$ and
$$
1+\frac{\dist(I, \mathbb B_{2^M})}{2^M}
\leq 1+\frac{\dist(J, \mathbb B_{2^M})+ \ell(I)}{2^M}
\leq \rdist(J,\mathbb B_{2^M})+1\leq M+1.
$$ 
Then 
$\rdist (I,\mathbb B_{2^{M+1}})=1+\frac{\dist(I, \mathbb B_{2^{M+1}})}{2^{M+1}}
\leq 1+\frac{\dist(I, \mathbb B_{2^M})}{2^M}
\leq  M+1$.    

With this, we now reason as follows. 
If $I\in \mathcal D_{M+1}^c$, then $I_p\neq J_p$ for all $J\in \mathcal D_M$ and so, by 
\eqref{psiortho2}, we have  
$\langle \psi_J,\psi_I\rangle=0$. Then 
$
%\langle f- P_Mf, \psi_I\rangle &=\langle f,\psi_I\rangle .
\langle P_Mf, \psi_I\rangle=0$ and thus
$
\langle f- P_Mf, \psi_I\rangle
=\langle f,\psi_I\rangle
$.

On the other hand, if $I\in \mathcal D_{M+1}$ we have
again by \eqref{psiortho2} that 
\begin{align*}
\langle &P_Mf, \psi_I\rangle
=\sum_{J\in \child(I_p)}
\langle f, \psi_J\rangle
\mu(I)^{\frac{1}{2}}\mu(J)^{\frac{1}{2}}
(\frac{\delta (I,J)}{\mu(I)}-
\frac{1}{\mu(I_{p})})
\\
&
=\langle f, \psi_I\rangle
-\frac{\mu(I)^{\frac{1}{2}}}{\mu(I_{p})}
\sum_{J\in \child(I_p)}
\langle f, \psi_J\rangle \mu(J)^{\frac{1}{2}}
=\langle f, \psi_I\rangle
\end{align*}
since 
$$
\sum_{J\in \child(I_p)}
\langle f, \psi_J\rangle \mu(J)^{\frac{1}{2}}
=\int f(x)\sum_{J\in \child(I_p)}
\mu(J)^{\frac{1}{2}}\psi_J(x)d\mu(x)=0
$$
by \eqref{aezero}. 
Therefore
$\langle f- P_Mf, \psi_I\rangle
=0$.
%-\sum_{J\in \mathcal D_M}\langle f, \psi_J\rangle \langle \psi_J,\psi_I\rangle .
% \\
% &
% =\langle f,\psi_I\rangle-\sum_{J\in \mathcal D_M}\langle f, \psi_J\rangle
% \delta(I_p,J_p)(\delta (I,J)-
% \frac{\mu(I)^{\frac{1}{2}}\mu(J)^{\frac{1}{2}}}{\mu(I_{p})})

With both results, and $\mathcal D_{M}\subset \mathcal D_{M+1}^c$ we get 
\begin{align*}
\|P_M^{\perp }f\|_{L^{2}(\mu)}^2&
=\sum_{I\in \mathcal D_{M+1}^c}|\langle f,\psi_I\rangle|^2
\leq \sum_{I\in \mathcal D_{M}^c}|\langle f,\psi_I\rangle|^2.
\end{align*}
\end{proof}
\begin{remark}
Previous work also shows that 
\begin{equation*}
\|P_Mf\|_{L^{2}(\mu)}^2
=\sum_{I\in \mathcal D_{M+1}}|\langle f,\psi_I\rangle|^2
\leq \|f \|_{L^2(\mu)}
\end{equation*}
and so, $\|P_M\|_{L^{2}(\mu)\rightarrow L^{2}(\mu)}\leq 1$ and $\|P_M^\perp\|_{L^{2}(\mu)\rightarrow L^{2}(\mu)}\leq 1$.
\end{remark}
\begin{corollary} Let $f\in L^2(\mu)$. Then
\begin{equation}\label{limPar} 
\lim_{M\rightarrow \infty}\|P_M^{\perp }f\|_{L^{2}(\mu)}
=0.
\end{equation}
\end{corollary}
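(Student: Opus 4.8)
The plan is to combine the tail estimate of Lemma~\ref{PMortho} with Parseval's identity of Lemma~\ref{Planche}, together with the elementary fact that the lagom families $\mathcal D_M$ exhaust $\mathcal D$. Fix $f\in L^2(\mu)$ and $\varepsilon>0$. By Lemma~\ref{Planche} the nonnegative series $\sum_{I\in \mathcal D}|\langle f,\psi_I\rangle|^2$ converges (its sum being $\|f\|_{L^2(\mu)}^2$), so there is a \emph{finite} subfamily $\mathcal F\subset \mathcal D$ with $\sum_{I\in \mathcal D\setminus \mathcal F}|\langle f,\psi_I\rangle|^2<\varepsilon$. Note cubes with $\mu(I)=0$ satisfy $\psi_I=0$ and hence contribute nothing, so this causes no difficulty.

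Next I would verify the exhaustion claim: every $I\in \mathcal D$ lies in $\mathcal D_M$ for all sufficiently large $M$. Indeed, for $M$ large we have $2^{-M}\le \ell(I)\le 2^M$ and $\mathbb B_{2^M}\supset I$, so $\dist(I,\mathbb B_{2^M})=0$ and therefore $\rdist(I,\mathbb B_{2^M})=1\le M$; thus $I\in \mathcal C_M\cap \mathcal D=\mathcal D_M$. Applying this to each of the finitely many cubes of $\mathcal F$ and taking the maximum of the resulting thresholds yields $M_0\in \mathbb N$ such that $\mathcal F\subset \mathcal D_M$, equivalently $\mathcal D_M^c\subset \mathcal D\setminus \mathcal F$, for all $M\ge M_0$.

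Finally, for $M\ge M_0$, Lemma~\ref{PMortho} gives
$$
\|P_M^{\perp }f\|_{L^{2}(\mu)}^2
\le \sum_{I\in \mathcal D_{M}^c}|\langle f,\psi_I\rangle|^2
\le \sum_{I\in \mathcal D\setminus \mathcal F}|\langle f,\psi_I\rangle|^2
<\varepsilon,
$$
and since $\varepsilon>0$ was arbitrary this establishes \eqref{limPar}. There is no genuine obstacle in this argument; the only point demanding (a trivial amount of) care is the exhaustion $\mathcal D_M\uparrow \mathcal D$, i.e. checking that both the side-length bound and the relative-distance bound defining a lagom cube are eventually met by any fixed dyadic cube—after which the conclusion is just the statement that the tail of a convergent series of squares of Haar coefficients tends to $0$.
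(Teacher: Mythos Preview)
Your proof is correct and follows essentially the same approach as the paper: both combine Parseval's identity (Lemma~\ref{Planche}) with the tail estimate of Lemma~\ref{PMortho}, and conclude by noting that the tail $\sum_{I\in \mathcal D_M^c}|\langle f,\psi_I\rangle|^2$ of a convergent series tends to zero. The paper's proof is simply a terser version that does not spell out the exhaustion $\mathcal D_M\uparrow \mathcal D$ or the $\varepsilon$-argument, but the logical content is the same.
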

\begin{proof}
By Lemma \ref{Planche} we have 
$
\sum_{I\in \mathcal D}
|\langle f,\psi_{I}\rangle |^{2}
= \| f\|_{L^{2}(\mu)}^{2}<\infty $. Then, by 
Lemma \ref{PMortho}, 
$$
\lim_{M\rightarrow \infty}\|P_M^{\perp }f\|_{L^{2}(\mu)}^2
=\lim_{M\rightarrow \infty}\sum_{I\in \mathcal D_{M+1}^c}|\langle f,\psi_I\rangle|^2
=0.
$$
\end{proof}

%Finally, recall that if the wavelet
%basis 
%%$(\psi_{I}^{i})_{I \in \mathcal{D},i\in \{ 1,\ldots ,2^{n}\}}$ 
%is orthonormal with respect to the pairing $\langle,\rangle_E$, then $\tilde\psi_{I}=\psi_{I}$.
%and remark that the equality
%For more information about these operators, we refer to \cite{}.

%\begin{equation}\label{ortho}
%P_{M}^{\perp}f=\sum_{I\in {\cal D}_{M}^{c}}\langle f, \psi_{I}\rangle \tilde \psi_{I}
%\end{equation}
%is to be interpreted in its Schauder basis sense, i.e.,
%$$
%\lim_{M'\rightarrow \infty }\Big\| P_{M}^{\perp}f
%-\sum_{I\in {\cal D}_{M'}\backslash {\cal D}_{M}}\langle f,\psi_{I}\rangle \tilde \psi_{I}  \Big\|_{E}=0.
%$$
%This, of course, allows us to write 
%\begin{equation}\label{orthodual}
%\langle  P_{M}^{\perp}f, g\rangle =\sum_{I\in {\cal D}_{M}^{c}}\sum_{i=1}^{2^{n}}\langle f, \psi_{I}^{i}\rangle 
%\langle \tilde \psi_{I}^{i},g\rangle
%\end{equation}
%for all $g\in E'$ and similar for $(P_{M}^{\perp})^{*}$.
\end{definition}

To prove the main result Theorem \ref{Mainresult2}, we will show that the truncated operators 
$T_{\gamma, Q}$ are uniformly bounded or compact on 
$L^2(\mu)$ with estimates independent on $Q$ and $\gamma $. 

\begin{lemma}\label{orthorep} Let $T$ be 
a bounded operator on $L^{2}(\mu)$.
%a linear operator with a compact Calder\'on-Zygmund kernel $K$.  
Let $(\psi_{I})_{I\in {\mathcal D}}$ be the Haar wavelet system. 
%We assume $T$ to be bounded on $L^{2}(\mu)$. 
Then
\begin{align}\label{generaldec}
\langle Tf,g\rangle 
=\sum_{I,J\in \mathcal D}
%\sum_{\tiny \begin{array}{c}I\in \ch R\\ J\in \ch S\end{array}}
\langle f,\psi_{I}\rangle 
\langle g,\psi_{J}\rangle  \langle T\psi_{I}, \psi_{J}\rangle
\end{align}
for all $f,g$ compactly supported and integrable. 
%locally integrable.
\end{lemma}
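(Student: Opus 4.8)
The plan is to expand both $f$ and $g$ in the Haar wavelet system using Lemma \ref{densityinL2} and then pass the limits through the bounded operator $T$. First I would reduce to the case where $f$ and $g$ have mean zero with respect to $\mu$: if $f,g$ are compactly supported and integrable, choose $S\in\mathcal D$ containing $\supp f\cup\supp g$ in one quadrant, and note that $\langle Tf,g\rangle$ only depends on the restrictions; but since the statement as written already assumes the full expansion is valid, the cleaner route is to apply Lemma \ref{densityinL2} directly, which for $f$ bounded compactly supported with mean zero gives
$$
\langle f,h\rangle=\lim_{M\to\infty}\Big\langle \sum_{\substack{I\in\mathcal D\\ 2^{-M}\le\ell(I)\le 2^M}}\langle f,\psi_I\rangle\psi_I,\ h\Big\rangle
$$
for all bounded compactly supported $h$. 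Set $f_M=\sum_{2^{-M}\le\ell(I)\le 2^M}\langle f,\psi_I\rangle\psi_I$ and similarly $g_M$, which are finite linear combinations of Haar functions.

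The key steps in order: (1) Write $\langle Tf,g\rangle=\langle Tf,g_M\rangle + \langle Tf,g-g_M\rangle$ and observe $\langle Tf,g-g_M\rangle=\langle f,T^*(g-g_M)\rangle\to 0$ as $M\to\infty$, using that $T^*$ is bounded on $L^2(\mu)$ and $g_M\to g$ in $L^2(\mu)$ (which follows from Parseval, Lemma \ref{Planche}, once we know $g\in L^2(\mu)$; here boundedness/compact support gives $g\in L^2(\mu)$). (2) For fixed $g_M$, a finite sum $\sum_{J}\langle g,\psi_J\rangle\psi_J$, write $\langle Tf,g_M\rangle=\sum_J\langle g,\psi_J\rangle\langle Tf,\psi_J\rangle$, and then expand $f$: $\langle Tf,\psi_J\rangle=\langle f,T^*\psi_J\rangle=\lim_{N\to\infty}\langle f_N,T^*\psi_J\rangle=\lim_{N\to\infty}\sum_I\langle f,\psi_I\rangle\langle T\psi_I,\psi_J\rangle$, again using boundedness of $T^*$ and $f_N\to f$ in $L^2(\mu)$. (3) Combining, $\langle Tf,g\rangle=\lim_{M}\lim_{N}\sum_{I,J}\langle f,\psi_I\rangle\langle g,\psi_J\rangle\langle T\psi_I,\psi_J\rangle$, and finally argue the double limit equals the unrestricted sum over all $I,J\in\mathcal D$.

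The main obstacle I anticipate is justifying that the iterated limit equals the absolutely (or at least unconditionally) convergent double sum $\sum_{I,J\in\mathcal D}\langle f,\psi_I\rangle\langle g,\psi_J\rangle\langle T\psi_I,\psi_J\rangle$ written in \eqref{generaldec}. One clean way: by Cauchy-Schwarz and boundedness of $T$,
$$
\Big|\sum_{I,J}\langle f,\psi_I\rangle\langle g,\psi_J\rangle\langle T\psi_I,\psi_J\rangle\Big|
\le \|T\|\Big(\sum_I|\langle f,\psi_I\rangle|^2\Big)^{1/2}\Big(\sum_J|\langle g,\psi_J\rangle|^2\Big)^{1/2}=\|T\|\,\|f\|_{L^2(\mu)}\|g\|_{L^2(\mu)},
$$
using Lemma \ref{Planche} twice together with the fact that $T$ maps the $L^2$-normalized partial-sum vector $\sum_I\langle f,\psi_I\rangle\psi_I/\|\cdot\|$ to a vector of norm $\le\|T\|$; the same estimate applied to tails shows the double series converges unconditionally, so the order of summation and the iterated limits are immaterial, and the value is $\langle Tf,g\rangle$ by the density argument above. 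I would present step (1)–(3) in detail and invoke this uniform bound to dispatch convergence, noting that the identity then extends from $f,g$ bounded with mean zero to general compactly supported integrable $f,g$ by the reductions already used in the proof of Lemma \ref{densityinL2} (splitting into quadrants and subtracting means, the mean parts contributing nothing since $\psi_I$ has mean zero).
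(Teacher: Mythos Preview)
Your approach is essentially the same as the paper's: expand $f$ and $g$ via the Haar system and use boundedness of $T$ to pass to the limit. The paper does this more directly with a single diagonal limit: it writes
\[
|\langle Tf,g\rangle-\langle TP_Mf,P_Mg\rangle|\le \|T\|\,\|f-P_Mf\|_{L^2(\mu)}\|g\|_{L^2(\mu)}+\|T\|\,\|P_Mf\|_{L^2(\mu)}\|g-P_Mg\|_{L^2(\mu)},
\]
and invokes the already-proved fact $\|P_M^\perp f\|_{L^2(\mu)}\to 0$ (equation \eqref{limPar}). Your iterated-limit version $\lim_M\lim_N$ is correct but unnecessarily elaborate, and the final discussion of unconditional convergence of the double sum is not needed once the diagonal partial sums $\langle TP_Mf,P_Mg\rangle$ are shown to converge.

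One point to tighten: you justify $g_M\to g$ in $L^2(\mu)$ by ``Parseval, Lemma \ref{Planche}''. Be careful here --- the Haar system $(\psi_I)_{I\in\mathcal D}$ in this paper is \emph{not} orthonormal (see Lemma \ref{psiortho}), so Parseval's identity $\sum_I|\langle g,\psi_I\rangle|^2=\|g\|_{L^2(\mu)}^2$ by itself does not immediately yield $\|g-g_M\|_{L^2(\mu)}\to 0$, nor does it give $\|\sum_{I\in A}\langle f,\psi_I\rangle\psi_I\|_{L^2(\mu)}^2=\sum_{I\in A}|\langle f,\psi_I\rangle|^2$ for arbitrary subsets $A$, which is what your Cauchy--Schwarz tail estimate tacitly uses. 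The convergence you need is precisely the content of Lemma \ref{PMortho} and its Corollary (equation \eqref{limPar}); cite those rather than Parseval alone. With that correction your argument is complete and matches the paper's.
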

%We remind that the first and fourth symbols $\langle, \rangle$ denote the action of a distribution while the second and forth are actual scalar products. 
\begin{proof} Let 
$P_{M}$ be the lagom projection related to the Haar wavelet frame. Since $T$ is bounded, we have
\begin{align*}
|\langle Tf,g\rangle-\langle TP_Mf, P_Mg\rangle |
&\leq |\langle T(f- P_Mf), g\rangle |
+|\langle TP_Mf, g-P_Mg\rangle |
\\
&\leq \| T\| \|f- P_Mf\|_{L^{2}(\mu)}
\|g\|_{L^{2}(\mu)}
\\
&
+\| T\| \|P_Mf\|_{L^{2}(\mu)}
\|g- P_Mg\|_{L^{2}(\mu)}. 
\end{align*}
Since by \eqref{limPar} we have $\|f- P_Mf\|_{L^{2}(\mu)}$ and $\|g- P_Mg\|_{L^{2}(\mu)}$ tend to zero, so does the left hand side of previous chain of inequalities.
\end{proof}

\begin{corollary}\label{repofproj} With the same hypotheses of Lemma \ref{orthorep}, let 
$P_{M}$ be the lagom projection related to the Haar wavelet frame. %such that there exists $C>0$ satisfying $B_{1}^{b_{i}}(I)\leq C[b_{i}]_{I}$ for all dyadic cubes $I$. 
%Let 
%$b_{i}^{N}=E_{N}(b_{i}\chi_{K})
%%=\sum_{I\in {\mathcal D}: \ell(I)=2^{-N}, I\subset K}[b_{i}]_{I}\chi_{I}
%$. 
Then
$$
\langle P_{M}^{\perp}TP_{M}^{\perp}f,g\rangle =\sum_{I,J\in \mathcal D_{M}^{c}}
%\sum_{J\in \mathcal D_{M}^{c}}
%\sum_{\tiny \begin{array}{c}I'\in \child(I)\\J'\in \child(J)\end{array}}
\langle f,\psi_{I}\rangle 
\langle g,\psi_{J}\rangle  \langle T\psi_{I}, \psi_{J}\rangle
$$
for all $f,g$ compactly supported and integrable. 
%where the sum is understood as in \eqref{ortho}. 
%The dual representation for $T_{b}$ also holds. 

%We remind that the first and fourth symbols $\langle, \rangle$ denote the action of a distribution while the second and forth are actual scalar products. 
\end{corollary}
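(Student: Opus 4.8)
The plan is to combine the wavelet decomposition of Lemma~\ref{orthorep} with the definition of $P_M$ and the orthogonality relation \eqref{psiortho2}. First I would start from the identity
$$
\langle P_M^{\perp}TP_M^{\perp}f,g\rangle = \langle T(P_M^{\perp}f),(P_M^{\perp}g)\rangle,
$$
which holds because $P_M^*=P_M$ and hence $(P_M^{\perp})^*=P_M^{\perp}$. Since $P_M^{\perp}f$ and $P_M^{\perp}g$ are in $L^2(\mu)$ but perhaps no longer compactly supported, one must check that Lemma~\ref{orthorep} (stated for compactly supported integrable $f,g$) still applies; the clean way is to keep $f,g$ compactly supported and integrable and instead expand directly: apply \eqref{generaldec} to $\langle T(P_M^{\perp}f),P_M^{\perp}g\rangle$ only after replacing $P_M^{\perp}f$, $P_M^{\perp}g$ by their wavelet series, i.e.\ use the bilinearity and the boundedness of $T$ together with Lemma~\ref{densityinL2} / Lemma~\ref{Planche} to justify interchanging $T$ with the (norm-convergent) wavelet expansions.

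The main computation is to evaluate the wavelet coefficients of $P_M^{\perp}f$. From the proof of Lemma~\ref{PMortho} we already know precisely that for $\mu(I)\neq 0$,
$$
\langle P_M^{\perp}f,\psi_I\rangle = \langle f-P_Mf,\psi_I\rangle =
\begin{cases} 0 & \text{if } I\in \mathcal D_{M+1},\\ \langle f,\psi_I\rangle & \text{if } I\in \mathcal D_{M+1}^c.\end{cases}
$$
Actually for the present corollary it is enough to observe the slightly weaker fact that $\langle P_M^{\perp}f,\psi_I\rangle = \langle f,\psi_I\rangle$ for $I\in \mathcal D_M^c$ and that the expansion of $P_M^{\perp}f$ only involves such $I$ — but the exact statement above, reused verbatim from Lemma~\ref{PMortho}, makes the bookkeeping cleanest. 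Then, expanding both $P_M^{\perp}f$ and $P_M^{\perp}g$ via Lemma~\ref{Planche} / Lemma~\ref{orthorep} gives
$$
\langle P_M^{\perp}TP_M^{\perp}f,g\rangle = \sum_{I,J\in \mathcal D}\langle P_M^{\perp}f,\psi_I\rangle\langle P_M^{\perp}g,\psi_J\rangle\langle T\psi_I,\psi_J\rangle,
$$
and since the coefficients vanish unless $I,J\in \mathcal D_{M+1}^c$ — in particular they vanish off $\mathcal D_M^c$ because $\mathcal D_M\subset \mathcal D_{M+1}^c$ is false in the wrong direction, so one must be careful: $\mathcal D_{M+1}^c\subset \mathcal D_M^c$, which is the correct inclusion — the sum collapses to $\sum_{I,J\in \mathcal D_M^c}$, with $\langle P_M^{\perp}f,\psi_I\rangle=\langle f,\psi_I\rangle$ and $\langle P_M^{\perp}g,\psi_J\rangle=\langle g,\psi_J\rangle$ on that index set. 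This yields exactly the claimed formula.

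The only genuine obstacle is the convergence/interchange justification: Lemma~\ref{orthorep} is stated for compactly supported integrable inputs, whereas $P_M^{\perp}f$ is generally neither. The fix is to not apply $T$ to $P_M^{\perp}f$ as a single vector, but to use that $T$ is bounded on $L^2(\mu)$, that the partial sums $\sum_{I\in \mathcal D,\,2^{-N}\le \ell(I)\le 2^N}\langle P_M^{\perp}f,\psi_I\rangle\psi_I$ converge to $P_M^{\perp}f$ in $L^2(\mu)$ (Lemma~\ref{Planche}), and continuity of the bilinear form $\langle T\cdot,\cdot\rangle$ in each variable. Passing to the limit in $N$ in $\langle T(\text{partial sum}),P_M^{\perp}g\rangle$ and then in the second variable, and invoking $\langle\psi_I,\psi_J\rangle$-orthogonality from \eqref{psiortho2} to collapse double sums appearing along the way, produces the stated double series. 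Everything else is the coefficient bookkeeping already carried out in Lemma~\ref{PMortho}, which can simply be quoted.
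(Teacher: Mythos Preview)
Your approach has a genuine gap stemming from the fact that the Haar system $(\psi_I)_{I\in\mathcal D}$ in this paper is \emph{not} orthonormal (siblings have nonzero inner product, see \eqref{psiortho2}). Because of this, a function admits several distinct frame expansions. You quote from Lemma~\ref{PMortho} that $\langle P_M^\perp f,\psi_I\rangle$ vanishes for $I\in\mathcal D_{M+1}$ and equals $\langle f,\psi_I\rangle$ for $I\in\mathcal D_{M+1}^c$, and then apply \eqref{generaldec} with these coefficients. Carried out consistently, that yields
\[
\langle P_M^\perp T P_M^\perp f,g\rangle=\sum_{I,J\in\mathcal D_{M+1}^c}\langle f,\psi_I\rangle\langle g,\psi_J\rangle\langle T\psi_I,\psi_J\rangle,
\]
which is a sum over $\mathcal D_{M+1}^c$, strictly smaller than the $\mathcal D_M^c$ appearing in the statement. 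Your claim that ``$\langle P_M^\perp f,\psi_I\rangle=\langle f,\psi_I\rangle$ for $I\in\mathcal D_M^c$'' is false precisely on the strip $\mathcal D_{M+1}\setminus\mathcal D_M$, where the coefficient is $0$, not $\langle f,\psi_I\rangle$. So your route proves a different (though true) identity, not the one asserted.

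The paper avoids this by never computing $\langle P_M^\perp f,\psi_I\rangle$. Instead it writes $P_M^\perp=\mathrm{Id}-P_M$ and expands $\langle TP_M^\perp f,P_M^\perp g\rangle$ into four terms $\langle Tf,g\rangle-\langle Tf,P_Mg\rangle-\langle TP_Mf,g\rangle+\langle TP_Mf,P_Mg\rangle$. Since $P_Mf=\sum_{I\in\mathcal D_M}\langle f,\psi_I\rangle\psi_I$ is a \emph{finite} sum of compactly supported functions, one substitutes this definition directly (no dual-coefficient computation needed) and then uses the $L^2$-convergence argument behind \eqref{generaldec} on the remaining variable. The four resulting sums combine algebraically to $\sum_{I,J\in\mathcal D_M^c}$. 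The easy fix for your argument is the same: use the definitional expansion $P_M^\perp f=\sum_{I\in\mathcal D_M^c}\langle f,\psi_I\rangle\psi_I$ rather than the canonical frame coefficients $\langle P_M^\perp f,\psi_I\rangle$.
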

\begin{proof} We have that 
%\begin{align*}
%\langle &P_{2,M}^{\perp}T(P_{1,M}^{\perp})^{*}f,g\rangle 
%=\langle T(P_{1,M}^{\perp})^{*}f,(P_{2,M}^{\perp})^{*}g\rangle
%\\
%&=\langle Tf,g\rangle 
%-\langle Tf,P_{2,M}^{*}g\rangle 
%-\langle TP_{1,M}^{*}f,g\rangle 
%+\langle TP_{1,M}^{*}f,P_{2,M}^{*}g\rangle 
%\end{align*}
\begin{align*}
\langle P_{M}^{\perp}TP_{M}^{\perp}f,g\rangle 
&=\langle TP_{M}^{\perp}f,P_{M}^{\perp}g\rangle
\\
&=\langle Tf,g\rangle 
-\langle Tf,P_{M}g\rangle 
-\langle TP_{M}f,g\rangle 
+\langle TP_{M}f,P_{M}g\rangle . 
\end{align*}
Then by \eqref{generaldec} the last expression coincides with the right hand side of the statement:
\begin{align*}
\langle P_{M}^{\perp}TP_{M}^{\perp}f,g\rangle 
&=\Big(\sum_{I,J\in \mathcal D}
-\sum_{\substack{I\in \mathcal D\\ J\in \mathcal D_M}}
-\sum_{\substack{I\in \mathcal D_M\\ J\in \mathcal D}}
+\sum_{I,J\in \mathcal D_M}\Big)
\langle f,\psi_{I}\rangle 
\langle g,\psi_{J}\rangle  \langle T\psi_{I}, \psi_{J}\rangle
\\
&=\Big(
\sum_{\substack{I\in \mathcal D\\ J\in \mathcal D_M^c}}
-\sum_{\substack{I\in \mathcal D_M\\ J\in \mathcal D_M^c}}
\Big)
\langle f,\psi_{I}\rangle 
\langle g,\psi_{J}\rangle  \langle T\psi_{I}, \psi_{J}\rangle .
\end{align*}
\end{proof}

As explained in \cite{V2}, to prove compactness of an operator on $L^{2}(\mu)$ 
it suffices to show that 
$
\langle TP_{M}^{\perp}f,P_{M}^{\perp}g\rangle 
$
tends to zero when $M$ tends to infinity uniformly for all %bounded, compactly supported 
functions $f,g$  in the unit ball of 
$L^{2}(\mu)$.
%As we will see, we can also assume that $f,g$ have zero mean and are constant on sufficiently small cubes. 
Furthermore, $f, g$ can be assumed to be bounded, compactly supported, and with mean zero with respect to $\mu$. 
%and constant on cubes of size length $\ell(I)<2^{-N_1}\ell(Q)$ for $N_1$ large enough. 
To see this last point,   
let $(\psi_I)_{I\in  \mathcal D}$ be the Haar wavelets
frame of Definition \ref{defpsi} and $P_N$ be the associated projection operator. 
Let $Q\in \mathcal D$ fixed with 
$\ell(Q)>1$ 
and $f,g$ in the unit ball of $L^2(\mu)$ supported on $Q$. Let also $N_1\in \mathbb N$ 
%$N_1>M_0+N_0$ 
such that $\|f-P_{N_1}f\|_{L^2(\mu)}
+\|g-P_{N_1}g\|_{L^2(\mu)}
<\frac{\epsilon \gamma^{\alpha}}{\mu(Q)}
$. Then 
\begin{align*}
|\langle &P_{M}^{\perp}T_{\gamma, Q,\mu} P_{M}^{\perp}f,g\rangle 
-\langle P_{M}^{\perp}T_{\gamma, Q,\mu} P_{M}^{\perp}(P_{N_1}f),P_{N_1}g\rangle |
% \\
% &\leq |\langle P_{M}^{\perp}T_{\gamma, Q,\mu} P_{M}^{\perp}(f-P_{N_1}f),g\rangle| 
% +|\langle P_{M}^{\perp}T_{\gamma, Q,\mu} P_{M}^{\perp}(P_{N_1}f),g-P_{N_1}g\rangle| 
\\
&
\leq \| P_{M}^{\perp}T_{\gamma, Q,\mu} P_{M}^{\perp}\|
(\|f-P_{N_1}f\|_{L^2(\mu)}
+\|g-P_{N_1}g\|_{L^2(\mu)})
\lesssim \epsilon ,
\end{align*}
where we used that $\| P_{M}^{\perp}T_{\gamma, Q,\mu} P_{M}^{\perp}\|\lesssim \| T_{\gamma, Q,\mu}\|\lesssim \frac{\mu(Q)}{\gamma^{\alpha}}$. 

This supports the claim 
since the functions $P_{N_1}f$ and $P_{N_1}g$ are bounded, compactly supported and have mean zero with respect to $\mu$.
%and are constant on cubes of side length smaller than 
%$2^{-N_{1}}<2^{-N_1}\ell(Q)$. 

%Here on line on compactness of $T_\gamma $ versus compactness of $T$...

% \begin{lemma}\label{smallF} Let $\tilde{L}, S, \tilde{D}$ be the functions of Definition \ref{LSDF} and let 
% $
% F(I, J) \! = \!
% \tilde F_{K}(I\smland J, I\smland J,  \langle I,J\rangle )
% +F_{W}(I;M_{T,\epsilon }) \chi_{I=J}$.
%  Given $\epsilon >0$, let $M>0$ be large enough so that
%  $\tilde{L}(2^{M})+S(2^{-M})+\tilde{D}(M^{\frac{1}{8}})<\epsilon $. 

% Then
% for all %$I,J\in \mathcal D$ such that 
% $I\in {\mathcal D}_{2M}^{c}$ and $J\in {\mathcal D}_{M}^{c}$ %and $\langle I,J\rangle \in {\mathcal D}_{3M}^{c}$ 
% we have that: either $F(I,J)<\epsilon $, $|\log(\ec(I,J))|\gtrsim \log M$, or $\rdist(I, J)\gtrsim M^{\frac{1}{8}}$. 
% \end{lemma}

% \begin{remark}
% $F(I,J)<\epsilon $ holds when either
% $\ell(I\smland J)>2^{M}$, or
% $\ell(I\smland J)<2^{-M}$, or $\rdist(\langle I,J\rangle ,\mathbb B)>M^{1/8}$. For this reason, in Definition \ref{accretive}
% we denote by $\mathcal F_{M}$ the family of
% ordered pairs $(I,J)$ with $I,J\in \mathcal D_{M}^{c}$ satisfying some of these three inequalities.
% %either $I\in {\mathcal D}_{M}^{c}$, or $J\in {\mathcal D}_{M}^{c}$ or $\langle I,J\rangle\in  {\mathcal I}_{M^{1/8}}^{c}$.
% \end{remark}

%Before proving the main result, 
We continue with the following technical result:
\begin{lemma}\label{smallF} 
We remind that
$$
F_{\mu}(I, J) \! = \!
\sup_{\substack{R\subset I, S\subset J}}F_{K}(R, S)\rho_{\mu}(R\smlor S)
+F_{T}(I) \delta(I,J),
$$ 
where $\rho_{\mu}$ is defined in \eqref{suitable2}, $F_K$ is defined in 
\eqref{defFmu}, $F_T$ is given in \eqref{b-averageandbound1limit}, $\delta $ is Dirac's delta.
Let also $L, S, \tilde{D}$ be the functions of Definition \ref{LSDF}.

By \eqref{limits}, 
given $\epsilon >0$, we can take $M>0$ so that
$L(2^M)\rho_{\mu}(I)<\epsilon $ if 
$\ell(I)>2^M$, 
$S(2^{-M})\rho_{\mu}(I)<\epsilon $ if 
$\ell(I)<2^{-M}$, and 
$D(M^{\frac{1}{8}})\rho_{\mu}(I)<\epsilon $
if $\rdist(I,\mathbb B_{2^M})>M^{\frac{1}{8}}$, 
% $L(2^M)\rho_{\mu}(I)+S(2^{-M})+\tilde{D}(M^{\frac{1}{8}})<\epsilon /2$ 
and $F_{T}(I)<\epsilon $ for $I\in {\mathcal D}_{M}^{c}$.

Then
for all %$I,J\in \mathcal D$ such that 
$I\in {\mathcal D}_{2M}^{c}$ and $J\in {\mathcal D}_{M}^{c}$ %and $\langle I,J\rangle \in {\mathcal D}_{3M}^{c}$ 
we have that: either $F_{\mu}(I,J)<\epsilon $, or  $|\log(\ec(I,J))|\gtrsim \log M$, or $\rdist(I, J)\gtrsim M^{\frac{1}{8}}$. 
\end{lemma}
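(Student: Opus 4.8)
\emph{Strategy.} I would prove the contrapositive. Fix $M$ as in the statement, so that the four listed consequences of \eqref{limits} and \eqref{FT} hold, and suppose $I\in{\mathcal D}_{2M}^{c}$, $J\in{\mathcal D}_{M}^{c}$ are such that \emph{both} alternatives in the conclusion fail, i.e.\ $\ec(I,J)\ge M^{-c_{1}}$ and $\rdist(I,J)\le c_{2}M^{1/8}$ for absolute constants $c_{1}$ small and $c_{2}$ large that I fix along the way; the goal is $F_{\mu}(I,J)\lesssim\epsilon$ (which, after initially shrinking $\epsilon$ by an absolute factor, yields the clean statement). There are two immediate reductions. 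The summand $F_{T}(I)\delta(I,J)$ contributes only when $I=J$, and since ${\mathcal D}_{M}\subseteq{\mathcal D}_{2M}$ one has $I\in{\mathcal D}_{2M}^{c}\subseteq{\mathcal D}_{M}^{c}$, so $F_{T}(I)<\epsilon$ by the fourth assumption; thus it suffices to bound $F_{K}(R,S)\,\rho_{\mu}(R\smlor S)$ uniformly over $R\subset I$, $S\subset J$. Fixing such $R,S$ and using that $F_{K}$, $\rho_{\mu}(R\smlor S)$, $\ec$ and $\rdist$ are symmetric in the two cubes, I may assume $\ell(S)\le\ell(R)$, so $R\smland S=S$ and $R\smlor S=R$. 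Since the power growth of $\mu$ makes $\rho_{\mu}$ bounded (a ball of radius $\lambda$ lies in a cube of side $2\lambda$, so $\rho_{\rm in}\lesssim 1$, and $\rho_{\rm out}\lesssim\sum_{m}m^{-\delta/2-1}\lesssim 1$) and $L,S,D,\tilde{D}$ are bounded, it is enough to show that for each such pair $R,S$ at least one of the four factors $L(\ell(S))$, $S(\ell(S))$, $D(\rdist(\langle R,S\rangle,\mathbb{B}))$, $\tilde{D}(\inrdist(R,S)\,S)$, when multiplied by $\rho_{\mu}$ of the cube of the corresponding side length and location, is $\lesssim\epsilon$, the other factors being absorbed into implicit constants.

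\emph{Geometry and the case split.} The bound $\ec(I,J)\ge M^{-c_{1}}$ makes $\ell(I)$ and $\ell(J)$ agree up to the factor $M^{c_{1}}$, while $\rdist(I,J)\le c_{2}M^{1/8}$ gives $\dist(I,J)\lesssim M^{1/8}\ell(I\smlor J)$, hence $\ell(\langle I,J\rangle)\approx\dist(I,J)+\ell(I\smlor J)\lesssim M^{1/8}\ell(I\smlor J)$. Because $R\subset I$ and $S\subset J$, the cube $\langle R,S\rangle$ lies in a bounded dilate of $\langle I,J\rangle$, so $\ell(\langle R,S\rangle)\lesssim M^{1/8+c_{1}}\ell(R)$ and $\dist(\langle R,S\rangle,\mathbb{B})=\dist(I,\mathbb{B})+O\!\big(M^{1/8+c_{1}}\ell(R)\big)$; similarly, for every $k\ge 0$ the cube $2^{k}\inrdist(R,S)\,S$ has side $\lesssim 2^{k}\ell(R)$, lies in a bounded dilate of $R$, and is at distance $\dist(I,\mathbb{B})+O(2^{k}\ell(R))$ from $\mathbb{B}$. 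Now $I\in{\mathcal D}_{2M}^{c}$ means $\ell(I)>2^{2M}$, or $\ell(I)<2^{-2M}$, or $\rdist(I,\mathbb{B}_{2^{2M}})>2M$, and each alternative activates a different factor. If $\ell(I)<2^{-2M}$, then $\ell(S)\le\ell(R)\le\ell(I)<2^{-M}$, and the second assumption — now with the scale in $S$ and the density attached to the same small cube — gives $S(\ell(S))\rho_{\mu}(R)\lesssim\epsilon$. If $\ell(I)>2^{2M}$ (hence $\ell(J)>2^{M}$), the first assumption disposes of the sub-case $\ell(S)>2^{M}$, and if $\ell(S)\le 2^{M}$ then $\ell(R)\gtrsim M^{-c_{1}}\ell(I)>2^{M}$, so $\langle R,S\rangle$ and the dilates $2^{k}\inrdist(R,S)\,S$ are large; the location estimates then show that either their distance to $\mathbb{B}$ relative to their own size forces $D$ and $\tilde D$ small, or else $R$ (and hence $I$, which contains it) meets $\mathbb{B}$, in which case $I$ and $J$ are both large cubes meeting $\mathbb{B}$, so $\dist(I,J)=0$ and $\ell(I\smland J)/\ell(I\smlor J)$ is small unless $F_{K}(R,S)$ is already small — i.e.\ $|\log\ec(I,J)|\gtrsim\log M$, contrary to our assumption. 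Finally, if $\rdist(I,\mathbb{B}_{2^{2M}})>2M$ with $\ell(I)\le 2^{2M}$, then $\dist(I,\mathbb{B})\gtrsim M\,2^{2M}$, which dominates $M^{1/8+c_{1}}\ell(R)$ and $2^{k}\ell(R)$ for all $k\lesssim\log M$ (recall $\ell(R)\le\ell(I)\le 2^{2M}$); hence $\rdist(\langle R,S\rangle,\mathbb{B})\gtrsim M^{\,1-1/8-c_{1}}>M^{1/8}$, the same holds for the first $\approx\log M$ dilates of $\inrdist(R,S)\,S$, the $D$-factor fires, and the $\tilde D$-series converges with the tail $k\gtrsim\log M$ summed through the weights $2^{-k\delta/2}$.

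\emph{Main obstacle.} The crux is the balance of exponents in this last ``far'' case, together with the density bookkeeping. Passing from $\langle I,J\rangle$ to $\langle R,S\rangle$, and from $\inrdist(R,S)\,S$ to its dilates $2^{k}\inrdist(R,S)\,S$, costs a factor $M^{1/8+c_{1}}$ (resp.\ $2^{k}$) in size, while leaving the lagom family at level $2M$ grants only a relative-distance gain linear in $M$; the conclusion asks for a gain of size $M^{1/8}$, and this works precisely because $1-\tfrac18-c_{1}>\tfrac18$ once $c_{1}<\tfrac34$, with the leftover logarithmic range of $k$ absorbed by the decaying weights in the definition of $\tilde D$ (the Lebesgue-domination observation recorded in Notation \ref{LSDF}). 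The genuinely technical ingredient, which the four bullet hypotheses are designed to package, is the step that replaces $\rho_{\mu}$ of the large cube $R$ by $\rho_{\mu}$ of the smaller cubes $S$, $\langle R,S\rangle$ and $\inrdist(R,S)\,S$ attached to the individual factors $L(\ell(S))$, $D$ and $\tilde D$: here one uses that $\rho_{\rm in}$ is monotone under inclusion and that the averaged density $\rho_{\rm out}$ transfers in a controlled way along the relevant inclusions and dilations, so that each factor is paired with a density of the matching scale and position and the chosen consequence of \eqref{limits} can be invoked.
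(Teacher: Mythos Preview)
Your contrapositive strategy is sound in outline, and your treatment of the $F_T$ term and of the ``far'' case ($\rdist(I,\mathbb{B}_{2^{2M}})>2M$ with $\ell(I)\le 2^{2M}$) is close in spirit to the paper's case (c). However, there is a genuine gap in your handling of the ``large'' case $\ell(I)>2^{2M}$. After reducing to $\ell(S)\le 2^M$ you assert ``$\ell(R)\gtrsim M^{-c_1}\ell(I)$,'' but this is unjustified: $R$ is an \emph{arbitrary} subcube of $I$ (or of $J$, after your relabeling), and nothing prevents $\ell(R)$ from being small. The eccentricity hypothesis $\ec(I,J)\ge M^{-c_1}$ controls $\ell(I)/\ell(J)$, not $\ell(R)/\ell(I)$. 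Once $\ell(R)$ is small, your subsequent claims that $\langle R,S\rangle$ and the dilates $2^k\inrdist(R,S)S$ are ``large'' collapse, and the sub-case cannot be closed this way. (Concretely: if $I$ is a huge dyadic cube containing the origin and $R\subset I$ is a unit cube near the origin, none of the four factors $L,S,D,\tilde D$ is forced small at that scale, so the supremum is not $\lesssim\epsilon$.) A related slip appears in your geometry paragraph: the correct bound is $\ell(\langle R,S\rangle)\lesssim M^{1/8+c_1}\ell(I)$, not $M^{1/8+c_1}\ell(R)$, since $\langle R,S\rangle$ is only controlled through $\langle I,J\rangle$.

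The paper sidesteps this entirely by \emph{not} attempting to bound the supremum over all $R\subset I$, $S\subset J$ in that case. Its proof is a direct case split on the reason $I\in{\mathcal D}_{2M}^c$ (small side, large side, or moderate side but far from $\mathbb{B}$), with a further split on $\ell(J)$; in each sub-case it either shows $F_K(I,J)\rho_\mu(I\smlor J)<\epsilon$ directly (working only with $I,J$ themselves), or exhibits $|\log\ec(I,J)|\ge M$ or $\rdist(I,J)\gtrsim M^{1/8}$ explicitly. In particular, when $\ell(I)>2^{2M}$ and $\ell(J)\le 2^M$ the paper simply reads off $\ec(I,J)<2^{-M}$ and is done --- there is no need to bound the supremum in that configuration, because the eccentricity alternative already fires. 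Your more ambitious program of controlling the full supremum uniformly is precisely where your argument breaks.
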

\begin{proof} We start with $F_{T}(I) \delta(I,J)$, since the proof is trivial in this case. Since 
$I=J\in \mathcal D_{2M}^{c}\subset \mathcal D_{M}^{c}$, we have $F_{T}(I)<\epsilon /2 $ by the choice of $M$. 
% we can write that $I\smlor J=I\smland J=\langle I, J\rangle=I\in \mathcal D_{M}^{c}$. 
% We now study three cases:
% \vskip5pt
% \noindent
% a) When $\ell(I)<2^{-M}$, we have 
% $F_{W}(I;M_{T,\epsilon })\lesssim S(\ell(I))\leq S(2^{-M})<\epsilon$.
% \vskip5pt
% \noindent
% b) When $\ell(I)>2^{M}$, we get
% $F_{W}(I;M_{T,\epsilon })\lesssim \tilde{L}(\ell(I))\leq \tilde{L}(2^{M})<\epsilon$.
% \vskip5pt
% \noindent
% c) When $2^{-M}\leq \ell(I)\leq 2^{M}$ with $\rdist(I,\mathbb B_{2^{M}})>2M$, we finally get
% $F_{W}(I;M_{T,\epsilon })\lesssim \tilde{D}(\rdist(I,\mathbb B_{2^{M}}))\leq \tilde{D}(2^{M})<\epsilon$. (?)

%\vskip8pt
We continue with $F_{K}$.
Since $I\in {\mathcal D}_{2M}^{c}$, we consider three cases: 

a) When $\ell(I)<2^{-2M}$, we have $\ell(I\smland J)<2^{-2M}$. Since $J\in {\mathcal D}_{M}^{c}$, we separate in two cases:
\begin{itemize}
\item[a.1)] If $\ell(J)<2^{-M}$ then
we have $\ell(I\smlor J)<2^{-M}$
and so, we get
$F_{\mu}(I, J )\lesssim S(\ell(I\smland J))\rho_{\mu}(I\smlor J)\leq S(2^{-M})\rho_{\mu}(I\smlor J)<\epsilon$.
\item[a.2)] If $\ell(J)\geq 2^{-M}$ then
$$\ec(I,J)=\frac{\ell(I\smland J)}{\ell(I\smlor J)}
=\frac{\ell(J)}{\ell(I)}\geq \frac{2^{-M}}{2^{-2M}}= 
2^{M}$$
and thus, $\log \ec(I,J)\geq M$. 
\end{itemize}
b) When $\ell(I)>2^{2M}$, since $J\in {\mathcal D}_{M}^{c}$ we distinguish two cases:
\begin{itemize}
\item[b.1)] When $\ell(J)>2^{M}$, we get $\ell(I\smlor J)\geq \ell(I\smland J)>2^{M}$. Then  
%$L(\ell(I\smlor J))\rho_{\mu}(I\smlor J)<\epsilon$. 
% \begin{itemize}
% \item If $\rho_{\mu}(I\smlor J)<\epsilon^{\frac{1}{2}$ then $F_{\mu}(I,J)\rho_{\mu}(I\smlor J)\lesssim \rho_{\mu}(I\smlor J)<\epsilon^{\frac{1}{2}}$.
%
% \item Otherwise, since also 
% $L(\ell(I\smland J))\rho_{\mu}(I\smland J)<\epsilon$, 
%
% \end{itemize}
$F_{\mu}(I,J)\rho_{\mu}(I\smlor J)\lesssim L(\ell(I\smland J))\rho_{\mu}(I\smlor J)\leq L(2^M)\rho_{\mu}(I\smlor J)<\epsilon$.
\item[b.2)] When %$2^{-M}\leq \ell(J)\leq 2^{M}$ with $\rdist(J,\mathbb B_{2^{M}})>M$,
$\ell(J)\leq 2^{M}$,
we have that 
$$
\ec(I,J)=\frac{\ell(I\smland J)}{\ell(I\smlor J)}=\frac{\ell(J)}{\ell(I)}<\frac{2^{M}}{2^{2M}}=2^{-M}
$$
and thus, $\log \ec(I,J)\leq -M$. 
\end{itemize}

c) When $2^{-2M}\leq \ell(I)\leq 2^{2M}$ with $\rdist(I,\mathbb B_{2^{2M}})>2M$, we 
have $|c(I)|>(2M-1)2^{2M}$. We fix $\alpha =\frac{1}{8}$, $\beta =\gamma =\frac{1}{4}$. Then,
\begin{itemize}
\item[c.1)] When $\ell(J)>(2M)^{\alpha}2^{2M}$, since $\alpha >0$ we have 
$$\ec(I,J)=\frac{\ell(I)}{\ell(J)}<\frac{2^{2M}}{(2M)^{\alpha }2^{2M}}\lesssim %M^{-\alpha}=
M^{-\frac{1}{8}},$$
which implies $\log \ec(I,J)\lesssim \log M$. 

\item[c.2)] When $\ell(J)\leq (2M)^{\alpha }2^{2M}$, we have $\ell(I\smlor J)<(2M)^{\alpha }2^{2M}$. Now:
\vskip5pt
\noindent c.2.1) When $\rdist(\langle I,J\rangle,\mathbb B %B_{2^{(2M)^{\beta }}}
)>(2M)^{\beta }$, we also have 
$\rdist(I\smlor J,\mathbb B %B_{2^{(2M)^{\beta }}}
)>(2M)^{\beta }$. Then
\begin{align*}
F_{K}(I,J)\rho_{\mu}(I\smlor J)&\lesssim \tilde D(\rdist(\langle I,J\rangle,\mathbb B))\rho_{\mu}(I\smlor J)
%<\tilde D(M^{\frac{1}{8}})
\\
&
\leq \tilde D(M^\beta )\rho_{\mu}(I\smlor J)
<\epsilon.
\end{align*}

\noindent c.2.2) When $\rdist(\langle I,J\rangle,\mathbb B%_{2^{(2M)^{\beta }}}
)\leq (2M)^{\beta }$, we get 
$|c(\langle I,J\rangle)|\leq (2M)^{\beta }(%2^{(2M)^{\beta }}
1+\ell(\langle I,J\rangle))$. Then, we examine the last two cases:
\begin{itemize}
\item When $\ell(\langle I,J\rangle)>(2M)^{\gamma}2^{2M}$,  
%since $0<\alpha <\gamma <1$ we have 
we get
$$
\hspace{.5cm}\rdist(I,J)=\frac{\ell(\langle I,J\rangle)}{\ell(I\smlor J)}>\frac{(2M)^{\gamma}2^{2M}}{(2M)^{\alpha }2^{2M}}
\gtrsim M^{\gamma -\alpha }=M^{\frac{1}{8}}.
$$
%and we are done since $\gamma >\alpha $.

\item When $\ell(\langle I,J\rangle)\leq (2M)^{\gamma }2^{2M}$, we have instead
\begin{align*}
\hspace{1.5cm}
%&|c(I)-c(J)|_{\infty }
%\\
&|c(I)-c(J)|>|c(I)|-|c(\langle I,J\rangle)-c(J)|-|c(\langle I,J\rangle)|
\\
&\geq |c(I)|-2^{-1}\ell(\langle I,J\rangle)-(2M)^{\beta }(%2^{(2M)^{\beta }}
1+\ell(\langle I,J\rangle))
\\
&\geq (2M-1)2^{2M}-(2M)^{\gamma }2^{2M}-(2M)^{\beta }(%2^{2M}
1+(2M)^{\gamma }2^{2M})
\\
&\gtrsim (M\!-\!M^{\gamma}\!-\!M^{\beta}\!-\!M^{\beta +\gamma })2^{2M}
%\\
%&
\gtrsim (M\!-\!3M^{\frac{1}{2}})2^{2M}\geq 2^{-1}M2^{2M}
\end{align*}
\hspace{-.5cm} for $M\geq 36$. Then 
\begin{align*}
\hspace{1.5cm}
\rdist(I,J)&\geq \frac{|c(I)-c(J)|}{\ell(I\smlor J)}
\gtrsim  \frac{M2^{2M}}{(2M)^{\alpha }2^{2M}}\gtrsim M^{1-\alpha }
=M^{\frac{7}{8}}.
\end{align*}
%and we are done since $1>\alpha $.
\end{itemize}

\end{itemize}
%The discussion proves the result.
\end{proof}

\begin{definition}\label{FM} As showed in the proof, 
$F_{\mu}(I,J)<\epsilon $ holds when either
$\ell(I\smland J)>2^{M}$, or
$\ell(I\smlor J)<2^{-M}$, or $\rdist(\langle I,J\rangle ,\mathbb B)>M^{1/8}$. For this reason, %in Definition \ref{accretive}
we denote by $\mathcal F_{M}$ the family of
ordered pairs $(I,J)$ with $I,J\in \mathcal D_{M}^{c}$ satisfying some of these three inequalities.
%either $I\in {\mathcal D}_{M}^{c}$, or $J\in {\mathcal D}_{M}^{c}$ or $\langle I,J\rangle\in  {\mathcal I}_{M^{1/8}}^{c}$.
\end{definition}

\section{The operator acting on bump functions}\label{bump}

%In this section 
%we study the action of $T$ over bump functions. In particular,
%we develop estimates of the dual pair
%According to decomposition \eqref{multiple}, % after Lemma \ref{adaptedwavelets}, 
We estimate the dual pair
 $\langle T\psi_{I},\psi_{J}\rangle $ 
 % $\langle T\psi_{I},\Psi_{J}\rangle $, and $\langle T\Psi_{I},\Psi_{J}\rangle $, 
  in terms of the space and
frequency location of the argument functions.
The computations are carried out in two different propositions.

%\section{On compact Calder\'on-Zygmund kernels}\label{interlude}
We start with a technical lemma, which in turn requires some explicit properties of the auxiliary functions $L$, $S$, $D$, and $F$ provided in Notation \ref{LSDF}. 
 We first note that, without loss of generality, $L$ and $D$ can be assumed to be non-creasing
while $S$ can be assumed to be non-decreasing. Moreover, we also have the following equivalent expression of the kernel smoothness condition:
%This is possible because, otherwise, we can always define 
%$$
%L_{1}(I)=\sup_{I\subset J}L(J),
%\hskip20pt
%S_{1}(I)=\sup_{J\subset I}S(J),
%\hskip20pt
%D_{1}(I)=\hspace{-.3cm}\sup_{\tiny \begin{array}{c}\ell(J)=\ell(I)\\ c(I)\leq c(J)\end{array}}\hspace{-.3cm}D(J),
%$$
%which bound above $L$, $S$ and $D$ respectively, satisfy the limits (\ref{limits}) of Definition 
%\ref{LSDF} and are non-creasing
%or non-decreasing as requested. 

\begin{remark}\label{rinfinity}
In \cite{V}, it is proved that
the smoothness condition 
\eqref{smoothcompactCZ}
implies the modified smoothness condition
\eqref{LSD}, which we will often use: 
\begin{equation}\label{LSD}
|K(t, x)-K(t',x')|
\lesssim
%\frac{1}{(|x|+|t|)^{\theta}}
\frac{(|t-t'|+|x-x'|)^\delta}{|t-x|^{\delta}}\frac{F(t,x,t',x')}{|t-x|^{\alpha}},
\end{equation}
whenever $|t-t'|+|x-x'|<|t-x|<|t'-x'|$, with $0<\delta <1$ and 
$$F(t,x,t',x')=L_{1}(|t-x'|)S_{1}(|t-t'|+|x-x'|)D_{1}\Big(1+\frac{|t+x'|}{1+|t-x'|}\Big),$$ 
%$F_{K}(t,x)=L(|t-x|)S(|t-x|)D(|t+x|)$ 
where
$L_{1}, S_{1}, D_{1}$ satisfy the limits in
(\ref{limits}). 
%We prove in \cite{PPV} and \cite{P} that any function $K$ for which \eqref{LSD} holds satisfies Definition \ref{compCZ}
%for any 
%$1\leq r_{i}<\frac{n}{n-1}$. This restriction, together with $r_{i}^{-1}+q_{i}^{-1}\leq 1$, implies
%$n< q_{i}\leq \infty $. We will show this point in an appendix at the end of the paper. 
%%and $|x|+|t|>M$.
% As it is explained in \cite{V} and in section \ref{interlude}, condition \eqref{LSD} can be obtained from 
% \eqref{smoothcompactCZ}.
%the simpler one
%\begin{equation*}\label{LSD0}
%|K(t, x)-K(t',x')|
%\lesssim
%%\frac{1}{(|x|+|t|)^{\theta}}
%\frac{(|t-t'|_{p}+|x-x'|_{\infty })^\delta}{|t-x|_{\infty }^{n+\delta}}F_{K}(t,x)
%\end{equation*}
%whenever $2(|t-t'|_{\infty }+|x-x'|_{\infty })<|x-t|_{\infty}$ with 
%$F_{K}(t,x)=L(|t-x|_{\infty })S(|t-x|_{\infty })D(|t+x|_{\infty })$ and now $0<\delta \leq 1$.

%Obviously, since $|\cdot |_{\infty }\leq |\cdot |_{p} \leq n^{1/p}|\cdot |_{\infty }$, 
% the norms $|\cdot |_{\infty}$ can be replaced by $|\cdot |_{p}$ for any $p\geq 1$.
\end{remark}

Now we state and prove the mentioned technical lemma.
%both sharing the same hypotheses:
% as previous statement. 
%let $T$
%be a linear operator with a compact C-Z kernel $K$
%and parameter $0<\delta <1$. 
%$0<r_{1},r_{2}\leq \infty $.
%Let $1\leq q_{1},q_{2}\leq \infty $. 
%so that $r_{i}^{-1}+q_{i}^{-1}\leq 1$
%be para-accretive functions supported on $Q$ such that
%$$
%\Big(\frac{1}{|Q|}\int_{Q}|b_{i}(x)|^{q_{i}}dx\Big)^{1/q_{i}}\leq C_{q_{i}}B_{i}(Q)
%$$

% We assume that
%%(\ref{linearrestricted})
%$T_{b}$ satisfies the weak compactness condition
%%hypothesis \eqref{restrictcompact2final} 
%and the 
%special cancellation conditions $Tb_{1}=T^{*}b_{2}=0$. 

% b) Let $I, J\in \mathcal D$ and  $b^{1}_{I}, b^{2}_{J}$ locally integrable. We write  
% $\psi_{I}=\psi_{2,I}$ with the latter function defined in \ref{} with respect  $b^{1}_{I}$ and $\psi_{J}=\psi_{2,J}$ similar as before but with respect $b^{2}_{J}$. We omit the dependence on the testing functions.  

% We remind the notation $B_{I,q_{i}}^{i}=
% \frac{ [b^{i}_{I^{a}}]_{I,q_{i}}}{|\langle b^{i}_{I^{a}}\rangle _{I}|}+\frac{ [b^{i}_{I_{p}^{a}}]_{I_{p},q_{i}}}{|\langle b^{i}_{I_{p}^{a}}\rangle _{I_{p}}|}$.

%Given $I,J\in \mathcal D$, we write 
%$I_{p}^{*}=\{ t\in \mathbb R^{n} / |t-c(I_{p})|\leq \ell(I_{p})/2+\ell(J_{p})\}$. 
\begin{lemma}\label{boundtoFK2}
Let $I_p,J_p\in \mathcal D$ such that $\ell(J_p)\leq \ell(I_p)$ and 
$\dist(I_p,J_p)\geq \ell(J_p)$. 
Let $t\in I_p$, $x\in J_p$, $c_{J_p}=c(J_p)$ and 
$$
F(t,x)=L(|t-c_{J_p}|)
S(|x-c_{J_p}|)D\Big(1+\frac{|t+c_{J_p}|}{1+|t-c_{J_p}|}\Big).
$$
% If 
% $\ell([I_p,J_p])<|t-c_{J_p}|<\ell(\langle I,J\rangle)$,
% $|x-c_{J_p}|\leq \ell(J_p)/2$, 
% and  
% $\ell([I_p,J_p])=\dist(I_p,J_p)$, 
Then 
$$
F(t,x)\leq L(\ell([I_p,J_p]))
S(\ell(J_p))D(\rdist (\langle I_p,J_p\rangle, \mathbb B))
$$ 
\end{lemma}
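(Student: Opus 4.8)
The plan is to bound each of the three factors $L$, $S$, $D$ in the definition of $F(t,x)$ separately, using the geometric hypotheses $\ell(J_p)\le\ell(I_p)$ and $\dist(I_p,J_p)\ge\ell(J_p)$, and the monotonicity conventions on $L,S,D$ (namely $L$ and $D$ non-increasing, $S$ non-decreasing) recorded just before the lemma. Throughout I will repeatedly use the basic comparison $\ell(\langle I_p,J_p\rangle)\approx \dist(I_p,J_p)+\ell(I_p\smlor J_p)=\dist(I_p,J_p)+\ell(I_p)$ from Section~\ref{ecandrdist}, together with the analogous $\ell([I_p,J_p])=\dist(I_p,J_p)$ and Remark~\ref{constants}, which lets me absorb universal constants into the arguments of $L,S,D$.

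\emph{The $S$ factor.} Since $x\in J_p$ and $c_{J_p}=c(J_p)$, we have $|x-c_{J_p}|\le \tfrac{\sqrt n}{2}\,\ell(J_p)\lesssim \ell(J_p)$. Because $S$ is non-decreasing, $S(|x-c_{J_p}|)\le S(\ell(J_p))$ (after absorbing the constant via Remark~\ref{constants}). This gives exactly the middle factor.

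\emph{The $L$ factor.} Here I need a \emph{lower} bound for $|t-c_{J_p}|$ in terms of $\ell([I_p,J_p])=\dist(I_p,J_p)$. Since $t\in I_p$ and $c_{J_p}\in J_p$, we have $|t-c_{J_p}|\ge \dist(I_p,J_p)$. On the other hand, the hypothesis $\dist(I_p,J_p)\ge\ell(J_p)$ combined with $\ell(J_p)\le\ell(I_p)$ is what prevents $|t-c_{J_p}|$ from being much larger than $\dist(I_p,J_p)$ is irrelevant; I only need a lower bound, so $|t-c_{J_p}|\ge \dist(I_p,J_p)=\ell([I_p,J_p])$ suffices, and since $L$ is non-increasing, $L(|t-c_{J_p}|)\le L(\ell([I_p,J_p]))$.

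\emph{The $D$ factor.} This is the delicate step and I expect it to be the main obstacle. I must show $1+\tfrac{|t+c_{J_p}|}{1+|t-c_{J_p}|}\gtrsim \rdist(\langle I_p,J_p\rangle,\mathbb B)=1+\dist(\langle I_p,J_p\rangle,\mathbb B)/\ell(\langle I_p,J_p\rangle)$, which after using $D$ non-increasing yields the claim (modulo Remark~\ref{constants}). The numerator $|t+c_{J_p}|$ is comparable to $2\,\dist(\langle I_p,J_p\rangle, 0)$ up to the diameter of $\langle I_p,J_p\rangle$: since $t$ and $c_{J_p}$ both lie in $\langle I_p,J_p\rangle$, the point $\tfrac{t+c_{J_p}}{2}$ lies in $\langle I_p,J_p\rangle$, so $|t+c_{J_p}|\approx \dist(c(\langle I_p,J_p\rangle),0)+O(\ell(\langle I_p,J_p\rangle))$, and $\dist(\langle I_p,J_p\rangle,\mathbb B)\approx \max(0,\dist(c(\langle I_p,J_p\rangle),0)-c\,\ell(\langle I_p,J_p\rangle))$. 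For the denominator, $1+|t-c_{J_p}|\approx 1+\dist(I_p,J_p)+\ell(I_p)\approx 1+\ell(\langle I_p,J_p\rangle)$, where the lower bound uses $|t-c_{J_p}|\gtrsim\ell(I_p)$ when $t$ is suitably placed and $\dist(I_p,J_p)\ge\ell(J_p)$; one has to be slightly careful that $|t-c_{J_p}|$ can actually be as small as $\dist(I_p,J_p)$, but then $\ell(\langle I_p,J_p\rangle)\approx\dist(I_p,J_p)+\ell(I_p)$ may exceed $|t-c_{J_p}|$, so instead I will bound $1+|t-c_{J_p}|\lesssim 1+\ell(\langle I_p,J_p\rangle)$ (an \emph{upper} bound on the denominator is what I want) using $|t-c_{J_p}|\le\diam\langle I_p,J_p\rangle\lesssim\ell(\langle I_p,J_p\rangle)$. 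Assembling: $1+\tfrac{|t+c_{J_p}|}{1+|t-c_{J_p}|}\gtrsim 1+\tfrac{\dist(c(\langle I_p,J_p\rangle),0)-O(\ell)}{1+\ell(\langle I_p,J_p\rangle)}\gtrsim 1+\tfrac{\dist(\langle I_p,J_p\rangle,\mathbb B)}{\ell(\langle I_p,J_p\rangle)}=\rdist(\langle I_p,J_p\rangle,\mathbb B)$, where in the last step I split into the case $\dist(c(\langle I_p,J_p\rangle),0)\gtrsim \ell(\langle I_p,J_p\rangle)$ (where the ``$-O(\ell)$'' is absorbed into the $1+$) and the case $\dist(c(\langle I_p,J_p\rangle),0)\lesssim\ell(\langle I_p,J_p\rangle)$ (where $\rdist(\langle I_p,J_p\rangle,\mathbb B)\approx 1$ and the bound is trivial). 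Multiplying the three factor estimates and using Remark~\ref{constants} to clear constants finishes the proof.
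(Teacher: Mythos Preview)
Your proposal is correct and follows essentially the same approach as the paper: the $L$ and $S$ factors are handled identically via the monotonicity and the obvious bounds $|t-c_{J_p}|\ge\dist(I_p,J_p)=\ell([I_p,J_p])$ and $|x-c_{J_p}|\lesssim\ell(J_p)$, while for the $D$ factor both you and the paper upper-bound the denominator by $1+\ell(\langle I_p,J_p\rangle)$ (since $t,c_{J_p}\in\langle I_p,J_p\rangle$) and lower-bound the numerator in terms of $|c(\langle I_p,J_p\rangle)|$ up to an $O(\ell(\langle I_p,J_p\rangle))$ error. The only cosmetic difference is that the paper routes the numerator estimate through $|c_{J_p}|$ via $|t+c_{J_p}|\ge 2|c_{J_p}|-|t-c_{J_p}|$ and then compares $|c_{J_p}|$ to $|c(\langle I_p,J_p\rangle)|$, whereas you go directly to the midpoint $(t+c_{J_p})/2\in\langle I_p,J_p\rangle$; these are equivalent formulations of the same triangle-inequality computation.
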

\begin{proof}
Since $L$ is non-increasing, $S$ is non-decreasing, 
$|t-c_{J_p}|> \dist(I_p,J_p)= \ell([I_p,J_p])$
%\ell(J_{m})/4$ 
and $|x-c_{J_p}|\leq \ell(J_{p})/2$,  
we get
$$
F(t,x)\leq L(\ell([I_p,J_p]))
S(\ell(J_{p}))D\Big(1+\frac{|t+c_{J_p}|}{1+|t-c_{J_p}|}\Big).
$$ 
From $t\in I_p$, $c_{J_p}\in J_{p}$, and $I_p\cap J_p=\emptyset $, we get $|t-c_{J_p}|\leq \dist(I_p, J_p)+\ell(I_p)+\ell(J_p)\leq 2\ell(\langle I_p,J_p\rangle)$.
% On the other hand, 
% we have $|x-c_{J_{p}}|\leq \ell(J_{p})/2
% \leq \dist(I_p,J_p)/2\leq |t-x|/2$ and thus 
% \begin{align*}
% |t+c_{J_p}|
% &\geq 2|c_{J_p}|-|t-c_{J_p}| 
% \\
% &\geq 2|c(J_{i,j})|-2|c_{J_p}-c(J_{i,j})|-|t-x^k_{i,j}|
% \\
% &
% \geq 2|c(J_{i,j})|-2|t-x^k_{i,j}|.
% \end{align*}
Then, since $|t+c_{J_p}|\geq 2|c_{J_p}|-|t-c_{J_p}|$, we have 
\begin{align*}
2(1+\frac{|t+c_{J_p}|}{1+|t-c_{J_p}|})
&\geq 2+\frac{|t+c_{J_p}|}{1+|t-c_{J_p}|}
\\
&
\geq 
2+\frac{2|c_{J_p}|}{1+|t-c_{J_p}|}
-\frac{|t-c_{J_p}|}{1+|t-c_{J_p}|}
\\
&
\geq 1+\frac{|c_{J_p}|}{1+\ell(\langle I_p,J_p\rangle )}.
%=\rdist (J_{i,j}, \mathbb B)
\end{align*}
%by the proof of Lemma \ref{boundtoFK}, 
Moreover, 
since $|c(I_p)|-|c(J_p)|\leq |c(I_p)-c(J_p)|\leq \ell(\langle I_p,J_p\rangle )$, we can bound below
the numerator in the last expression 
as follows:
\begin{align*}
1+\ell(\langle I_p,J_p\rangle )+|c_{J_p}| 
&\geq 1+\frac{\ell(\langle I_p,J_p\rangle ) }{2}+\frac{|c(I_p)|-|c(J_p)|}{2}+|c(J_p)|
\\
&\geq \frac{1}{2}\big(1+\ell(\langle I_p,J_p\rangle )+\frac{1}{2}|c(I_p)+c(J_p)|\big).
\end{align*}
Therefore, 
\begin{align*}
1+\frac{|c_{J_p}|}{1+\ell(\langle I_p,J_p\rangle )}
&\geq \frac{1}{2}\Big(1+\frac{|c(I_p)+c(J_p)|/2}{1+\ell(\langle I_p,J_p\rangle )}\Big)
\\
&\geq \frac{1}{3}\Big(\frac{3}{2}+\frac{|c(I_p)+c(J_p)|/2}{1+\ell(\langle I_p,J_p\rangle )}\Big).
\end{align*}
Now, since $(c(I_p)+c(J_p))/2\in \langle I_p,J_p\rangle$, we have
$|(c(I_p)+c(J_p))/2-c(\langle I_p,J_p\rangle)|\leq \ell(\langle I_p,J_p\rangle)/2$
and so, we can bound below previous expression by
\begin{align*}
\frac{1}{3}\Big(\frac{3}{2}+\frac{|c(\langle I_p,J_p\rangle )|}{1+\ell(\langle I_p,J_p\rangle )}-\frac{1}{2}\Big) 
&\geq \frac{1}{3}\Big(1+\frac{|c(\langle I_p,J_p\rangle )|}{2\max(\ell(\langle I_p,J_p\rangle),1)}\Big)
\\
&
\geq \frac{1}{6}\Big(2+\frac{|c(\langle I_p,J_p\rangle )|}{\max(\ell(\langle I_p,J_p\rangle),1)}\Big)
\\
&
\gtrsim 1+\frac{|c(\langle I_p,J_p\rangle )|+\max(\ell(\langle I_p,J_p\rangle),1)}{\max(\ell(\langle I_p,J_p\rangle),1)}
\\
&
\geq 1+\frac{\dist( \langle I_p,J_p\rangle, 
\mathbb B)}{\max(\ell(\langle I_p,J_p\rangle),1)}
=\rdist(\langle I_p,J_p\rangle ,\mathbb B )
\end{align*}
with $\mathbb B=[-1/2,1/2]^n$. 

Finally, by using that $D$ is non-increasing, we get
$$
F(t,x)\leq L(\ell([I_p,J_p]))
S(\ell(J_p))D(\rdist (\langle I_p,J_p\rangle, \mathbb B))
.
$$ 

\end{proof}

\begin{proposition}\label{twobumplemma1} 
Let $T$
be a linear operator with compact C-Z kernel $K$
and parameters $0<\delta <1$, $0<\alpha\leq n$. 
Let $\theta \in(0,1)$ and  $I,J\in \mathcal D$ be such that 
$\dist(I_p,J_p)>0$ and 
$\ec(I,J)^\theta (\inrdist(I_{p}, J_{p})-1)>1$.  
%$\dist(I_p,J_p)\geq \ec(I,J)^{-\theta}\ell(I_{p}\smland J_{p})$.
%\begin{eqnarray*}\label{twobump}
Then 
%$\ec(I,J)^\theta\ell(I_p\smland J_p)\leq \rdist(I_p,J_p)\leq \ell(I_p\smlor J_p)$
\begin{align*}
%\label{bumps}
|\langle T\psi_{I},\psi_{J}\rangle |
&\lesssim \inrdist(I_p,J_p)^{-(\alpha+\delta )}
\frac{\mu(I)^{\frac{1}{2}}\mu(J)^{\frac{1}{2}}}
{\ell(I\smland J)^{\alpha}}
%\frac{\mu_{1}(I)^{\frac{1}{2}}\mu_{2}(J)^{\frac{1}{2}}}{\nu(\dist(I,J)+\ell(I_{p}\smland J_{p}))}
F_{1}(I,J),
\end{align*}
with
$F_{1}(I, J)
=L(\ell([I_p,J_p]))
S(\ell(I_p\smland J_p))D(\rdist (\langle I_p,J_p\rangle, \mathbb B))
%=F([I_p,J_p],I\smland J, \langle I_p,J_p\rangle )
$.
\end{proposition}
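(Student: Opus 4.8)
The plan is to estimate $\langle T\psi_I,\psi_J\rangle$ by exploiting the mean-zero property of $\psi_I$ against the kernel smoothness. Since $\dist(I_p,J_p)>0$ and the cubes are dyadic, $I_p\cap J_p=\emptyset$, so on the support of $\psi_J$ (contained in $J_p$) the variable $x$ is separated from the support of $\psi_I$ (contained in $I_p$). First I would write
\begin{align*}
\langle T\psi_{I},\psi_{J}\rangle
=\int\!\!\int K(t,x)\psi_I(t)\psi_J(x)\,d\mu(t)\,d\mu(x),
\end{align*}
and use $\int\psi_I\,d\mu=0$ (Lemma \ref{psiortho}) to subtract a constant in $t$: replacing $K(t,x)$ by $K(t,x)-K(c_{I_p},x)$ inside the $t$-integral. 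Here $c_{I_p}$ is the center of $I_p$; the separation hypothesis $\dist(I_p,J_p)\geq \ell(J_p)$ is exactly what one needs (after also checking $\ell(J_p)\le\ell(I_p)$, which follows from $\ell(I)\smland\ell(J)$ bookkeeping or can be arranged by the symmetric roles of the two cubes) to guarantee $2(|t-c_{I_p}|+0)<|t-x|$ so that the smoothness estimate \eqref{smoothcompactCZ} (or its variant \eqref{LSD} from Remark \ref{rinfinity}) applies with $|t-t'|=|t-c_{I_p}|\lesssim\ell(I_p)$.

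Next I would insert the pointwise bound
\begin{align*}
|K(t,x)-K(c_{I_p},x)|\lesssim\Big(\frac{\ell(I_p)}{|t-x|}\Big)^{\delta}\frac{F(t,x)}{|t-x|^\alpha},
\end{align*}
and replace $|t-x|$ throughout by its lower bound $\dist(I_p,J_p)\approx\ell([I_p,J_p])$, and more precisely by $\inrdist(I_p,J_p)\ell(I\smland J)$-type quantities. The role of the inner relative distance is that the true distance from the relevant child boundary where the Haar function "lives" is captured by $\dist(I\smland J,\mathfrak D_{I\smlor J})$; combined with $\ell(I_p)\approx\ell(I)$ this produces the factor $\inrdist(I_p,J_p)^{-(\alpha+\delta)}$ after one writes $\ell(I_p)^\delta/|t-x|^{\alpha+\delta}$ and extracts $\ell(I\smland J)^{-\alpha}$. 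For the $F$-factor I would invoke Lemma \ref{boundtoFK2} directly: it converts the pointwise $F(t,x)=L(|t-c_{J_p}|)S(|x-c_{J_p}|)D(\cdots)$ into $L(\ell([I_p,J_p]))S(\ell(J_p))D(\rdist(\langle I_p,J_p\rangle,\mathbb B))=F_1(I,J)$, which is what the statement requires; a small adjustment is needed because the lemma is centered at $c_{J_p}$ rather than $c_{I_p}$, but since $L,D$ are non-increasing and $|t-c_{I_p}|\le|t-c_{J_p}|+\ell(\langle I_p,J_p\rangle)$, one absorbs the discrepancy up to universal constants (which are allowed by Remark \ref{constants}). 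Finally the size of the integrals contributes $\|\psi_I\|_{L^1(\mu)}\|\psi_J\|_{L^1(\mu)}\lesssim\mu(I)^{1/2}\mu(J)^{1/2}$ by the last estimate in Lemma \ref{psiortho} with $q=1$.

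The hypothesis $\ec(I,J)^\theta(\inrdist(I_p,J_p)-1)>1$ is used to guarantee that the decay gained, $\inrdist(I_p,J_p)^{-(\alpha+\delta)}$, actually dominates any loss of the form $\ec(I,J)^{-\text{(something)}}$ incurred when one only subtracts a single constant rather than the full one-dimensional Taylor/telescoping correction; it ensures we are in the regime where the cubes are far apart relative to the smaller one's scale, so that the naive estimate already suffices and no further decomposition into the smaller cube's children is needed. The main obstacle I anticipate is bookkeeping the various length scales correctly: keeping straight that $\ell(I_p)\approx\ell(I)$, that $\dist(I_p,J_p)$, $\ell([I_p,J_p])$, $\inrdist(I_p,J_p)\,\ell(I\smland J)$, and $\dist(I\smland J,\mathfrak D_{I\smlor J})$ are all comparable in the relevant regime, and that applying Lemma \ref{boundtoFK2} with the correct choice of center (and swapping $I\leftrightarrow J$ when $\ell(J_p)>\ell(I_p)$, in which case one subtracts a constant in $x$ using $\int\psi_J\,d\mu=0$ instead) produces exactly the claimed $F_1(I,J)$. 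The analytic content — one application of kernel smoothness plus $L^1$ norms of Haar functions — is routine; the care is entirely in the geometry.
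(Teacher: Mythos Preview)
Your approach has a genuine gap: you subtract the constant at the center of the \emph{wrong} cube. Assuming $\ell(J_p)\le\ell(I_p)$ as you do, the hypothesis $\ec(I,J)^\theta(\inrdist(I_p,J_p)-1)>1$ yields only $\dist(I_p,J_p)\ge\ell(J_p)$ (this is what the paper verifies), not $\dist(I_p,J_p)\ge\ell(I_p)$. But to apply the smoothness condition \eqref{smoothcompactCZ} after replacing $K(t,x)$ by $K(t,x)-K(c_{I_p},x)$ you need $2|t-c_{I_p}|<|t-x|$, and since $|t-c_{I_p}|$ can be of order $\ell(I_p)$ while $|t-x|$ need only be of order $\dist(I_p,J_p)\approx 2^{e\theta}\ell(J_p)$ with $2^e=\ell(I)/\ell(J)$, this fails whenever $e>0$ (because $\theta<1$). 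Even in the sub-regime where the condition happens to hold, the resulting numerator $\ell(I_p)^\delta$ rather than $\ell(J_p)^\delta$ produces an extra factor $\ec(I,J)^{-\delta}$, which cannot be absorbed into the stated bound $\inrdist(I_p,J_p)^{-(\alpha+\delta)}$.

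The paper's proof reverses your choice: when $\ell(J)\le\ell(I)$ it uses the zero mean of $\psi_J$ (not $\psi_I$) to subtract $K(t,c_{J_p})$. Then $2|x-c_{J_p}|\le\ell(J_p)\le\dist(I_p,J_p)\le|t-c_{J_p}|$, the smoothness condition applies directly, and one obtains the factor $\ell(J_p)^\delta/\dist(I_p,J_p)^{\alpha+\delta}$, which is exactly $\inrdist(I_p,J_p)^{-(\alpha+\delta)}/\ell(I\smland J)^\alpha$ after using $\dist(I_p,J_p)\ge 2^{-1}(\dist(I_p,J_p)+\ell(J_p))$. With this choice Lemma~\ref{boundtoFK2} also applies with no adjustment, since that lemma is already centered at $c_{J_p}$; your remark that ``a small adjustment is needed'' was a symptom of the wrong subtraction point. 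The fix is simply to swap which wavelet's cancellation you exploit: always cancel on the side of the \emph{smaller} cube.
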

\begin{proof} By symmetry we can assume $\ell(J)\leq \ell(I)$. Let $e\in \mathbb N$ such that $\ec(I,J)^{-1}=\ell(I)/\ell(J)=2^{e}
\geq 1$.
Then %since $I_{p},J_{p}\in \mathcal D$, 
%the hypothesis is 
$$
\frac{\dist(I_{p}, J_{p})}{\ell(J_p)}
=\inrdist(I_p,J_p)-1
\geq \ec(I,J)^{-\theta}
=2^{e\theta}
$$
that is, $\dist(I_{p}, J_{p})\geq 2^{e\theta}\ell(J_{p})\geq \ell(J_p)$. 
%Since % $I_{p}^{*}\cap J_{p}=\emptyset$, we have $\inrdist (I_{p},J_{p})> 1$ and so, 
%$\ell(J_{p})<\dist(I_{p}, J_{p})$,
%we have that $(5I)\cap J=\emptyset $ and so,  
We can then use the kernel representation of $T$ and
the zero mean of $\psi_{J}$ 
to write
$$
\langle T\psi_{I},\psi_{J}\rangle
%=\int \int \psi(t,x) K(x,t)b_{1}(t)b_{2}(x)\, dtdx
%$$
%$$
=\int \int \psi_{I}(t)\psi_{J}(x) (K(t,x)-K(t,c_{J_{p}}))\, d\mu(t)d\mu(x)
$$
%where the second equality is due to the zero mean of $\psi$ in the variable $x$ with respect to $b_{2}$.
with $c_{J_p}=c(J_p)$.
Since
$
\psi_{I}=\mu(I)^{\frac{1}{2}}(\mu(I)^{-1}\chi_{I}-\mu(I_{p})^{-1}\chi_{I_{p}})
$
% with coefficients $\alpha_{I}=\varphi_{I}(c_{I})=\mu_{1}(I)^{-1}\langle b^{1}_{I^{a}}\rangle _{I}^{-1}$, 
% $\alpha_{I_{p}}=\varphi_{I_{p}}(c_{I_{p}})=\mu_{1}(I_{p})^{-1}\langle b^{1}_{I_{p}^{a}}\rangle _{I_{p}}^{-1}$
% %$B^{1}_{\alpha_{I'}}=\alpha_{I}b^{1}_{I^{a}}-\alpha_{I_{p}}b^{1}_{I_{p}^{a}}$ for $I'=I$, 
% %$B^{1}_{\alpha_{I'}}=-\alpha_{I_{p}}b^{1}_{I_{p}^{a}}$ for $I'\neq I$, 
and similar for $\psi_{J}$, 
%with coefficients $\beta_{J}$, $\beta_{J_{p}}$. 
%We note that  
%$|\alpha_{I}b^{1}_{I^{a}}-\alpha_{I_{p}}b^{1}_{I_{p}^{a}}|\leq (|\alpha_{I}|+|\alpha_{I_{p}}|)(|b^{1}_{I_{p}^{a}}|+|b^{1}_{I_{p}^{a}}|)
%=\beta^{1}_{I}\tilde{b}^{1}_{I}$, while for $I'\neq I$, we denote $\beta^{1}_{I'}=|\alpha_{I_{p}}|$, $\tilde{b}^{1}_{I'}=|b^{1}_{I_{p}^{a}}|$.
%We note that for $I'\in \{ I,I_{p}\}$,
%$|\alpha_{I'}|=\mu_{1}(I')^{-1}|\langle b^{1}_{(I')^{a}}\rangle _{I'}|^{-1}$ and 
%similar for $\beta_{J'}$.
we  have 
\begin{align}\label{sumf}
\nonumber
|\langle T\psi_{I},\psi_{J}\rangle | &\lesssim 
\mu (I)^{\frac{1}{2}} \mu (J)^{\frac{1}{2}}
\sum_{R\in \{ I,I_{p}\}}\sum_{S\in \{ J,J_{p}\}}%\sum_{I'\in \ch(I_{p})}\sum_{J'\in \ch(J_{p})}
\mu (R)^{-1}\mu (S)^{-1}
\\
&
\hskip30pt
\int_{S} \int_{R} 
|K(t,x)-K(t,c_{J_{p}})|\, d\mu(t)d\mu(x) .
\end{align}

We fix $R\in \{ I,I_{p}\}$ and $S\in \{ J,J_{p}\}$. 
%$(5I)\cap J=\emptyset $ and $\ell(J)\leq \ell(I)$ 
%imply 
%$\diam(I\cup J)
%%= \ell(I)/2+|c(I)-c(J)|_{\infty }+\ell(J)/2
%\leq \ell(I)+|c(I)-c(J)|_{\infty }$.
%With this %, $3\ell(I)<\diam (I\cup J)$
%%This implies 
%%$I\cap J=\emptyset $ and Moreover, 
%and $|t-c(I)|_{\infty }\leq \ell(I)/2$, we prove the equivalence:
%\begin{align*}
%|t-c(J)|_{\infty }&\geq |c(I)-c(J)|_{\infty }-|t-c(I)|_{\infty }
%\\
%&\geq \diam (I\cup J)-3\ell(I)/2
%%\geq \diam(I\cup J)-\diam(I\cup J)/2 =
%>\diam(I\cup J)/2
%\end{align*}
%and 
%$$
%|t-c(J)|_{\infty }
%%\leq |c(I)-c(J)|_{\infty }+|t-c(I)|_{\infty }
%\leq |c(I)-c(J)|_{\infty }+\ell(I)/2\leq \diam(I\cup J) .
%$$
%%which implies that $|t-x|>0$ and so, the support of $\psi $ is disjoint with the diagonal. 
%%which proves that the supports of 
%Then the double integral 
%terms inside the sums 
%can be bounded by 
%equals
In the domain of integration of the double integral we have $t\in R\subset I_p$, $x\in S\subset J_p$ and so,
%$|t-c(J_{p})|>\dist(I_p,J_p)\geq \ell(J_p)$ and so,
% \begin{align}\label{farawayf}
% \int_{S}
% \int_{R\cap \Delta_{I_p,J_p}}|K(t,x)-K(t,c_{J_{p}})|d\mu(t) d\mu(x),
% \end{align}
% where $\Delta_{I_p,J_p}=\{ t\in \mathbb R^{n}: 
% \dist(I_p,J_p)<|t-c(J_{p})|\}$.
% %We denote by $\Int $ the double integral in the RHS of \eqref{faraway}.
% A straightforward calculation shows that 
$$
2|x-c_{J_{p}}|\leq \ell(J_{p})%=\ell([I,J])
\leq \dist(I_p,J_p)
\leq |t-c_{J_{p}}|.
$$ 
Then, by the smoothness condition %\eqref{kernelsmooth2}
of a compact C-Z kernel \eqref{smoothcompactCZ}, 
the double integral in \eqref{sumf}
%\eqref{farawayf} 
is bounded by
\begin{align*}
\int_{S}&
\int_{R%\cap \Delta_{I_p,J_p}
}\frac{|x-c_{J_p}|^\delta }{|t-x|^{\alpha +\delta }}F(t,x)d\mu(t) d\mu(x)
\end{align*}
with 
$
F(t,x)=L(|t-c_{J_p}|)
S(|x-c_{J_p}|)D\Big(1+\frac{|t+c_{J_p}|}{1+|t-c_{J_p}|}\Big)
$.
Now, 
%since 
%$\dist(I_p,J_p)<|t-c_{J_p}|<\ell(\langle I,J\rangle)$,
%$|x-c_{J_p}|\leq \ell(J_p)/2$, 
by
Lemma \ref{boundtoFK2}, 
%and  
%%the fact that 
%$\ell([I_p,J_p])=\dist(I_p,J_p)$, 
%\approx \ell(R\cap \Delta_{I,J})$, 
previous expression can be bounded by
%\eqref{farawayf} 
\begin{align*}
%\int_{S}&
% \frac{|x-c_{J_p}|^{\delta}}{\dist(x,R)^{\alpha+\delta }}d\mu(x)
% \mu(R)L(\ell([I_p,J_p]))S(\ell(J))D(\ell(\langle I_p,J_p\rangle ))
% \\
%&
%\leq 
\frac{\ell(J_p)^{\delta }}{\dist(S,R)^{\alpha +\delta }}\mu(R)\mu(S)
L(\ell([I_p,J_p]))S(\ell(J))D(\rdist (\langle I_p,J_p\rangle, \mathbb B)).
%F([I_p,J_p], J,\langle I_p,J_p\rangle ).
% \\
% &\leq 
% \frac{\ell(J)^{\delta }}{\dist(I_p,J_p)^{\alpha +\delta }}\mu(R)\mu(S)
% F([I,J], J,[ I,J] ).
\end{align*}
Since $R\subset I_p$ and $S\subset J_p$, we have  $\dist(S,R)\geq \dist(I_p,J_p)
%\geq 2^{e\theta}\ell(J_p)=2^{e(\theta-1)}\ell(I_p)
$. 
% Then
% $$
% \ell(I_p)+\dist(I_p,J_p)
% \leq (2^{e(1-\theta)}+1)\dist(I_p,J_p)
% $$
Furthermore, since $\dist(I_{p}, J_{p})\geq \ell(J_{p})$, we have
\begin{align*}
\dist(I_{p},J_{p})\geq 2^{-1}(\dist(I_{p},J_{p})+\ell(J_{p})).
\end{align*}
%and 
%$\rdist(S,R)\geq \rdist(I_p,J_p)\approx \rdist(I,J)$.
%If we denote $B_{I,q_{1}}^{1}=\frac{ [b^{1}_{I^{a}}]_{I,q_{1}}}{|\langle b^{1}_{I^{a}}\rangle _{I}|}+\frac{ [b^{1}_{I_{p}^{a}}]_{I_{p},q_{1}}}{|\langle b^{1}_{I_{p}^{a}}\rangle _{I_{p}}|}$ and similar for $B_{J,q_{2}}^{2}$, then
% \eqref{sum} 
%can be bounded by
%\begin{align*}
%|\langle &T\psi_{I},\psi_{J}\rangle | \lesssim B_{I_{p},q_{1}}^{1}B_{J_{p},q_{2}}^{2}\mu_{1}(I)^{\frac{1}{2}}\mu_{1}(I)^{\frac{1}{2}}
%%\sum_{I'\in \{ I,I_{p}\}}\sum_{J'\in \{ J,J_{p}\}}
%%\frac{ [b^{1}_{(I')^{a}}]_{I',q_{1}}}{|\langle b^{1}_{(I')^{a}}\rangle _{I'}|}
%%\frac{[b^{2}_{(J')^{a}}]_{J',q_{2}}}{|\langle b^{1}_{(I')^{a}}\rangle _{I'}|}
%\frac{\ell(J)^{\delta }}{\ell(\langle I,J\rangle )^{\delta }}\frac{1}{\mu(\langle I,J\rangle )}
%F_{K}(\langle I,J\rangle, J,\langle I,J\rangle )
%\end{align*}
%%Now, using that $\sum_{I'\in \ch(I_{p}), I'\neq I}\mu_{i}(I')=\mu_{i}(I_{p}\backslash I)$, the first sum equals
%%$$
%%\mu_{1}(I)^{\frac{1}{2}}\Big( \Big(\frac{1}{\mu_{1}(I)|\langle b_{I^{a}}\rangle_{I}|}+\frac{1}{\mu_{1}(I_{p})|\langle b_{I_{p}^{a}}\rangle_{I_{p}}|}\Big)
%%\mu_{1}(I)+\frac{1}{\mu_{1}(I_{p})|\langle b_{I_{p}^{a}}\rangle_{I_{p}}|}\mu_{1}(I_{p}\backslash I)\Big)
%%$$
%%$$
%%\leq \mu_{1}(I)^{\frac{1}{2}}
%%\Big(\frac{1}{|\langle b_{I^{a}}\rangle_{I}|}+\frac{2}{|\langle b_{I_{p}^{a}}\rangle_{I_{p}}|}\Big)
%%\leq 2C_{I} \mu_{1}(I)^{\frac{1}{2}}
%%$$
%%since $\mu_{1}(I), \mu_{1}(I_{p}\backslash I)\leq \mu_{1}(I_{p})$. Similar for the second sum. 
%%
With this, 
we can continue the bound in \eqref{sumf} as
\begin{align}\label{separated}
|\langle T\psi_{I},\psi_{J}\rangle | &\lesssim 
\mu(I)^{\frac{1}{2}}\mu(J)^{\frac{1}{2}}
\sum_{R\in \{ I,I_{p}\}}\sum_{S\in \{ J,J_{p}\}}
\frac{\ell(J_p)^{\delta}}{\dist(I_p,J_p)^{\alpha +\delta }}
%\frac{1}{\dist(I_p,J_p)^\alpha}
F_1(I,J)
\\
&
\nonumber
\lesssim 
%\Big(\frac{\ell(J_p)}{\dist(I_p,J_p)}\Big)^{\delta }
\Big(\frac{\ell(J_p)}{\ell(J_p)+\dist(I_p,J_p)}\Big)^{\alpha+\delta}
\frac{\mu(I)^{\frac{1}{2}}\mu(J)^{\frac{1}{2}}}{\ell(J)^\alpha}
F_1(I,J).
%\\
%&=\frac{\ell(J)^{\delta +\frac{1}{2}}}{\ell(\langle I,J\rangle )^{\delta +n}}
%B_{I,q_{1}}^{1}
%B_{J,q_{2}}^{2}
%\frac{(\mu_{1}(I)/|I|)^{\frac{1}{2}}(\mu_{2}(J)/|J|)^{\frac{1}{2}}}{\nu(\langle I,J\rangle )/|\langle I, J\rangle |}
%F_{K}(\langle I,J\rangle, J,\langle I,J\rangle ) .
\end{align}
%with $B_{I_{p},q_{1}}^{1}=\sup_{I'\in \child(I_{p})}B_{I',q_{1}}^{1}$ and similar for $B_{J_{p},q_{2}}^{2}$. 
%This is the result corresponding to the case 1) in the statement. 
% Since $\dist(I_{p}, J_{p})\geq \ell(J_{p})$, we have
% \begin{align*}
% \ell([I_p,J_p])&=\dist(I_{p},J_{p})\geq 2^{-1}(\dist(I_{p},J_{p})+\ell(J_{p}))
% \\
% &
% %\geq 2^{-1}(\dist(I,J)+\ell(I)/2+\ell(J))
% =2^{-1}\inrdist(I_{p},J_{p})\ell(J_p)
% \end{align*}
%and 
%$$
%\frac{\mu_{1}(I)^{\frac{1}{2}}\mu_{2}(J)^{\frac{1}{2}}}{\nu([I,J])}
%=\frac{\nu(I)^{\frac{1}{2}}\nu(J)^{\frac{1}{2}}}{\nu([I,J])}
%\Big(\frac{\mu_{1}(I)}{\nu(I)}\Big)^{\frac{1}{2}}\Big(\frac{\mu_{2}(J)}{\nu(J)}\Big)^{\frac{1}{2}}
%$$
%and 
%the last expression is comparable to the statement. 
\end{proof}
\begin{remark}\label{alterbump}
When $\ell(I_p\smlor J_p)\leq \dist(I_p,J_p)$ we will use the weaker inequality
\begin{align}\label{bumpwithec}
%\label{bumps}
|\langle T\psi_{I},\psi_{J}\rangle |
&\lesssim \ec(I,J)^{\delta }
\rdist(I_p,J_p)^{-(\alpha+\delta)}
\frac{\mu(I)^{\frac{1}{2}}\mu(J)^{\frac{1}{2}}}
{\ell(I\smlor J)^{\alpha}}
%\frac{\mu_{1}(I)^{\frac{1}{2}}\mu_{2}(J)^{\frac{1}{2}}}{\nu(\dist(I,J)+\ell(I_{p}\smland J_{p}))}
F_{1}(I,J),
\end{align}
which we now justify. Assuming $\ell(J)\leq \ell(I)$, we have $\dist(I_{p}, J_{p})\geq \ell(I_{p})$. Then
\begin{align*}
\dist(I_{p},J_{p})\geq 2^{-1}(\dist(I_{p},J_{p})+\ell(I_{p}))\geq 
2^{-1}(\dist(I_{p},J_{p})+\ell(J_{p})).
\end{align*}
With this, we get from \eqref{separated}
\begin{align*}
|&\langle T\psi_{I},\psi_{J}\rangle | \lesssim 
\mu(I)^{\frac{1}{2}}\mu(J)^{\frac{1}{2}}
\Big(\frac{\ell(J_p)}{\dist(I_p,J_p)}\Big)^{\delta }
\frac{1}{\dist(I_p,J_p)^\alpha}
F_1(I,J)
\\
&\lesssim 
\Big(\frac{\ell(J_p)}{\ell(J_p)+\dist(I_p,J_p)}\Big)^{\delta}
\Big(\frac{\ell(I_p)}{\ell(I_p)+\dist(I_p,J_p)}\Big)^{\alpha}
\frac{\mu(I)^{\frac{1}{2}}\mu(J)^{\frac{1}{2}}}{\ell(I)^\alpha}
F_1(I,J)
\\
&=\inrdist(I_p,J_p)^{-\delta }
\rdist(I_p,J_p)^{-\alpha}
\frac{\mu(I)^{\frac{1}{2}}\mu(J)^{\frac{1}{2}}}
{\ell(I)^{\alpha}}
F_{1}(I,J).
\end{align*}
Finally,  
%Moreover,  $\dist(I_p,J_p)\geq \ell(I\smland J)$ and so,
\begin{align*}
\inrdist(I_p,J_p)^{-\delta}
&\lesssim \Big(\frac{\ell(J)}
{\dist(I_p,J_p)}\Big)^{\delta}
\lesssim \Big(\frac{\ell(J)}{\ell(I)}\Big)^{\delta}
\Big(\frac{\ell(I_p)}{\ell(I_p)+\dist(I_p,J_p)}\Big)^{\delta}
\\
&=\ec(I,J)^{\delta}
\rdist(I_p,J_p)^{-\delta}.
\end{align*}

\end{remark}

\begin{remark}
We also note that, from
$\dist(I_p,J_p)\leq \dist(I,J)
\leq \dist(I_p,J_p)+\ell(I_p)$,
we have
$$
\frac{1}{3}(1+\frac{\dist(I,J)}{\ell(I)})
\leq 1+\frac{\dist(I_p,J_p)}{\ell(I_p)}
\leq 1+\frac{\dist(I,J)}{\ell(I)},
$$
that is, $\rdist(I_p,J_p)\approx\rdist (I,J)$. 
\end{remark}

% Proposition \ref{twobumplemma2} is divided in two cases. The second case relies on the following technical lemma 
% \begin{lemma}\label{realboundHaar2} Let $K$ 
% satisfy Definition \ref{prodCZoriginal} of a compact Calder\'on-Zygmund kernel. There exist a constant $c_n>0$ only dependent on $n$, such that for all 
% $I,J\in {\mathcal D}$ with $I\neq J$, 
% $\dist(I,J)=0$ and for $K=(3J_{p})\backslash J_{p}$, we have
% \begin{align}\label{3IminusI}
% |\langle T(\psi_{I} \chi_{K}),\psi_{J}\rangle |
% &\lesssim \sum_{R\in \{I,I_{p}\}}
% \Big(\frac{\mu(c_n(R\cap K))}
% {\mu(R)}\Big)^{\frac{1}{2}}F_\mu(I_p,J_p).
% \end{align}
% \end{lemma}
% This second case in Proposition \ref{twobumplemma2}  is included for the sake of completeness, since it is not needed to prove compactness. For that reason, the long proof of Lemma 
% \ref{realboundHaar2} is provided in an appendix at the end of the paper.

For the next Lemma, we remind the following notation introduced in Definition \ref{fullwavelet}. For  
$I_p,J_p\in \mathcal D$, $Q\in 3\mathcal D$ with 
$I_p,J_p\subset 3^{-1}Q$, we write
% and write $\alpha_{K,J}=\varphi_{K}(c_{J})$.
%and that ${\rm cl} I$ denotes the closure of a cube %$I$.
%We denote $a_{J_p}=c_{J_p}-(c_{J_p}-c_I)/2
%=(c_I+c_{J_p})/2$ the midpoint between $c_I$ %and $c_{J_p}$ and 
%$b_{J_p}=c_{J_p}+(c_{J_p}-c_{I_p})/2=(3c_{I_p%}-c_{J_p})/2$.
%We then define
$$
\psi_{I,J}^{{\rm full}}(t)=\mu(I)^{\frac{1}{2}}(\varphi_{I}( c_{J_p})%\chi_{I}(t)
-\varphi_{I_{p}}(c_{J_{p}})%\chi_{I_{p}}(t)
)\chi_{Q}(t),
$$
with $\varphi_{I}
=\frac{1}{\mu(I)}\chi_{I}$, $c_{J_p}=c(J_p)$. 
% where 
% $\tilde c_{J_{p}}\in 3J_p\setminus I$
% %=(c_I+c_{J_p})/2$$ 
% if $J_p\subset R$ with 
% $\dist(\partial I_p,R)=0$
% for any $R\in \{I,I_p\}$,
% and
% $\tilde c_{J_{p}}=c_{J_{p}}$ otherwise. 

% With this, for $J_p\cap I_p=\emptyset $ or 
% $I_p\subsetneq J_p$ or 
% $J_p\subseteq R$ 
% with $\dist(\partial I_p,R)=0$
% for any $R\in \{I,I_p\}$,
% we have 
% $\psi_{I,J}^{{\rm full}}=0$. 
% Meanwhile, for $J_p\subseteq I_p$ and $t\in I$, we have that $\psi_{I,J}^{{\rm full}}(t)=\psi_{I}(t)$.

% when $I_p\neq J_p$, 
% $$
% \psi_{I,J}^{{\rm full}}(t)=\mu(I)^{\frac{1}{2}}(\varphi_{I}(a_{J_p})\chi_{I}(t)
% -\varphi_{I_{p}}(c_{J_{p}})\chi_{I_{p}}(t)
% )%\chi_{S}(t).
% $$
% when $I_p=J_p$, and 
% $$
% \psi_{I,J}^{{\rm full}}(t)=\mu(I)^{\frac{1}{2}}(\varphi_{I}(b_{J_p})\chi_{I}(t)
% -\varphi_{I_{p}}(b_{J_{p}})\chi_{I_{p}}(t)
% )%\chi_{S}(t).
% $$
% when $I_p=J$.
% % $
% % \psi_{I,J}^{{\rm full}}
% % =\psi_{I}
% % $ 
% % for $J_p\subseteq I_p$, 

% With this, for $J_p\cap I_p=\emptyset $ or 
% $I_p\subsetneq J_p$, we have 
% $\psi_{I,J}^{{\rm full}}=0$. 
% %with $c_{J_p}\in J_p\setminus I_p$. Therefore
% %$
% %\psi_{I}-\psi_{I,J}^{{\rm full}}\neq 0$
% %implies 
% Meanwhile, for $J_p\subseteq I_p$, we have that $\psi_{I,J}^{{\rm full}}=\psi_{I}$.
% % while for $J_p=I_p$, we have 
% % $\psi_{I,J}^{{\rm full}}=-\mu(I)^{\frac{1}{2}}
% % \frac{1}{\mu(I_p)}\chi_{{\rm int} I_{p}}(t)$.
% %Finally, for $I_p=J$, we have 
% %$\psi_{I,J}^{{\rm full}}=0$.

\begin{proposition}\label{twobumplemma2}
%=\frac{1}{\mu_{1}(I)\langle b^{1}_{I^{a}}\rangle _{I}}\chi_{I}(c(J_{p}))
%, and
%$\alpha_{I_{p},J}=\varphi_{I_{p}}(c_{J})
%=\frac{1}{\mu_{1}(I_{p})\langle b^{1}_{(I_{p})^{a}}\rangle _{I_{p}}}\chi_{I_{p}}(c(J_{p}))
%$.
% $\alpha_{I,J}=\psi_{I}(c(J))$.
%We also denote
%$\tilde\psi_{I}^{1}(t)=\mu(I)^{\frac{1}{2}}(\alpha_{I,J}b_{I^{a}}(t)-\alpha_{I_{p},J}b_{I_{p}^{a}}(t)).
%$
Let $T$
be a linear operator with compact C-Z kernel $K$
and parameter $0<\delta <1$. 
Let $I,J\in \mathcal D$ be such that
%\geq \ell(I_{p}\smland J_{p})$,
$\dist(I_{p}, J_{p})=0$ and
$\ec(I,J)^{\theta}(\inrdist(I_{p},J_{p})-1)\geq 1$. Then
%for $\theta \in (0,1)$,
\begin{align*}
%\label{bumps2}
|\langle T(\psi_{I}-{\psi}^{\rm full}_{I,J}),\psi_{J}\rangle |
&\lesssim
%\left(\frac{|J|}{|I|}\right)^{\frac{1}{2}+\frac{\delta}{n}}
%\left(\frac{\diam(I\cup J)}{\ell(I)}\right)^{-(n+\delta)}
%\alpha \, \ec(I,J)^{\frac{n}{2}+\delta}
%B_{2,1}(I,J)
%F_{2,1}(I,J)
%\\
%\Big(\frac{\ec(I,J)}{\rdist(I,J)}\Big)^{\delta}
%\frac{\ec_{\mu}(I,J)^{\frac{1}{2}}}{
\inrdist(I_p,J_p)^{-\delta}
\sum_{R\in \{I,I_p\}}
\Big(\frac{\mu(R\cap J)}{\mu(R)}\Big)^{\frac{1}{2}}
%+\kappa \frac{\mu(J)}{\mu(I_J)} \Big)^{\frac{1}{2}}
F_{2,\mu }(I,J)
\\
&+\inrdist(I_p,J_p)^{-(\alpha+\delta)}
%\kappa 
\frac{\mu(I)^{\frac{1}{2}}\mu(J)^{\frac{1}{2}}}{\ell(
I\smland J)^{\alpha }}\chi_{I_p\setminus I}(c_{J_p})F_{3}(I,J),
%\\
%\nonumber
%&\hskip10pt 
%+\frac{\ec(I,J)^{\frac{n}{2}}}{\inrdist(I,J)^{\delta}}
%[b_{1}]_{I,q_{1}}[b_{2}]_{J,q_{2}} F_{2,2}(I,J),
\end{align*}
%\begin{align*}
%%\label{bumps2}
%|\langle T(\psi_{I}^{0}),\psi_{J}\rangle |
%&\lesssim
%%\left(\frac{|J|}{|I|}\right)^{\frac{1}{2}+\frac{\delta}{n}}
%%\left(\frac{\diam(I\cup J)}{\ell(I)}\right)^{-(n+\delta)}
%%\alpha \, \ec(I,J)^{\frac{n}{2}+\delta}
%%B_{2,1}(I,J)
%%F_{2,1}(I,J)
%%\\
%\frac{\ec(I,J)^{\frac{n}{2}}}{\inrdist(I,J)^{\delta}}%\Big)
%B_{2}(I,J)
%F_{2}(I,J),
%%\\
%%\nonumber
%%&\hskip10pt 
%%+\frac{\ec(I,J)^{\frac{n}{2}}}{\inrdist(I,J)^{\delta}}
%%[b_{1}]_{I,q_{1}}[b_{2}]_{J,q_{2}} F_{2,2}(I,J),
%\end{align*}
where 
%for $J_p\subseteq I$ we have $I_J=I$ and $\kappa =0$, while for $J_p\nsubseteq I$, $I_J=I_p$ and $\kappa =1$. Moreover,  
%with 
%$\alpha =1$ if $J\subset I$, $\alpha =0$ otherwise and 
%$F_{K}^{D}$ as given in Definition \ref{LSDF}, we have
%$
%B_{2,1}(I,J)=C_{I}^{b_{1}, i}C_{J}^{b_{2}, j}
%$,\\
%$
%F_{2,1}(I, J)=F_{K}^{D}(I\smlor J,I\smland J,I\smlor J)
%$,\\
\begin{align*}
F_{2,\mu}(I, J)&=
L(\ell(I\smland J))S(\ell(I\smland J))
\sum_{k\geq 0}2^{-k\delta }\frac{\mu(2^kK)}{\ell(2^kK)}D(\rdist(2^kK,\mathbb B))
%F_{\mu}^D(I\smland J, I\smland J,  \lambda (I\smland J) )
\end{align*} 
and
\begin{align*}
F_{3}(I, J)&=
L(\ell(I\smland J))S(\ell(I\smland J))
\sum_{k\geq 0}2^{-k\delta }D(\rdist(2^kK,\mathbb B))
%F_{\mu}^D(I\smland J, I\smland J,  \lambda (I\smland J) )
\end{align*} 
with 
%$\alpha =1$ if $I\smland J\subsetneq I\smlor J$, while $\alpha =0$ otherwise, 
$K=\inrdist(I_p,J_p)(I\smland J)$.
\end{proposition}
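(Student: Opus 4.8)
The plan is to reduce to a well‑separated configuration and then run the scheme of Proposition \ref{twobumplemma1} — zero mean of $\psi_J$ against the kernel smoothness — the new point being that $\psi_I-\psi^{\rm full}_{I,J}$ is built to vanish on a neighbourhood of $J_p$. First I would take $\ell(J)\le\ell(I)$ without loss of generality, so $I\smland J=J$, $I\smlor J=I$, and set $\lambda=\inrdist(I_p,J_p)$. Since $\ec(I,J)\le1$, the hypothesis $\ec(I,J)^{\theta}(\lambda-1)\ge1$ forces $\lambda\ge2$; and since $\dist(I_p,J_p)=0$ for cubes of one dyadic grid, $J_p\subseteq I_p$ (they cannot merely touch along $\partial I_p\subseteq\mathfrak D_{I_p}$, which would give $\lambda=1$), and $J_p$ is not a child of $I_p$ (again $\lambda=1$) but lies deep inside a unique child $I'$ of $I_p$, so that $\dist(J_p,\partial I')\ge\dist(J_p,\mathfrak D_{I_p})=(\lambda-1)\ell(J_p)$ and likewise $\dist(J_p,\partial I_p)\ge(\lambda-1)\ell(J_p)$. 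I would also record the elementary bound $\ec(I,J)\lambda\lesssim1$ — from $\dist(J_p,\mathfrak D_{I_p})\le\dist(c_{J_p},\partial I')\le\ell(I_p)/4$ — and abbreviate $K=\lambda J$, so $\ell(K)=\lambda\ell(J)\lesssim\ell(I_p)$.

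Next I would compute $\psi_I-\psi^{\rm full}_{I,J}$ from $\psi^{\rm full}_{I,J}=\mu(I)^{1/2}(\varphi_I(c_{J_p})-\varphi_{I_p}(c_{J_p}))\chi_Q$ with $\varphi_{I_p}(c_{J_p})=\mu(I_p)^{-1}$, splitting into the sub-cases $c_{J_p}\in I$ (i.e. $I'=I$, so $\chi_{I_p\setminus I}(c_{J_p})=0$) and $c_{J_p}\in I_p\setminus I$ (so $\chi_{I_p\setminus I}(c_{J_p})=1$). In the first sub-case $\psi_I-\psi^{\rm full}_{I,J}=-\mu(I)^{-1/2}\chi_{I_p\setminus I}+\mu(I)^{1/2}(\mu(I_p)^{-1}-\mu(I)^{-1})\chi_{Q\setminus I_p}$; in the second, $\psi_I-\psi^{\rm full}_{I,J}=\mu(I)^{-1/2}\chi_I+\mu(I)^{1/2}\mu(I_p)^{-1}\chi_{Q\setminus I_p}$. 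In both cases the difference is supported in $Q\setminus J_p$, and it is a ``near'' term (supported inside $I_p$, at distance $\ge(\lambda-1)\ell(J_p)$ from $J_p$) plus a ``tail'' term on $Q\setminus I_p$. I would also write $\psi_J=\mu(J)^{1/2}(\mu(J)^{-1}\chi_J-\mu(J_p)^{-1}\chi_{J_p})$. Since the supports of $\psi_I-\psi^{\rm full}_{I,J}$ and $\psi_J$ are disjoint, the kernel representation applies and the zero mean of $\psi_J$ replaces $K(t,x)$ by $K(t,x)-K(t,c_{J_p})$; on all the relevant regions $2|x-c_{J_p}|\le\ell(J_p)\le|t-c_{J_p}|$, so the smoothness condition \eqref{smoothcompactCZ} (in the form used in Proposition \ref{twobumplemma1}) gives $|K(t,x)-K(t,c_{J_p})|\lesssim\ell(J)^{\delta}|t-c_{J_p}|^{-\alpha-\delta}F(t,x)$, and Lemma \ref{boundtoFK2} together with the monotonicity of $L,S,D$ replaces $F$ by $L(\ell(J))S(\ell(J))D(\rdist(\cdot,\mathbb B))$.

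It then remains to bound three scalar double integrals, pairing each support piece against the two pieces of $\psi_J$. For the ``near'' piece $\mu(I)^{-1/2}\chi_I$ of the sub-case $c_{J_p}\notin I$ — the one whose integration domain has measure $\mu(I)$ matching its normalization $\mu(I)^{-1/2}$ — I would use the pointwise estimate $\int_I|t-c_{J_p}|^{-\alpha-\delta}d\mu(t)\le\dist(I,c_{J_p})^{-\alpha-\delta}\mu(I)\lesssim(\lambda\ell(J))^{-\alpha-\delta}\mu(I)$, which produces the factor $\inrdist(I_p,J_p)^{-(\alpha+\delta)}\ell(I\smland J)^{-\alpha}\mu(I)^{1/2}\mu(J)^{1/2}$ and, after estimating the single $D$-value by $\sum_k 2^{-k\delta}D(\rdist(2^kK,\mathbb B))$, the second term of the statement. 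For the tail piece in both sub-cases and the near piece $-\mu(I)^{-1/2}\chi_{I_p\setminus I}$ of the other sub-case — whose domains have measure $\approx\mu(I_p)$, not matching $\mu(I)^{-1/2}$ — I would instead decompose the domain into the dyadic annuli $2^{k+1}K\setminus2^kK$, bound each by measure $\le\mu(2^{k+1}K)$ and kernel $\lesssim\ell(J)^{\delta}(2^k\ell(K))^{-\alpha-\delta}L(\ell(J))S(\ell(J))D(\rdist(2^kK,\mathbb B))$, and sum the geometric series; since $\ell(J)^{\delta}(2^k\ell(K))^{-\alpha-\delta}\ell(2^kK)^{\alpha}=\lambda^{-\delta}2^{-k\delta}$, this gives $\inrdist(I_p,J_p)^{-\delta}$ times $\sum_k2^{-k\delta}\mu(2^kK)\ell(2^kK)^{-\alpha}D(\rdist(2^kK,\mathbb B))$, i.e. $F_{2,\mu}(I,J)$ up to the coefficient, and the elementary cancellations $\mu(I)^{-1/2}\mu(I_p)^{-1}\cdot(\text{domain and }\psi_J\text{ measures})\lesssim(\mu(R\cap J)/\mu(R))^{1/2}$ for $R=I_p$ (resp. $R=I$) yield the first term. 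Summing the three contributions proves the proposition.

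The step I expect to cause the most friction is this last one: deciding, domain by domain, between the pointwise bound (which delivers the $\inrdist^{-(\alpha+\delta)}$ decay but requires the integration domain to be a single cube whose measure cancels the normalization $\mu(I)^{-1/2}$) and the dyadic-annulus decomposition (which delivers only $\inrdist^{-\delta}$ but is forced when the domain has measure $\approx\mu(I_p)$), and then checking that the leftover powers of $\mu(I),\mu(I_p),\mu(J),\mu(J_p)$ collapse exactly into $(\mu(R\cap J)/\mu(R))^{1/2}$ and $\mu(I)^{1/2}\mu(J)^{1/2}$, together with the systematic use of $\ec(I,J)\inrdist(I_p,J_p)\lesssim1$ and the matching of the $D$-values coming out of Lemma \ref{boundtoFK2} against the sums $\sum_k2^{-k\delta}D(\rdist(2^kK,\mathbb B))$ defining $F_{2,\mu}(I,J)$ and $F_3(I,J)$ — here the extra summation $\tilde D$ built into $F_K$ in \eqref{defFmu} is what provides the slack when $\langle I,J_p\rangle$ is much larger than $K$.
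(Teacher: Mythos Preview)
Your proposal is correct and follows essentially the same approach as the paper: compute $\psi_I-\psi^{\rm full}_{I,J}$ in the two sub-cases, observe its support avoids a neighbourhood of $J_p$, subtract $K(t,c_{J_p})$ via the zero mean of $\psi_J$, apply kernel smoothness, and decompose the integration domain in dyadic annuli about $c_{J_p}$ starting at scale $\lambda\ell(J)$. The only organizational difference is that the paper unifies the two sub-cases into a single pointwise bound on $|\psi_I^{\rm out}|$ before decomposing into annuli $\Delta_k$, and for the $\chi_I$ piece (your ``near'' piece when $c_{J_p}\notin I$) it still runs the annuli sum over $I\cap\Delta_k$ rather than your one-shot pointwise bound $\int_I|t-c_{J_p}|^{-\alpha-\delta}d\mu(t)\le(\lambda\ell(J))^{-\alpha-\delta}\mu(I)$ --- but these are equivalent and your version is slightly more direct.
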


% \begin{remark}\label{defBF} We first show that the expression $ F$ 
% appearing in Definition \ref{accretive} of compatible testing functions dominates the terms 
% $F_{i}$ appearing in each case of the statement of Proposition \ref{twobumplemma1}. 

% We first note that 
% %we essentially have that 
% %$B_{1}\lesssim B_{2}\lesssim B_{3}$. 
% $F_{i}(I,J)\lesssim F_{K}^{LD}(I\smland J, I\smland J,  \langle I,J\rangle )+F\mu_{W}(I;M_{T,\epsilon })$

% \end{remark}

\begin{proof} We assume $\ell(J)\leq \ell(I)$.  
Let $e\in \mathbb N$ such that $2^{e}=\ell(I)/\ell(J)\geq 1$. 
%Since $\diam (I_{p}\cup J_{p})\leq 3\ell(I_{p})$, we have
%+\{ t\in \mathbb R^{n} / |t-c(I_{p})|\leq \ell(I_{p})/2+\ell(J_{p})\}$.
% and $1\leq \rdist(I,J)\leq 3$.
%We split 
%$$\psi=\psi_{out}+\psi_{in}$$
%$$\psi_{in}(t,x)= \psi(t,x) \Phi_{\tilde{I}}(t)$$
%with $\tilde{I}$ the translate of $I$ centered at $c(J)$.
%% and $\lambda_1=\ell(I)^{-1}{\rm diam}(I\cup J)$.
%
%We notice that $\psi_{out}$ can be bounded as in previous case.
% We denote %$\lambda=3\inrdist{(I_p,J_p)}$, 
% % $\tilde{J}_{p}=%\lambda
% % %%\frac{\ell(I)}{\ell(J)}
% % 2^eJ_{p}$, the cube with
% % $c(\tilde{J_{p}})=c(J_{p})$ 
% % %%$J\subset I$ 
% % and $\ell(\tilde{J}_{p})=\ell(I_p)
% % %%\geq \dist (J_{p},\mathfrak %D_{I_{p}})+\ell(J_{p})
% % $.
% %Also 
% $I_{p}^{*}=
% %(1+2\frac{\ell(J)}{\ell(I)})
% (1+2^{1-e})I_p$ 
% the cube with  
% $c(I_{p}^{*})=c(I_{p})$ 
% %$J\subset I$ 
% and $\ell(I_{p}^*)=\ell(I_p)+2\ell(J_p)$. 
% By the assumption $\dist(I_p,J_p)=0$, 
% we have $J_{p}\subset I_{p}^{*}$. 
Since $\dist(I_{p}, J_{p})=0$ and
$\ec(I,J)^{\theta}\inrdist(I_{p},J_{p})> 1$, we have
$$
\frac{\dist(J_{p}, \mathfrak D_{I_p})}{\ell(J_p)}
=\inrdist(I_p,J_p)-1
\geq \ec(I,J)^{-\theta}
=2^{e\theta}
$$
% Then $\dist(J_{p}, \mathfrak D_{I_p})> 0$, which implies $J_{p}\subsetneq I_{p}$.
% Furthermore, %when $J_{p}\subset I_{p}$, 
% the condition 
Then
$\dist(J_{p},\partial I_{p})>2^{e\theta}\ell(J_p)$, which implies that 
%either 
%$J_{p}\subset I_{p}^*\backslash I_{p}$ with $I_{p}\cap %3J_{p}=\emptyset$ or 
$3J_{p}\subsetneq I_{p}$ with $\ell(J)\leq \ell(I)/8$ and so,  
%the stronger inclusion
$3J_{p}\subseteq I'$ for some $I'\in \child(I_{p})$. %Similar relationship between sizes happens when 
%$J_p\subset (3I_p)\setminus I_p$. 
%In the former case we have $R\cap 3J_{p}=\emptyset$ for $R\in \{ %I,I_{p}\}$. Then
% $\varphi_{R}(t)\chi_{3J_{p}}(t)=\varphi_{R}(c_{J_{p}})\chi_{Q\cap (3J_{p})}(t)=0$, which implies $\psi^{\rm in}_{I}\equiv 0$. 
%In the latter case
%we notice that $\ell(J)<\ell(I)/8$ and thus,

Now we note that 
%$\psi_{I}(t)-{\psi}^{\rm full}_{I,J}(t)$ equals 
 \begin{align*}
%\psi_{I}^{\rm in}(t)&
%\chi_{\tilde J_{p}}(t)
\psi_{I}(t)-{\psi}^{\rm full}_{I,J}(t)
=\mu(I)^{\frac{1}{2}}[\varphi_{I}(t)-%\alpha_{I,J_{p}}
\varphi_{I}(c_{J_{p}})
\chi_{Q}(t)
%\\
%&\hskip35pt
-\varphi_{I_{p}}(t)+%\alpha_{I_{p},J_{p}}
\varphi_{I_{p}}(c_{J_{p}})
\chi_{Q}(t)].
\end{align*}

%and $3J_{p}\subset \tilde{I}_{p}$. 
Then if $t\in 3J_p\subsetneq I_p$ we have   $\varphi_{R}(t)\chi_{3J_{p}}(t)=\varphi_{R}(c_{J_{p}})\chi_{3J_{p}}(t)$ for $R\in \{ I,I_{p}\}$ 
and so 
$\psi_{I}(t)-{\psi}^{\rm full}_{I,J}(t)=0$.  
With this,
\begin{align*}
%\label{subtraction1}
\psi_{I}-\psi_{I,J}^{{\rm full}}&=(\psi_{I}-\psi_{I,J}^{\rm full})(1-\chi_{3J_{p}})
\end{align*}
We denote the last expression by $\psi_{I}^{\rm out}$, 
which is supported on $(I_p\cup Q)\setminus 3J_p$. 
 Since %$e\geq 2$, 
  $\dist((I_p\cup Q)\setminus 3J_{p}, J_{p})
  %\geq %3\inrdist{(I_p,J_p)}
  %\ell(I_{p})/2-\ell(J_{p})/2
  %=\ell(J_{p})+3\dist{(I_p,J_p)}/2
  \geq \ell(J_{p})$, 
  %For the same reason, $\dist(I_{p}\backslash \tilde I_{p}, J_{p})\geq \ell(J_{p})$.
we can apply the reasoning we used in Proposition \ref{twobumplemma2} with some variations.
% %with $I_{p}$ changed by 
% %either $I\backslash \tilde I_{p}$ or 
% %$I_{p}\backslash \tilde J_{p}$.
% %to obtain the same estimate as in Proposition \ref{twobumplemma2}.
We describe again the argument
 because we aim for slightly different estimates. 

We improve previous argument. Since 
that $J_{p}\subseteq I'$ for some $I'\in \child(I_{p})$,
we have for $t\in I'$ that  $\varphi_{R}(t)=\varphi_{R}(c_{J_{p}})$
with $R\in\{I,I_p\}$, 
%, and 
%$\varphi_{I_{p}}(t)=\varphi_{I_{p}}(c_{J_{p}})$
%which imply, %, from \eqref{psiout}, 
and so $\psi^{\rm out}_{I}(t)\equiv 0$.
%for all $t\in I_{k}\cap \tilde{I}\cap (3J)^{c}$. 
That is, $\psi^{\rm out}_{I}(t)\neq 0$ implies 
$t\in ((I_p\cup Q)\setminus I')\cap (3J_{p})^{c}$. 
Then %$\dist_{\infty }(J,{\mathfrak D}_{I})\geq \ell(J_{p})$ and 
\begin{align*}
\hskip50pt |t&-c(J_{p})|\geq \frac{\ell(J_{p})}{2}+\dist(I_p\backslash I', J_{p})
\\&
= \frac{\ell(J_{p})}{2}+\dist(J_{p},{\mathfrak D}_{I_{p}}) 
\geq \frac{1}{2}\inrdist(I_p,J_p)\ell(J_p).
\end{align*}
%\end{itemize}
%\end{itemize}

Now we prove the following inequalities: 
% a) for if $J_p\subset 3I_p\setminus I_p$, 
% $|\psi_I^{\rm out}|
% \lesssim  \mu(I)^{\frac{1}{2}}(\frac{1}{\mu(I)}\chi_{I}
% +\frac{1}{\mu(I_p)}\chi_{I_p})
% $
for $J_p\subset I_p$, 
\begin{itemize}
\item[1)] if $J_p\subset I$ then $|\psi_I^{\rm out}|
\lesssim \mu(I)^{\frac{1}{2}}\frac{1}{\mu(I)}\chi_{Q\setminus I},
$
\item[2)] if $J_p\cap I=\emptyset$ then 
$|\psi_I^{\rm out}|
\lesssim \mu(I)^{\frac{1}{2}}(\frac{1}{\mu(I)}\chi_{I}
+\frac{1}{\mu(I_p)}\chi_{Q\setminus I}).
$
\end{itemize}
% a) The first inequality is trivial since in that case, 
% $$
% |\psi^{\rm out}_{I}|= |\psi_{I}(1-\chi_{3J_{p}})|
% \leq |\psi_{I}|\lesssim \mu(I)^{\frac{1}{2}}(\frac{1}{\mu(I)}\chi_{I}
% +\frac{1}{\mu(I_p)}\chi_{I_p}).
% $$
%b) For the second case, we reason as follows. 
1) If $I'=I$, since $J_p\subset I\subset I_p$, we have seen that for all $t\in I$, $\psi_I^{\rm out}(t)=0$. Meanwhile for 
$t\in I_p\setminus I$ we 
have $\varphi_{I}(t)=0$ and
$\varphi_{I_{p}}(t)=\varphi_{I_{p}}(c_{J_{p}})$. Then
$$
\psi_I^{\rm out}(t)
=\mu(I)^{\frac{1}{2}}[-\varphi_{I}(c_{J_{p}})
\chi_{Q}(t)]
=-\mu(I)^{\frac{1}{2}}\frac{1}{\mu(I)}\chi_{Q\setminus I}(t)
$$
Finally for $t\in Q\setminus I_p$ we have $\varphi_{I}(t)=\varphi_{I_p}(t)=0$
and so
$$
|\psi_I^{\rm out}(t)|
=\mu(I)^{\frac{1}{2}}|-\varphi_{I}(c_{J_{p}})
\chi_{Q}(t)
+\varphi_{I_{p}}(c_{J_{p}})
\chi_{Q}(t)|
\leq \mu(I)^{\frac{1}{2}}\frac{2}{\mu(I)}\chi_{Q\setminus I}(t)
$$
since $\mu(I)\leq \mu(I_p)$. 

2) On the other hand, if $I'\neq I$ we have that 
$I'\cap I=\emptyset $ and so, since $J_p\subset I'$, for $t\in I$ we get $\varphi_{I}(c_{J_{p}})=0$ and
$\varphi_{I_{p}}(t)=\varphi_{I_{p}}(c_{J_{p}})$. With this
\begin{align*}
\psi_I^{\rm out}(t)
=\mu(I)^{\frac{1}{2}}\varphi_{I}(t)
=\mu(I)^{\frac{1}{2}}\frac{1}{\mu(I)}\chi_{I}(t).
\end{align*}
Meanwhile 
for 
$t\in I_p\setminus I$ we have $\varphi_I(t)=\varphi_{I}(c_{J_{p}})=0$ and 
$\varphi_{I_{p}}(t)=\varphi_{I_{p}}(c_{J_{p}})$ and so, we get 
$$
\psi_I^{\rm out}(t)
=\mu(I)^{\frac{1}{2}}[
-\varphi_{I_{p}}(t)+
\varphi_{I_{p}}(c_{J_{p}})
\chi_{Q}(t)]=0.
$$
Finally for $t\in Q\setminus I_p$ we have 
$\varphi_{I}(t)=\varphi_{I_{p}}(t)=\varphi_{I}(c_{J_{p}})=0$ and so,
$$
\psi_I^{\rm out}(t)
=\mu(I)^{\frac{1}{2}}
\varphi_{I_{p}}(c_{J_{p}})
\chi_{Q}(t)
\leq \mu(I)^{\frac{1}{2}}\frac{1}{\mu(I_p)}\chi_{Q\setminus I_p}(t).
$$
%In all these three cases, $|t-c(J_{p})|\geq \frac{1}{2}\inrdist(I_{p},J_{p})\ell(J_{p})$. 
%Also in both cases,
%$|t-c(J_{p})|\leq \diam(I_{p}\cup J_{p})\leq 3\ell(I_{p})$ and since 
This finishes the proof of these two inequalities.

We also have that for $t\in (I_p\cup Q)\setminus 3J_p$ we have 
%due to the factor $(1-\chi_{3J}(t))h_{J}(x)$ we have that $\psi_{out}(t,x)\neq 0$ also implies 
$|t-c(J_{p})|\geq 3\ell(J_{p})/2> \ell(J_{p})$. 
% The latter inequality and $|x-c(J_{p})|\leq \ell(J_{p})/2$
% imply $2|x-c(J_{p})|<|t-c(J_{p})|$. %Then
% %the support of $\psi_{out, in, in}$ is disjoint with the diagonal.
% %Therefore,
% %When $J_{p}\subset I$, since $\tilde{I}_{p}= 2^{e}J_{p}$ and $\ell(I)=2^{e}\ell(J)$, 
%
% We first note that $\supp \psi_{I}^{{\rm out}}\subset 
% I_{p}\backslash \tilde J_{p}$. 
% % Since $\ell(2\tilde J_{p})
% % =\inrdist{(I_p,J_p)}\ell(J_p)=2\ell(I_p)$
% % and $2\tilde J_{p}\cap I_p\neq \emptyset$, 
% % we have 
% % $I_{p}\backslash \tilde J_{p}\subset 
% % I_p\subset 
% % 2\tilde J_p$. 
%
%We know $\psi_I-\psi_{I,J}^{\rm full}$ is supported on $(I_p\cup Q)\setminus 3J_p$. Then 
%
We then decompose the support of $\psi_I^{\rm out}$ as follows. Let  
  $\Delta_{k} =\{ t\in (I_p\cup Q)\setminus 3J_{p} : 2^{k-1}\ell(J_{p})< |t-c(J_{p})|\leq 2^{k}\ell(J_{p})\}\subset (2^{k+1}J_{p}\backslash 2^{k}J_{p})$. Then
  $$
(I_p\cup Q)\setminus (3J_p)\subset \bigcup_{k=m_{0}}^{m_{1}}\Delta_{k},
$$
with $m_0=\log \inrdist(I_p,J_p)$ and 
$m_1=\log\frac{\ell(I_p)+\ell(Q)}{\ell(J_p)}+1$. 
This way we can write
$$
\psi_I^{\rm out}=\sum_{k=  m_0}^{m_1}\Phi_k
$$
%  $$
%   \chi_{I_p\cap Q}(1-\chi_{3J_{p}})\leq \sum_{k=  m_0}^{m_1}\chi_{2^{k+1}J_{p}}-\chi_{2^{k}J_{p}},
%   $$
%   %with 
%   %$m_0=\log \inrdist(I_p,J_p)$.  
%   %%and 
%   %%$m_1=\log\frac{\ell(I)}{\ell(J)}+1$. 
%   which is a finite sum since
 % $(2^{k+1}J_{p}\backslash 2^{k}J_{p})\cap (I_p\cup Q)= \emptyset $ for $k$ large enough. 
  %Let $M$ be the last term. 
  %there exists $t$ such that
  %$|t-c(J_{p})|>2^{k-1}\ell(J_{p})$ and $|t-c(I_{p})|\leq \ell(I_{p})/2$. Then, since $J\subset 5I$, we have $J_{p}\subset 5I_{p}$, 
%  %$$
%  %2^{k-1}\ell(J_{p})<|t-c(J_{p})|\leq |t-c(I_{p})|+|c(I_{p})-c(J_{p})|\leq \ell(I_{p})/2+5\ell(I_{p})
%  %$$ 
%  %and thus, $2^{k-1}\leq 6\ell(I_{p})\ell(J_{p})^{-1}\leq 2^{e+3}$. With this, 
%  %$$
%  %(1-\chi_{\tilde{I}_{p}})\chi_{I_{p}}=\Big(\sum_{e\leq k\leq e+4}\chi_{2^{k+1}J_{p}}-\chi_{2^{k}J_{p}}\Big) \chi_{I_{p}}=(\chi_{2^{e+5}J_{p}}-\chi_{2^{e}J_{p}})\chi_{I_{p}}
%  %$$
%  %$$
%  %1-\chi_{\tilde{I}_{p}}%=\sum_{k\geq 1}\chi_{2^{k+1}J}-\chi_{2^{k}J}
%  %=\chi_{I_{p}}+\sum_{k\geq 1}\chi_{2^{k+1}I}-\chi_{2^{k}I}-\chi_{\tilde{I}_{p}\cap (2^{k+1}I\backslash 2^{k}I)}
%  %$$
%  % with $2^{k}\geq 1+\ell(J)^{-1}|a-c(J)|_{\infty }$
 where
%  %cut-off function $\chi=$. We write for $k\geq 0$, 
  $\Phi_{k}=\psi_{I}^{{\rm out}} (\chi_{2^{k+1}J_{p}}-\chi_{2^{k}J_{p}})$. 
 We note that, since $J_{p}\subset 3J$,
 we have $\supp \Phi_{k} \subseteq \Delta_{k} \subseteq 2^{k+1}J_{p}\subset 2^{k+3}J$ and so,
  $\mu(\Delta_{k})\leq \mu(2^{k+3}J)$. 
Moreover,  
$\Delta_{k}$ is included in the difference of two concentric cubes with 
diameters $2^{k}\ell(J_p)$ and $2^{k+1}\ell(J_p)$. Then, despite $\Delta_{k}$ is not a cube, we denote
$\ell(\Delta_{k})=2^{k+1}\ell(J_p)$ and $c(\Delta_{k})=c(J_{p})$. 
% and $|J_{m}|\approx 2^{mn}|J|$. 

The plan is now to estimate
  $|\langle T\Phi_{k}, \psi_{J}\rangle |$. 
 Since 
 %$J_p\subset \tilde J_p$, we have 
 $\Delta_k \cap J_p=\emptyset$, 
 we use the kernel representation
 and the zero mean of $\psi_{J}$ to write
 \begin{align*}
 |\langle T\Phi_k, \psi_{J}\rangle |
 %=\int_{J} \int_{\Delta_{k}} \Psi_k(t)f(x) K(x,t)b_{1}(t)b_{2}(x)\, dtdx
 %\\
 %Moreover, as we have seen we may restrict the domain of integration to $|x-c(J)|<|J|/2$
 %and  $2^{k-1}|J|<|t-a|<2^{k+1}|J|$.
 %for two large constants $C_1,C_2$ .
 &\leq \int_{J_{p}} \int_{\Delta_k}
 |\psi_{I}^{{\rm out}}(t)| |\psi_{J}(x)| |K(t,x)-K(t,c_{J_{p}})|\, d\mu(t)d\mu(x) .
 \end{align*}

% %We also denote by $J_{a}$ the set of points satisfying the inequality 
% %$|x-a|_{\infty }\leq (1+\ell(J)^{-1}|c(J)-a|_{\infty })\ell(J)$.  
% %This proves that $t$ and $x$ cannot be equal and so, that the supports of $\Phi_k$ and $\psi_{J}$
% %are disjoint. 

As in previous proposition, 
% %$\Phi_{I}=\sum_{I'\in \ch(I_{p})}\alpha_{I'}b^{1}_{\alpha_{I'}}\chi_{I'}$ 
% %$\psi_{J}=\sum_{J'\in \ch(J_{p})}\alpha_{J'}b^{2}_{\alpha_{J'}}\chi_{J'}$
% %with $b^{2}_{\alpha_{J'}}=b^{2}_{J^{a}}$ for $J'=J$, 
% %$b^{2}_{\alpha_{I'}}=b^{2}_{J_{p}^{a}}$ for $J'\neq J$. 
%  $
%  \psi_{I}^{{\rm out}}\chi_{\Delta_k}
%  =\mu(I)^{\frac{1}{2}}(
%  \frac{1}{\mu(I)}\chi_{I}-
%  \varphi_{I}(c_{J_{p}})\chi_Q
%  -\frac{1}{\mu(I_p)}\chi_{I_{p}}
%  +\varphi_{I_{p}}(c_{J_{p}})\chi_{Q})\chi_{\Delta_k}
%  $
% %$$
% % C_{I}^{b_{1}}\mu_{1}(I)^{\frac{1}{2}}(\alpha_{I}|b^{1}_{I^{a}}|\chi_{I}+\alpha_{I_{p}}|b^{1}_{I_{p}^{a}}|\chi_{I_{p}})
% %%\leq C_{I}^{b_{1}}\mu_{1}(I)^{-\frac{1}{2}}\tilde b^{1}_{I}\chi_{I_{p}}
% %$$
 $
 \psi_{J}=\mu(J)^{\frac{1}{2}}(\frac{1}{\mu(J)}\chi_{J}-\frac{1}{\mu(J_p)}\chi_{J_{p}})
 %+\sum_{J'\in \ch(J_{p}), J'\neq J}(-\alpha_{J_{p}}b^{2}_{J_{p}^{a}})\chi_{J'}
 $ 
 %which implies $|\psi_J|\lesssim \mu(J)^{\frac{1}{2}}\mu(S)^{-1}\chi_S$ 
Meanwhile, we can write previous two inequalities in a unified way as follows: 
 \begin{align*}
 |\psi_{I}^{{\rm out}}(t)| 
&\lesssim \mu(I)^{\frac{1}{2}}
\Big(\frac{1}{\mu(I)}\chi_{Q\setminus I}(t)\chi_{I}(c_{J_p})
\\
&
\hskip40pt +\frac{1}{\mu(I)}\chi_{I}(t)
\chi_{I_p\setminus I}(c_{J_p})
+\frac{1}{\mu(I_p)}\chi_{Q\setminus I}(t)
\chi_{I_p\setminus I}(c_{J_p})
\Big)
\\
&\lesssim \mu(I)^{\frac{1}{2}}
\Big(\frac{1}{\mu(I)}\chi_{I}(c_{J_p})
+\frac{1}{\mu(I_p)}
\chi_{I_p}(c_{J_p})
%\\
%&
+\frac{1}{\mu(I)}\chi_{I}(t)
\chi_{I_p\setminus I}(c_{J_p})
%\Psi_2(t)
\Big)
\end{align*}
 Then 
 %performing similar calculations as in , 
%  %the modulus of the LHS can be bounded by
 \begin{align}\label{sum2}|\langle T\Phi_{k},\psi_{J}\rangle |
 &\lesssim \mu(I)^{\frac{1}{2}}\mu(J)^{\frac{1}{2}}
 \sum_{R\in \{ I,I_{p}\}}
 \sum_{S\in \{ J,J_{p}\}}\frac{\chi_{R}(c_{J_p})}{\mu(R)}
 \frac{1}{\mu(S)}
 \\
 &
  \nonumber
\hskip40pt  \int_{S} \int_{\Delta_k}  |K(t,x)-K(t,c_{J_{p}})|\, d\mu(t)d\mu(x)
 \\
 &
 \nonumber
 +\mu(I)^{\frac{1}{2}}\mu(J)^{\frac{1}{2}}
 \sum_{S\in \{ J,J_{p}\}}\frac{\chi_{I_p\setminus I}(c_{J_p})}{\mu(I)}
 \frac{1}{\mu(S)}
 \\
 &
  \nonumber
\hskip40pt  \int_{S} \int_{I\cap \Delta_k}  |K(t,x)-K(t,c_{J_{p}})|\, d\mu(t)d\mu(x).
 \end{align}
 %where we used $\Delta_k\subset 2^{k+3}J$.

% % For fixed $R\in \{ I,I_{p}\}$ and $S\in \{ J,J_{p}\}$, 
% % %Since
% % %$
% % %|\tilde \Phi |\lesssim C_{I}^{b_{1}}\mu_{1}(I)^{\frac{1}{2}}(\alpha_{I}|b^{1}_{I^{a}}|\chi_{I}+\alpha_{I_{p}}|b^{1}_{I_{p}^{a}}|\chi_{I_{p}})
% % %%\leq C_{I}^{b_{1}}\mu_{1}(I)^{-\frac{1}{2}}\tilde b^{1}_{I}\chi_{I_{p}}
% % %$, 
% % the  terms inside the sums in \eqref{sum2} can be bounded by
% % %Since $\ell(\langle I,J\rangle)/2<|t-c(J_{p})|_{\infty }\leq \ell(\langle I,J\rangle)$,
% % %The double integral can be bounded by 
% % \begin{align*}%\label{faraway2}
% % %C_{I}^{b_{1}}&
% % &\mu(R)^{-1}
% % %+\alpha_{I_{p}}\|b^{1}_{I_{p}^{a}} \|_{L^{q_{1}}(I_{p})})
% % %\sum_{J'\in \ch(J_{p})}
% % %C_{J}^{b_{2}}
% % \mu(S)^{-1}
% % \int_{S}
% % \int_{2^{k+3}J }|K(t,x)-K(t,c_{J_{p}})| d\mu(t)d\mu(x)
% % \end{align*}
% % where we used $\Delta_k\subset 2^{k+3}J$.

% % Now, for $t\in \Delta $ we have  $|t-c(J_{p})|\geq 2^{k-1}\ell(J_{p})
% % \geq \inrdist(I_p,J_p)2^{-1}\ell(\tilde J_{p})$.
% % For $t\in \Delta $, $x\in \sup \psi_{J}\subset J_{p}$ and $e\geq 2$, we have 
% % %satisfying $|x-c(J)|_{\infty }\leq \ell(J)/2$.
% % %$2^{k+1}\ell(J)<|t-a|_{\infty }<2^{k+3}\ell(J)$ and $|x-c(J)|_{\infty }\leq \ell(J)/2$. 
% % %$ the annulus defined by the double inequality.
% % %Then,
% % \begin{equation}\label{forsmooth}
% % |x-c(J_{p})|
% % %\leq |x-c(J)|_{\infty }+|c(J)-a|_{\infty }
% % \leq \ell(J_{p})/2
% % %=\ell(J)
% % %+|c(J)-a|_{\infty }
% % %$$
% % %$$
% % %\leq \ell(J)
% % %(1+\ell(J)^{-1}|c(J)-a|_{\infty })
% % \leq 2^{k-1}\ell(J_{p})/2<|t-c(J_{p})|/2.
% % \end{equation}

We now estimate the double integral in the right hand size of \eqref{sum2}, starting with the first one which we denote by $\Int$. 
We fix $R\in \{ I,I_{p}\}$ and $S\in \{ J,J_{p}\}$. For $t\in \Delta_k $ we have  $|t-c(J_{p})|> 2^{k-1}\ell(J_{p})\geq %3\inrdist{(I_p,J_p)}
 %2^{e-1}\ell(J_{p})\geq 
 \ell(J_p)$. For $x\in S\subset J_p$ we have $|x-c(J_{p})|\leq \ell(J_{p})/2$. 
With both things 
 %for $t\in \Delta_k $, $x\in \sup \psi_{J}\subset J_{p}$,% and $e\geq 2$, 
% %satisfying $|x-c(J)|_{\infty }\leq \ell(J)/2$.
% %$2^{k+1}\ell(J)<|t-a|_{\infty }<2^{k+3}\ell(J)$ and $|x-c(J)|_{\infty }\leq \ell(J)/2$. 
% %$ the annulus defined by the double inequality.
% %Then,
 \begin{equation*}
 %\label{forsmooth}
 |x-c(J_{p})|
 %\leq |x-c(J)|_{\infty }+|c(J)-a|_{\infty }
 \leq \ell(J_{p})/2
 %=\ell(J)
 %+|c(J)-a|_{\infty }
 %$$
 %$$
 %\leq \ell(J)
 %(1+\ell(J)^{-1}|c(J)-a|_{\infty })
 \leq 2^{k-1}\ell(J_{p})/2
 <|t-c(J_{p})|/2.
 \end{equation*}
and so, we can use the smoothness property \eqref{smoothcompactCZ}, to write
%in  Definition \ref{prodCZoriginal} of a compact Calder\'on-Zygmund kernel, 
\begin{equation}\label{intin}
\Int \leq \int_{S}\int_{\Delta_{k}}\frac{|x-c(J_{p})|^{\delta }}{|t-c(J_{p})|^{\alpha +\delta}}
F(t,x)d\mu(t)
d\mu(x)
\end{equation}
%$$
%\lesssim F_{K}(J,J,I)\ell(J)^{\delta }|J|^{\frac{1}{r_{2}}}
%\Big(\int_{I\cap J'} \frac{1}{|t-c(J)|_{\infty }^{r_{1}(n+\delta)}}
%dt\Big)^{\frac{1}{r_{1}}}
%$$
with 
$F(t,x)=L(|t-c(J_p)|)
S(|x-c(J_p)|)D\Big(1+\frac{|t+c(J_p)|}{1+|t-c(J_p)|}\Big)$.
Since $L$ is non-increasing, $S$ is non-decreasing, 
$2^{k}\ell(J)\geq |t-c(J_p)|> 2^{k-1}\ell(J_p)\geq \ell(J_p)=2\ell(J)$
%\ell(J_{m})/4$ 
and $|x-c(J_p)|\leq \ell(J_p)/2=\ell(J)$, we have 
$$
F(t,x)\leq L(\ell(J))
S(\ell(J))D\Big(1+\frac{|t+c(J_p)|}{1+|t-c(J_p)|}\Big).
$$ 
On the other hand,
$|t+c(J_p)|\geq 2|c(J_p)|-|t-c(J_p)|$ which implies 
\begin{align*}
2(1+\frac{|t+c(J_p)|}{1+|t-c(J_p)|})&\geq 
2+\frac{2|c(J_p)|}{1+|t-c(J_p)|}-\frac{|t-c(J_p)|}{1+|t-c(J_p)|}
\\&
\geq 1+\frac{|c(J_p)|}{1+2^{k}\ell(J_p)}.
\end{align*}
%by the proof of Lemma \ref{boundtoFK}, 
%Since $|c(I)-c(J)|_{\infty }\leq \diam(I\cup J)\leq 3\ell(I)$, we have 
%$$
%4\Big(1+\frac{|c(J)|_{\infty }}{1+(m+1)\ell(I)}\Big)
%\geq 4+\frac{|c(I)|_{\infty }}{1+(m+1)\ell(I)}-3
%\gtrsim \rdist (mI,\mathbb B)
%%\geq \rdist (J,\mathbb B)
%$$
Moreover, since $\Delta_{k}\subset 2^{k+3}J$ and 
$\ell(\Delta_k)=2^{k+2}\ell(J)$, we have
\begin{align*}
1+\frac{|c(J_p)|}{1+2^{k}\ell(J_p)}
&\gtrsim 1+\frac{|c(\Delta_{k})|}{1+\ell(\Delta_k)}
\gtrsim \rdist (\Delta_{k},\mathbb B)
\geq \rdist (2^{k+3}J,\mathbb B)
\end{align*}
with clear meaning of $\rdist (\Delta_{k},\mathbb B)$ despite $\Delta_{k}$ is not a cube. 
Then, since $D$ is non-increasing, we get
\begin{align*}
F(t,x)
%&\leq L(\ell(J_{m}))
%S(\ell(J))D(\rdist (J_{m},\mathbb B))
%\\
&\leq L(\ell(J))
S(\ell(J))D(\rdist (2^kJ,\mathbb B))
=F(J,J,2^kJ).
%=F_{K}(J,J,I)
\end{align*}
With this and $\Delta_{k}\subset 2^{k+3}J$, we continue the bound in \eqref{intin} as
\begin{align*}
\Int
%\leq F_{K}(J,J,I)\Big(\int_{J}\Big(\int_{I(r)}\frac{\ell(J)^{r_{1}\delta }}{(r\ell(J))^{r_{1}(n+\delta)}}
%dt\Big)^{\frac{r_{2}}{r_{1}}}
%dx\Big)^{\frac{1}{r_{2}}}
%$$
%$$
&\lesssim \frac{\ell(J)^{\delta }}{(2^{k}\ell(J))^{\delta+\alpha }}\mu(2^{k+3}J)\mu(S)
F(J,J,2^kJ)
\\
&\lesssim  2^{-k\delta}
\frac{\mu(2^{k+3}J)}
{(2^{k+3}\ell(J))^\alpha }\mu(S)
F(J,J,2^kJ).
%\\
%&\lesssim F_{K}(J,J,I) 2^{-m\delta}
%\frac{\mu_{1}(J_{m})^{\frac{1}{q_{1}'}}\mu_{2}(J)^{\frac{1}{q_{2}'}}}{\nu(2^{m}J)}.
\end{align*}
%where $\theta_{r_{1}}=n-\frac{\min(n-1,1)}{r_{1}}=\frac{\min(n-1,1)}{r_{1}'}+\max(n-1,1)\geq 1$. 

For the second double integral in the right hand size of \eqref{sum2}, which we denote by $\Int'$, we can apply the same reasoning with the only difference of integrating over $I$ instead of $\Delta_k$. With this we obtain 
\begin{align*}
\Int'
%\leq F_{K}(J,J,I)\Big(\int_{J}\Big(\int_{I(r)}\frac{\ell(J)^{r_{1}\delta }}{(r\ell(J))^{r_{1}(n+\delta)}}
%dt\Big)^{\frac{r_{2}}{r_{1}}}
%dx\Big)^{\frac{1}{r_{2}}}
%$$
%$$
&\lesssim \frac{\ell(J)^{\delta }}{(2^{k}\ell(J))^{\delta+\alpha }}\mu(I)\mu(S)
F(J,J,2^kJ)
\\
&\lesssim  2^{-k\delta}
\frac{1}
{(2^{k}\ell(J))^\alpha }\mu(I)\mu(S)
F(J,J,2^kJ).
%\\
%&\lesssim F_{K}(J,J,I) 2^{-m\delta}
%\frac{\mu_{1}(J_{m})^{\frac{1}{q_{1}'}}\mu_{2}(J)^{\frac{1}{q_{2}'}}}{\nu(2^{m}J)}.
\end{align*}

%  By \eqref{forsmooth} and the smoothness condition %of a compact Calder\'on-Zygmund kernel
% % %\eqref{smoothcompactCZ}, %of Definition \ref{prodCZoriginal},
%  \eqref{kernelsmooth2},  
%  the double integral can be bounded by a constant times
%  \begin{align*}
%  \frac{\ell(J_{p})^{\delta }}{(2^{k}\ell(J_{p}))^{\delta }}&
%  \frac{\mu(2^{k+3}J )}{(2^{k+3}\ell(J_{p}))^\alpha }
%  \mu(S)
%  F_{K}(2^kJ_p, J_{p},2^kJ_p)
%  \\
%  &\lesssim 2^{-k\delta }
%  \frac{\mu(2^{k+3}J )}{(2^{k+3}\ell(J_{p}))^\alpha } 
%  \mu(S)
%  F_{K}(2^kJ_p, J,2^kJ_p).
% %  \\
% %  &\lesssim 
% %  2^{-k\delta }
% %  \mu(S)
% %  F\mu(2^kJ_p, J,2^kJ_p).
%  \end{align*}
Then we continue the estimate in \eqref{sum2} as follows: since $\mu(I)\leq \mu(R)$,
 \begin{align*}
% %C_{I}^{b_{1}}C_{J}^{b_{2}}
 |\langle T\Phi_{k},\psi_{J}\rangle |
 &\lesssim 2^{-k\delta }  F(J,J,2^{k}J)
 \mu(I)^{\frac{1}{2}}\mu(J)^{\frac{1}{2}}
\\
 &\Big(
 \sum_{\substack{R\in \{ I,I_{p}\}\\
 S\in \{ J,J_{p}\}}}
 \frac{\chi_{R}(c_{J_p})}{\mu(R)}
 %+[b^{2}_{J_{p}^{a}}]_{J_{p},q_{2}})
 \frac{\mu(2^{k+3}J )}{(2^{k+3}\ell(J))^\alpha }
+\sum_{S\in \{ J,J_{p}\}}
 \frac{\chi_{I_p\setminus I}(c_{J_p})}
{(2^{k}\ell(J))^\alpha }
\Big) 
 \\
 &\lesssim 2^{-k\delta }  F(J,J,2^{k}J)
\Big( \sum_{R\in \{ I,I_{p}\}}
 \Big(\frac{\mu(J)\chi_{R}(c_{J_p})}{\mu(R)}\Big)^{\frac{1}{2}}
\frac{\mu(2^{k+3}J )}{(2^{k+3}\ell(J) )^\alpha }
\\
&
\hskip100pt +\chi_{I_p\setminus I}(c_{J_p})\frac{\mu(I)^{\frac{1}{2}}\mu(J)^{\frac{1}{2}}}
{(2^{k}\ell(J))^\alpha }\Big)
 \\
 &\lesssim \sum_{R\in \{ I,I_{p}\}}
 \Big(\frac{\mu(J\cap R)}{\mu(R)}\Big)^{\frac{1}{2}}
 2^{-k\delta }F(J,J,2^{k}J)
\frac{\mu(2^{k+3}J )}{(2^{k+3}\ell(J) )^\alpha }
\\
&
+\chi_{I_p\setminus I}(c_{J_p})\frac{\mu(I)^{\frac{1}{2}}\mu(J)^{\frac{1}{2}}}
{\ell(J)^\alpha }
 2^{-k(\alpha+ \delta )}  F(J,J,2^{k}J)
 \end{align*}

Now, using that 
$F(J, J, 2^kJ)=L(\ell(J))L(\ell(J))
D(\rdist (2^kJ, \mathbb B))$ and summing in $k$, we have
that $|\langle T\psi_{I}^{\rm out},\psi_{J}\rangle |$ can be bounded by
 \begin{align*}
 %|\langle T\psi_{I}^{\rm out},\psi_{J}\rangle | 
 %&\lesssim 
 %C_{I}^{b_{1}}C_{J}^{b_{2}}
 %\Big(\frac{\ell(J)}{\ell(I)}\Big)^{\delta}
 &\sum_{R\in \{ I,I_{p}\}}\hspace{-.3cm}
 \Big(\frac{\mu(J\cap R)}{\mu(R)}\Big)^{\frac{1}{2}}
 L(\ell(J))S(\ell(J))\hspace{-.1cm}
% \\
%&\hskip50pt 
\sum_{k\geq m_0}2^{-k\delta }\hspace{-.1cm}
 \frac{\mu(2^{k+3}J )}{(2^{k+3}\ell(J) )^\alpha }
 D(\rdist(2^{k}J,\mathbb B))
 \\
 &+\chi_{I_p\setminus I}(c_{J_p})
\frac{\mu(I)^{\frac{1}{2}}\mu(J)^{\frac{1}{2}}}
{\ell(J)^\alpha } L(\ell(J))S(\ell(J))
%\\
%&\hskip50pt
\sum_{k\geq m_0}2^{-k(\alpha +\delta )}
 D(\rdist(2^{k}J,\mathbb B))
 \end{align*}
 %Since $2^{e}J\subset 4I$, $\ell(2^{e}J)=\ell(I)$ and $|c(2^{e}J)-c(I)|\leq 2\ell(I)$, 

If we denote $\lambda =\inrdist(I_p,J_p)$, 
since $m_0=\log \inrdist(I_p,J_p)$, the last factor in each term equals
 \begin{align*}
 &2^{-m_0\delta }\sum_{k\geq 0}2^{-k\delta }
 \frac{\mu(2^{k+3}2^{m_0}J )}{\ell(2^{k+3}2^{m_0}J)^\alpha }
 D(\rdist(2^{k}(2^{m_0}J), \mathbb B))
%  \\
%  &\lesssim
%  %\Big(\frac{\ell(J)}{\ell(I)}\Big)^{\delta}
%  %\Phi(I)
%  \inrdist(I_p,J_p)^{-\delta}
%  \sum_{k\geq 0}2^{-k\delta }
%  \frac{\mu(2^{k+3}\lambda J)}{(2^{k+3}\ell(\lambda J))^\alpha }
%  D(\rdist(2^{k}\lambda J,\mathbb B))
 \\
 &\lesssim
 %\Big(\frac{\ell(J)}{\ell(I)}\Big)^{\delta}
 %\Phi(I)
 \inrdist(I_p,J_p)^{-\delta}
 \sum_{k\geq 0}2^{-k\frac{\delta}{2}}
 D(\rdist(2^{k}\lambda J,\mathbb B))
\sum_{k\geq 0}2^{-k\frac{\delta}{2} }\frac{\mu(2^{k+3}\lambda J)}{(2^{k+3}\ell(\lambda J))^\alpha }
\\
 &
 \lesssim \inrdist(I_p,J_p)^{-\delta}
 \tilde D(\rdist(\lambda J,\mathbb B))
 \rho_{\rm out}(\lambda J),
 \end{align*}
 and 
\begin{align*}
 2^{-m_0(\alpha+\delta )}&\sum_{k\geq 0}2^{-k(\alpha+\delta )}
 D(\rdist(2^{k}(2^{m_0}J), \mathbb B))\\
 &\lesssim
 %\Big(\frac{\ell(J)}{\ell(I)}\Big)^{\delta}
 %\Phi(I)
 \inrdist(I_p,J_p)^{-(\alpha +\delta)}
 \sum_{k\geq 0}2^{-k(\alpha +\delta) }
 D(\rdist(2^{k}\lambda J,\mathbb B))
 \\
 &
 \lesssim \inrdist(I_p,J_p)^{-(\alpha +\delta)}
 \tilde D(\rdist(\lambda J,\mathbb B)),
 \end{align*}
with $\tilde D$ defined in \eqref{Dtilde}.
% %Now $I\subset 2\tilde{I}$, $\ell(\tilde{I})=\ell(I)$ and $|c(I)-c(\tilde{I})|\lesssim \ell(I)$ imply 
% %$\rdist (\tilde{I},\mathbb B)\approx \rdist (I,\mathbb B)$. 
This ends the proof. 

\end{proof}

\section{Paraproducts}\label{Para}
%, we will use the following 
%characterization of compactness in Banach spaces with Schauder basis
The proof of Theorem \ref{Mainresult2} is divided in two parts: we first deal with the part associated to $\psi_{I,J}^{{\rm full}}$, which corresponds to the paraproduct case in the classical proof. 
Then we use the estimates of the bump lemma for the remaining part. 

For the first part of the proof we will use 
the classical Carleson's Embedding Theorem
%that suits our purposes. 
%The proof is based on the demonstration in \cite{AHMTT}.
%Since the proof follows by an small modification of the standard stopping time argument used to prove the classical case (see \cite{AHMTT}), 
%we just sketch the reasoning.

\begin{lemma}[Carleson Embedding Theorem]\label{Carleson}
Let $(a_{I})_{I\in \mathcal D}$ be a collection of non-negative numbers. 
% For $\lambda \in \mathbb N$ and $I\in \mathcal D$, let 
% $I'\in \mathcal D$ be the smallest dyadic cube such that $I\subset I'$ and 
% $\inrdist{(I,I')}>\lambda $. 
Then 
\begin{align}\label{Carleson2} 
\sum_{I\in \mathcal D}a_{I}
|\langle f\rangle_{I}|^{2}
\lesssim \sup_{I\subset \mathcal D}\Big(\frac{1}{\mu(I)}\sum_{\substack{J\in \mathcal D(I)%\\\inrdist{(I,J)>\lambda }}
}}a_{J}\Big)\, \| f\|_{L^{2}(\mu)}^{2}
\end{align}
for all $f\in L^{2}(\mu)$.
% such that for all $I\in \mathcal D$,
% \begin{equation}\label{Carlesonsequence}
% \sum_{J\in \mathcal D(I)}a_{J}\lesssim [b]_{I,2}^{2}|I|.
% \end{equation}
% Then, for every $f\in L^{2}(\mathbb R^{n})$,
% ${\displaystyle
% \sum_{I\in \mathcal D}[b]_{I,2}^{-2}\, a_{I}
% |\langle f\rangle_{I}|^{2}
% \lesssim \| f\|_{L^{2}(\mathbb R^{n})}^{2}}
% $.
\end{lemma}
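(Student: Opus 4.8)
The plan is to run the classical layer-cake argument built on the dyadic maximal operator attached to the grid $\mathcal{D}$; this uses nothing about $\mu$ beyond the fact that dyadic cubes are nested, so the non-homogeneity of $\mu$ will not interfere.

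First I would reduce to a convenient form. We may assume the right-hand side of \eqref{Carleson2} is finite and denote by $B$ the Carleson constant occurring there; replacing $f$ by $|f|$ and using $|\langle f\rangle_{I}|\le\langle|f|\rangle_{I}$, we may also assume $f\ge0$. The only genuinely analytic ingredient will be the $L^{2}(\mu)$ boundedness of the dyadic maximal operator
\[
M_{\mathcal{D}}f(x)=\sup_{I\in\mathcal{D},\,I\ni x}\langle f\rangle_{I},
\]
which holds for \emph{every} Radon measure $\mu$: it is Doob's maximal inequality for the martingale generated by the dyadic filtration, and it yields $\|M_{\mathcal{D}}f\|_{L^{2}(\mu)}\le 2\|f\|_{L^{2}(\mu)}$.

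The main estimate then goes as follows. By the layer-cake formula and Tonelli's theorem,
\[
\sum_{I\in\mathcal{D}}a_{I}\langle f\rangle_{I}^{\,2}=\int_{0}^{\infty}\Big(\sum_{I:\,\langle f\rangle_{I}^{\,2}>\lambda}a_{I}\Big)\,d\lambda .
\]
For a fixed $\lambda>0$ I would put $E_{\lambda}=\{x:M_{\mathcal{D}}f(x)>\sqrt\lambda\}$, a set of finite $\mu$-measure. If $\langle f\rangle_{I}^{\,2}>\lambda$ then $M_{\mathcal{D}}f(x)\ge\langle f\rangle_{I}>\sqrt\lambda$ for every $x\in I$, hence $I\subseteq E_{\lambda}$. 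Decomposing $E_{\lambda}$ into the disjoint family $\{Q_{j}\}$ of its maximal dyadic subcubes, each such $I$ lies in exactly one $Q_{j}$, and the Carleson hypothesis applied to each $Q_{j}$ gives
\[
\sum_{I:\,\langle f\rangle_{I}^{\,2}>\lambda}a_{I}\ \le\ \sum_{j}\sum_{I\subseteq Q_{j}}a_{I}\ \le\ \sum_{j}B\,\mu(Q_{j})\ =\ B\,\mu(E_{\lambda}).
\]
Substituting this and changing variables $\lambda=s^{2}$,
\[
\sum_{I\in\mathcal{D}}a_{I}\langle f\rangle_{I}^{\,2}\le B\int_{0}^{\infty}\mu(E_{\lambda})\,d\lambda=B\int_{0}^{\infty}\mu\big(\{M_{\mathcal{D}}f>s\}\big)\,2s\,ds=B\,\|M_{\mathcal{D}}f\|_{L^{2}(\mu)}^{2}\lesssim B\,\|f\|_{L^{2}(\mu)}^{2},
\]
which is exactly \eqref{Carleson2}.

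The one point that needs care — and the only place I expect a possible snag — is the existence of the maximal cubes $Q_{j}$: if the averages $\langle f\rangle_{I}$ stay above $\sqrt\lambda$ along an infinite increasing dyadic chain, that chain contains no maximal cube inside $E_{\lambda}$. I would deal with this in the usual way, first proving the inequality with the left-hand sum restricted to cubes of side length in $[2^{-N},2^{N}]$ and $M_{\mathcal{D}}$, $E_{\lambda}$ truncated to those scales, with constants independent of $N$, and then letting $N\to\infty$ by monotone convergence. In the same spirit I would read the Carleson constant as $\sup_{I}\mu(I)^{-1}\sum_{J\subseteq I}a_{J}$, which is what makes the step $\sum_{I\subseteq Q_{j}}a_{I}\le B\mu(Q_{j})$ immediate (the sum over $\mathcal{D}(I)$ in \eqref{Carleson2} differs from this only by the single term $a_{I}$). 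All remaining steps are formal.
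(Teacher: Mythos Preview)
The paper does not prove this lemma: it is quoted as the ``classical Carleson's Embedding Theorem'' and used as a black box in the paraproduct estimate (Proposition~\ref{paraproducts}). Your argument --- layer-cake plus the $L^{2}(\mu)$ boundedness of the dyadic maximal operator, which holds for an arbitrary Radon measure via Doob's inequality --- is the standard proof and is correct, including the truncation-in-scale device to guarantee existence of the maximal cubes $Q_{j}$.

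Your closing remark is more than a cosmetic point. With the paper's convention $\mathcal{D}(I)=\{J\in\mathcal{D}:J\subsetneq I\}$, the inequality as literally stated is \emph{false} for non-doubling measures: take $n=1$, $\mu=\delta_{0}+\epsilon\,m|_{[1,2)}$, $a_{[1,2)}=1$ and all other $a_{J}=0$; then the Carleson constant over strict subcubes is $(1+\epsilon)^{-1}\le 1$, while for $f=\chi_{[1,2)}$ the left side equals $1$ and $\|f\|_{L^{2}(\mu)}^{2}=\epsilon$. The correct hypothesis is the one you adopt, namely $\sup_{I}\mu(I)^{-1}\sum_{J\subseteq I}a_{J}$, and your proof establishes exactly that version.
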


The following proposition deals with the paraproduct part of the operator. The proof provided follows the work in \cite{NTV4}.
\begin{proposition}[Paraproducts]\label{paraproducts}
%Let $\mu, \mu_1$ Radon measures such that 
%$\mu_1\leq \mu$. 
Let $Q\in \mathcal C$ and $\theta \in (0,1)$ be fixed. 
We define the following bilinear forms: 
for $f,g$ bounded functions with $\supp f\cup \supp g\subset Q$ and mean zero,
\begin{align*}
\Pi (f,g)
&=\sum_{I\in \mathcal D(Q)}
\sum_{\substack{J\in \mathcal D(I_p)\\
\inrdist(J_p,I_p)>\lambda_\theta}
%\\ \ell(J)\leq \ell(I), J_p\subset I_p^*
}
\langle f, \psi_{I}\rangle \langle g, \psi_{J}\rangle \langle T\psi_{I,J}^{{\rm full}},\psi_{J}\rangle
%+\langle f, \tilde\psi_{3,I}\rangle \langle g,\tilde \psi_{J}\rangle \langle T\psi_{3,I}^{{\rm full}},\psi_{J}\rangle
\end{align*}
%and $\sum_{I:\ell(J)\leq \ell(I), J\subset 5I}\langle f, \tilde\psi_{I}\rangle\tilde\psi_{I}^{1},$ 
%does not telescope. Then, 
%where
%$$
%\psi_{I}^{{\rm full}}(t)=\mu(I)^{\frac{1}{2}}(\alpha_{I,J}b^{1}_{I^{a}}(t)-\alpha_{I_{p},J}b^{1}_{I_{p}^{a}}(t))
%$$
\begin{align*}
\Pi' (f,g)
&=\sum_{J\in \mathcal D(Q)}
\sum_{\substack{I\in \mathcal D(J_p)\\
\inrdist(I_p,J_p)>\lambda_{\theta}}
%\\ \ell(J)\leq \ell(I), J_p\subset I_p^*
}
\langle f, \psi_{I}\rangle \langle g, \psi_{J}\rangle \langle T\psi_{I},\psi_{J,I}^{\rm full}\rangle
%+\langle f, \tilde\psi_{3,I}\rangle \langle g,\tilde \psi_{J}\rangle \langle T\psi_{3,I}^{{\rm full}},\psi_{J}\rangle
\end{align*}
with
$\lambda_\theta =1+\ec(I,J)^{-\theta}$.

Then 
given $\epsilon >0$ there exist 
$M_0\in \mathbb N $ independent of $Q$ and the functions $f, g$ such that for all $M>M_0$
$$
|\Pi (P_M^\perp f,P_M^\perp g) |+|\Pi' (P_M^\perp f,P_M^\perp g) |\lesssim 
\epsilon \| f\|_{L^2(\mu)}\| g\|_{L^2(\mu)}.
$$
\end{proposition}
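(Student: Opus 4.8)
The plan is to exploit the structure of the paraproduct forms by rewriting them through the localized difference operators $\hat\Delta_R$ of Definition \ref{fullwavelet} and Lemma \ref{decompfull}, which convert the inner sum over Haar wavelets $\psi_{I,J}^{\rm full}$ into a single telescoping expression controlled by an average of $f$. For $\Pi$, I would first fix $J\in\mathcal D(Q)$ and examine the inner sum $\sum_{I\in\mathcal D(J_p),\, \inrdist(J_p,I_p)>\lambda_\theta}\langle f,\psi_I\rangle\langle T\psi_{I,J}^{\rm full},\psi_J\rangle$; the condition $I\in\mathcal D(J_p)$ together with the inner relative distance constraint forces $J_p$ to be deep inside $I_p$, so the hypotheses of Proposition \ref{twobumplemma2} and Lemma \ref{decompfull} apply. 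Using $\langle T\psi_{I,J}^{\rm full},\psi_J\rangle$ and summing $\langle f,\psi_I\rangle \psi_{I,J}^{\rm full}$ over the relevant children collapses to something of the form $\langle f\rangle_{\text{(a cube near }J)}$ times $\langle T\chi_Q,\psi_J\rangle$ or $\langle T(\text{a suitable truncation}),\psi_J\rangle$, up to the error terms coming from the $\inrdist>\lambda_\theta$ restriction.

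Next I would estimate the resulting coefficient. The key point is that the ``symbol'' of the paraproduct, roughly $b=T\chi_Q$ tested against $\psi_J$, satisfies a Carleson measure condition: $\frac{1}{\mu(R)}\sum_{J\in\mathcal D(R)}|\langle b,\psi_J\rangle|^2 \lesssim \|\chi_R T\chi_R\|_{L^2(\mu)}^2/\mu(R) + (\text{tail})$, which is bounded by $F_T(R)^2 + (\text{kernel tail from the complement})$ via the testing hypothesis \eqref{b-averageandbound1limit} and the kernel decay \eqref{kerneldecay}. Then Carleson's Embedding Theorem (Lemma \ref{Carleson}) gives
$$
|\Pi(f,g)|\lesssim \Big(\sum_J a_J |\langle f\rangle_{R(J)}|^2\Big)^{1/2}\|g\|_{L^2(\mu)}\lesssim \Big(\sup_R \frac{1}{\mu(R)}\sum_{J\in\mathcal D(R)}a_J\Big)^{1/2}\|f\|_{L^2(\mu)}\|g\|_{L^2(\mu)},
$$
where $a_J$ absorbs $|\langle T\psi_{I,J}^{\rm full},\psi_J\rangle|$-type quantities weighted by the $\inrdist$ decay. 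The symmetric argument handles $\Pi'$ by duality, with $T$ replaced by $T^*$ and the roles of $I,J$ interchanged (using the second half of the testing condition on $T^*$).

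The crucial refinement is the insertion of the projections $P_M^\perp$. Since $P_M^\perp f$ has Haar expansion supported on $\mathcal D_M^c$ (by Corollary \ref{repofproj} and Lemma \ref{PMortho}), only wavelets $\psi_I$ with $I\in\mathcal D_M^c$ contribute, and likewise for $\psi_J$. The plan is to split the sum according to whether the relevant cubes lie in $\mathcal D_{M_0}^c$ for a large auxiliary $M_0\ll M$: on the ``lagom'' part $\mathcal D_{M_0}$ the sum is finite but all cubes are excluded once $M>M_0$; on the complement $\mathcal D_{M_0}^c$ the Carleson constant is dominated by $\sup_{R\in\mathcal D_{M_0}^c}(F_T(R)^2 + \tilde D$-type tails$)$, which by \eqref{FT}, \eqref{limits}, and Notation \ref{LSDF} can be made smaller than $\epsilon^2$ by choosing $M_0$ large. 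This two-scale decomposition — finitely many lagom cubes killed by $P_M^\perp$, infinitely many non-lagom cubes killed by smallness of the compactness parameters — is exactly what Lemma \ref{smallF} is designed for, and it is the technical heart of the argument.

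The main obstacle I anticipate is bookkeeping the error terms generated by the $\inrdist(J_p,I_p)>\lambda_\theta$ restriction: the telescoping in Lemma \ref{decompfull} is clean only when one sums over \emph{all} $I\supset J_p$, so restricting to large inner relative distance produces boundary terms (the ``incomplete'' part of the telescope near $J_p$) that must be reabsorbed. These should be controlled by the second term of Proposition \ref{twobumplemma2}, the one carrying $\chi_{I_p\setminus I}(c_{J_p})F_3(I,J)$ and the favorable $\inrdist^{-(\alpha+\delta)}$ decay, which makes them summable and — after the same $\mathcal D_{M_0}$ versus $\mathcal D_{M_0}^c$ split — arbitrarily small. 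Verifying that the Carleson constant genuinely inherits the limit \eqref{FT} (rather than just boundedness) across all these pieces, uniformly in $Q$, is where the care is needed.
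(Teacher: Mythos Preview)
Your high-level strategy is right and matches the paper: telescope the inner sum via Lemma~\ref{decompfull} to obtain an expression of the form $\sum_J \langle f\rangle_{R(J)}\langle g,\psi_J\rangle\langle T\chi_Q,\psi_J\rangle$, then apply Carleson's Embedding (Lemma~\ref{Carleson}) after Cauchy--Schwarz. However, two points are off.

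First, the obstacle you flag is not real. The restriction $\inrdist(J_p,I_p)>\lambda_\theta$ does not produce error terms in the telescoping; it merely fixes where the telescope \emph{starts}. For each $J$ one defines $J_\lambda$ as the smallest $I_p$ satisfying the constraint, and the sum $\sum_{I_p}\hat\Delta_{I_p}f$ runs exactly from $J_\lambda$ up to $Q'$, collapsing to $\langle f\rangle_{R_J}\chi_Q$ with $R_J\in\child(J_\lambda)$ (the top term vanishes by the mean-zero of $f$). There is nothing to reabsorb, and Proposition~\ref{twobumplemma2} plays no role here: that proposition bounds $\langle T(\psi_I-\psi_{I,J}^{\rm full}),\psi_J\rangle$, which is precisely the \emph{complement} of the paraproduct and is handled elsewhere in the main proof.

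Second, and this is the genuine gap, your Carleson estimate $\tfrac{1}{\mu(R)}\sum_{J\subset R}|\langle T\chi_Q,\psi_J\rangle|^2\lesssim F_T(R)^2+\text{tail}$ does not go through as stated in the non-doubling setting. If you split $\chi_Q=\chi_R+\chi_{Q\setminus R}$, the tail fails because $\dist(J,Q\setminus R)$ can be zero for $J$ near $\partial R$. If instead you split at $2R$, the tail is fine but the local piece gives $\|\chi_{2R}T\chi_{2R}\|^2\lesssim F_T(2R)^2\mu(2R)$, and $\mu(2R)/\mu(R)$ is unbounded without doubling. The paper's fix is a Whitney decomposition of $R$ into maximal dyadic $S$ with $3S\subset R$: on each $S$ one splits $\chi_Q=\chi_{2S}+\chi_{Q\setminus 2S}$, so every $J\subset S$ has $\dist(J,Q\setminus 2S)\gtrsim\ell(S)$ (good tail), while $\sum_S\mu(2S)\lesssim\mu(R)$ by bounded overlap of the $2S$ (good local part). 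The smallness in $\epsilon$ then comes from $2S\in\mathcal D_M^c$ (since $S\subset R\in\mathcal D_M^c$), not from any two-scale $M_0$ trick, and Lemma~\ref{smallF} is not used here. When $\ell(R)>2^M$ an extra subdivision of each $S$ into cubes of side $2^{-(M+2)}$ is needed to force the testing cube into $\mathcal D_M^c$. This Whitney step is the technical heart you are missing.
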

\begin{proof}
By symmetry, we only need to work with $\Pi$. 
By writing $f$ as the sum of $2^n$ 
functions (the restriction of $f$ to each quadrant), we can assume that $\supp f\subset Q'\in \mathcal D$. 

By Lemma \ref{densityinL2}, we can 
by rewriting $\Pi (P_M^\perp f,P_M^\perp g)$ in the following way
\begin{align*}
\Pi (P_M^{\perp} f,P_M^{\perp}g)
&
=\sum_{J\in \mathcal D_M^c(Q)} \langle g,\psi_{J}\rangle
\langle T( \hspace{-.5cm}
\sum_{\substack{I\in \mathcal D_M^c(Q)\\ %\ell(J_p)\leq \ell(I_p), 
J_p\subset I_p, 
\inrdist(J_p,I_p)>\lambda_\theta}}
\langle f, \psi_{I}\rangle\psi_{I,J}^{{\rm full}}
%+\langle f, \tilde\psi_{3,I}\rangle \psi_{3, I}^{{\rm full}}
),\psi_{J}\rangle .
\end{align*}
We can assume that the first sum in previous expression only contains terms for which $\mu(J_p)\neq 0$ since otherwise $\psi_J\equiv 0$. 
Then, since in the second sum we have 
$\ell(J_p)<\ell(I_p)$, 
by Lemma \ref{decompfull}, 
we have 
$$
\sum_{I\in \child(I_p)}
\langle f, \psi_{I}\rangle 
\psi_{I,J}^{{\rm full}}
=\hat{\Delta}_{I_p}(f)
=\Big(\sum_{I\in \child(I_p)}\hat E_{I}f\Big)-\hat E_{I_p}f,
$$
where 
$\hat E_{I}f=\langle f\rangle _{I}\chi_{I}(c_{J_{p}})\chi_{Q}$.

%for every locally integrable function $f$, and for every $k\in \mathbb Z$,
%$$
%E_{k}f=\sum_{\tiny \begin{array}{c}I\in \mathcal D\\ \ell(Q)=2^{-k}\end{array}}E_{Q}f .
%$$
%We also define their corresponding difference operators
%$$
%\hat \Delta_{k}f=\hat E_{k+1}f-\hat E_{k}f
%=\sum_{\tiny \begin{array}{c}Q\in \mathcal D\\ \ell(Q)=2^{-k}\end{array}}\hat \Delta_{Q}f ,
%$$
%where

The inner sum takes place under the condition 
$\inrdist(J_p,I_p)>\lambda_{\theta}\geq 2$. 
Then, 
for $J_p\in \mathcal{D}_M^c(Q)$, 
let 
$\lambda $ be the smallest integer such that 
$\inrdist(J_p,I_p)>\lambda$. Let also 
%$J^{\rm over}$
%be the family of cubes 
%$J'\in \mathcal{D}(Q)$ such that 
%$J_p\subseteq J'$ and let  
$J_{\lambda}\in \mathcal{D}(Q)$ 
be the smallest cube such that 
$J_p\subset J_{\lambda}$ and $\inrdist(J_p,J_{\lambda})>\lambda$.  
If such cube does not exist, we then define 
$J_{\lambda}=\emptyset $. 
%Given $J\in \mathcal{D}(Q)$ 
%we denote by $R_J$ the smallest dyadic cube 
%in $J^{\lambda_\theta}$ such that 
%$J\subset R_{J}$. 
%Given $J\in \mathcal{D}(Q)$ and $R\in \mathcal{D}(Q)$ in the same quadrant, 
%we denote by $R_J$ the smallest dyadic cube such that 
%$J\cup R\subset R_{J}$. 
Now, 
by summing a telescopic sum, we have for $J\in {\mathcal D}_M^c(Q)$ fixed and $M\in \mathbb N$ such that $2^M>\ell(Q)$, 
\begin{align*}
\sum_{\substack{I\in \mathcal D_M^c(Q)\\ 
%\ell(J)\leq \ell(I)\\ 
J_p\subset I_p, 
\inrdist(J_p,I_p)>\lambda}}
\langle f, \psi_{I}\rangle 
\psi_{I,J_p}^{{\rm full}}
%+\langle f, \tilde{\psi}_{3,I}\rangle
%\psi_{3,I}^{{\rm full}}
&=\sum_{\substack{I_p\in \mathcal D_M^c(Q)\\ 
%\ell(J)\leq \ell(I)\\ 
J_p\subset I_p, 
\inrdist(J_p,I_p)>\lambda}}
\sum_{I'\in \child(I_p)}
\langle f, \psi_{I'}\rangle 
\psi_{I',J_p}^{{\rm full}}
\\
&=\sum_{\substack{I_p\in \mathcal D_M^c(Q)\\ 
%\ell(J)\leq \ell(I)\\ 
J_p\subset I_p, 
\inrdist(J_p,I_p)>\lambda}}\hat\Delta_{I_p}(f)
\\
&=\sum_{R\in \child(J_{\lambda})}
\hat E_{R}f
%\hat E_{J}f
-\hat E_{Q'}f
%-\sum_{R\in J^{\rm fr}\setminus \{J\}}\hat E_{R_J}f
%\\
%&=\sum_{R\in \child(J^{\lambda_\theta})\cup \{Q\}}
%\alpha_R\hat E_{R_J}f
\\
&
=\sum_{R\in \child(J_{\lambda})}
\langle f\rangle _{R_J}
%\chi_{R_J}(c_{J_{p}})
\chi_{Q},
%\chi_{I}
%-\langle f\rangle _{(Q_{0})_{p}}
%\chi_{(Q_0)_{p}}(c_{J_{p}})%\chi_{(Q_0)_p}
%\chi_{S}
\end{align*}
where $Q'\in \mathcal D$ such that 
$\sup f\subset Q'$ and so, 
$\langle f\rangle_{Q'}=0$. 
%and 
%where 
%$\mathcal{J}=\child(J^{\lambda_\theta})$. 
%$\alpha_R=1$ for $R\neq Q$ and 
%$\alpha_{Q}=-1$.  
% $$
% +\sum_{k=1}^{\log \frac{\ell(Q_{0})}{\ell(J)}}\sum_{%\tiny \begin{array}{c}
% I\in J_{k,3}}
% %\\ \ec(I,J)=k, \rdist(I,J)=3\end{array}}
% \frac{\langle f\rangle _{I}}{\langle b^{1}_{I^{a}}\rangle _{I}}b^{1}_{I^{a}}\chi_{I}(c(J_{p})) (\rm maybe not needed????)
% $$
%We denote $\mathcal{J}=J^{\rm fr}\cup \{Q\}$.
The cardinality of $\child(J_{\lambda})$ is $2^n$ and so, 
we can enumerate the family in a uniform way accordingly with their position inside $J_{\lambda}$: 
%for all $J_p\in \mathcal{D}_M^c(Q)$: 
$\child(J_{\lambda})=\{ J_j\}_{j=1}^{2^n}$. 
%with 
%$J_0=J$ and 
%$J_{2^n+1}=Q$.
With this
\begin{align*}
\Pi (P_M^{\perp}f,P_M^{\perp}g)
&=\sum_{J\in \mathcal D_M^c(Q)} \langle g,\psi_{J}\rangle
\langle T(\sum_{R\in \child(J_{\lambda})}\hat E_{R}f)
,\psi_{J}\rangle 
%-\langle T(\hat E_{Q_0}f),\psi_{J}\rangle \Big)
\\
&=\sum_{j=1}^{2^n}
\sum_{J\in \mathcal D_M^c(Q)} 
\langle f\rangle _{J_j}
%\chi_{R_J}(c_{J_{p}})
\langle g,\psi_{J}\rangle 
\langle T\chi_{Q},\psi_{J}\rangle .
\end{align*}

With this, boundedness of $\Pi (P_M^{\perp}f,P_M^{\perp}g)$ follows once we obtain for each $j$ fixed a uniform estimate for  
\begin{align*}
\Pi_{j} (P_M^{\perp}f,P_M^{\perp}g)
% &=\sum_{J\in \mathcal D(Q_{0})} 
% \langle g,\psi_{J}\rangle
% \langle T(\hat{E}_If),\psi_J\rangle 
% \\
&=\sum_{J\in \mathcal D_M^c(Q)} 
%\sum_{k}\sum_{\tiny \begin{array}{c}I\in J_{0,1}\\ \ec(I,J)=k, \rdist(I,J)=3\end{array}}
\langle g,\psi_{J}\rangle \langle f\rangle _{J_j}%\chi_{I}(c_{J_{p}})
\langle T\chi_{Q}, \psi_{J}\rangle .
%\frac{1}{\mu(J)}\int \chi_{J}(x)d\mu(x)
% \\
% &=\int \sum_{J\in \mathcal D} %\sum_{k}\sum_{\tiny \begin{array}{c}I\in J_{0,1}\\ \ec(I,J)=k, \rdist(I,J)=3\end{array}}
% \langle g,\psi_{J}\rangle \langle f\rangle _{R_{J_j}} \langle T\chi_{S}, \psi_{J}\rangle
% \frac{1}{\mu(J)}\chi_{J}(x)d\mu(x)
\end{align*}
By Cauchy's inequality and Lemma \ref{Planche}, 
\begin{align*}
|\Pi_{j} (P_M^{\perp}f,P_M^{\perp}g)|
&\leq \Big(\sum_{J\in {\mathcal D}_M^c(Q)} |\langle f\rangle _{J_j}|^{2}
|\langle T\chi_{Q}, \psi_{J}\rangle |^{2} \Big)^{\frac{1}{2}}
\Big(\sum_{J\in {\mathcal D}_M^c(Q)} 
|\langle g,\psi_{J}\rangle |^{2}\Big)^{\frac{1}{2}}
%\\
% &\leq \int \Big(\sum_{J\in {\mathcal D}_M^c(Q)} |\langle f\rangle _{J_j}|^{2}
% |\langle T\chi_{Q}, \psi_{J}\rangle |^{2}
% \frac{1}{\mu(J)}\chi_{J}(x)\Big)^{\frac{1}{2}}
% \\
% &
% \Big(\sum_{J\in {\mathcal D}_M^c(Q)} 
% |\langle g,\psi_{J}\rangle |^{2}\frac{1}{\mu(J)}\chi_{J}(x)\Big)^{\frac{1}{2}}d\mu(x)
% \\
% &\leq \Big\| \Big(\sum_{J\in \mathcal D_M^c(Q)} |\langle f\rangle _{J_j}|^{2}
% |\langle T\chi_{Q}, \psi_{J}\rangle |^{2}
% \frac{1}{\mu(J)}\chi_{J}\Big)^{\frac{1}{2}}\Big\|_{p}
% \\
% &
% \Big\| \Big(\sum_{J\in \mathcal D_M^c(Q)} 
% |\langle g,\psi_{J}\rangle |^{2}\frac{1}{\mu(J)}\chi_{J}(x)\Big)^{\frac{1}{2}}\Big\|_{p'}
% \\
\\
&
\lesssim \Big(\sum_{J\in {\mathcal D}_M^c(Q)} |\langle f\rangle _{J_j}|^{2}
|\langle T\chi_{Q}, \psi_{J}\rangle |^{2} \Big)^{\frac{1}{2}}\| g\|_{L^2(\mu)}
\end{align*}
For each $J\in {\mathcal D}_M^c(Q)$ there is a unique $J_j\in \mathcal D_M^c(Q)$
such that $J_j\in \child{(J^{\lambda})}$. But now we consider a cube $J_j\in \mathcal D_M^c(Q)$ in fixed position depending on $j$ and define 
$\mathcal J(J_j)$ as the family of
all cubes $J\in {\mathcal D}_M^c(Q)$ such that for the corresponding cube $J_{\lambda}$ we have 
$J_j\in \child{(J_{\lambda})}$.

We now show that  
$\mathcal J(J_j)\cap \mathcal J(J_j')=\emptyset $ for $J_j\neq J_j'$. 
If there is $J\in \mathcal J(J_j)\cap \mathcal J(J_j')$ then the corresponding cube 
$J_{\lambda}$ satisfies 
$J\subset J_{\lambda}$ with 
$J_j\in \child{J_{\lambda}}$
and $J_j'\in \child{J_{\lambda}}$. But then, since the position indicated by $j$ is fixed, we have
$J_j=J_j'$, which is contradictory.

Then by Lemma \ref{Carleson}, 
\begin{align*}
|&\Pi_{j} (P_M^{\perp}f,P_M^{\perp}g)|
\lesssim
\Big(\sum_{J_j\in {\mathcal D}_M^c(Q)} |\langle f\rangle _{J_j}|^{2}
\sum_{J\in \mathcal J(J_j)}
|\langle T\chi_{Q}, \psi_{J}\rangle |^{2} \Big)^{\frac{1}{2}}\| g\|_{L^2(\mu)}
\\
&
\lesssim
\sup_{R\in \mathcal D_M^c(Q)}\Big(\mu(R)^{-1}
\hspace{-.5cm}
\sum_{\substack{J_j\in \mathcal D_M^c(R)\\ \inrdist(J,R)>1}}
\hspace{-.2cm}\sum_{J'\in \mathcal J(J_j)}
|\langle T\chi_{Q}, \psi_{J'}\rangle |^{2}\Big)^{\frac{1}{2}}
\| f\|_{L^2(\mu)}\| g\|_{L^2(\mu)},
\end{align*}
were more terms were added to the first factor to get the condition $\inrdist(J,R)>1$.
We now prove that for all $R\in \mathcal D_M^c(Q)$,
$$
\sum_{\substack{J_j\in \mathcal D_M^c(R)\\ \inrdist(J,R)>1}}
\sum_{J'\in \mathcal J(J_j)}
|\langle T\chi_{Q}, \psi_{J'}\rangle |^{2}
\lesssim \epsilon \mu(R).$$

For $R\in \mathcal D_M^c(Q)$ with 
$\ell(R)<2^{-M}$ or $\rdist (R,\mathbb B_{2^M})>M$, 
we construct  
$\mathcal{W}(R)$ a Whitney decomposition of $R$ defined by the maximal (with respect to the inclusion) dyadic cubes $S$ such that $\inrdist(S,R)>1$ and 
$3S\subset R$. The cubes $S$ in 
$\mathcal{W}(R)$ form a partition of $R$ and for any cube $J\in \mathcal D_M^c(R)$ such that  $\inrdist(J,R)>1$ there exists
$S\in \mathcal{W}(R)$ such that 
$J_p\subset S$. 
Then  
we can write
\begin{align*}
\sum_{J\in \mathcal D_M^c(R)} \sum_{J'\in \mathcal J(J)}&
|\langle T\chi_{Q}, \psi_{J'}\rangle |^{2}
\leq \sum_{S\in \mathcal{W}(R)}\sum_{J\in \mathcal D_M^c(S)} \sum_{J'\in \mathcal J(J)}
|\langle T\chi_{Q}, \psi_{J'}\rangle |^{2}
\\
&\leq \sum_{S\in \mathcal{W}(R)}
\Big(\sum_{J\in \mathcal D_M^c(S)}
\sum_{J'\in \mathcal J(J)}
|\langle T\chi_{2S}, \psi_{J'}\rangle |^{2}
\\
&
+\sum_{J\in \mathcal D_M^c(S)} 
\sum_{J'\in \mathcal J(J)}
|\langle T(\chi_{Q\setminus 2S}), \psi_{J'}\rangle |^{2}
\Big).
\end{align*}
We will later deal with each term in different ways. 

On the other hand, for $R\in \mathcal D_M^c(Q)$ with 
$\ell(R)>2^{M}$, we 
start by defining the same Whitney decomposition of $R$ as before $\mathcal{W}(R)$. But then
we decompose each $S\in \mathcal{W}(R)$ as follows: 
$$
S=\bigcup_{\substack{\bar S\in \mathcal D(S)\\ \ell( \bar S)=2^{-(M+2)}}}\bar S .
$$
This ensures that $\bar S\in \mathcal D_{M}^{c}(Q)$. 
Then, similarly as before, we write
\begin{align*}
\sum_{J\in \mathcal D_M^c(R)} \sum_{J'\in \mathcal J(J)}&
|\langle T\chi_{Q}, \psi_{J'}\rangle |^{2}
\leq \sum_{S\in \mathcal{W}(R)}
\sum_{\substack{\bar S\in \mathcal D(S)\\ \ell(\bar S)=2^{-(M+1)}}}
\sum_{J\in \mathcal D_M^c(\bar S)} \sum_{J'\in \mathcal J(J)}
|\langle T\chi_{Q}, \psi_{J'}\rangle |^{2}
\\
&\leq \sum_{S\in \mathcal{W}(R)}
\Big( \sum_{\substack{\bar S\in \mathcal D(S)\\ \ell(\bar S)=2^{-(M+1)}}}
\sum_{J\in \mathcal D_M^c(\bar S)}
\sum_{J'\in \mathcal J(J)}
|\langle T\chi_{2\bar S}, \psi_{J'}\rangle |^{2}
\\
&
+\sum_{\substack{\bar S\in \mathcal D(S)\\ \ell(\bar S)=2^{-(M+1)}}}
\sum_{J\in \mathcal D_M^c(\tilde S)} 
\sum_{J'\in \mathcal J(J)}
|\langle T(\chi_{Q\setminus 2\bar S}), \psi_{J'}\rangle |^{2}
\Big).
\end{align*}
As before, we also deal with each term differently. In fact, we will show in detail the first case and only the small differences of the second case. 

In the first case, 
for each $S\in \mathcal{W}(R)$, we use the fact the 
the families $\mathcal J(J)$ are pairwise disjoint, Lemma \ref{Planche}, 
%the inequality $\mu_1\leq \mu$ 
and the testing condition, to estimate 
the inner double sum in the first term as follows:
\begin{align*}
\sum_{J\in \mathcal D_M^c(S)}
\sum_{J'\in \mathcal J(J)}
|\langle T\chi_{2S}, \psi_{J'}\rangle |^{2}
&\leq 
\sum_{J'\in \mathcal D_M^c(S)}
|\langle \chi_{2S}T\chi_{2S}, \psi_{J'}\rangle |^{2}
% \\
% &\lesssim
% \| \chi_{S}T\chi_{2S}\|_{L^2(\mu)}^2
% \| \psi_{J}\|_{L^2(\mu)}^2
% \\
% &\leq
% \sum_{S\in \mathcal{W}(R)}
% \| \chi_{2S}T\chi_{2S}\|_{L^2(\mu)}^2
\\
&\lesssim 
\| \chi_{2S}T\chi_{2S}\|_{L^2(\mu)}^2
\\
&\lesssim \mu(2S)F_\mu(2S)^2. 
\end{align*}

Since $2S\subset R\in \mathcal D_M^c(Q)$ and 
$\ell(R)<2^{-M}$ or $\rdist (R,\mathbb B_{2^M})>M$, 
we have $\ell(2S)<2^{-M}$ or $\rdist (2S,\mathbb B_{2^M})>M$. 
Therefore $2S\in \mathcal D_M^c(Q)$ and so,
$F_\mu(2S)\leq \sup \{F_\mu(K): K\in \mathcal D_M^c\}<\epsilon $. Then, by summing in $S\in \mathcal{W}(R)$, we have
\begin{align*}
\sum_{S\in \mathcal{W}(R)}\sum_{J\in \mathcal D_M^c(S)}
\sum_{J'\in \mathcal J(J)}
|\langle T\chi_{2S}, \psi_{J'}\rangle |^{2}
&
\lesssim \epsilon^2 \sum_{S\in \mathcal{W}(R)}\mu(2S)
\lesssim \epsilon^2 \mu(R).
\end{align*}
In the last inequality we used that, since 
$3S\subset R$ and the cubes $S$
are disjoint by maximality,  
the cubes $2S$ can only overlap a uniform amount of times, as we show. 

Since $3S\subset R$ and $6S\not\subset R$, we have $\dist(2S,\partial R)\geq \ell(S)/2$ and 
$\dist(S,\partial R)\leq 5\ell(S)$ respectively. 
Then, for all $S$ such that $x\in 2S$ we have 
$\dist(x,\partial R)\geq \dist(2S,\partial R)\geq \ell(S)/2$ and 
$\dist(x,\partial R)\leq 5\ell(S)+\dist(S,\partial R)\lesssim 16\ell(S)$. 

Now we reason as follows. For each $x\in R$ by disjointness there is at most one cube $S_0$ such that $x\in S_0\subset 2S_0$. With this,
\begin{itemize}
\item if $\ell(S)\leq \ell(S_0)/64$, then 
$\ell(S)\leq 2\dist(x,\partial R)/64$ and so \\
$\dist(x,\partial R)\geq 32\ell(S)$, 
which implies that $x\notin 2S$. 

\item If $\ell(S)\geq 64\ell(S_0)$, then 
$\ell(S)\geq 64\dist(x,\partial R)/16$ and so \\
$\dist(x,\partial R)\leq \ell(S)/4$, 
which implies that $x\notin 2S$. 
\end{itemize}

Therefore, there are up to 12 size lengths of $S$ for which $x\in 2S$. In addition, 
since the cubes $S$ are disjoint, for each $x\in R$ there are up to $3^n$ cubes $S$ of a fixed size length
such that $x\in 2S$. With this, we get that there are in total up to $12\cdot 3^n$ different cubes $S$ such that 
$x\in 2S$.

For the second term, we reason as follows. 
Let $S\in \mathcal{W}(R)$, $J\in \mathcal D_M^c(S)$ and 
$J'\in \mathcal J(J)$ be fixed. 
Since $J_p'\subset S$ and $\psi_{J'}$ has mean zero, we can write
\begin{align*}
|\langle T(\chi_{Q\setminus 2S}), \psi_{J'}\rangle |
&=|\langle T(\chi_{Q\setminus 2S})-T(\chi_{Q\setminus 2S})(c_{J'_p}), \psi_{J'}\rangle |
\\
&\leq \int_{Q\setminus 2S}\int_{J'_p}
|K(t,x)-K(t,c_{J'_p})||\psi_{J'}(x)|d\mu(t)d\mu(x)
\\
&\leq \int_{Q\setminus 2S}
\int_{J_p}\frac{|x-c_{J'_p}|^\delta }{|t-x|^{\alpha +\delta }}F(t,x)|\psi_{J'}(x)|d\mu(t) d\mu(x),
\end{align*}
where $F(t,x)=L(|t-x|)S(|x-c_{J'_p}|)D(1+\frac{|t+c_{J'_p}|}{1+|t-c_{J'_p}|})$.
Now $|t-x|\gtrsim \ell(S)$ and 
$|x-c_{J'_p}|\leq \ell(J'_p)/2$. 
Then
$2|x-c_{J'_p}|\leq \ell(J'_p)\leq \ell(S)\leq |t-x|$. Moreover 
$|t-x|\gtrsim |t-c_{J'_p}|-|x-c_{J'_p}|\geq |t-c_{J'_p}|-|t-x|/2$, that is,  $|t-x|\gtrsim 2|t-c_{J'_p}|/3$. 
With this, we have
\begin{align*}
|\langle T(\chi_{Q\setminus 2S})&, \psi_{J}\rangle |
\leq L(\ell(S))S(\ell(S))\int_{J'_p}|\psi_{J'}(x)|
d\mu(x)
\\&
\hspace{1cm}\int_{Q\setminus 2S}\frac{\ell(J')^{\delta }}{|t-c_{J'_p}|^{\alpha +\delta }}
D(1+\frac{|t+c_{J'_p}|}{1+|t-c_{J'_p}|})
%1+\frac{|c_{J'_p}|}{1+\dist(t, J')})
d\mu(t)
\\
&\leq 
\ell(J')^{\delta }\mu(J'_p)^{\frac{1}{2}}
L(\ell(S))S(\ell(S))
\int_{Q\setminus 2S}\frac{D(1+\frac{|t+c_{J'_p}|}{1+|t-c_{J'_p}|})}
{|t-c_{J'_p}|^{\alpha +\delta }}
d\mu(t).
% \\
% &\leq 
% \frac{\ell(J)^{\delta }}{\dist(I_p,J_p)^{\alpha +\delta }}\mu(R)\mu(S)
% F([I,J], J,[ I,J] ).
\end{align*}
Since $J'\subset S$,  
for $t\in Q\setminus 2S$ we have 
$\dist(t,J')\geq \ell(S)$. Then 
we decompose 
$$
Q\setminus 2S=\bigcup_{i=1}^{\log \frac{\ell(Q)}{\ell(S)}}
S_i
$$
where 
$S_i=\{ t\in Q\setminus 2S : 2^{i-1}\ell(S)<
|t-c_{J_p'}|\leq 2^{i}\ell(S)
%\dist(t,J')\leq 2^{i+1}\ell(S)
\}\subset 2^{i+1}S$. Note that 
$S_i\subset B(c_i,2^{i}\ell(S))$ with 
$c_i\in S_i$, and $c(S_i)=c_{J'_p}$. 
%With this, $|t-c_{J_p'}|\geq \dist(t,J')>2^i\ell(S)$ and, Then, as we showed before, we have 
Moreover, 
since 
$|t-c_{J'_p}|+|t+c_{J'_p}|\geq 2|c_{J'_p}|$, 
we have
$$
1+\frac{|t+c_{J'_p}|}{1+|t-c_{J'_p}|}\geq 
1+\frac{2|c_{J'_p}|}{1+|t-c_{J'_p}|}\geq 
1+\frac{|c(2^iS)|}{1+2^i\ell(S)}. 
$$
Then
$D(1+\frac{|t+c_{J'_p}|}{1+|t-c_{J'_p}|})
\lesssim D(1+\frac{|c(2^iS)|}{1+2^i\ell(S)})\lesssim 
D(\rdist (2^iS, \mathbb B))$. With this
\begin{align*}
\int_{Q\setminus 2S}\frac{D(1+\frac{|t+c_{J'_p}|}{1+|t-c_{J'_p}|})}
{|t-c_{J'_p}|^{\alpha +\delta }}
d\mu(t)
&\lesssim \sum_{i=1}
\frac{D(\rdist (2^iS, \mathbb B))}{(2^{i}\ell(S))^{\alpha +\delta }}\mu(S_i)
\\
&\lesssim \sum_{i=1}
\frac{1}{(2^{i}\ell(S))^{\delta }}
\frac{\mu(2^{i+1}S)}{(2^{i+1}\ell(S))^{\alpha }}
D(\rdist (2^iS, \mathbb B))
\\&
\lesssim \frac{1}{\ell(S)^{\delta}}\tilde D(S)\rho_{\mu}(S).
%\rho_{\rm out}(S)
\end{align*}
Then
\begin{align*}
|\langle T(\chi_{Q\setminus 2S}), \psi_{J'}\rangle |
&\lesssim \Big(\frac{\ell(J')}{\ell(S)}\Big)^{\delta }\mu(J'_p)^{\frac{1}{2}}
L(\ell(S))S(\ell(S))%\rho_{\rm out}(S)
\tilde D(S)\rho_{\mu}(S)
\\
&\leq  \Big(\frac{\ell(J')}{\ell(S)}\Big)^{\delta }\mu(J'_p)^{\frac{1}{2}}F_{\mu}(S)
\leq 
\epsilon\Big(\frac{\ell(J')}{\ell(S)}\Big)^{\delta }\mu(J'_p)^{\frac{1}{2}}.
\end{align*}
The last inequality is due to the fact that, 
%since $S\subset R\in \mathcal D_M^c(Q)$, we have 
as we saw before, 
$S\in \mathcal D_M^c(Q)$, and then 
$F_\mu(S)\leq \sup \{F_\mu(K): K\in \mathcal D_M^c(Q)\}<\epsilon $. 
Now, we parametrize the cubes $J', J$ according with their relative size with respect to $S$: $\ell(J)=2^{-k}\ell(S)$
and $\ell(J')=2^{-k'}\ell(J)$, which imply
$\ell(J')=2^{-(k+k')}\ell(S)$. 
%Note that for each $k$, the cubes can be arranged by the condition $J\subset S+j\ell(S)$ with $j=(j_1, \ldots , j_n)$ and $|j_i|\in \{ 0, \ldots, 2^k\}$.  
We sum in $J$ and $J'$ and use that the cubes with fixed side lenght are disjoint, to get
\begin{align*}
\sum_{J\in \mathcal D_M^c(S)} 
&\sum_{J'\in \mathcal J(J)}
|\langle T(\chi_{Q\setminus 2S}), \psi_{J}\rangle |^{2}
\lesssim \epsilon^2
\sum_{J\in \mathcal D(S)} 
\sum_{J'\in \mathcal J(J)}
\Big(\frac{\ell(J')}{\ell(S)}\Big)^{2\delta }\mu(J')
\\
&\lesssim \epsilon^2
\sum_{k\geq 1}2^{-k2\delta}
%\sum_{\substack{j\in \mathbb Z^n\\ |j|\leq 2^k}}
\sum_{\substack{J\in {\mathcal D}(S)\\ \ell(J)=2^{-k}\ell(S)}}
\sum_{k'\geq 1}2^{-k'2\delta}
\sum_{\substack{J\in \mathcal J(J)\\ 
\ell(J)=2^{-k'}\ell(S')}}
\mu(J')
\\
&\leq \epsilon^2
\sum_{k\geq 1}2^{-k2\delta}
\sum_{\substack{J\in {\mathcal D}(S)\\ \ell(J)=2^{-k}\ell(S)}}
\sum_{k'\geq 1}2^{-k'2\delta}
\mu(J)
\\
&\lesssim \epsilon^2
\sum_{k\geq 1}2^{-k2\delta}
\sum_{\substack{J\in {\mathcal D}(S)\\ \ell(J)=2^{-k}\ell(S)}}
\mu(J)
\\
&
\leq \epsilon^2
\sum_{k\geq 1}2^{-k2\delta}\mu(S)
\lesssim \mu(S)
\epsilon^2.
\end{align*}
Summing now over the cubes $S$ in $\mathcal{W}(R)$, we finally get
\begin{align*}
\sum_{S\in \mathcal{W}(R)}
\sum_{J\in \mathcal D_M^c(S)} 
\sum_{J'\in \mathcal J(J)}
|\langle T(\chi_{Q\setminus 2S}), \psi_{J'}\rangle |^{2}
&\lesssim \epsilon^2\sum_{S\in \mathcal{W}(R)}
 \mu(S)
\\
&\lesssim \epsilon^2\mu(R).
\end{align*}

When $\ell(R)>2^{M}$, the reasoning is similar with very few modifications. In this case we have for the first term
\begin{align*}
\sum_{\substack{\bar S\in \mathcal D(S)\\ \ell(\tilde S)=2^{-(M+2)}}}
&\sum_{J\in \mathcal D_M^c(\bar S)}
\sum_{J'\in \mathcal J(J)}
|\langle T\chi_{2\bar S}, \psi_{J'}\rangle |^{2}    
\\
&\leq 
\sum_{\substack{\tilde S\in \mathcal D(S)\\ \ell(\bar S)=2^{-(M+2)}}}
\sum_{J'\in \mathcal D_M^c(\bar S)}
|\langle \chi_{2\bar S}T\chi_{2S}, \psi_{J'}\rangle |^{2}
\\
&\lesssim 
\sum_{\substack{\bar S\in \mathcal D(S)\\ \ell(\bar S)=2^{-(M+2)}}}
\| \chi_{2\bar S}T\chi_{2\bar S}\|_{L^2(\mu)}^2
\\
&\lesssim 
\sum_{S\in \mathcal{W}(R)}
\sum_{\substack{\bar S\in \mathcal D(S)\\ \ell(\bar S)=2^{-(M+2)}}}
\mu(2\bar S)F_\mu(2\bar S)^2. 
\end{align*}
Since $\ell(2\bar R)=2^{-(M+1)}$ we have $2\bar S\in \mathcal D_{M}^{c}$ and so $F_\mu(2\bar S)<\epsilon$. Then 
\begin{align*}
\sum_{S\in \mathcal{W}(R)}
\sum_{\substack{\bar S\in \mathcal D(S)\\ \ell(\bar S)=2^{-(M+2)}}}
&\sum_{J\in \mathcal D_M^c(\bar S)}
\sum_{J'\in \mathcal J(J)}
|\langle T\chi_{2\bar S}, \psi_{J'}\rangle |^{2} 
\\
&
\lesssim \epsilon^2
\sum_{S\in \mathcal{W}(R)}
\sum_{\substack{\bar S\in \mathcal D(S)\\ \ell(\bar S)=2^{-(M+2)}}}\mu(2\bar S)
\\
&\lesssim \epsilon^2
\sum_{S\in \mathcal{W}(R)}\mu(2S)
\lesssim \epsilon^2\mu(R), 
\end{align*}
and we continue as before. In a similar way we can estimate the second term.

To finish the proof, we still need to prove that $\Pi  (P_M^{\perp} f,P_M^{\perp}g)$ belongs to the class of operators for which the theory applies.
In particular, we must show that
the integral representation of Definition \ref{intrep} holds with
a kernel satisfying the Definition \ref{prodCZoriginal} of a compact Calder\'on-Zygmund kernel. This work is independent of the measure $\mu$ and it can be done in exactly the same way it was performed in \cite{V2}.

\end{proof}

\section{$L^{p}$ compactness}\label{L2}
In this section we develop the proof of the main result, Theorem \ref{Mainresult2}. 
We start with a technical lemma needed in the proof of Theorem \ref{Mainresult2}. 
The lemma shows that the regions that are sufficiently close to the border of an open dyadic cube have arbitrarily small measure. 
\begin{notation}
For $N\in \mathbb N$, we define the following two collections of dyadic cubes:  
$\mathcal{D}(Q)_{\geq N}=\{ I\in \mathcal{D}_M^c(Q)/ \ell(I)\geq 2^{-N}\ell(Q)\}$  
and $\mathcal{D}(Q)_{N}=\{ I\in \mathcal{D}_M^c(Q)/ \ell(I)=2^{-N}\ell(Q)\}$.
\end{notation}

\begin{lemma}\label{zeroborder}
Let 
$\mu$ be a positive Radon measure in $\mathbb R^n$ with power growth $0<\alpha \leq n$. 
Let $Q\in \mathcal D$,  
%$\theta\in (0,1)$, 
$N_0, M \in \mathbb N$ and $\theta \in (0,1)$ be fixed. 

Let $I\in \mathcal{D}(Q)_{\geq N_0}$ with 
$\ell(R)=2^{-k_I}\ell(Q)$, $0\leq k_I\leq N_0$. For 
$k\geq k_I$  
%$k>N_0$, 
let $C_k(I)$ be the union of the interior of all cubes 
$R\in \mathcal{D}(3I)$ such that $\ell(R)=2^{-k}\ell(Q)\leq \ell(I)$ 
and $\inrdist(R,I)<1+\ec(I,R)^{-\theta} $. 
% and such that there exists $I\in \mathcal{D}(Q)_{\geq N_0}$ with $\ell(R)\leq \ell(I)$ and $\inrdist(R,I)-1\leq \lambda $. 
Let finally $C_k=\bigcup_{I\in \mathcal{D}(Q)_{\geq N_0}}  C_k(I)$.
   
Then 
for each $\epsilon>0$ there exist $k_0\in \mathbb N$ such that $\mu(C_k)<\epsilon$ for all $k>k_0$. 

\end{lemma}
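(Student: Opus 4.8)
The plan is to reduce the statement to a single cube $I$ and then play two features of $C_{k}(I)$ against each other: as $k\to\infty$ the set $C_{k}(I)$ concentrates near the inner boundary $\mathfrak{D}_{I}$, yet, being a union of \emph{open} dyadic cubes aligned with the grid, it never touches $\mathfrak{D}_{I}$ at all. First I would note that $\mathcal{D}(Q)_{\geq N_{0}}$ is a finite family --- it consists of the (at most $\sum_{j=1}^{N_{0}}2^{nj}$) dyadic cubes $I\subsetneq Q$ with $\ell(I)\geq 2^{-N_{0}}\ell(Q)$ --- so that $\mu(C_{k})\leq\sum_{I\in\mathcal{D}(Q)_{\geq N_{0}}}\mu(C_{k}(I))$, and it suffices to prove $\mu(C_{k}(I))\to 0$ as $k\to\infty$ for each fixed $I$; the $k_{0}$ in the statement is then the largest of these finitely many individual thresholds.

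Fix such an $I$, write $\ell(I)=2^{-k_{I}}\ell(Q)$, and take $k\geq k_{I}+1$. For a cube $R$ in the family defining $C_{k}(I)$ one has $\ell(R)=2^{-k}\ell(Q)\leq\ell(I)$, hence $I\smland R=R$, $I\smlor R=I$, so $\inrdist(R,I)=1+\dist(R,\mathfrak{D}_{I})/\ell(R)$ and $\ec(I,R)=\ell(R)/\ell(I)$; the defining inequality $\inrdist(R,I)<1+\ec(I,R)^{-\theta}$ therefore reads $\dist(R,\mathfrak{D}_{I})<\ell(R)^{1-\theta}\ell(I)^{\theta}$. Since $\diam R\leq\sqrt{n}\,\ell(R)\leq\ell(R)^{1-\theta}\ell(I)^{\theta}$, every point of $\inte(R)$ lies within $d_{k}:=(1+\sqrt{n})(2^{-k}\ell(Q))^{1-\theta}(2^{-k_{I}}\ell(Q))^{\theta}$ of $\mathfrak{D}_{I}$, so $C_{k}(I)\subset\{x\in 3I:\dist(x,\mathfrak{D}_{I})\leq d_{k}\}$. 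On the other hand $R$ is a dyadic cube of level $k$, so $\inte(R)$ avoids every dyadic hyperplane of level $k$; since $k\geq k_{I}+1$ these include all level-$(k_{I}+1)$ hyperplanes, and $\mathfrak{D}_{I}=\bigcup_{I'\in\child(I)}\partial I'$ lies in their union. Hence $C_{k}(I)\cap\mathfrak{D}_{I}=\emptyset$, and combining the two facts gives $C_{k}(I)\subset A_{k}:=\{x\in 3I:\dist(x,\mathfrak{D}_{I})\leq d_{k}\}\setminus\mathfrak{D}_{I}$.

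Because $\theta<1$ we have $d_{k}\downarrow 0$, so the sets $A_{k}$ decrease; and since $\mathfrak{D}_{I}$ is closed, $\bigcap_{k}\{x:\dist(x,\mathfrak{D}_{I})\leq d_{k}\}=\mathfrak{D}_{I}$, whence $\bigcap_{k}A_{k}=\emptyset$. As $\mu$ is Radon and $3I$ is bounded, $\mu(3I)<\infty$, so continuity of $\mu$ from above yields $\mu(C_{k}(I))\leq\mu(A_{k})\to 0$, which finishes the argument.

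The genuinely delicate point is the disjointness $C_{k}(I)\cap\mathfrak{D}_{I}=\emptyset$: it is precisely what makes the proof work for an \emph{arbitrary} power-growth measure. Without it one could only cover the $d_{k}$-neighbourhood of the $(n-1)$-dimensional set $\mathfrak{D}_{I}$ by $\sim(\ell(I)/d_{k})^{n-1}$ cubes of side $d_{k}$ and estimate $\mu(C_{k}(I))\lesssim\ell(I)^{n-1}d_{k}^{\,\alpha-n+1}$, which fails to vanish when $\alpha\leq n-1$. Using the open cubes, so that $C_{k}(I)$ slips through the dyadic grid on which $\mathfrak{D}_{I}$ lives, removes this restriction entirely; everything else is routine manipulation of the definitions of $\inrdist$ and $\ec$ together with continuity from above of a finite measure.
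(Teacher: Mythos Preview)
Your proof is correct and follows essentially the same route as the paper: both reduce to a single $I$ by finiteness of $\mathcal{D}(Q)_{\geq N_0}$, trap $C_k(I)$ inside a shrinking neighbourhood of $\mathfrak{D}_I$ that avoids $\mathfrak{D}_I$ itself, and conclude by continuity of the finite measure $\mu$ on $3I$. The paper phrases the last step as the vanishing tail of the convergent series $\sum_j \mu(D_j(I))$ over dyadic annuli $D_j(I)$, while you invoke continuity from above directly; these are equivalent, and you make the key disjointness $C_k(I)\cap\mathfrak{D}_I=\emptyset$ (only implicit in the paper) explicit.
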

\begin{proof}
%For every fixed $k$, 
%we define the following sequence of sets closely related with $C_k$: for $k>M_0$ 

We start by noting that the family of cubes $\mathcal{D}(Q)_{\geq N_0}$ has cardinality 
less than $2^{N_0+1}$. 
Let $I\in \mathcal{D}(Q)_{\geq N_0}$ be fixed. 

We remind that $\mathfrak{D}_{I}=
\cup_{I'\in \child(I)}\partial I'$. Then, 
for each cube $R$ in the definition of 
%We denote by 
$C_k(I)$, 
%the union of the interior of all cubes 
%$R\in \mathcal{D}(3I)$ such that $\ell(R)=2^{-k}\ell(Q)\leq \ell(I)$ and $\inrdist(R,I)-1\leq \lambda $. 
the condition $\inrdist(R,I)-1\leq \ec(I,R)^{-\theta} $ implies 
$$
\frac{\dist(R,\mathfrak{D}_{I})}{\ell(R)}
=\inrdist(R,I)-1
\leq \Big(\frac{\ell(R)}{\ell(I)}\Big)^{-\theta},
$$
that is
$$
\dist(R,\mathfrak{D}_{I})
\leq \Big(\frac{\ell(R)}{\ell(I)}\Big)^{1-\theta}\ell(I).
$$
Since $\ell(I)=2^{-k_I}\ell(Q)$ and $\ell(R)=2^{-k}\ell(Q)$
then 
$$
\dist(R,\mathfrak{D}_{I})
\leq 2^{-(k-k_I)(1-\theta)}\ell(I). 
$$

% Let $D_{j}$ be the union of the interior of all cubes 
% $R\in \mathcal{D}(3I)$ such that $\ell(R)=2^{-(j+1)}\ell(Q)$ and 
% $$
% 2^{-(j-k_0+1)(1-\theta)}\ell(I)<\dist(R,\partial I)
% \leq 2^{-(j-k_0)(1-\theta)}\ell(I)
% $$
Now, for $j\geq k$ we define the set 
$$
D_{j}(I)=\{x\in 3I/ 
2^{-(j-k_I+1)(1-\theta)}\ell(I)<\dist(x,\mathfrak{D}_{I})
\leq 2^{-(j-k_I)(1-\theta)}\ell(I)\}. 
$$

% We note that $D_j$ also depends on $I$ (as well as $Q$ and $N_0$) even though this dependence is not reflected in the notation. 

Then the sets $(D_j(I))_{j\geq k_I}$ are pairwise disjoint and for each $k>k_I$, 
$\bigcup_{j\geq k}D_j(I)\subset C_k(I)\subset \bigcup_{j\geq k-1}D_j(I)$. 
% $\lambda<\inrdist(R,I)\leq 2\lambda $. 
% %for 
% %some $I\in \mathcal{D}(Q)_{\geq N_0}$. 
% For each fixed $k\in \mathbb N$, the sequence of sets $(D_j)_{j\geq k}$ form a disjoint partition of $C_k$. Then
%$C_k=\cup_{j\geq k}D_k$. 
Then
$$
\sum_{j\geq k}\mu(D_j(I))\leq \mu(C_k(I))\leq \mu(C_k)\leq \mu(Q)<\infty .
$$
Then, for any $\epsilon>0$ there 
exists  
$k_{0,I}\geq k_I$ dependent on $I$ such that 
$$
\sum_{j\geq k}\mu(D_j(I))<2^{-(N_0+1)}\epsilon.
$$
for all $k\geq k_{0,I}$. 

Let now $k_0=\max\{ k_{0,I}: I\in \mathcal{D}(Q)_{\geq N_0}\}$. Since $C_{k+1}\subset C_k$ for each $k\in \mathbb N$, we have for all $k>k_0$
\begin{align*}
\mu(C_k)&\leq 
\mu(C_{k_{0}})\leq 
\sum_{I\in \mathcal{D}(Q)_{\geq N_0}}\mu(C_{k_{0}}(I))
\\&
\leq \sum_{I\in \mathcal{D}(Q)_{\geq N_0}}\mu(\bigcup_{j\geq k_0-1\geq k_{0,I}-1}D_j(I))
\leq \sum_{I\in \mathcal{D}(Q)_{\geq N_0}}\sum_{j\geq k_{0,I}}\mu(D_j(I))
\\&
<\sum_{I\in \mathcal{D}(Q)_{\geq N_0}}2^{-(N_0+1)}\epsilon <\epsilon.
\end{align*}
%for all 
%$j\in \{0,\ldots, N_0\}$ and $i\in \{1,\ldots ,2^{jn}\}$.

% For each 
% $j\in \{ 0,\ldots , N_0\}$ there are $2^{jn}$ cubes 
% $I_{j,J}$ with 
% $\ell(I_{j,J})=2^{-j}\ell(Q)$. For each $j\in \{0,\ldots, N_0\}$, $i\in \{1,\ldots ,2^{jn}\}$, and 
% for all $k\geq 2$, we define $D_k$ as the union of all dyadic cubes
% $R\subset Q$ such that $\ell(R)=2^{-k}\ell(I_{j}^i)$ and 
% $\dist(R,\mathbb R^n\setminus I_j^i)=2^{-k}\ell(I_{j}^i)$. Then, since  $\{D_k\}_{k\geq 2}$ form a disjoint partition of $I_j^i$, we have
% $$
% \sum_{k\geq 2}\mu(D_k)=\mu(I_j^i)\leq \mu(Q)<\infty 
% $$
% Therefore, for any $\varepsilon>0$, there exists unique 
% $k_0\geq 2$ such that 
% $$
% \mu(\bigcup_{k\geq k_0}D_k)
% =\sum_{k\geq k_0}\mu(D_k)<\varepsilon
% $$
% for all 
% $j\in \{0,\ldots, N_0\}$ and $i\in \{1,\ldots ,2^{jn}\}$.

\end{proof}

% \begin{theorem}\label{L2bounds}
% Let
% %$K$ be a Calderon-Zygmund kernel with parameter $\delta$ and
% %$T:\S(\R)\to \S'(\R)$ 
% $T$ 
% be a continuous linear operator 
% %continuous as a mapping from $\S(\R)$ to $\S'(\R)$.
% %with associated kernel $K$ and
% %satisfying the mixed WB-CZ condition.
% with a compact Calder\'on-Zygmund kernel.
% %l with parameter $0<\delta \leq 1$.
% % and $1\leq r_{i}\leq \infty $. 
% %Assume that  there exist two admissible test functions 
% % Let $(b^{1}_{I})_{I\in \mathcal D}$, $(b^{2}_{J})_{J\in \mathcal D}$ be two systems of locally integrable functions admissible for $T$. 
% %and such that $T_{b}$ satisfies 
% %the weak compactness condition
% %and the special cancellation conditions
% %$T(b_{1})=T^{*}(b_{2})=0$.
% Then $T$ can be extended to a compact operator on $L^2(\mu)$.
% %to $L^2(\mu)$.
% \end{theorem}

We finally proceed  with the proof of the main result in the paper, Theorem \ref{Mainresult2}. 
\begin{proof}[Proof of Theorem \ref{Mainresult2}]
The necessity of the hypotheses can be shown in a similar way as it was done in \cite{V2}. Then we focus on their sufficiency. 

Once boundedness is proved on $L^2(\mu)$, a classical argument that applies to Calder\'on-Zygmund operators, allows to extend the result to weak estimates from $L^1(\mu)$ to  $L^{1,\infty}(\mu)$. Then by a standard interpolation argument one can prove  
boundedness on $L^p(\mu)$ for all $1<p<\infty $.  
%can be obtained from the estimate for $p=2$ and 
Moreover, as shown \cite{V}, we can deduce compactness on $L^{p}(\mu)$ for all $1<p<\infty $ by interpolation between compactness on $L^{2}(\mu)$ and boundedness on  $L^{p}(\mu)$. 
%due to the fact that the operator is a classical bounded Calder\'on-Zygmund. 
For all this we only focus on the case $p=2$. 

% Since boundedness for all $1<p<\infty $ can be obtained by standard interpolation arguments, 
% we will only work the case $p=2$. We will do the same for compactness because, once compactness is proved for $p=2$, interpolation with bounded operators on $1<p<\infty $ shows compactness on the remaining cases $1<p<\infty $.

%Let $\epsilon>0$ be arbitrary and fixed. 
Let $Q\in \mathcal C$ with $c(Q)=0$, $\ell(Q)>2$. Let  
%such that $F_\mu(Q)<\epsilon$, 
$0<\gamma <\ell(Q)$ 
%such that $S(\gamma )<\epsilon $ (?) 
and $N_0=\log \frac{6\ell(Q)^{\alpha+2}}{\gamma^{\alpha+1}}$. Let 
$T_{\gamma, Q}$ be the truncated operator of Definition \ref{TgammaQ}.

We start by considering the dyadic grid $\mathcal D=\mathcal D^1$ as denoted in Notation \ref{grids}.
Let $(\psi_{I})_{I\in {\mathcal D}}$ 
%and $(\psi_{I})_{I\in {\tilde{\mathcal D}}}$ 
be the Haar wavelets frame of Definition \ref{defpsi} 
%such that 
%$I\in \mathcal D_1$
%if $\ell(I)\geq 2^{-N_1}\ell(Q)$, while $I\in \mathcal D_1$ if 
%$\ell(I)< 2^{-N_1}\ell(Q)$. 
and $P_{M}$ 
%and $\tilde P_{M}$ 
be the lagom projection operators related to that system.  %respectively.
% we denote both projections simply by $P_{M}$.
%By Theorem \ref{charofcompact}, 
We also fix 
the parameter $\theta=\frac{\alpha }{\alpha +\delta/2} \in (0,1)$.

We aim to prove that  $T_{\gamma, Q}$ are uniformly compact on $L^2(\mu)$ with bounds independent of $Q\in \mathcal C$ and $0<\gamma<\ell(Q)$. By the comments  
%Definition \ref{lagom}
at the end of subsection \ref{charaofcomp}, we need to show that
%By the comments after Definition \ref{lagom}, to show compactness of $T$ on $L^{2}(\mu)$
%we need to check that
%$P_{M}^{\perp}(T)$ converges to zero in the operator norm 
%$\| \cdot \|_{L^{2}\rightarrow L^{2}}$ when $M$ tends to infinity.
%For this, 
%it is enough to show for example that 
% $
% \langle P_{M}^{\perp}T_{\gamma, Q,\mu} P_{M}^{\perp}f,g\rangle
% $
% tends to zero when $M$ tends to infinity uniformly for all $\gamma>0$, $Q\in \mathcal D$ and $f,g$ bounded functions in the unit ball of 
% $L^{2}(\mu)$, supported on $Q$ with mean zero. 
for any $\epsilon >0$ there exists $M_0\in \mathbb N$ (independent of $Q$ and $\gamma$) such that  $\|P_{M}^{\perp}T_{\gamma, Q} P_{M}^{\perp}\|_2\lesssim \epsilon $
for all $M>M_0$, with implicit constant   
independent of $Q$ and $\gamma$ (it may depend on $\delta $ and the constants appearing in the kernel smooth condition and the testing conditions).
This is equivalent to show that 
\begin{align}\label{tmu}
|\langle T_{\gamma, Q} P_{M}^{\perp}f,P_{M}^{\perp}g\rangle| \lesssim \epsilon
\end{align}
for all $M>M_0$, all $f,g$ functions in the unit ball of $L^{2}(\mu)$, bounded, compactly supported on $Q$,  
and with mean zero with respect to $\mu$.
%and constant on cubes of size length $\ell(I)<2^{-N_1}\ell(Q)$ for $N_1$ large enough.
% Uniformly estimates mean that the implicit constant is 
% independent on $\gamma>0$ and $Q\in \mathcal D$.
% It may depend on $\delta $ and the constants of the kernel and the testing conditions. 
%Here $P_M$ is the projection operator associated with the Haar wavelet frame. 
%(which is allowed to depend on the functions $f, g$). 

Then let $f$ and $g$ be fixed functions
in the unit ball of $L^{2}(\mu)$ bounded, supported on $Q$, with mean zero with respect to $\mu$, 
and satisfying 
$\|P_{M}^{\perp}T_{\gamma, Q} P_{2M}^{\perp}\|_2\leq 2
|\langle P_{M}^{\perp}T_{\gamma, Q} P_{2M}^{\perp}f,g\rangle |$.

By writing $g$ as the sum of $2^n$ functions each, its restriction each quadrant, we can assume that there exist $Q'\in \mathcal D$ such that 
$\supp g\subset Q'$. 

Let $0<\epsilon <((\|f\|_{L^\infty (\mu)}+\|g\|_{L^\infty (\mu)})\mu(2Q)^{\frac{1}{2}})^{-4}$ be fixed. Let $M_0$ such that for all $M>M_0$ we have 
$M^{-\frac{\delta }{8}}
+M^{-\alpha (\frac{\alpha +\delta}{\alpha+\delta/2}-1)}
+M^{-\frac{\alpha \delta}{\alpha +\delta/2}}
%+3M2^{-\frac{M}{4}\frac{\delta }{2\alpha +\delta}}
<\epsilon $ and 
$$
%\sum_{I\in {\mathcal D}_{M}^{c}}
\sup_{\substack{J\in  \mathcal D_{M}^{c}\\
(I,J) \in \mathcal F_{M}}}F_\mu(I,J)
+F_\mu(J,I)
<\epsilon , 
$$
where $\mathcal D_M$ is given in Definition \ref{FM}.

Then, for $\epsilon >0$ fixed and chosen $M_{0}\in \mathbb N$, we are going to prove that 
\begin{equation}\label{M2M}
|\langle T_{\gamma, Q}P_{2M}^{\perp }f,P_{M}^{\perp}g\rangle |\lesssim \epsilon^{1/4} 
\end{equation}
for all 
$M>M_{0}$, 
which is also enough for our purposes.
% where the implicit constant depends on 
% %the constants carried by the Haar wavelets $(\psi_{I}^{b_{1},i})_{I\in {\mathcal D},i}$, $(\psi_{J}^{b_{2},j})_{J\in {\mathcal D},j}$ and
%  the parameter $\delta $ of the operator kernel (not only). 
%
%
%
%
% Furthermore, $f, g$ can be assumed to be bounded, with mean zero and constant on cubes of size length $\ell(I)<2^{-N_1}\ell(Q)$ for $N_1$ large enough. We see this point.  
% Let $(\psi_I)_{I\in  \mathcal D^1}$ be the Haar wavelets
% frame of Definition \ref{defpsi} and let $P_N$ be the associated projection operator. 
% Fixed $f,g$ in the unit ball of $L^2(\mu)$ supported on $Q$, 
% let $N_1>M_0+N_0$ such that $\|f-P_{N_1}f\|_{L^2(\mu)}
% +\|g-P_{N_1}g\|_{L^2(\mu)}
% <\frac{\epsilon \gamma^{\alpha}}{\mu(Q)}
% $. Then 
% \begin{align*}
% |\langle &P_{M}^{\perp}T_{\gamma, Q,\mu} P_{M}^{\perp}f,g\rangle 
% -\langle P_{M}^{\perp}T_{\gamma, Q,\mu} P_{M}^{\perp}(P_{N_1}f),P_{N_1}g\rangle |
% % \\
% % &\leq |\langle P_{M}^{\perp}T_{\gamma, Q,\mu} P_{M}^{\perp}(f-P_{N_1}f),g\rangle| 
% % +|\langle P_{M}^{\perp}T_{\gamma, Q,\mu} P_{M}^{\perp}(P_{N_1}f),g-P_{N_1}g\rangle| 
% \\
% &
% \leq \| P_{M}^{\perp}T_{\gamma, Q,\mu} P_{M}^{\perp}\|
% (\|f-P_{N_1}f\|_{L^2(\mu)}
% +\|g-P_{N_1}g\|_{L^2(\mu)})
% \leq \epsilon .
% \end{align*}
% This supports the claim. 
%We then fix $N_1>N_0$ and $f, g$ to bounded, supported on $Q$, with mean zero with respect to $\mu$ and constant on cubes of size length $\ell(I)<2^{-N_1}\ell(Q)$.
To simplify notation, from now we denote the operator 
$T_{\gamma, Q}$ simply by $T$.

% By Lemma \ref{Planche}, 
% %given $\epsilon >0$ there exists $M_{0}\in \mathbb N$ such that 
% \begin{equation}\label{useofPlanche0}
% \sum_{I\in {\mathcal D}}
% F_\mu(I,J)
% (|\langle f,\psi_{I}\rangle |^{2}
% +|\langle g,\psi_{I}\rangle |^{2})
% %|B(I)^{-1}\langle f,\psi_{I}^{b}\rangle_{b} |^2
% \lesssim 1
% \end{equation}
% and
% \begin{equation}\label{useofPlanche}
% \sum_{I\in {\mathcal D}_{M}^{c}}
% \hspace{-.2cm}\sup_{\tiny \begin{array}{c}J\in  \mathcal D_{M}^{c}\\
% \langle I,J\rangle \in \mathcal I_{M}^{c}\end{array}}\hspace{-.5cm}F_\mu(I,J)
% (|\langle f,\psi_{I}\rangle |^{2}
% +|\langle g,\psi_{I}\rangle |^{2})
% %|B(I)^{-1}\langle f,\psi_{I}^{b}\rangle_{b} |^2
% \lesssim \epsilon 
% \end{equation}
% for any $M>M_{0}$ 
% %any functions $f,g$ on $L^2(\mu)$ and any Haar wavelet system. (??) 
% and similar inequalities for the wavelet functions associated with cubes 
% $I,J\in \tilde{\mathcal D}$. 

By Lemma  \ref{orthorep}, we have that 
\begin{align}\label{orthopro}
%\nonumber
%\langle P_{M}^{\perp}&TP_{2M}^{\perp }f,g\rangle =
\langle TP_{2M}^{\perp }f,P_{M}^{\perp}g\rangle
=\hspace{-.3cm}
\sum_{I\in {\cal D}_{2M}^{c}(Q)}\sum_{J\in {\cal D}_{M}^{c}(Q)}
\langle f,\psi_{I}\rangle 
\langle g, \psi_{J}\rangle
\langle T\psi_I,\psi_J\rangle .
\end{align}

As in Lemma \ref{zeroborder}, we set up the following notation: 
for $N\in \mathbb N$, let
$\mathcal{D}(Q)_{\geq N}=\{ I\in \mathcal{D}_M^c(Q)/ \ell(I)\geq 2^{-N}\ell(Q)\}$; for 
$I\in \mathcal{D}(Q)_{\geq N_0}$ and 
$k>N_0$, $C_k(I)$ denotes the union of the interior of all cubes 
$R\in \mathcal{D}(Q)$ such that $\ell(R)=2^{-k}\ell(Q)\leq \ell(I)$ and 
$\inrdist(R,I)-1\leq \ec(I,R)^{-\theta}$; finally 
$C_k=\bigcup_{I\in \mathcal{D}(Q)_{\geq N_0}}
C_k(I)$. 
%for some $I\in \mathcal{D}(Q)_{\geq N_0}$.  

Now, by Lemma \ref{zeroborder} and the implicit limit in the equality at \eqref{orthopro}, we can choose $N_1>N_0+M$ so that for all $N>N_1$,
\begin{align}\label{Small}
\mu(C_N)^{\frac{1}{2}}\| f\|_{L^{\infty }(\mu)}\| g\|_{L^{\infty }(\mu)}
%\mu(Q)^{\frac{1}{2}}
%\sup_{i,k}F_\mu(J_{i,k})
%N_0
2^{N_0(n+3)}<\epsilon ,
\end{align}

%where $(\psi_{I}^{b_{1},i})_{I\in \mathcal D}$, $(\psi_{I}^{b_{2},j})_{I\in \mathcal D}$ are the wavelet basis given in the remarks preceding the statement of the theorem. 
%Note that there exist a fixed order $N$ and constant $C$, such that each $\psi_{I}$ is adapted to $I$ with constant $C\frac{B_{1, q_{1}}(I)}{b_{1,av}(I)}$ and order $N$. Furthermore, we are free to choose this $N$ so as to ensure that Corollary \ref{symmetricspecialcancellation} applies.
%From this point, we will never use the partial accretivity satisfied by the functions $b_{i}^{N}$. For this reason, in order to simplify the notation, we omit the subscript $N$.
%Now we consider a finite (no needed) family of dyadic cubes $\mathcal F_M\subset {\cal D}_{2M}^{c}$ and prove
and 
\begin{align*}
|\langle TP_{2M}^{\perp }f,P_{M}^{\perp}g\rangle |
&\leq 2\Big|\sum_{I\in {\cal D}_{2M}^{c}(Q)_{\geq N}}\sum_{J\in {\cal D}_{M}^{c}(Q)_{\geq N}}
 \langle f,\psi_{I}\rangle 
 \langle g, \psi_{J}\rangle
 \langle T\psi_I,\psi_J\rangle \Big|
\\&
=2|\langle TP_{2M}^{\perp }P_{M_N}f,P_{M}^{\perp}P_{M_N}g\rangle |,
 %\lesssim \epsilon
 \end{align*}
 %uniformly in the finite family $\mathcal F_M$.
 with $M_N=N-\log \ell(Q)$. 
We note that $P_{M}^{\perp}P_{M_N}$ is not the zero operator since $N>N_0+M>\log \ell(Q)+M$ implies that 
$M_N>M$. 
Again to simplify notation, we stop writing the conditions $I\in {\cal D}_{2M}^{c}(Q)_{\geq N}$, $J\in {\cal D}_{M}^{c}(Q)_{\geq N}$. We will recover this notation whenever needed.

%From now on, we work to obtain bounds of the expression (\ref{orthopro}) when the
%sum runs over finite families of dyadic cubes in such way that the bounds are independent of the  particular families of cubes.

We fix such $N>N_1$. 
We denote by $\partial \mathcal{D}(Q)$ the union of 
$\partial I$ for all $I\in \mathcal{D}(Q)_{\geq N}$. Then we
decompose the argument functions as 
$P_{2M}^{\perp}P_{M_N}f=f_1+f_{1,\partial}$, $P_M^{\perp}P_{M_N}g=g_1+g_{1,\partial}$ where 
$f_{1,\partial}=(P_{2M}^{\perp}P_{M_N}f)\chi_{\partial D(Q)}$ and $g_{1,\partial}=(P_M^{\perp}P_{M_N}g)\chi_{\partial D(Q)}$. 
% Since $f_1$ is supported on the interior of the cubes 
% $I\in \mathcal D(Q)_{\geq N}$ we have that 
% $\langle f_1, \psi_I\rangle = \langle f_1, \psi_{\tilde I}\rangle$ for any cubes 
% $I\in \mathcal D $ and $I\in \tilde{\mathcal D}$.
% Then
% $P_Mf_1=\tilde{P}_Mf_1$,  $P_M^{\perp}f_1=\tilde{P}_M^{\perp}f_1$ and similar for $g_1$. 
With this, 
\begin{align}\label{fdecom}
\nonumber
\langle TP_{2M}^{\perp }P_{M_N}f,P_{M}^{\perp}P_{M_N}g\rangle 
&=\langle TP_{2M}^{\perp }P_{M_N}f,g_1\rangle 
+\langle Tf_1,g_{1,\partial}\rangle 
\\&
+\langle Tf_{1,\partial},g_{1,\partial}\rangle 
\end{align}
%We can assume there exists a dyadic cube $Q\in \mathcal D$ %such that $\supp f\cup \supp g\subset Q$. 

We note that 
$$
g_1=\sum_{J\in {\cal D}_{M}^{c}(Q)_{\geq N}}
 \langle g,\psi_{J}\rangle \psi_{\tilde J}
$$
where $\tilde J$ is the interior of $J$ and so, it is an open cube. Similar for $f_1$. Therefore, when we deal with 
$$
\langle TP_{2M}^{\perp }P_{M_N}f,g_1\rangle
=\sum_{I\in {\cal D}_{2M}^{c}(Q)_{\geq N}}\sum_{J\in {\cal D}_{M}^{c}(Q)_{\geq N}}
 \langle f,\psi_{I}\rangle 
 \langle g, \psi_{J}\rangle
 \langle T\psi_I,\psi_{\tilde J}\rangle 
$$
we have that the cubes $\tilde J\in \tilde{\mathcal D}$ are all open. Similarly, when we later deal with 
$$
\langle Tf_1,g_{1,\partial}\rangle 
=\sum_{I\in {\cal D}_{2M}^{c}(Q)_{\geq N}}\sum_{J\in {\cal D}_{M}^{c}(Q)_{\geq N}}
 \langle f,\psi_{I}\rangle 
 \langle g, \psi_{J}\rangle
 \langle T\psi_{\tilde I},\psi_{J}\rangle 
$$
we will have that $\tilde I\in \tilde{\mathcal D}$ are all open cubes. 

%Therefore, 
%we decompose the functions $f, g_{1,\partial}$ using 
%the Haar wavelet system $(\psi_{I})_{I\in \mathcal D}$, 
%while $f_1, g_1$ are decomposed using 
%the Haar wavelet system $(\psi_{I})_{I\in \tilde{\mathcal D}}$. 
However we will start our work without reflecting this distinction in the notation since it is only useful at the end of the argument. 
%Instead we will highlight the difference later. 
That is, although 
the work to prove \eqref{M2M} starts with the first term $\langle TP_{2M}^{\perp }P_{M_N}f,g_1\rangle$, since the same argument will also work for the second term 
$\langle Tf_1,g_{1,\partial}\rangle $,
we write each term simply by 
$\langle P_{M}^{\perp}TP_{2M}^{\perp }f,g\rangle$ 
%we keep denoting the argument functions $f$ and $g$
and we will make distinctions only at the end of the proof. We hope this license will not cause any confusion.  

In view of the rates of decay stated in propositions \ref{twobumplemma1} and \ref{twobumplemma2},  
we parametrize the sums according to eccentricity, relative distance and inner relative distance of the cubes as follows.
%\begin{align*}
%\langle &P_{2M}^{\perp}(T(P_{M}^{\perp}f)),g\rangle
%\\
%&=\sum_{e\in \mathbb Z}\sum_{m\in \mathbb N}
%\sum_{\tiny \begin{array}{c}J{\in \cal D}_{2M}^{c}\end{array}}
%\sum_{I\in J_{e,m}\cap {\in \cal D}_{M}^{c}}\sum_{i,j=1}^{2^{n}}
%\langle f,\psi_{I}^{b_{1},i}\rangle \langle g, \psi_{J}^{b_{2},j}\rangle
%\langle T(\widetilde{\psi}_I^{b_{1},i}),\widetilde{\psi}_J^{b_{2},j}\rangle 
%\end{align*}
%where f
For fixed $e\in \mathbb Z$, $m\in \mathbb N$ and every given dyadic cube $J$,
we define the family
$$
J_{e,m}=\{ I\in {\mathcal D}_{2M}^c(Q):\ell(I)=2^{e}\ell(J), m\leq \rdist(I_p,J_p)< m+1 \} .
$$
For $m=1$ and $1\leq k\leq 2^{-\min(e,0)-2}-1$, we also define 
$$
J_{e,1, k}=J_{e,1}\cap \{ I\in {\mathcal D}_M^c(Q):k\leq \inrdist(I_p,J_p)< k+1 \} .
$$

The cardinality of 
$J_{e,m}$ is comparable to $2^{-\min(e,0)n}nm^{n-1}$, while the cardinality of 
$J_{e,1,k}$ is comparable to $n(2^{-\min(e,0)}-k)^{n-1}$.
By symmetry, %the family 
%$\{ (I,J): I\in J_{e,m}\}$ can be  parametrized as 
%$\{ (I,J): J\in I_{-e,m}\}$ 
we have $I\in J_{e,m}$ if and only if $J\in I_{-e,m}$
and, similarly,  $I\in J_{e,1,k}$ if and only if $J\in I_{-e,1,k}$.

Accordingly with previous parametrization, 
we divide the double sum in \eqref{orthopro} into three parts $D_i$, $N_i$ and $B_6$ (distant cubes, nested cubes and borderline cubes)  and add and subtract the paraproducts $P_i$ into the second part. Namely, we write
\begin{align*}
%\label{moduloin}
%\nonumber
\langle P_{M}^{\perp}TP_{2M}^{\perp}&f,g\rangle 
=\sum_{e\in \mathbb Z}\sum_{m\geq 2}
\sum_{J}
\sum_{I\in J_{e,m}}
\langle f,\psi_{I}\rangle 
\langle g, \psi_{J}\rangle
\langle T\psi_I,\psi_J\rangle 
\\
&
+\sum_{e\in \mathbb Z}
\sum_{k=2^{\theta |e|}+1}^{2^{|e|}}
\sum_{J}
\sum_{\substack{I\in J_{e,1,k}\\ \dist(I_p,J_p)>0}}
\langle f,\psi_{I}\rangle 
\langle g, \psi_{J}\rangle
\langle T\psi_I,\psi_J\rangle 
\\
&+\sum_{e\geq 0}\sum_{k=2^{\theta |e|}+1}^{2^{|e|-2}}
\sum_{I}
\sum_{\substack{J_p\subset I_p\\ J\in I_{e,1,k}}}
\langle f,\psi_{I}\rangle 
\langle g, \psi_{J}\rangle
\langle T(\psi_I-\psi_{I,J_p}^{\rm full}),\psi_J\rangle 
\\
&+\sum_{e<0}\sum_{k=2^{\theta |e|}+1}^{2^{|e|-2}}
\sum_{J}
\sum_{\substack{I_p\subset J_p\\ I\in J_{-e,1,k}}}
\langle f,\psi_{I}\rangle 
\langle g, \psi_{J}\rangle
\langle T\psi_I,\psi_J-\psi_{J,I_p}^{\rm full}\rangle 
\\
&
+\Pi(P_{2M}^{\perp}f,P_M^{\perp}g)
+\Pi'(P_{2M}^{\perp}f,P_M^{\perp}g)
\\
&+\sum_{e\in \mathbb Z}\sum_{k=1}^{2^{\theta |e|}}
\Big(\sum_{I}
\sum_{J\in I_{e,1,k}}
+\sum_{J}
\sum_{I\in J_{-e,1,k}}
\Big)
\langle f,\psi_{I}\rangle 
\langle g, \psi_{J}\rangle
\langle T\psi_I,\psi_J\rangle
\\
&=D_1+D_2+N_2+N_3+P_4+P_5+B_6.
\end{align*}

The terms $P_4$ and $P_5$ are the paraproduct bilinear forms, which are bounded by Proposition \ref{paraproducts}. 
The terms $D_1$, $D_2$ correspond to the distant cubes and so, they can be dealt by using the estimates of Remark \ref{alterbump} and Proposition \ref{twobumplemma1} respectively.
%and \ref{realbound}. 
The terms $N_2$, $N_3$ correspond to the nested cubes, for which we use the estimate of 
Proposition \ref{twobumplemma2}. By symmetry we only need to work with $N_2$.
%and \ref{realboundHaar}. 
%The third term $N_3$ is symmetric to the second one and so, we do not need to deal with it. 
Finally, the term 
$B_6$ corresponds to borderline cubes.% which 
%will require the use of Proposition 
%\ref{realbound2}.

1) We start with $D_1$. 
%which we separate in two cases: $m\geq2$ and $m=1$. 
Since $m\geq 2$, we have 
by \eqref{bumpwithec} in Remark \ref{alterbump} 
%Proposition \ref{twobumplemma1}
% \begin{align*}
% |\langle T\psi_I,\psi_J\rangle |
% %=|\langle T_{b}(h_I^{b_{1},i}),h_J^{b_{2},j}\rangle |
% &\lesssim 
% \ec(I,J)^{\delta}\rdist(I_p,J_p)^{-(\alpha +\delta)}
% %\Big(\frac{B_{1}(I)B_{2}(J)}{b_{1}^{av}(I)b_{2}^{av}(J)}\Big)^{2}
% %C_{I}^{b_{1},i}C_{J}^{b_{2},j}
% %\big( F(I_{1},I_{2},I_{3}; M_{T,\epsilon })+\epsilon \big)
% \frac{\mu(I)^{\frac{1}{2}}
% \mu(J)^{\frac{1}{2}}}{\ell(I\smlor J)^\alpha }
% F_1(I,J),
% \end{align*}
%where $I_{1}=\tilde{K}_{max}$, $I_{2}=K_{min}$, $I_{3}=\langle I,J\rangle$.
% where $F_{1}(I, J)
% %=F([I_p,J_p],I\smland J, \langle I_p,J_p\rangle )
% $ is given in Proposition \ref{twobumplemma1}.
% Since $m\geq 2$, we have
% $m\geq \rdist(I,J)-1=\frac{\dist(I_p,J_p)}{\ell(I\smlor J)}\geq m-1\geq m/2$. 
% %Moreover,  $\dist(I_p,J_p)\geq \ell(I\smland J)$ and so,
% Then
% \begin{align*}
% \inrdist(I,J)^{-\delta}\rdist(I,J)^{-\alpha}
% &\lesssim \Big(\frac{\ell(I\smland J)}
% {\dist(I_p,J_p)}\Big)^{\delta}
% \Big(\frac{\ell(I\smlor J)}{\dist(I_p,J_p)}\Big)^{\alpha}
% \\
% &\leq \Big(\frac{\ell(I\smland J)}{\ell(I\smlor J)}\Big)^{\delta}
% \Big(\frac{\ell(I\smlor J)}{\dist(I_p,J_p)}\Big)^{\alpha+\delta}
% \\
% &\lesssim 2^{-|e|\delta}m^{-(\alpha +\delta)}.
% \end{align*}
%given $\epsilon>0$ and $M_{T,\epsilon}\in \mathbb N$
\begin{align*}
|\langle T\psi_I,\psi_J\rangle |
%=|\langle T_{b}(h_I^{b_{1},i}),h_J^{b_{2},j}\rangle |
&\lesssim \frac{2^{-|e|\delta}}{m^{\alpha +\delta}}
\frac{\mu(I)^{\frac{1}{2}}
\mu(J)^{\frac{1}{2}}}{\ell(I\smlor J)^\alpha }
F_1(I,J)
\end{align*}
where $F_{1}(I, J)
%=F([I_p,J_p],I\smland J, \langle I_p,J_p\rangle )
$ is given in Proposition \ref{twobumplemma1}.
Then 
%we can bound the terms in $D_1$ corresponding to the case $m\geq 2$ by a constant times
\begin{align}\label{moduloin2}
|D_1|
\lesssim
\sum_{\substack{e\in \mathbb Z\\ m\geq 2}}
\frac{2^{-|e|\delta}}{m^{\alpha +\delta}}
\sum_{J}\sum_{I\in J_{e,m}}
\frac{\mu(I)^{\frac{1}{2}}
\mu(J)^{\frac{1}{2}}}{\ell(I\smlor J)^\alpha }
|\langle f,\psi_{I}\rangle | |\langle g,\psi_{J}\rangle |
F_1(I,J).
\end{align}

% 1) We further divide this case in two parts depending whether $m>3$ or $m\leq 3$. 

%1) In the first case, 
%that 
%$G(I,J;\epsilon )
%%L(|I_{i}|)S(|I_{i}|)D(\rdist(I_{i},K_{M_{1}}))
%<\epsilon $. Then, 

To estimate this last quantity, we divide the study into two cases:
$(I,J)\in \mathcal F_{M}$ 
and
$(I,J)\notin \mathcal F_{M}$. 

1.a) In the first case, to simplify the argument, we assume $\ell(J)\leq \ell(I)$, that is, $e\geq0$. The other case follows by symmetry. Then $I\smlor J=I$ and, by Cauchy's inequality, we can bound the terms in (\ref{moduloin2}) 
corresponding to this case by
%\begin{align*}
%\sum_{e\in \mathbb Z}2^{-|e|\delta }
%&\Big( \sum_{I\in {\mathcal D_{2M}^{c}}}\sum_{m\geq 4}\frac{1}{m^{\delta }}\hspace{-.2cm}
% \sum_{\tiny \begin{array}{c}J\! \in \! I_{-e,m}\cap {\mathcal D}_{2M}^{c}\end{array}}\hspace{-.2cm}
%\frac{
%\mu_{2}(J)}{\nu(\langle I,J\rangle )}
%B\!F(I,J)|\langle f,\tilde{\psi}_{I}^{b_{1},i}\rangle |^2\Big)^{\frac{1}{2}}
%\\
%&\Big( \sum_{J\in {\mathcal D}_{2M}^{c}}\sum_{m\geq 4}\frac{1}{m^{\delta }}\hspace{-.2cm} \sum_{\tiny \begin{array}{c}I\! \in \! J_{e,m}\cap {\mathcal D}_{2M}^{c}\end{array}}\hspace{-.2cm}
%\frac{\mu_{1}(I)}{\nu(\langle I,J\rangle )}
%B\!F(I,J)|\langle g,\tilde{\psi}_{J}^{b_{2},j}\rangle |^2\Big)^{\frac{1}{2}}.
%\end{align*}
%%Now, for every fixed $J$ and each $n\in \mathbb N$ there are $2^{-\min(e,0)n}2n(2m)^{n-1}$ dyadic cubes $I$ such that
%%$\ell(I)=2^{e}\ell(J)$ and  $m\leq \rdist(I\cup J)<m+1$. This implies that 
%%As said, the cardinality of
%%$J_{e,m}$ is comparable to $2^{-\min(e,0)n}m^{n-1}$ and so, 
%%the cardinality of $I_{-e,m}$ is comparable to 
%%%$2^{-\min(-e,0)}m^{n-1}=
%%$2^{\max(e,0)n}m^{n-1}$.
%Therefore, the previous expression can be bounded by a constant depending on $2^{n}$ multiplied by 
%the following quantity
\begin{align}\label{aftercauchy}
&\sum_{e\geq 0}
2^{-e\delta }%\\
%&\hskip40pt
\Big( \sum_{I}
\sup_{\substack{J\in  \mathcal D_{M}^{c}\\
(I,J)\in \mathcal F_{M}}}F_1(I,J)
|\langle f,\psi_{I}\rangle |^2
%\sup_{I\in {\mathcal D_{2M}^{c}}}
\sum_{m\geq 2}\frac{1}{m^{\alpha +\delta }}
\frac{1}{\ell(I)^{\alpha}}
\sum_{J \in I_{-e,m}}
\mu(J)\Big)^{\frac{1}{2}}
\\
\nonumber
&\hskip40pt 
\Big( \sum_{J}
\sup_{\substack{I\in  \mathcal D_{M}^{c}\\
(I,J)\in \mathcal F_{M}}}F_1(I,J)
|\langle g,\psi_{J}\rangle |^2
%\sup_{J\in {\mathcal D}_{2M}^{c}}
\sum_{m\geq 2}\frac{1}{m^{\alpha +\delta }}\frac{1}{2^{e\alpha }\ell(J)^{\alpha}}
\sum_{I \in J_{e,m}}
%\frac{
\mu(I)
%}{\nu(\langle I,J\rangle )}
\Big)^{\frac{1}{2}}.
\end{align}
%\\
%&\lesssim  \sum_{e\in \mathbb Z}2^{-|e|(\frac{n}{2}+\delta )}2^{\max(e,0)\frac{n}{2}}
%2^{-\min(e,0)\frac{n}{2}}
%\sum_{m\in \mathbb N}m^{-(1+\delta )}\epsilon \| f\|_{L^{2}(\mathbb R^{n})}\| g\|_{L^{2}(\mathbb R^{n})}
We note that the cubes $J\in I_{-e,m}$ are pairwise disjoint, and that 
also the cubes $I\in J_{e,m}$ are pairwise disjoint. 
%and that, since $e\geq 0$.
%the cardinality of 
%$J_{e,m}$ is comparable to $n$. 
Then we have
$$
\sum_{J\in I_{-e,m}}
\mu(J)\lesssim \mu(mI\setminus (m-1)I), 
$$
$$
\sum_{I\in  J_{e,m}}
\mu(I)\lesssim \mu(m2^{e}J\setminus (m-1)2^{e}J) .
$$
We start with the first factor of 
\eqref{aftercauchy}, whose 
%being the work with the second factor completely symmetrical (write it?). 
inner sum can be written as
%of \eqref{aftercauchy} as 
\begin{align*}
\frac{1}{\ell(I)^{\alpha}}
\sum_{m\geq 2}\frac{1}{m^{\alpha +\delta }}
\sum_{J \in I_{-e,m}}
\mu(J)
\lesssim \lim_{R\rightarrow \infty }
%\label{boundedby1}
%\sup_{I\in {\mathcal D_{2M}^{c}}}
\frac{1}{\ell(I)^{\alpha}}
\sum_{m=2}^R
\frac{\mu(mI)-\mu((m-1)I))}{m^{\alpha+\delta }}.
%\lesssim 
%\frac{1}{\ell(I)^{\alpha }}\sum_{m\geq 2}\frac{a_{m}-a_{m-1}}{m^{\alpha +\delta }}
\end{align*}
%where $I_{m}$ is the cube in finite family $\mathcal F_M$ where the supremum is attained. 
%we used that $\nu(m \ell(I))=m^{d}\ell(I)^{d}$ and  
%$b_{m}=\mu_{1}(m2^{e}J)$, $c_{m}=\nu(m \ell(I))$ and $d_{m}=\nu(m 2^{e}\ell(J))$ 
%to simplify notation. 
Now, we write $a_{m}=\mu(mI)$ and use Abel's formula to get 
\begin{align*}
\frac{1}{\ell(I)^{\alpha }}\sum_{m= 2}^{R}\frac{a_{m}-a_{m-1}}{m^{\alpha +\delta }}
&=\frac{a_{R}}{R^{\alpha +\delta }\ell(I)^{\alpha }}-
\frac{a_1}{2^{\alpha +\delta}\ell(I)^{\alpha }}
\\
&
+\frac{1}{\ell(I)^{\alpha }}\sum_{m= 2}^{R-1}a_{m}\Big(\frac{1}{m^{\alpha +\delta }}-\frac{1}{(m+1)^{\alpha +\delta }}\Big).
\end{align*}
For the first term we have 
$$
\frac{a_{R}}{\ell(I)^{\alpha }R^{\alpha +\delta }}= \frac{\mu(RI)}{\ell(RI)^{\alpha }}
\frac{1}{R^\delta}
=\rho(RI)\frac{1}{R^{\delta }}\lesssim \frac{1}{R^{\delta }}\leq \rho_{\rm out}(I)
$$
for $R$ sufficiently large, 
where we remind that  
$
\rho_{\rm out}(I)=\sum_{m\geq 1}\frac{\mu(mI)}{\ell(mI)^{\alpha }}
\frac{1}{m^{\delta+1}}. 
$
% Note that  
% $\Phi(RI)\frac{1}{R^{\delta }}$ tends to zero when $R$ tends to infinity. 
The second term is bounded in a similar way:
$$
\frac{a_{1}}{2^{\alpha +\delta}\ell(I)^{\alpha }}\lesssim \frac{\mu(I)}{\ell(I)^{\alpha }}
=\rho(I)\leq \rho_{\rm out}(I).
$$
The last term is bounded by 
\begin{align*}
\frac{1}{\ell(I)^{\alpha }}
&
\sum_{m=2}^{R-1}a_{m}\frac{(m+1)^{\alpha +\delta }-m^{\alpha +\delta }}{(m+1)^{\alpha +\delta }m^{\alpha +\delta }}
\\
&
\lesssim \sum_{m=2}^{R-1}
\frac{\mu(mI)}{m^{\alpha}\ell(I)^{\alpha }}
\frac{(m+1)^{\alpha +\delta -1}}{(m+1)^{\alpha +\delta }m^{\delta }}
\\
&
\lesssim \sum_{m=2}^{R-1}
\frac{\mu(mI)}{\ell(mI)^{\alpha }}
\frac{1}{m^{\delta+1}}\leq \rho_{\rm out}(I).
\end{align*}
% where 
% $$
% \tilde \Phi(I)=\sum_{m\geq 1}\frac{\mu(mI)}{\ell(mI)^{\alpha }}
% \frac{1}{m^{\delta+1}}
% $$
% This shows that the right hand side of $\eqref{boundedby1}$ is bounded by a constant independent of $I$. In a similar way, we have that 
% the inner sum in the second factor of \eqref{} can be bounded by a constant independent of $J$.
We finish the work with the first factor by noting that  
$F_1(I,J)\rho_{\rm out}(I)\leq F_\mu(I,J)<\epsilon $, since 
$(I,J)\in \mathcal F_{M}$. 
%implies that
%\eqref{useofPlanche} holds,
For the second factor we can use similar calculations to obtain 
\begin{align*}
\sum_{m\geq 2}\frac{1}{m^{\alpha +\delta }}\frac{1}{2^{e\alpha }\ell(J)^{\alpha}}
\sum_{I \in J_{e,m}}
%\frac{
\mu(I)
\lesssim \rho_{\rm out}(2^eJ).  
\end{align*}
However, now $2^eJ$ does not belong to 
$\mathcal D_M^c(Q)$ in general and so, the only inequality we can use is $F_1(I,J)\rho_{\rm out}(2^eJ)\lesssim 1$.

With both things and Lemma \ref{Planche}, we conclude that the terms in 
$D_1$ corresponding to both cases ($e\geq0$ and $e\leq 0$) can be bounded by a constant times
\begin{align*}
%|T_1| 
% &\lesssim 
% \sum_{e\in \mathbb Z}
% 2^{-|e|\delta }
% \Big( \sum_{I\in {\mathcal D}_{2M}^{c}}\hspace{-.3cm}
% \sup_{\tiny \begin{array}{c}J\in  \mathcal D_{M}^{c}\\
% \langle I,J\rangle \in \mathcal I_{M}^{c}\end{array}}\hspace{-.5cm}F(I,J)
% |\langle f,\psi_{I}\rangle |^2
% \Phi(I)
% \Big)^{\frac{1}{2}}
% \\
% &\hskip40pt 
% \Big( \sum_{J\in {\mathcal D}_{2M}^{c}}
% \hspace{-.3cm}
% \sup_{\tiny \begin{array}{c}J\in  \mathcal D_{M}^{c}\\
% \langle I,J\rangle \in \mathcal I_{M}^{c}\end{array}}\hspace{-.5cm}F(I,J)
% |\langle g,\psi_{J}\rangle |^2
% \Phi(I)
% \Big)^{\frac{1}{2}}
% \\
\sum_{e\geq 0}
2^{-e\delta }
&\Big( \sum_{I\in {\mathcal D}_{2M}^{c}}
\sup_{\substack{J\in  \mathcal D_{M}^{c}\\
(I,J)\in \mathcal F_{M}}}F_\mu(I,J)
|\langle f,\psi_{I}\rangle |^2
\Big)^{\frac{1}{2}}
%\\
%&\hskip40pt 
\Big( \sum_{J\in {\mathcal D}_{2M}^{c}}
%\sup_{\substack{I\in  \mathcal D_{M}^{c}\\
%(I,J)\in \mathcal F_{M}}}F_\mu(I,J)
|\langle g,\psi_{J}\rangle |^2
\Big)^{\frac{1}{2}}
\\
&\lesssim \epsilon^{\frac{1}{2}} \sum_{e\geq 0}
2^{-e\delta } \| f\|_{L^{2}(\mu)}
\| g\|_{L^{2}(\mu)}
\lesssim \epsilon^{\frac{1}{2}}.
\end{align*}

%We now deal with the inner sum
%
%\begin{align*}
%\sum_{m\geq 4}
%\frac{1}{m^{\delta }}&
%(\frac{a_{m}-a_{m-1}}{c_{m}})^{\frac{1}{2}}
%(\frac{b_{m}-b_{m-1}}{d_{m}})^{\frac{1}{2}}
%\\
%\lesssim \Big(\sum_{m\geq 4}&
%\frac{a_{m}-a_{m-1}}{m^{\delta }c_{m}}\Big)^{\frac{1}{2}}
%\Big(\sum_{m\geq 4}
%\frac{b_{m}-b_{m-1}}{m^{\delta }d_{m}}\Big)^{\frac{1}{2}}
%\end{align*}
%and we work each factor separately. 

%\begin{align*}
%&\lesssim \epsilon \Big( \sum_{e\in \mathbb Z}2^{-|e|\delta }\sum_{m\geq 4}m^{-(1+\delta )}\Big)\| f\|_{L^{2}(\mathbb R^{n})}\| g\|_{L^{2}(\mathbb R^{n})}
%\lesssim \epsilon \| f\|_{L^{2}(\mathbb R^{n})}\| g\|_{L^{2}(\mathbb R^{n})}, 
%\end{align*}
%since $2^{\max(e,0)}2^{-\min(e,0)}=2^{|e|}$ and we use  
%\eqref{useofPlanche}. 

%$\nu(m \ell(I))=m^{d}\ell(I)^{d}$, $\nu(m 2^{e}\ell(J))=m^{d}2^{ed}\ell(J)^{d}=m^{d}\ell(I)^{d}$

%Since $J\notin {\cal D}_{2M}$, we have $J\notin {\cal D}_{M}$ and so 
%we just need to consider the cases when $I\in {\cal D}_{M}$ or $\langle I\cup J\rangle \in {\cal D}_{M}$.
%We start with the first one.

% a) In the first case, 
% $(I,J)\in \mathcal F_{M}$ implies that
% \eqref{useofPlanche} holds and so, we conclude that 
% \begin{align*}
% |T_1|&\lesssim \epsilon \sum_{e\in \mathbb Z}
% 2^{-|e|\delta } \| f\|_{L^{2}(\mu)}
% \| g\|_{L^{2}(\mu)}
% \\
% &
% \lesssim \epsilon \| f\|_{L^{2}(\mu)}\| g\|_{L^{2}(\mu)}
% \end{align*}

1.b) We now study the case when 
$(I,J)\notin \mathcal F_{M}$, that is, when 
$I\in \mathcal D_{2M}^{c}(Q)$, $J\in \mathcal D_{M}^{c}(Q)$ are such that $F_\mu(I,J)\geq \epsilon$. By Lemma \ref{smallF}, we have that 
 $|\log(\ec(I,J))|\gtrsim \log M$, or $\rdist(I, J)\gtrsim M^{\frac{1}{8}}$. 
%$\langle I, J\rangle \in {\mathcal I}_{M}$.
Then, instead the smallness of $F_\mu$,
in this case we use that 
%the fact that it is bounded 
the size and location of the cubes $I$ and $J$ are such that either their eccentricity or their relative distance are extreme. 

We fix $e_{M}\in \{0, \log M\}$, $m_{M}\in \{M^{\frac{1}{8}},1\}$ such that $e_{M}=0$ implies $m_{M}=M^{\frac{1}{8}}$.
Then, %when $m\geq 2$, 
by 
%using \eqref{useofPlanche0}, 
the calculations developed in the sub-case 1.a) and 
$F_\mu(I,J)\lesssim 1$, we can bound the relevant part of  (\ref{moduloin2}) by a constant times
\begin{align*}
\sum_{|e|\geq e_{M}}&
\sum_{m\geq m_{M}}
\frac{2^{-|e|\delta }}{m^{\alpha +\delta }}
\sum_{J}\sum_{I\in J_{e,m}}
\frac{\mu(I)^{\frac{1}{2}}
\mu(J)^{\frac{1}{2}}}{\ell(I\smlor J)^\alpha }
|\langle f,\psi_{I}\rangle | |\langle g,\psi_{J}\rangle |
F_1(I,J)
\\
&\lesssim \sum_{|e|\geq e_M}
2^{-|e|\delta }%\\
%&\hskip40pt
\Big( \sum_{I}
|\langle f,\psi_{I}\rangle |^2
%\sup_{I\in {\mathcal D_{2M}^{c}}}
\sum_{m\geq m_{M}}\frac{1}{m^{\alpha +\delta }}
\frac{\mu(mI\setminus (m-1)I) }{\ell(I)^{\alpha}}\Big)^{\frac{1}{2}}
\\
\nonumber
&\hskip50pt 
\Big( \sum_{J}
|\langle g,\psi_{J}\rangle |^2
%\sup_{J\in {\mathcal D}_{2M}^{c}}
\sum_{m\geq m_{M}}\frac{1}{m^{\alpha +\delta }}
\frac{\mu(m2^{e}J\setminus (m-1)2^{e}J) }{\ell(2^{e}J)^{\alpha}}
\Big)^{\frac{1}{2}}.
\end{align*}
Now, by Abel's inequality as in case a) and 
$\rho(I)=\frac{\mu(I)}{\ell(I)^\alpha}\lesssim 1$, 
we have 
\begin{align*}
%\label{boundedby1}
%\sup_{I\in {\mathcal D_{2M}^{c}}}
\sum_{m\geq m_M}
\frac{\mu(mI\setminus (m-1)I) }{m^{\alpha+\delta }\ell(I)^{\alpha}}
%\lesssim 
%\frac{1}{\ell(I)^{\alpha }}\sum_{m\geq 2}\frac{a_{m}-a_{m-1}}{m^{\alpha +\delta }}
&\lesssim 
\lim_{R\rightarrow \infty }\frac{1}{R^{\delta }} +
\frac{1}{m_M^{\delta}}
+\hspace{-.2cm}\sum_{m=m_M+1}^{R}\frac{1}{m^{\delta+1}}
\lesssim m_M^{-\delta},
\end{align*}
and similar for the second factor. 
Then previous expression can be bounded by
\begin{align*}
\sum_{|e|\geq e_{M}}2^{-|e|\delta }\, m_{M}^{-\delta }\| f\|_{L^{2}(\mu)}\| g\|_{L^{2}(\mu)}
&\lesssim 2^{-e_{M}\delta }m_{M}^{-\delta }%\| f\|_{L^{2}(\mathbb R^{n})}\| g\|_{L^{2}(\mathbb R^{n})}
\lesssim M^{-\frac{\delta }{8}}%\| f\|_{L^{2}(\mathbb R^{n})}\| g\|_{L^{2}(\mathbb R^{n})}
<\epsilon , %\| f\|_{L^{2}(\mathbb R^{n})}\| g\|_{L^{2}(\mathbb R^{n})},
\end{align*}
by the choice of $M$.

2) We now work with $D_2$. 
Since $m=1$ and $k\geq 1+2^{|e|\theta}$ we now have by Proposition
\ref{twobumplemma1} 
\begin{align*}
|\langle T\psi_I,\psi_J\rangle |
% &\lesssim 
% \inrdist(I,J)^{-(\alpha +\delta)}
% \frac{\mu(I)^{\frac{1}{2}}
% \mu(J)^{\frac{1}{2}}}{\ell(I\smland J)^\alpha }
% F_1(I,J).
% \\
&\lesssim \frac{1}{k^{\alpha +\delta}}
\frac{\mu(I)^{\frac{1}{2}}
\mu(J)^{\frac{1}{2}}}{\ell(I\smland J)^\alpha }
F_1(I,J), 
\end{align*}
with $F_1(I,J)$ as before.  
Therefore, 
%we can bound the terms in $D_1$ corresponding to the case $m=1$ by a constant times
\begin{align}\label{moduloin2-2}
|D_2|
\lesssim
\sum_{e\in \mathbb Z}\sum_{k=2^{|e|\theta}}^{2^{|e|}}
\sum_{J}\sum_{I\in J_{e,1,k}}\frac{1}{k^{\alpha +\delta}}
\frac{\mu(I)^{\frac{1}{2}}
\mu(J)^{\frac{1}{2}}}{\ell(I\smland J)^\alpha }
|\langle f,\psi_{I}\rangle | |\langle g,\psi_{J}\rangle |
F_1(I,J).
\end{align}

% 1) We further divide this case in two parts depending whether $m>3$ or $m\leq 3$. 

%1) In the first case, 
%that 
%$G(I,J;\epsilon )
%%L(|I_{i}|)S(|I_{i}|)D(\rdist(I_{i},K_{M_{1}}))
%<\epsilon $. Then, 

Again to estimate this last quantity, we divide the study into the same two cases as before:
$(I,J)\in \mathcal F_{M}$ 
and
$(I,J)\notin \mathcal F_{M}$. 

2.a) For the first case, 
%to simplify the argument, 
we assume again $\ell(J)\leq \ell(I)$.
%, that is, $e\geq0$. 
%The other case follows by symmetry. 
In this case, $I\smland J=J$ and  $\ell(I)=2^e\ell(J)$. Moreover, 
since $e\geq 0$ we have that for each 
$I$ and each $k\in \{ 2^{\theta e}, \ldots, 2^{e}\}$
the cardinality of $I_{-e,1,k}$ is at most  $n(2^{e-1}-2^{e\theta})^{n-1}$. On the other hand, for each $J$ there are only a quantity comparable to $n$ cubes $I$ such that $m=1$ and there 
is only one $k\geq 2^{e\theta}$
such that $J_{e,1,k}$ is not empty. Then we can consider that this $k_J$ is completely determined by $J$. 

With this and 
Cauchy's inequality, we can bound the terms in (\ref{moduloin2-2}) 
corresponding to this case by
%\begin{align*}
%\sum_{e\in \mathbb Z}2^{-|e|\delta }
%&\Big( \sum_{I\in {\mathcal D_{2M}^{c}}}\sum_{m\geq 4}\frac{1}{m^{\delta }}\hspace{-.2cm}
% \sum_{\tiny \begin{array}{c}J\! \in \! I_{-e,m}\cap {\mathcal D}_{2M}^{c}\end{array}}\hspace{-.2cm}
%\frac{
%\mu_{2}(J)}{\nu(\langle I,J\rangle )}
%B\!F(I,J)|\langle f,\tilde{\psi}_{I}^{b_{1},i}\rangle |^2\Big)^{\frac{1}{2}}
%\\
%&\Big( \sum_{J\in {\mathcal D}_{2M}^{c}}\sum_{m\geq 4}\frac{1}{m^{\delta }}\hspace{-.2cm} \sum_{\tiny \begin{array}{c}I\! \in \! J_{e,m}\cap {\mathcal D}_{2M}^{c}\end{array}}\hspace{-.2cm}
%\frac{\mu_{1}(I)}{\nu(\langle I,J\rangle )}
%B\!F(I,J)|\langle g,\tilde{\psi}_{J}^{b_{2},j}\rangle |^2\Big)^{\frac{1}{2}}.
%\end{align*}
%%Now, for every fixed $J$ and each $n\in \mathbb N$ there are $2^{-\min(e,0)n}2n(2m)^{n-1}$ dyadic cubes $I$ such that
%%$\ell(I)=2^{e}\ell(J)$ and  $m\leq \rdist(I\cup J)<m+1$. This implies that 
%%As said, the cardinality of
%%$J_{e,m}$ is comparable to $2^{-\min(e,0)n}m^{n-1}$ and so, 
%%the cardinality of $I_{-e,m}$ is comparable to 
%%%$2^{-\min(-e,0)}m^{n-1}=
%%$2^{\max(e,0)n}m^{n-1}$.
%Therefore, the previous expression can be bounded by a constant depending on $2^{n}$ multiplied by 
%the following quantity
\begin{align*}
&\sum_{e\geq 0}
\Big( \sum_{k=2^{e\theta}}^{2^{e}}\sum_{I}
\sum_{J \in I_{-e,1,k}}
F_1(I,J)
|\langle f,\psi_{I}\rangle |^2
%\sup_{I\in {\mathcal D_{2M}^{c}}}
\frac{1}{k^{\alpha +\delta }}
%\frac{2^{e\alpha }}{\ell(I)^{\alpha}
\frac{1}{\ell(J)^{\alpha}}
\mu(J)\Big)^{\frac{1}{2}}
\\
&\hskip47pt 
\Big(
\sum_{k=2^{e\theta}}^{2^{e}}
\sum_{I}\sum_{J \in I_{-e,1,k}}
F_1(I,J)
|\langle g,\psi_{J}\rangle |^2
%\sup_{J\in {\mathcal D}_{2M}^{c}}
\frac{1}{k^{\alpha +\delta }}\frac{1}{\ell(J)^{\alpha}}
%\frac{
\mu(I)
%}{\nu(\langle I,J\rangle )}
\Big)^{\frac{1}{2}}
\\
&\leq \sum_{e\geq 0}
\Big( \sum_{I}
\sup_{\substack{J\in  \mathcal D_{M}^{c}\\
(I,J)\in \mathcal F_{M}}}F_1(I,J)
|\langle f,\psi_{I}\rangle |^2
%\sup_{I\in {\mathcal D_{2M}^{c}}}
\sum_{k=2^{e\theta}}^{2^{e}}\frac{1}{k^{\alpha +\delta }}
\frac{2^{e\alpha }}{\ell(I)^{\alpha}}
\sum_{J \in I_{-e,1,k}}
\mu(J)\Big)^{\frac{1}{2}}
\\
&\hskip47pt 
\Big( \sum_{J}
\sup_{\substack{I\in  \mathcal D_{M}^{c}\\
(I,J)\in \mathcal F_{M}}}F_1(I,J)
|\langle g,\psi_{J}\rangle |^2
%\sup_{J\in {\mathcal D}_{2M}^{c}}
\frac{1}{k_J^{\alpha +\delta }}\frac{1}{\ell(J)^{\alpha}}
\sum_{I \in J_{e,1,k_J}}
%\frac{
\mu(I)
%}{\nu(\langle I,J\rangle )}
\Big)^{\frac{1}{2}}.
\end{align*}
%\\
%&\lesssim  \sum_{e\in \mathbb Z}2^{-|e|(\frac{n}{2}+\delta )}2^{\max(e,0)\frac{n}{2}}
%2^{-\min(e,0)\frac{n}{2}}
%\sum_{m\in \mathbb N}m^{-(1+\delta )}\epsilon \| f\|_{L^{2}(\mathbb R^{n})}\| g\|_{L^{2}(\mathbb R^{n})}

Now, for each cube $I\in \mathcal{D}(Q)_{N}$ we define
$I^{\rm fr}$ as the family of dyadic cubes $I'\in \mathcal{D}(Q)_{N}$ such that $\ell(I')=\ell(I)$ and $\dist(I',I)=0$. Then 
%for fixed $I'\subset 3I$ with $\ell(I')=\ell(I)$, 
for each $I'\in I^{\rm fr}$, 
all $k\in \{ 2^{\theta e}, \ldots, 2^{e}\}$ and all $J\in I_{-e,1,k}$, since 
$\ell(J)=2^{-e}\ell(I)$ is fixed, 
we denote by $I_k\in \mathcal C$ the cube 
such that $c(I_k)=c(I')$ and $\ell(I_k)=(2^{e}-k)\ell(J)\leq 2^{e}\ell(J)=\ell(I)$. With this, 
$$
I_{-e,1,k}\subset \{ t\in 3I : k\ell(J)<\dist(t,I)\leq (k+1)\ell(J)\}
\subset I_k\setminus I_{k+1}.
$$
Now, since the cubes $J$ in $I_{-e,1,k}$ are pairwise disjoint 
$$
\sum_{J \in I_{-e,1,k}}\hspace{-.2cm}
\mu(J)\lesssim \sum_{I'\in I^{\rm fr}}\mu(I_k\setminus I_{k+1}).
$$
On the other hand, since 
the cardinality of $J_{e,1,k}$ is comparable to $n$, we have
%As before, the cubes $J\in I_{-e,1,k}$ are pairwise disjoint and the cardinality of $J_{e,1,k}$ is comparable to one. Then
%also the cubes $I\in J_{e,1,k}$ are pairwise disjoint, 
%we have
% $$
% \sum_{J\in I_{-e,1,k}}
% \mu(J)\lesssim \mu(I_k\setminus I_{k-1}), 
% $$
$$
\sum_{I\in  J_{e,1,k}}
\mu(I)\lesssim \sum_{I'\in I^{\rm fr}}\mu(I')\leq \mu(3I) 
$$
since the cubes in $I^{\rm fr}$ are disjoint.
Then previous expression is bounded by
a constant times  
\begin{align}\label{aftercauchy-2}
&\sum_{e\geq 0}
\Big(\hspace{-.1cm} 
\sum_{I\in {\mathcal D}_{2M}^{c}}
\sup_{\substack{J\in  \mathcal D_{M}^{c}\\
\langle I,J\rangle \in \mathcal I_{M}^{c}}}
\hspace{-.3cm}F_1(I,J)
|\langle f,\psi_{I}\rangle |^2
\sum_{I'\in I^{\rm fr}}
\sum_{k=2^{\theta e}}^{2^{e}}
\frac{1}{k^{\alpha +\delta }}\frac{2^{e\alpha}}{\ell(I)^{\alpha}}
\mu(I_k\setminus I_{k+1})
\Big)^{\frac{1}{2}}
\\
\nonumber
&
\hskip30pt
\Big( \sum_{J\in {\mathcal D}_{M}^{c}}
\sup_{\substack{I\in  \mathcal D_{M}^{c}\\
\langle I,J\rangle \in \mathcal I_{M}^{c}}}\hspace{-.3cm}F_1(I,J)
|\langle g,\psi_{J}\rangle |^2
2^{-e\theta(\alpha +\delta)}
\frac{\mu(3I)}{2^{-e\alpha }\ell(3I)^\alpha }
\Big)^{\frac{1}{2}}.
% \\
\end{align}
%Now, for every fixed $J$ and each $n\in \mathbb N$ there are $2^{-\min(e,0)n}2n(2m)^{n-1}$ dyadic cubes $I$ such that
%$\ell(I)=2^{e}\ell(J)$ and  $m\leq \rdist(I\cup J)<m+1$. This implies that 

%Now, let $0<\theta <1$ to be chosen later and 
%we divide the inner sum in the first factor in two cases: %when $1\leq k\leq   2^{\theta e}$ and when 
%$2^{\theta e}< k\leq 2^{e}$.
% while on the other side, when $1\leq k'\leq  2^{-\theta \min(e,0)}$ and 
%$2^{-\theta \min(e,0)}\leq k'\leq 2^{-\min(e,0)}$. 
%Notice that when $k$ divides into two different cases, there is only one case for $k'$ and vice versa. 

%In the latter case, 
We start working first factor of 
\eqref{aftercauchy-2}.
%being the work with the second factor completely symmetrical. 
%We can write the inner sum of the first factor as
As before, we write $a_{k}=\mu(I_k)$ and 
evaluate the inner sum of the first factor by using 
Abel's formula:
\begin{align*}
\sum_{k=2^{\theta e}}^{2^{e}} \frac{a_{k}-a_{k+1}}{k^{\alpha +\delta }}
&=\frac{a_{2^{\theta e}}}{2^{(\alpha +\delta )\theta e}}
-\frac{a_{2^{e}+1}}{2^{(\alpha +\delta)e}}
+
\sum_{k=2^{\theta e}+1}^{2^{e}}
a_{k}\Big(\frac{1}{k^{\alpha +\delta }}-\frac{1}{(k-1)^{\alpha +\delta }}\Big).
\end{align*}
For the first term we have
\begin{align*}
\frac{a_{2^{\theta e}}}{2^{(\alpha +\delta)\theta e}}
&\leq \frac{\mu(I_{2^{\theta e}})}{2^{(\alpha +\delta )\theta e}}
\leq \rho(I_{2^{\theta e}})\ell(I_{2^{\theta e}})^\alpha 2^{-(\alpha +\delta )\theta e}
\\
&
\leq \rho_{\rm in }(3I)\ell(I)^\alpha 2^{-(\alpha +\delta )\theta e}.
% \\
% &
% \leq \rho_{\rm in }(3I)2^{-e(\theta (\alpha +\delta )-\alpha )}
% \ell(J)^\alpha .
\end{align*}
Similarly, %since $2^{e-2}-1\gtrsim 2^{e}$, 
the absolute value of the second term can be bounded by
$$
\frac{a_{2^{e-2}}}{2^{(\alpha +\delta)e}}
%\leq \frac{a_{2^{e-2}}}{2^{\delta e}\mu(I)}
 \leq \frac{\mu(I)}{2^{(\alpha +\delta) e}}
 =\rho(I)\ell(I)2^{-(\alpha +\delta) e}
 \leq \rho_{\rm }(3I)\ell(I)2^{-(\alpha +\delta) e}.
 $$
%which tends to zero when $R$ tends to infinity. 
The absolute value of the last term is bounded by 
\begin{align*}
\sum_{k=2^{\theta e}+1}^{2^{e}}
a_{k}\frac{k^{\alpha +\delta }-(k-1)^{\alpha +\delta }}{(k-1)^{\alpha +\delta }k^{\alpha +\delta }}
&
\lesssim \sum_{k=2^{\theta e}+1}^{2^{e}}
\mu(I_{k})
\frac{k^{\alpha +\delta -1}}{(k-1)^{\alpha +\delta }k^{\alpha +\delta }}
\\
&
\lesssim \sum_{k=2^{\theta e}+1}^{2^{e}}
\rho(I_k)\ell(I_{k})^\alpha 
\frac{1}{k^{\alpha + \delta+1}}
% \\
% &
% \lesssim \sum_{k=2^{\theta e}+1}^{2^{e}}
% \rho(I_k)
% %(2^{e-2}-1-2k)^\alpha 
% \ell(I)^\alpha 
% \frac{1}{k^{\alpha + \delta+1}}
\\
&
\lesssim \rho_{\rm in}(3I)
%2^{e\alpha } 
\ell(I)^\alpha
\sum_{k=2^{\theta e}}^{2^{e}} 
\frac{1}{k^{\alpha + \delta+1}}
\\
&\lesssim \rho_{\rm in}(3I)\ell(I)^\alpha
2^{-(\alpha +\delta )\theta e}
\end{align*}

From the three inequalities, $\ell(I)=2^e\ell(J)$ and the fact that the cardinality of $I^{\rm fr}$ is 
$3^n$, we get 
\begin{align*}
\sum_{I\in I^{\rm fr}}
\frac{2^{e\alpha}}{\ell(I)^\alpha }\sum_{k=2^{\theta e}}^{2^{e}} \frac{a_{k}-a_{k+1}}{k^{\delta }}
&\lesssim 
\rho_{\rm in }(3I)2^{-e(\theta (\alpha +\delta )-\alpha )}.
% +
% 2^{e\alpha}\frac{\mu(I)}{\ell(I)^\alpha }2^{-(\alpha +\delta )\theta e}
% \\
% &
% +\rho_{\rm in}(3I)2^{e\alpha}
% 2^{-(\alpha +\delta )\theta e}
% \\
% &\lesssim \rho_{\rm in }(3I)2^{-e(\theta (\alpha +\delta )-\alpha )}
\end{align*}

On the other hand, for the second factor in \eqref{aftercauchy-2}, we have
$$
2^{-e\theta(\alpha +\delta)}
\frac{\mu(3I)}{2^{-e\alpha }\ell(3I)^\alpha }
\leq 2^{-e(\theta (\alpha +\delta )-\alpha )}\rho_{\rm in}(3I).
$$

With all this, 
the inequality 
$F_1(I,J)\rho_{\rm in}(3I)\lesssim F_{\mu}(I,J)$ 
and Lemma \ref{Planche},
%\eqref{useofPlanche}, 
the terms in $N_2$
%the expression 
%\eqref{firstpart} 
corresponding to this case can be bounded by
\begin{align*}
&\sum_{e\geq 0}2^{-e(\theta (\alpha +\delta )-\alpha )}
%2^{-\frac{\theta \delta e}{2}}
\Big( \sum_{I\in {\mathcal D}_{2M}^{c}}
\sup_{\substack{J\in  \mathcal D_{M}^{c}\\
\langle I,J\rangle \in \mathcal I_{M}^{c}}}\hspace{-.3cm}F_\mu(I,J)
|\langle f,\psi_{I}\rangle |^2\Big)^{\frac{1}{2}}
\\
&
\hskip85pt
\Big(  \sum_{J\in {\mathcal D}_{M}^{c}}
\sup_{\substack{J\in \mathcal D_{M}^{c}\\
\langle I,J\rangle \in \mathcal I_{M}^{c}}}\hspace{-.3cm}F_\mu(I,J)
|\langle g,\psi_{J}\rangle |^2\Big)^{\frac{1}{2}}
\\
&\lesssim \epsilon \sum_{e\geq 0}
2^{-e(\theta (\alpha +\delta )-\alpha )}
%2^{-\frac{\theta \delta e}{2}}
\| f\|_{L^{2}(\mu)}\| g\|_{L^{2}(\mu)}
\lesssim \epsilon ,
\end{align*}
by the choice of $0<\frac{\alpha }{\alpha +\delta}<\theta =\frac{\alpha }{\alpha +\frac{\delta}{2}}\leq 1$.

2.b) We now study the case when 
$(I,J)\notin \mathcal F_{M}$
%$I\in \mathcal D_{2M}^{c}(Q)$, $J\in \mathcal D_{M}^{c}(Q)$ are such that $F_\mu(I,J)\geq \epsilon$.
%By Lemma \ref{smallF}, we have that 
% $|\log(\ec(I,J))|\gtrsim \log M$, or $\rdist(I, J)\gtrsim M^{\frac{1}{8}}$. 
%$\langle I, J\rangle \in {\mathcal I}_{M}$.
and so, as before instead the smallness of $F_\mu$,
we use that either 
%the fact that it is bounded 
the eccentricity or the relative distance between $I$ and $J$ are extreme. 

As in case 1.b), we fix $e_{M}\in \{0, \log M\}$, $m_{M}\in \{M^{\frac{1}{8}},1\}$ such that $e_{M}=0$ implies $m_{M}=M^{\frac{1}{8}}$. 
But, since $m=1$, we have that $m_{M}\leq m=1<M^{\frac{1}{8}}$, which implies $m_{M}=1$ and so, $e_{M}=\log M$.

Then, %when $m\geq 2$, 
by Lemma \ref{Planche}, the calculations developed in the sub-case 2.a) and 
$F_\mu(I,J)\lesssim 1$, we can bound the relevant part of  (\ref{moduloin2}) by a constant times
\begin{align*}
&\sum_{|e|\geq e_{M}}\sum_{k=2^{|e|\theta}}^{2^{|e|}}
\sum_{J}\sum_{I\in J_{e,1,k}}\frac{1}{k^{\alpha +\delta}}
\frac{\mu(I)^{\frac{1}{2}}
\mu(J)^{\frac{1}{2}}}{\ell(I\smland J)^\alpha }
|\langle f,\psi_{I}\rangle | |\langle g,\psi_{J}\rangle |
F_1(I,J)
\\
&\lesssim \sum_{|e|\geq e_{M}}
\Big( \sum_{I\in {\mathcal D}_{2M}^{c}}
\sup_{\substack{J\in  \mathcal D_{M}^{c}\\
\langle I,J\rangle \in \mathcal I_{M}^{c}}}
\hspace{-.3cm}F_1(I,J)
|\langle f,\psi_{I}\rangle |^2
\sum_{k=2^{\theta e}}^{2^{e}}
\frac{1}{k^{\alpha +\delta }}\frac{2^{e\alpha}}{\ell(I)^{\alpha}}
\mu(I_k\setminus I_{k+1})
\Big)^{\frac{1}{2}}
\\
&
\hskip50pt
\Big( \sum_{J\in {\mathcal D}_{M}^{c}}
\sup_{\substack{I\in  \mathcal D_{M}^{c}
\\ \langle I,J\rangle \in \mathcal I_{M}^{c}}}\hspace{-.3cm}F_1(I,J)
|\langle g,\psi_{J}\rangle |^2
2^{-e\theta(\alpha +\delta)}
\frac{\mu(3I)}{2^{-e\alpha }\ell(3I)^\alpha }
\Big)^{\frac{1}{2}}
\\
&
\lesssim \sum_{|e|\geq e_M}2^{-|e|(\theta (\alpha +\delta )-\alpha )}
%2^{-\frac{\theta \delta e}{2}} 
\Big( \sum_{I\in {\mathcal D}_{2M}^{c}}
\sup_{\substack{J\in  \mathcal D_{M}^{c}\\
\langle I,J\rangle \in \mathcal I_{M}^{c}}}\hspace{-.3cm}F_\mu(I,J)
|\langle f,\psi_{I}\rangle |^2\Big)^{\frac{1}{2}}
\\
&
\hskip115pt
\Big(  \sum_{J\in {\mathcal D}_{M}^{c}}
\sup_{\substack{I\in \mathcal D_{M}^{c}\\
\langle I,J\rangle \in \mathcal I_{M}^{c}}}\hspace{-.3cm}F_\mu(I,J)
|\langle g,\psi_{J}\rangle |^2\Big)^{\frac{1}{2}}
\\
&\lesssim \sum_{|e|\geq \log M}
2^{-|e|\alpha (\frac{\alpha +\delta}{\alpha+\delta/2}-1)}
%2^{-\frac{\theta \delta e}{2}}
\| f\|_{L^{2}(\mu)}\| g\|_{L^{2}(\mu)}
\lesssim 
% 2^{-e_{M}(\theta (\alpha +\delta )-\alpha )}
%\\
%&
%M^{-(\theta (\alpha +\delta)-\alpha )}=
M^{-\alpha (\frac{\alpha +\delta}{\alpha+\delta/2}-1)}< \epsilon, 
\end{align*}
by the choices of $\theta$ and $M$.

3) Now we work with $N_2$, for which we have 
$2^{\theta |e|}\leq k\leq 2^{|e|-2}$ with $\theta =\frac{\alpha}{\alpha +\delta/2}$. 
By Proposition \ref{twobumplemma2}, when $m=1$ and $k\geq 2^{\theta |e|}$, we have
\begin{align*}
|\langle T(\psi_I-\psi_{I,J}^{\rm full}),\psi_J\rangle |
% &\lesssim \inrdist(I_p,J_p)^{-\delta}
% \sum_{R\in \{I,I_p\}}\Big(\frac{\mu(R\cap J)}{\mu(R)}\Big)^{\frac{1}{2}}
% %C_{I}^{b_{1},i} C_{J}^{b_{2},j}
% F_{2,\mu }(I,J)
% \\
% &+\inrdist(I_p,J_p)^{-(\alpha+\delta)}
% %\kappa 
% \frac{\mu(I)^{\frac{1}{2}}\mu(J)^{\frac{1}{2}}}{\ell(
% I\smland J)^{\alpha }}
% \chi_{I_p\setminus I}(c_{J_p})F_{3}(I,J)
%\Big(\frac{B_{1}(I)B_{2}(J)}{b_{1}^{av}(I)b_{2}^{av}(J)}\Big)^{2}
%%\big( F(I_{1},I_{2},I_{3}; M_{T,\epsilon })+\epsilon \big)
%B\! F(I,J)
%\\
%&\hskip15pt |B_{1}(I)^{-1}\langle f,\psi_{I}^{b_{1}}\rangle | |B_{2}(J)^{-1}\langle g, \psi_{J}^{b_{2}}\rangle |
%%L(|I_{i}|)S(|I_{i}|)D(\rdist(I_{i},K_{M_{1}}))
&
\lesssim k^{-\delta}
\sum_{R\in \{I,I_p\}}
\Big(\frac{\mu(R\cap J)}{\mu(R)}\Big)^{\frac{1}{2}}
F_{2,\mu }(I,J)
\\
&+k^{-(\alpha+\delta)}
\frac{\mu(I)^{\frac{1}{2}}\mu(J)^{\frac{1}{2}}}{\ell(
I\smland J)^{\alpha }}F_{3}(I,J),
\end{align*}
with $F_{2,\mu}$ and $F_3$ are given in Proposition \ref{twobumplemma2}. 
% and for $m=1$ and $k=1$
% \begin{align*}
% |\langle T(\psi_I-\psi_{I,J}^{\rm full}),\psi_J\rangle |
% &\lesssim \Big(\frac{\mu(3I\cap 3J)}
% {\mu(I)}\Big)^{\frac{1}{2}}
% %B_{I,q_{1}}B_{J,q_{2}}
% F_\mu(J)
% \end{align*}
The second term can be bounded using the same approach we used in the case 2) since the only difference between these two cases is the last factor, which is given by  
$F_3$ instead of $F_1$.
%but anyway this term can be treated in the exactly the same way it was done in 2). 
Then we focus on the first term. 

% Let $\theta =\frac{1}{1+\delta}\in (0,1)$. 
% We divide the inner sum in the first factor in two cases: when $1\leq k\leq   2^{\theta e}$ and when 
% $2^{\theta e}< k\leq 2^{e}$.

%2.1) We work first the case when $2^{\theta e}< k\leq 2^{e}$. 
In $N_2$ we have $e\geq 0$, which implies  
$\ell(J)\leq \ell(I)$. Moreover, 
$F_{2,\mu }\leq F_{\mu}$ and so, the terms corresponding to this case can be bounded by a constant times 
\begin{align*}
%\label{firstpart}
\sum_{e\geq 0}&\sum_{k=2^{\theta e}}^{2^{e-2}}
\sum_{\substack{J\\ J_p\subset I_p}}
\sum_{I\in J_{e,1,k}}
|\langle f,\psi_{I}\rangle |
|\langle g, \psi_{J}\rangle |
\sum_{R\in \{I,I_p\}}\Big(\frac{\mu(R\cap J)}{\mu(R)}\Big)^{\frac{1}{2}}k^{-\delta}
%C_{I}^{b_{1},i} C_{J}^{b_{2},j}
F_{\mu}(I,J) .
\end{align*}
As before, we distinguish two cases:
%$\langle I\cup J\rangle \notin {\cal I}_{M}$
%and
% $\langle I\cup J\rangle \in {\cal I}_{M}$.
% To estimate this last quantity, we divide the study into two cases:
$(I,J)\in \mathcal F_{M}$ 
and
$(I,J)\notin \mathcal F_{M}$.

3.a) In the first case, we have that $F_{\mu}(I,J)<\epsilon $. Moreover, since $e\geq 0$ the cardinality of $J_{e,1,k}$ is comparable to $n$ and 
there 
is only one $k\geq 2^{e\theta}$
such that $J_{e,1,k}$ is not empty, that is, $k_J$ is completely determined by $J$. 
Then, 
by Cauchy's inequality again, we can bound the terms  
of $N_2$
%(\ref{moduloin2}) 
corresponding to this case by a constant times
\begin{align*}
&\epsilon \sum_{e\geq 0}
\Big( \sum_{I\in {\mathcal D}_{2M}^{c}}
|\langle f,\psi_{I}\rangle |^2
\sum_{k=2^{\theta e}}^{2^{e-2}}k^{-\delta}
\sum_{R\in \{I,I_p\}}
\sum_{J\in I_{-e,1,k}}
\frac{\mu(R\cap J)}{\mu(R)}
\Big)^{\frac{1}{2}}
\\
&
\hskip30pt \Big( \sum_{J}
|\langle g,\psi_{J}\rangle |^2
k_J^{-\delta}
%\hspace{-.1cm}  
%\sum_{k'=2^{\theta \min(e,0)}}^{2^{\min(e,0)}}
%\sum_{\tiny \begin{array}{c}I\! \in \! J_{e,k'}'\end{array}}{(k')}^{-2(\delta +\frac{n}{r_{1}'})}
\Big)^{\frac{1}{2}},
\end{align*}

Now, for fixed $J\in I_{-e,1,k}$, fixed $I'\in \child(I_p)$ such that $J\subset I'$, all $k\in \{ 2^{\theta e}, \ldots, 2^{e-2}\}$ and all $J\in I_{-e,1,k}$, since 
$\ell(J)=2^{-e}\ell(I)$ is fixed, 
we can define $I_k\in \mathcal C$ to be the cube 
such that $c(I_k)=c(I')$ and $\ell(I_k)=(2^{e}-k)\ell(J)\leq \ell(I)$. With this, we have
$$
I_{-e,1,k}\subset \{ t\in I : k\ell(J)<\dist(t,\mathfrak{D}_{I_p})\leq (k+1)\ell(J)\}
\subset I_k\setminus I_{k+1}
$$
Moreover, since the cubes $J$ in $I_{-e,1,k}$ are pairwise disjoint, we have for $R\in \{I,I_p\}$
$$
\sum_{J \in I_{-e,1,k}}\hspace{-.2cm}
\mu(R\cap J)\lesssim \sum_{I'\in \ch(I_p)}\mu(R\cap (I_k\setminus I_{k+1})).
$$
%where $I_k=I\cap kJ$.
%(where $I_k=I\cap k\mathfrak{D}_{I}$
%and $k\mathfrak{D}_{I}=\displaystyle{\cup_{I'\in %\child(I)}\partial I'}$.)
Then, using that the cardinality of 
$\ch(I_p)$ is $2^n$,  previous expression can be bounded by a constant times 
\begin{align*}
&\epsilon \sum_{e\geq 0}
\Big( \sum_{I\in {\mathcal D}_{2M}^{c}}
|\langle f,\psi_{I}\rangle |^2
\sum_{I'\in \ch(I_p)}
\sum_{k=2^{\theta e}}^{2^{e-2}}k^{-\delta}
\frac{\mu(R\cap I_k)-\mu( R\cap I_{k+1})}{\mu(R)}
\Big)^{\frac{1}{2}}
\\
&
\hskip30pt
\Big( \sum_{J\in {\mathcal D}_{M}^{c}}
|\langle g,\psi_{J}\rangle |^2k_J^{-\delta }\Big)^{\frac{1}{2}}.
% \\
\end{align*}
%Now, for every fixed $J$ and each $n\in \mathbb N$ there are $2^{-\min(e,0)n}2n(2m)^{n-1}$ dyadic cubes $I$ such that
%$\ell(I)=2^{e}\ell(J)$ and  $m\leq \rdist(I\cup J)<m+1$. This implies that 

%Now, let $0<\theta <1$ to be chosen later and 
%we divide the inner sum in the first factor in two cases: %when $1\leq k\leq   2^{\theta e}$ and when 
%$2^{\theta e}< k\leq 2^{e}$.
% while on the other side, when $1\leq k'\leq  2^{-\theta \min(e,0)}$ and 
%$2^{-\theta \min(e,0)}\leq k'\leq 2^{-\min(e,0)}$. 
%Notice that when $k$ divides into two different cases, there is only one case for $k'$ and vice versa. 

%In the latter case, 
As before, we write $a_{k}=\mu(R\cap I_k)$ and 
evaluate the inner sum of the first factor by using 
Abel's formula:
\begin{align*}
\frac{1}{\mu(R)}\sum_{k=2^{\theta e}}^{2^{e-2}} \frac{a_{k}-a_{k+1}}{k^{\delta }}
&=\frac{a_{2^{\theta e}}}{2^{\theta \delta e} \mu(R)}
-\frac{a_{2^{e-2}+1}}{2^{(e-2)\delta } \mu(R)}
\\
&
+\frac{1}{\mu(R)}
\sum_{k=2^{\theta e}+1}^{2^{e-2}}
a_{k}\Big(\frac{1}{k^{\delta }}-\frac{1}{(k-1)^{\delta }}\Big).
\end{align*}
For the first term we have
$$
\frac{a_{2^{\theta e}}}{2^{\theta \delta e} \mu(R)}
\leq \frac{\mu(R\cap I')}{2^{\delta \theta e}\mu(R)}
\leq 2^{-\delta \theta e}.
$$
Similarly, %since $2^{e-2}-1\gtrsim 2^{e}$, 
the absolute value of the second term can be bounded by
$$
\frac{a_{2^{e-2}+1}}{2^{(e-2)\delta }\mu(R)}
%\leq \frac{a_{2^{e-2}}}{2^{\delta e}\mu(I)}
 \lesssim \frac{\mu(R\cap I')}{2^{\delta e}\mu(R)}
 \leq 2^{-\delta e}.
 $$
%which tends to zero when $R$ tends to infinity. 

The absolute value of the last term is bounded by 
\begin{align*}
\frac{1}{\mu(R)}
&
\sum_{k=2^{\theta e}+1}^{2^{e-2}}
a_{k}\frac{k^{\delta }-(k-1)^{\delta }}{(k-1)^{\delta }k^{\delta }}
\\
&
\lesssim \sum_{k=2^{\theta e}+1}^{2^{e-2}}
\frac{\mu(R\cap I_{k})}{\mu(R)}
\frac{k^{\delta -1}}{(k-1)^{\delta }k^{\delta }}
\\
&
\lesssim \sum_{k\geq 2^{\theta e}+1}\frac{1}{(k-1)^{ \delta+1}}\lesssim 
2^{-\delta \theta e}
\end{align*}

For the second factor, we just use that 
$k_J\geq 2^{\theta e}$. 
With this, the fact that the cardinality of 
$\ch(I_p)$ is $2^n$ and Lemma \ref{Planche}, we bound the terms in $N_2$
%the expression 
%\eqref{firstpart} 
corresponding to this case by
\begin{align*}
\epsilon \sum_{e\geq 0}
\Big( 2^{-\theta \delta e}
&\sum_{I\in {\mathcal D}_{2M}^{c}}
|\langle f,\psi_{I}\rangle |^2\Big)^{\frac{1}{2}}
\Big( 2^{-\theta \delta e}
 \sum_{J\in {\mathcal D}_{M}^{c}}
|\langle g,\psi_{J}\rangle |^2\Big)^{\frac{1}{2}}
\\
&\lesssim \epsilon \sum_{e\geq 0}
2^{-\theta \delta e}
\| f\|_{L^{2}(\mu)}\| g\|_{L^{2}(\mu)}
\lesssim \epsilon ,
\end{align*}
since $0<\theta $.

3.b) 
When $(I,J)\notin \mathcal F_{M}$, 
%We now study the case when $I\in \mathcal D_{2M}^{c}(Q)$, $J\in \mathcal D_{M}^{c}(Q)$ are such that $F(I,J)\geq \epsilon$. 
as in case 2.b), we 
fix 
% can fix $e_{M}\in \{0, \log M\}$, $m_{M}\in \{M^{\frac{1}{8}},1\}$ such that $e_{M}=0$ implies $m_{M}=M^{\frac{1}{8}}$. 
% Moreover, since $m=1$, we have that $m_{M}\leq m=1<M^{\frac{1}{8}}$ implies $m_{M}=1$ and so, 
$e_{M}=\log M$.
Then, %when $m\geq 2$, 
by the calculations developed in the sub-case 3.a) and 
$F_\mu(I,J)\lesssim 1$, we bound the relevant part of  (\ref{moduloin2}) by a constant times
\begin{align*}
%\label{firstpart}
&\sum_{|e|\geq e_M}\sum_{k=2^{\theta |e|}}^{2^{|e|-2}}
\sum_{\substack{J\\ J_p\subset I_p}}
\sum_{I\in J_{e,1,k}}
|\langle f,\psi_{I}\rangle |
|\langle g, \psi_{J}\rangle |
\hspace{-.1cm}
\sum_{R\in \{I,I_p\}}
\hspace{-.1cm}
\Big(\frac{\mu(R\cap J)}{\mu(R)}\Big)^{\frac{1}{2}}
%k^{-\delta}
%C_{I}^{b_{1},i} C_{J}^{b_{2},j}
\frac{F_\mu(I,J) }{k^{\delta}}
\\
&\lesssim \sum_{|e|\geq e_M}
\Big(
|\langle f,\psi_{I}\rangle |^2
\sum_{I'\in \child{(I)}}
\sum_{k=2^{\theta |e|}}^{2^{|e|-2}}k^{-\delta}
\hspace{-.1cm}
\sum_{R\in \{I,I_p\}}
\sum_{J\in I_{-e,1,k}}
\frac{\mu(R\cap J)}{\mu(R)}
\Big)^{\frac{1}{2}}
\\
&
\hskip45pt \Big( \sum_{J}
|\langle g,\psi_{J}\rangle |^2 k_J^{-\delta}
%\hspace{-.1cm}  
%\sum_{k'=2^{\theta \min(e,0)}}^{2^{\min(e,0)}}
%\sum_{\tiny \begin{array}{c}I\! \in \! J_{e,k'}'\end{array}}{(k')}^{-2(\delta +\frac{n}{r_{1}'})}
\Big)^{\frac{1}{2}}
\\
&\lesssim \sum_{|e|\geq \log M}2^{-|e|\theta \delta}
\| f\|_{L^{2}(\mu)}\| g\|_{L^{2}(\mu)}
\lesssim M^{-\frac{\alpha \delta }{\alpha+\delta/2}}
\leq \epsilon ,
\end{align*}
by the choice of $\theta $ and $M$.

% \begin{align*}
% \sum_{e\geq m_M}&
% \sum_{k=2^{\theta e}+1}^{2^{|e|}}
% 2^{-e\frac{n}{2}}
% \sum_{J\in {\cal D}_{2M}^{c}} \sum_{I\in J_{e,1,k}\cap {\cal D}_{2M}^{c}}
% |\langle f,\psi_{I}\rangle |
% |\langle g,\psi_{J}\rangle | F\mu(I,J)
% \\
% &\lesssim \sum_{e\geq \frac{M}{2}}2^{-|e|\frac{\min(\theta \delta, 1-\theta)}{2}}
% \| f\|_{L^{2}(\mu)}\| g\|_{L^{2}(\mu)}
% \\
% &
% \lesssim 2^{-\frac{M}{4}\frac{\delta }{1+\delta}}
% \| f\|_{L^{2}(\mu)}\| g\|_{L^{2}(\mu)}
% \leq \epsilon \| f\|_{L^{2}(\mu)}\| g\|_{L^{2}(\mu)},
% \end{align*}
% by the choice of $M$ and  $\theta =\frac{1}{1+\delta}\in (0,1)$.

4) We work now with the term $B_6$, which contains all cubes $I,J\in \mathcal D_M^c(Q)$ such that $1\leq k\leq  2^{\theta |e|}+1$, that is, $\inrdist(I,J)-1\leq \ec(I,J)^{-\theta}$. 
%For the terms corresponding to the case 
%$1\leq k\leq 2^{\theta e}$ we proceed in a different way.

% We assume $f$ and $g$ to be compactly supported on a dyadic cube $Q$, with mean zero and constant on cubes of side length $2^{-(N-1)}$. 
% %Let $\supp f\cup \supp g\subset R$ with 
% %$\langle f\rangle_R=\langle g\rangle_R=0$.
We remind the following notation used in Lemma \ref{zeroborder}:
for $N\in \mathbb N$, 
%we define the collections of cubes 
let 
$\mathcal{D}(Q)_{\geq N}=\{ I\in \mathcal{D}_M^c(Q)/ \ell(I)\geq 2^{-N}\ell(Q)\}$  
and $\mathcal{D}(Q)_{N}=\{ I\in \mathcal{D}_M^c(Q)/ \ell(I)=2^{-N}\ell(Q)\}$.
% Now we split the measure $\mu $ into two parts. 
% Let $\partial \mathcal{D}_N(Q)$ be the set defined by the union of
% the boundaries of all cubes in $\mathcal{D}_N(Q)$. Then we decompose $\mu=\mu_{\rm int}+\mu_\partial$ with $\mu_\partial=\mu|_{\partial \mathcal{D}_N(Q)}$. 
% We first work with $\mu_{\rm int}$ and its associated operator $T_{\rm int}$. But, to simplify notation, we keep denoting this measure and its associated operator by $\mu $ and $T$. Then, if for every cube $I$ we denote its interior by 
% $\inte I=I\setminus \partial I$, we have 
% the equality 
% $\mu(I)=\mu(\inte I)$.
Moreover, for $I\in \mathcal{D}(Q)_{\geq N}$, let 
$I_{\theta}$ be the family of cubes $J\in \mathcal{D}(Q)_{\geq N}$
for which 
%$\dist(I,J)\leq (\frac{\ell(I\smlor J)}{\ell(I\smland J)})^\theta \ell(I\smland J)=2^{|e|\theta}\ell(I\smland J)$
$1\leq k\leq  2^{\theta |e|}$.
%$\inrdist(I,J)-1\leq \ec(I,J)^{-\theta}$.

% Then  
% \begin{align*}
% |B_6|
% % &=\lim_{N\rightarrow \infty }|\sum_{I\in \mathcal{D}(Q)_{\geq N}}\sum_{J\in I_{\theta}}
% % \langle f,\psi_I\rangle \langle g,\psi_J\rangle
% % \langle T\psi_I,\psi_J \rangle |
% % \\
% &\leq \sup_{N\rightarrow \infty }|\sum_{I\in \mathcal{D}(Q)_{\geq N}}\sum_{J\in I_{\theta}}
% \langle f,\psi_I\rangle \langle g,\psi_J\rangle
% \langle T\psi_I,\psi_J \rangle |.
% \end{align*}
% % where
% % both cases $\ell(J)\leq \ell(I)$ and 
% % $\ell(I)\leq \ell(J)$ coming from the terms 
% % $T_2$ and $T_3$ are considered together. 

%We now consider 
%$N>\max(M, N_1)$ such that 

Then we can rewrite $B_6$ as
$$
B_6=\sum_{I\in \mathcal{D}(Q)_{\geq N}}\sum_{J\in I_{\theta}}
\langle f,\psi_I\rangle \langle g,\psi_J\rangle
\langle T\psi_I,\psi_J \rangle .
$$
For each $I\in \mathcal{D}(Q)_{\geq N}$, let 
$I_{\max}$ be the family of cubes $J\in I_{\theta}$ that are maximal with respect to the inclusion. Then let 
$I_{\rm over}$ be the family of cubes 
$R\in \mathcal{D}(Q)_{\geq N}$ such that 
$J\subsetneq R$ for some $J\in I_{\max}$. 
We note that 
for all $I\in \mathcal{D}(Q)_{\geq N}$, 
either $Q\in I_{\max}$ (if $I\in Q_{\theta}$) or 
$Q\in I_{\rm over}$. So, we always have 
$Q\in I_{\theta}\cup I_{\rm over}$. 
We also note that 
all cubes in $I_{\rm over}$ satisfy that $k>2^{\theta |e|}$ with respect to $I$. 
Now to previous expression we  
add and subtract the term
$$
A=\sum_{I\in \mathcal{D}(Q)_{\geq N}}\sum_{J\in I_{\rm over}}
\langle f,\psi_I\rangle \langle g,\psi_J\rangle
\langle T\psi_I,\psi_J \rangle .
$$
With this we obtain
\begin{align}\label{b4teles}
|B_6|\lesssim |\sum_{I\in \mathcal{D}(Q)_{\geq N}}
\langle f,\psi_I\rangle 
\langle T\psi_I,\sum_{J\in I_{\theta}\cup I_{\rm over}}\langle g,\psi_J\rangle\psi_J \rangle |
+|A|.
\end{align}
Since all pair of cubes added satisfy that $k>2^{\theta |e|}$, we can apply the reasoning of any of the previous cases
(adding and subtracting the corresponding part of a paraproduct when needed)
to prove that the second term
in \eqref{b4teles} satisfies $|A|\lesssim \epsilon \|f\|_{L^2(\mu)}\|g\|_{L^2(\mu)}$. Then we only need to study the first term. 

By summing a telescoping series 
%and using that $\langle g\rangle_Q=0$, 
we have for each  $I\in \mathcal{D}(Q)_{\geq N}$,
$$
\sum_{J\in I_{\theta}\cup I_{\rm over}}\langle g,\psi_J\rangle\psi_J 
=\sum_{J\in \mathcal{D}(I)_{N}\cap I_{\theta}}
\langle g\rangle_J\chi_J-\langle g\rangle_{Q'}\chi_{Q'}
=\sum_{J\in \mathcal{D}(I)_{N}\cap I_{\theta}}
\langle g\rangle_J\chi_J, 
$$
where $Q'\in \mathcal D$ such that 
$\supp g\subset Q'$ and so, 
% Now, for $J\in \mathcal{D}(Q)_{\geq N}$ we denote as before
% $J_{\theta}$ the family of cubes $I\in \mathcal{D}(Q)_{\geq N}$
% for which $\dist(I,J)\leq (\frac{\ell(I\smlor J)}{\ell(I\smland J)})^\theta \ell(I\smland J)$
% that is, $1\leq k\leq 2^{\theta e}$. 
%Then, 
$\langle g\rangle_{Q'}=0$. With this, and Fubini's theorem, the first term in the right hand side of \eqref{b4teles} can be rewritten as 
\begin{align}\label{telescopy4f}
\nonumber
\sum_{I\in \mathcal{D}(Q)_{\geq N}}&\sum_{J\in \mathcal{D}(I)_{N}\cap I_{\theta}}
\langle f,\psi_I\rangle 
\langle g\rangle_J
\langle T\psi_I,\chi_J \rangle 
\\
&
=\sum_{J\in \mathcal{D}(Q)_{N}}
\langle g\rangle_J
\langle T(\sum_{I\in \mathcal{D}(Q)_{\geq N}\cap J_{\theta}}
\langle f,\psi_I\rangle \psi_I),\chi_J \rangle ,
\end{align}
where $J_\theta $ is defined as $I_\theta$ was defined before. 
% Furthermore, we note that since $g$ is constant on each cube $J\in \mathcal D(Q)_N$, if we denote by 
% $\tilde J$ the interior of $J$ we have 
% $$
% \langle g\rangle_J=\langle g\rangle_{\tilde J}
% $$
% Then, we can and will assume that $J$ is an open cube. 

We now remind the following definition: for $J\in \mathcal{D}(Q)_{N}$, 
$J^{\rm fr}$ denotes the family of dyadic cubes $J'\in \mathcal{D}(Q)_{N}$ such that $\ell(J')=\ell(J)$ and $\dist(J',J)=0$. Then 
we note that, since $J$ has minimal side length, the condition $I\in J_\theta $ implies that $J'\subset I$ for some $J'\in J^{\rm fr}$. 

Moreover, the cardinality of $J^{\rm fr}$ is $3^n$ and so, we can enumerate the cubes in $J^{\rm fr}$ as 
$\{ J_j\}_{j=1}^{3^n}$ by their fixed position with respect to $J$. 
Then, for each $j\in \{1, \ldots, 3^n\}$ 
%and each $J_j\in J^{\rm fr}$, 
the cubes $I\in \mathcal{D}(Q)_{\geq N}\cap J_{\theta}$ such that $J_j\subset I$ 
form an increasing chain of cubes
$J_j=J_{j,N}\subset J_{j,N-1}\subset \ldots \subset J_{j, k_{j}}$ parametrized by their size length   
$\ell(J_{j,k})=2^{-k}\ell(Q)$ with $k\in 
\{ k_{j},\ldots, N\}\subset 
\{0,\ldots, N\}$. Some chains may be empty.
%, but we will still keep this notation as if they were not empty. 
% Finally, if $I,J\in \mathcal{D}_N(Q)$, we have that 
% $e=0$ and so, since $1\leq k\leq 2^{\theta |e|}$, we get $k=1$, that is, 
% $\dist(J_{j,N},J)=0$, which in turn implies 
% % With this, we have that 
% % $I_{N,J'}\in \mathcal{D}(Q)_{N}$ with $\dist(I_{N,J'},J)=0$, that is, 
% $I_{j,N}=J_j\in J^{\rm friends}$. 
% There are only $3^n$ of  cubes in 
% $\mathcal{D}(Q)_{N}$ with $\dist(I_{N,J'},J)=0$. 
% %Moreover we can 
% %assume $i=2$ for all $i\geq 2$. 
% %%\in \{1, \ldots, 2^d\}$ while 
% %Also $j_i\leq 3^n$. 
Then for each fixed $J\in \mathcal{D}(Q)_{N}$, we have
\begin{align*}
\sum_{I\in \mathcal{D}(Q)_{\geq N}\cap J_{\theta}}
\langle f,\psi_I\rangle \psi_I
=\sum_{j=1}^{3^n}
%\sum_{\substack{j=1\\ J_j\in J^{\rm fr}}}^{3^n}
\langle f\rangle_{J_{j}} \chi_{J_j}
-\sum_{j=1}^{3^n}\langle f\rangle_{J_{j,k_j}} \chi_{J_{j,k_j}}.
\end{align*}

With this \eqref{telescopy4f} can be written as 
\begin{align*}
\sum_{J\in \mathcal{D}(Q)_{N}}&
\sum_{j=1}^{3^n}
%\sum_{\substack{j=1\\ J_j\in J^{\rm fr}}}^{3^n}
\langle f\rangle_{J_{j}} 
\langle g\rangle_J
\langle T\chi_{J_j},\chi_J \rangle
\\
&-\sum_{J\in \mathcal{D}(Q)_{N}}
\sum_{j=1}^{3^n}\langle f\rangle_{J_{j,k_j}} 
\langle g\rangle_J
\langle T\chi_{J_{j,k_j}},\chi_J \rangle
=S_1-S_2.
\end{align*}
%We now note an important detail. 
% Now, we divide the operator in two parts 
% $T=T_{\gamma , Q}= T_1+T_2$,
% corresponding to each of the two terms in the definition of the kernel
% \begin{align*}
% K_{\gamma,Q}(t,x)&=K(t,x)
% (1-\phi(\frac{|t-x|}{\gamma}))
% %\phi(\frac{2|t|}{\ell(Q)})
% %\phi(\frac{2|x|}{\ell(Q)})
% +\frac{F(t,x)}{\gamma^\alpha}
% %K(t', x')
% \phi(\frac{|t-x|}{\gamma})
% %\phi(\frac{2|t|}{\ell(Q)})
% %\phi(\frac{2|x|}{\ell(Q)})
% \\
% &
% =K_{\gamma,Q,1}(t,x)+K_{\gamma,Q,2}(t,x). 
% \end{align*}
% Note that we have used that $t, x\in Q$. 
% We then write previous expression as 
% \begin{align*}
% S_1-S_2=S_{1,1}+S_{1,2}-(S_{2,1}+S_{2,2})
% \end{align*}

We now use that the kernel operator is 
\begin{align*}
K_{\gamma,Q}(t,x)&=K(t,x)
(1-\phi(\frac{|t-x|}{\gamma})),
%\phi(\frac{2|t|}{\ell(Q)})
%\phi(\frac{2|x|}{\ell(Q)})
% +\frac{F(t,x)}{\gamma^\alpha}
% %K(t', x')
% \phi(\frac{|t-x|}{\gamma})
% %\phi(\frac{2|t|}{\ell(Q)})
% %\phi(\frac{2|x|}{\ell(Q)})
% \\
% &
% =K_{\gamma,Q,1}(t,x)+K_{\gamma,Q,2}(t,x). 
\end{align*}
where we have used that $t, x\in Q$.

By the definition of the kernel we have
$K_{\gamma, Q}(t,x)=0$ for $|t-x|<\gamma $. Then, 
if $\ell(I\smlor J)<\gamma/3$ and 
$\dist(I,J)< \gamma/3 $, 
we have 
$|t-x|<\gamma $ for all $t\in I$ and $x\in J$ and so,
$\langle T\chi_I,\chi_J \rangle =0$. 

Now, all cubes in
the first sum $S_1$ satisfy $\ell(J')=\ell(J)$ and 
$\dist(J',J)=0$. Moreover, since  
$N>N_0\geq \log\frac{6\ell(Q)}{\gamma }$ and 
$\ell(J')=2^{-N}\ell(Q)$, we have 
$\ell(J')<\gamma/3$, and so, each term in the sum $S_{1}$ equals zero.

We now focus on $S_{2}$. 
The cubes in that term satisfy 
%$\ell(J_j)=\ell(J)$, 
$\dist(J_j,J)=0$. Moreover, since $1\leq k\leq 2^{\theta |e|}+1$, we have
\begin{align*}
\dist(J_{j,k_j},J_j)&< 2^{|e|\theta}\ell(J_j)-1
\leq \Big(\frac{\ell(J_j)}{\ell(J_{j,k_j})}\Big)^{1-\theta}
\ell(J_{j,k_j})\leq \ell(J_{j,k_j}).
\end{align*}
Then, since $\ell(J_j)\leq \ell(J_{j,k_j})$, we get 
$$
\dist(J_{j,k_j},J)\leq \dist(J_j,J)+\ell(J_j)+\dist(J_{j,k_j},J_j)
\leq 2\ell(J_{j,k_j}).
$$
%\leq \ell(J)\leq \ell(J_{j,k_j})$. 
With this, when 
$\ell(J_{j,k_j})<\gamma /6$, we have 
$\dist(J_{j,k_j},J)\leq \gamma /3$ and so 
$\langle T\chi_{I_{j,k_j}},\chi_{J} \rangle =0$. Therefore
the scales for which the dual pair is non-zero 
%the non-zero terms in the second sum 
satisfy 
$\ell(J_{j,k_j})=2^{-k}\ell(Q)\geq \gamma /6$, that is,
$k\leq \log\frac{6\ell(Q)}{\gamma }\leq N_0$. And since 
$k\in \{0,\ldots, N\}$, that means that the non-zero terms in $S_2$ 
contain cubes $J_{j,k_j}$ of 
at most $N_0+1$ different size lengths (in fact in the $N_0+1$ largest scales, all of them in $\{0, 1, \ldots , N_0\}$).

% In summary, even though $N$ can (and will) be taken as small as needed, 
% %the maximum number of scales in the previous sums is limited %to $N_0=\log\frac{2\ell(S)}{\epsilon }$. Thus, 
% whenever 
% $N>N_0$ the first sum $S_1$ will equal zero, while the second sum $S_2$ will have at most $N_0+1$ non-zero terms (the first $N_0+1$ ones).

% Since
% $J\subset I_{j,J}$, from Proposition \ref{realbound2} we have
% \begin{align*}
% |\langle T\chi_{I_{j,J}},\chi_J \rangle|
% &\lesssim \mu(3J)\log(\frac{\ell(I_{j,J})}{\ell(J)})
% \lesssim 
% \mu(3J)\Big(
% \frac{\ell(I_{j,J})}{\ell(J)}
% \Big)^{\epsilon}
% \\
% &\lesssim 3^\epsilon 
% \mu(3J)
% \frac{\ell(I_{j,J})^\epsilon }{\ell(3J)^{\epsilon}}
% \lesssim 
% \mu(3J)
% \frac{\ell(I_{j,J})^\epsilon }{\mu(3J)^{\epsilon}}
% \end{align*}
% With this we estimate as follows:

Then we are going to rewrite the sum in 
\begin{align}\label{reparameter}
S_{2}=
\sum_{J\in \mathcal{D}(Q)_{N}}
\sum_{j=1}^{3^n}\langle f\rangle_{J_{j,k_j}} 
\langle g\rangle_J
\langle T\chi_{J_{j,k_j}},\chi_J \rangle
\end{align}
in terms of the cubes $J_{j,k_j}$ instead of the cubes $J$. 

We remind that in \eqref{reparameter}, 
for each $J\in \mathcal{D}(Q)_{N}$,  
each $J_j\in J^{\rm fr}$ with $j\in \{1,\ldots , 3^n\}$ and each scale
$k\in \{ 0,\ldots , N_0\}$,
%let 
%$\mathcal J_k$ be the family of cubes 
%$J\in \mathcal{D}(Q)_{N}$ such that 
there is an associated cube 
$J_{j,k}\in J_{\theta}$ with side length 
$\ell(J_{j,k})=2^{-k}\ell(Q)$. 
%In that case 
%$\ell(J)=2^{-N}\ell(S)=2^{j-N}\ell(I_{j,J})$. 
Now we 
re-parametrize the cubes we have up to now denoted by $J_{j,k}$ in the following way: 
for each scale $k\in \{ 0,\ldots , N_0\}$ and 
each $i\in \{ 1,\ldots , 2^{kn}\}$, 
we denote by $I_{i,k}$ the cubes 
such that $\ell(I_{i,k})=2^{-k}\ell(Q)$. 
We note that inside $Q$, for each 
$k\in \{ 0,\ldots , N_0\}$ there are in total $2^{kn}$ of such cubes.
%$I_{j,k}$ with 
%$\ell(I_{j,k})=2^{-k}\ell(Q)$. 
%Then, we can 
Now, for each $I_{i,k}$ 
we define $\mathcal J_{i,k}$ as the family of cubes 
$J\in \mathcal D(Q)_N$ such that there exists  
$J'\in J^{\rm fr}$ 
associated with $I_{i,k}$. This means that %namely, 
%satisfying 
$J'\subset I_{i,k}$, 
$\ell(I_{i,k})=2^{-k}\ell(Q)$ and 
$J'\in (I_{i,k})_{\theta}$, that is, 
$\dist(I_{i,k}, J')<
(2^{|e|\theta}+1)\ell(J')$. 
% in $\mathcal J_j$
% associated with the same cube $I_{j,J}$, which we now denote by $I_{j}^i$. 
Finally, we denote 
$C_{i,k}=\bigcup_{J\in \mathcal J_{i,k}}3J$. We note that 
$\mu(3J)\leq \mu(C_{i,k})$. With this
\begin{align*}
S_{2}=\sum_{k=0}^{N_0-1}\sum_{i=0}^{2^{kn}} \sum_{J\in \mathcal J^{i,k}}
\langle f\rangle_{I_{i,k}} 
\langle g\rangle_J 
\langle T_1\chi_{I_{i,k}},\chi_J \rangle
\end{align*}

Now, for fixed $I_{i,k}$, let $\mathcal{J}^{0,0}$, $\mathcal{J}^{0,1}$, $\mathcal{J}^{1,0}$ and $\mathcal{J}^{1,1}$ be  
the collection of cubes in $J\in \mathcal J_{i,k}$
such that $\Re(\langle T_1\chi_{I_{i,k}},\chi_J \rangle)\geq 0$, 
$\Re(\langle T_1\chi_{I_{i,k}},\chi_J \rangle)<0$, 
$\Im(\langle T_1\chi_{I_{i,k}},\chi_J \rangle)\geq 0$, and $\Im(\langle T_1\chi_{I_{i,k}},\chi_J \rangle)>0$ respectively. Let also $S^{l_1,l_2}$ the union of the cubes in $\mathcal{J}^{l_1,l_2}$. Finally, we define 
$$
\tilde S
=\bigcup_{k=0}^{N_0-1}\bigcup_{i=0}^{2^{kn}}\bigcup_{J\in \mathcal{J}^{i,k}}J,
$$ 
that is, the union of 
all cubes $J\in \mathcal{D}(Q)_{N}$ such that 
$J\in (I_{i,k})_\theta$ for some $i,k$. 
We note that 
$$
\tilde S
=\bigcup_{k=0}^{N_0-1}\bigcup_{i=0}^{2^{kn}}\bigcup_{l_1,l_2\in \{0,1\}}\bigcup_{J\in \mathcal{J}^{l_1,l_2}}J.
$$

Before continuing, we remind that in the decomposition obtained in \eqref{fdecom} 
we are first working on estimates for 
$\langle P_{M}^{\perp}TP_{2M}^{\perp }f,g_1\rangle $. In this case, the cubes $J\in \tilde{\mathcal D}$ and so, they are open cubes. Therefore, 
$C_{i,k}$ and $\tilde S$ are open sets and they satisfy 
by the choice of $N$ in \eqref{Small} that 
$\mu(\tilde S)$ is sufficiently small.

With all this,
\begin{align*}
|S_{2}|&\leq \sum_{k=0}^{N_0-1}\sum_{i=0}^{2^{kn}} \sum_{J\in \mathcal J^{i,k}}
|\langle f\rangle_{I_{i,k}} |
|\langle g\rangle_J |
|\langle T_1\chi_{I_{i,k}},\chi_J \rangle|
% \\
% &\leq  \| f\|_{L^{\infty }(\mu)}\| g\|_{L^{\infty }(\mu)}
% \sum_{k=0}^{N_0-1}\sum_{i=0}^{2^{kn}}
% \sum_{J\in \mathcal J_{i,k}}
% |\langle T\chi_{J_{i,k}},\chi_J \rangle|
\\
&\lesssim \| f\|_{L^{\infty }(\mu)}\| g\|_{L^{\infty }(\mu)}
\sum_{k=0}^{N_0-1}\sum_{i=0}^{2^{kn}}
\sum_{l=0}^1\Big( \sum_{J\in \mathcal{J}^{0,l}}(-1)^{l}
\Re\langle T_1\chi_{I_{i,k}},\chi_J \rangle
\\
&
\hskip100pt +\sum_{J\in \mathcal{J}^{1,l}}(-1)^{l}\Im\langle T_1\chi_{I_{i,k}},\chi_J \rangle\Big)
\\
&=\| f\|_{L^{\infty }(\mu)}\| g\|_{L^{\infty }(\mu)}
\sum_{k=0}^{N_0-1}\sum_{i=0}^{2^{kn}}
\sum_{l=0}^1 
\big((-1)^{l}\Re\langle T_1\chi_{I_{i,k}},\chi_{S^{0,l}} \rangle
\\
&
\hskip100pt +(-1)^{l}\Im\langle T_1\chi_{J_{i,k}},\chi_{S^{1,l}} \rangle \big)
\\
&\lesssim \| f\|_{L^{\infty }(\mu)}\| g\|_{L^{\infty }(\mu)}
\sum_{k=0}^{N_0-1}\sum_{i=0}^{2^{kn}}
\sum_{l_1,l_2\in \{0,1\}}
|\langle T_1\chi_{I_{i,k}},\chi_{S^{l_1,l_2}}\rangle |.
\end{align*}

Now we divide $S^{l_1,l_2}$ into $2n+1$ parts: 
$S^{l_1,l_2}=\cup_{j=0}^{2n}S_j^{l_1,l_2}$, where 
$S_j^{l_1,l_2}$ is the union of cubes $J\in \mathcal{D}(Q)_{N}$ such that 
$J\in (I_{i,k})_\theta$ for some $i,k$, and there is $I_{i,k}^j\in I_{i,k}^{\rm fr}$ with $J\subset I_{i,k}^j$.
This implies that $S_j^{l_1,l_2}\subset J_{i,k}^j$. 

% We can assume without loss of generality that 
% $I_{i,k}^0=I_{i,k}$, and so 
% we work first with $S_0^{l_1,l_2}$.  Even though $S^{l_1,l_2}_0$ is not a cube, 
% since $S^{l_1,l_2}_0\subset I_{i,k}$ and $I_{i,k}$ is a cube, we have from the testing condition 
% \begin{align*}
% |\langle T\chi_{I_{i,k}},\chi_{S^{l_1,l_2}} \rangle |
% &\leq \| \chi_{I_{i,k}} T\chi_{I_{i,k}}\|_{L^2(\mu)}
% \mu(S^{l_1,l_2})^{\frac{1}{2}}
% \\
% &
% \lesssim \mu(I_{i,k})^{\frac{1}{2}}F_\mu(I_{i,k})
% \mu(\tilde S)^{\frac{1}{2}}
% \\
% &
% \lesssim \mu(I_{i,k})^{\frac{1}{2}}
% \mu(\tilde S)^{\frac{1}{2}}. %(nope)
% \end{align*}

We work with $S_j^{l_1,l_2}$ for every $j\in \{ 0, 1, \ldots, 2n\}$. By Lemma \ref{truncaT}, the truncated operator $T_{\gamma, Q}$ and also $T_1$ are bounded on $L^2(\mu)$ with bounds $\| T_{\gamma ,Q}\|_{2,2}
%, \| T_1\|_{2,2}
\leq 
\frac{\ell(Q)}{\gamma^\alpha }\leq 2^{N_0}$. Then, since 
$S_i^{l_1,l_2}\subset \tilde S$, we have
% \begin{align*}
% |\langle T\chi_{J_{i,k}},\chi_{S_i^{l_1,l_2}}\rangle |
% &\leq \mu(S_i^{l_1,l_2})^{\frac{1}{2}}
% \| \chi_{J_{i,k}^j} T\chi_{J_{i,k}}\|_{L^2(\mu)}
% \\
% &\lesssim \mu(S_i^{l_1,l_2})^{\frac{1}{2}}
% (\mu(S_i^{l_1,l_2})+\mu(J_{i,k}))^{\frac{1}{2}}.
% \end{align*}
\begin{align*}
|\langle T\chi_{I_{i,k}},\chi_{S_i^{l_1,l_2}}\rangle |
&\leq \| T\|_{2,2}
\mu(I_{i,k})^{\frac{1}{2}}
\mu(S_i^{l_1,l_2})^{\frac{1}{2}}
\\
&\leq 2^{N_0}
\mu(I_{i,k})^{\frac{1}{2}}
\mu(\tilde S)^{\frac{1}{2}}
\end{align*}

With this,
% \begin{align*}
% |S_2|&\lesssim \sum_{J\in \mathcal{D}(Q)_{N}}
% |\langle f\rangle_{I_{j,J}}|
% |\langle g\rangle_J |
% |\langle T\chi_{I_{j,J}},\chi_{J} \rangle |
% \\
% &\lesssim \sum_{J\in \mathcal{D}(Q)_{N}}
% |\langle f\rangle_{I_{j,J}}|
% |\langle g\rangle_J |
% \mu(3J)
% \frac{\ell(I_{j,J})^\epsilon }{\mu(3J)^{\epsilon}}
% \\
% &\lesssim \Big(\sum_{J\in \mathcal{D}(Q)_{N}}
% |\langle g\rangle_J |^2\mu(3J)\Big)^{\frac{1}{2}}
% \Big(\sum_{J\in \mathcal{D}(Q)_{N}}
% |\langle f\rangle_{I_{j,J}} |^2
% \mu(3J)\frac{\ell(I_{j,J})^{2\epsilon }}{\mu(3J)^{2\epsilon}}
% \Big)^{\frac{1}{2}}
% \end{align*}
\begin{align*}
|S_2|&\lesssim \mu(\tilde S)^{\frac{1}{2}}\| f\|_{L^{\infty }(\mu)}\| g\|_{L^{\infty }(\mu)}
2^{N_0}\sum_{k=0}^{N_0-1}\sum_{i=0}^{2^{kn}}
\mu(I_{i,k})^{\frac{1}{2}}
%F_\mu(J_{i,k})
\\
&\lesssim \mu(\tilde S)^{\frac{1}{2}}\| f\|_{L^{\infty }(\mu)}\| g\|_{L^{\infty }(\mu)}
2^{N_0}
\mu(Q)^{\frac{1}{2}}
%\sup_{i,k}F_\mu(J_{i,k})
N_02^{N_0n}
\\
&
\lesssim \mu(\tilde S)^{\frac{1}{2}}\| f\|_{L^{\infty }(\mu)}\| g\|_{L^{\infty }(\mu)}
2^{N_0(n+3)}
%\mu(Q)^{\frac{1}{2}}
%\sup_{i,k}F_\mu(J_{i,k})
<\epsilon ,
\end{align*}
In the second last inequality we used  $\mu(I_{i,k})\leq \mu(Q)\leq \rho (Q)\ell(Q)^\alpha \lesssim 
2^{N_0}$ and so, 
$\mu(Q)^{\frac{1}{2}}\leq 2^{N_0}$. 
The last inequality holds because 
$\tilde S\subset C_N$ and from the choice of $N$ in \eqref{Small}.

All this work finally proves the estimate for 
$\langle TP_{2M}^{\perp }P_{M_N}f,g_1\rangle $, the first term in \eqref{fdecom}. 

To deal with the second term in \eqref{fdecom}
$\langle Tf_1,g_{1,\partial}\rangle $ we note first that the reasoning to estimate
$D_1$, $N_i$, and $P_i$ can be applied unchanged to this new case. For the term $B_6$, we implement a small change. Since $B_6$ is completely symmetrical with respect the cubes $I$ and $J$, we can switch the roles played by these cubes,
$$
B_6=\sum_{J\in \mathcal{D}(Q)_{\geq N}}\sum_{I\in I_{\theta}}
\langle f,\psi_I\rangle \langle g,\psi_J\rangle
\langle T\psi_I,\psi_J \rangle .
$$
We now add and subtract the term
$$
A'=\sum_{J\in \mathcal{D}(Q)_{\geq N}}\sum_{I\in J_{\rm over}}
\langle f,\psi_I\rangle \langle g,\psi_J\rangle
\langle T\psi_I,\psi_J \rangle .
$$
which satisfies $|A'|\lesssim \|f\|_{L^2(\mu)}\|g\|_{L^2(\mu)}$
and we rewrite previous reasoning to obtain 
\begin{align*}
   & |B_6-A'|\lesssim \sum_{I\in \mathcal{D}(Q)_{N}}
\langle f\rangle_I
\langle T\chi_I,\sum_{J\in \mathcal{D}(Q)_{\geq N}\cap I_{\theta}}
\langle g,\psi_J\rangle \psi_J \rangle 
\\&
=\sum_{I\in \mathcal{D}(Q)_{N}}
\sum_{i=1}^{3^n}\langle f\rangle_I 
\langle g\rangle_{I_{i}}
\langle T\chi_I,\chi_{I_{i}} \rangle 
%\\&
-\sum_{I\in \mathcal{D}(Q)_{N}}
\sum_{i=1}^{3^n}\langle f\rangle_I 
\langle g\rangle_{I_{i,k_i}}
\langle T\chi_I,\chi_{I_{i,k_i}} \rangle
\\&=S_{1}
%+S_{1,2}-(S_{2,1}+
-S_{2},
\end{align*}
in similar way as we did before. Again $S_{1}=0$, 
% while $S_{1,2}$ 
% and $S_{2,2}$, can be treated with the same reasoning we developed in the previous case using now that 
% $I\in \mathcal D(Q)_{N}$ and 
% $0<\epsilon <(\| g\|_{L^{\infty }(\mu)}\mu(2Q)^{\frac{1}{2}})^{-6}$.  
%Finally, 
while we can reparametrize the sums in $S_2$ as we did before, 
to write
\begin{align*}
    |S_{2}|&\lesssim \| f\|_{L^{\infty }(\mu)}\| g\|_{L^{\infty }(\mu)}
\sum_{k=0}^{N_0-1}\sum_{j=0}^{2^{kn}}
\sum_{l_1,l_2=0}^1
|\langle T\chi_{S^{l_1,l_2}}, \chi_{J_{j,k}}\rangle |.
\end{align*}
Now we note that the cubes $I\in \tilde{\mathcal D}$ are open and so 
$N$ can be chosen large enough so that 
\begin{align*}
|S_2|\lesssim \mu(\tilde S)^{\frac{1}{2}}\| f\|_{L^{\infty }(\mu)}\| g\|_{L^{\infty }(\mu)}
2^{N_0(n+3)}
%\mu(Q)^{\frac{1}{2}}
%\sup_{i,k}F_\mu(J_{i,k})
<\epsilon .
\end{align*}
This ends the estimate for $\langle Tf_1,g_{1,\partial}\rangle $. 

For the last term in \eqref{fdecom}  
$\langle Tf_{1,\partial},g_{1,\partial}\rangle $ we reason by reiteration. 
We first note that the supports of $f_{1,\partial}$ and $g_{1,\partial}$ are contained in the union of 
$\partial I$ for all $I\in \mathcal{D}^1(Q)$ with $\ell(I)\geq 2^{-N}\ell(Q)$. 
This set, which we denote by $\partial \mathcal{D}^1$, consists of finitely many euclidean affine spaces of dimension $n-1$, which are either pairwise parallel or pairwise perpendicular.  

Let now $\mathcal D=\mathcal D^2$. We now consider the families of cubes $\mathcal{D}^2(Q)$ 
%and $\tilde{\mathcal{D}}^2(Q)$, 
and decompose $f_{1,\partial}=f_2+f_{2,\partial}$ 
where $f_{2,\partial}=f_{1,\partial}\chi_{\partial \mathcal{D}^2(Q)}$ and similar for $g_{1,\partial}$. Now, using the Haar wavelet system 
$(\psi_I)_{I\in \mathcal D^2}$ 
%and $(\psi_I)_{I\in \tilde{\mathcal D}^2}$,  
we decompose as before:
\begin{align}\label{fdecom2}
\langle Tf_{1,\partial},g_{1,\partial}\rangle 
&=\langle Tf_{1,\partial},g_2\rangle 
+\langle Tf_2,g_{2,\partial}\rangle 
%\\&
+\langle Tf_{2,\partial},g_{2,\partial}\rangle 
\end{align}
Then we can apply all previous work to estimate the first two terms. 

For the third term, we note that the supports of $f_{2,\partial}$ and $g_{2,\partial}$ are now contained in $\partial \mathcal{D}^1(Q)\cap \partial \mathcal{D}^2(Q)$. Also that $\partial \mathcal{D}^2(Q)$ consists of finitely many euclidean affine spaces of dimension $n-1$, which are either pairwise parallel or pairwise perpendicular, and also either parallel or perpendicular
to every affine space of dimension $n-1$ of 
$\partial \mathcal{D}^1(Q)$. Then 
$\partial \mathcal{D}^1(Q)\cap \partial \mathcal{D}^2(Q)$ is 
%,  with $A_0=0$. 
a set consisting of finitely many euclidean affine spaces of dimension $n-2$. 

Then, by repeating the same argument $k=n-[\alpha ]+\delta(\alpha -[\alpha])$ times in total, we obtain   
$P_{2M}^{\perp}P_{M_N}f=\sum_{i=1}^{k}f_i+f_{i,\partial}$ and similar for $P_{M}^{\perp}P_{M_N}g$ such that the appropriate estimates hold for $|\langle Tf_i, \cdot \rangle |$ and $|\langle \cdot , T^*g_i\rangle |$ for all 
with $i\in \{1, \ldots ,k\}$  and 
the functions $f_{k,\partial }, g_{k,\partial }$,  are supported on 
$\bigcap_{i=1}^{k}\partial \mathcal{D}^i(Q)$. 
By repeating previous reasoning on parallel and perpendicular affine spaces, we conclude that this set consists of finitely many euclidean affine spaces of dimension $n-k=[\alpha]-\delta(\alpha -[\alpha])$, which are either pairwise parallel or pairwise perpendicular.

But now we can show that $\bigcap_{i=1}^{k}\partial \mathcal{D}^i(Q)$  %a set consisting of finitely many euclidean spaces of dimension zero that is, 
has measure zero with respect to $\mu$. 
%is a discrete set of points and so, their measure is zero. %One last reiteration, leaves us with a  
%  measure $\mu_{n+1}$ which is supported on 
% $\bigcap_{i=0}^{n+1}\partial \mathcal{D}_{A_i}(S)_{N_i}$. Now this set is empty and so, the remaining measure is zero. 
Let $I$ be an arbitrary $n-k$ dimensional dyadic cube with side length $\ell(I)$. Let 
$(J_i)_{i=1}^{m}$ be a 
%It can be covered by a 
family of pairwise disjoint $n$-dimensional cubes $J_i$ with fixed side length $r$ such that 
$I\subset \cup_iJ_i$. This family has 
%The family of $n$-dimensional cubes can be chosen to  
cardinality  
$m=(\frac{\ell(I)}{r})^{n-k}$. Then 
$$
\mu(I)\leq \sum_{i=1}^{m}\mu(J_{i})
\lesssim (\frac{\ell(I)}{r})^{n-k}r^{\alpha}
=\ell(I)^{n-k}r^{\alpha -n+k}. 
$$
Since 
$\alpha -n+k=\alpha-[\alpha]+\delta(\alpha -[\alpha])>0$,
we have  
$$
\mu(I)\lesssim \ell(I)^{n-k}
\lim_{r\rightarrow 0}r^{\alpha -n+k}=0
$$ 
for all cubes $I$ of dimension $n-k$. This shows that $\mu(\bigcap_{i=1}^{k}\partial \mathcal{D}^i(Q))=0$ and so, $\langle Tf_{k,\partial},g_{k,\partial}\rangle=0$. This  
finishes the proof of the first part of the theorem,  except for the last result in Proposition \ref{Ttrunc2T}.

The proof of the second part follows similar steps. As before, we first work with the classical $n$ dimensional dyadic grid $\mathcal D^n(Q)$ 
and decompose 
$P_M^{\perp}P_{M_N}f=f_1+f_{1,\partial}$, where 
$f_{1,\partial}=(P_M^{\perp}P_{M_N}f)\chi_{\partial D(Q)}$ and similar for 
$P_M^{\perp}P_{M_N}g$. 
%$f=f_1+f_{1,\partial}$, 
%with $f_{1,\partial}=f\chi_{\partial \mathcal D^n(Q)}$ and similar for $g$. 
With this
\begin{align*}
\langle TP_{2M}^{\perp }P_{M_N}f,P_{M}^{\perp }P_{M_N}g\rangle 
&=\langle TP_{2M}^{\perp }P_{M_N}f,g_1\rangle 
+\langle Tf_1,g_{1,\partial}\rangle 
\\&
+\langle TPf_{1,\partial},g_{1,\partial}\rangle . 
\end{align*}

Then we  use previous reasoning
%the Haar wavelets $(\psi_I)_{I\in \mathcal D^n(Q)}$ 
to estimate the first two terms: 
$\langle TP_{2M}^{\perp }P_{M_N}f,g_1\rangle $ and 
$\langle Tf_1,g_{1,\partial}\rangle 
$. 

To control 
$\langle Tf_{1,\partial },g_{1,\partial}\rangle$
we note that the supports of $f_{1,\partial }$ and $g_{1,\partial}$ are contained in $\partial \mathcal D^n(Q)$. Then we consider the dyadic $n-1$ dimensional grid 
$\mathcal D^{n-1}_{\partial}(Q)$ and 
decompose 
$f_{1,\partial}=f_2+f_{2,\partial}$, 
with $f_{2,\partial}=f_{1,\partial}\chi_{\partial \mathcal D^{n-1}(Q)}$ and similar for $g$. 
Similarly as before, we use the Haar wavelets $(\psi_I)_{I\in \mathcal D^{n-1}(Q)}$ to estimate 
the first two terms $\langle Tf_{1,\partial},g_2\rangle $ and 
$\langle Tf_2,g_{2,\partial}\rangle 
$. 

To control 
$\langle Tf_{2,\partial },g_{2,\partial}\rangle$ 
we note that $f_{2,\partial }$ and $g_{2,\partial}$ are supported on $\partial \mathcal D^{n-1}(Q)$
and we reiterate the process. 

By repeating the same argument $k=n-[\alpha ]+\delta(\alpha -[\alpha])$ times, we obtain   
$P_{2M}^{\perp }P_{M_N}f=\sum_{i=1}^{k}f_i+f_{i,\partial}$ and similar for $P_{M}^{\perp }P_{M_N}g$ such that the appropriate estimates hold for $|\langle Tf_i, \cdot \rangle |$ and $|\langle \cdot , T^*g_i\rangle |$ for all $i\in \{1, \ldots ,k\}$ and 
the functions $f_{k,\partial }, g_{k,\partial }$,  are supported on 
$\partial \mathcal{D}^{n-k+1}(Q)$, for which we consider the $n-k$ dimensional grid $\mathcal D^{n-k}_{\partial}(Q)$.  
We prove as before that $\partial \mathcal{D}^{n-k+1}(Q)$ has measure zero with respect to $\mu$. 

Let $I\in \mathcal D^{n-k}_{\partial}(Q)$ be an arbitrary $n-k$ dimensional dyadic cube with side length $\ell(I)$. We cover $I$ with a family of  $n$ dimensional cubes $(J_i)_{i=1}^m$ with side length $r$ and cardinality 
%of this family of $n$-dimensional cubes is
$m=(\frac{\ell(I)}{r})^{n-k}$. Then again
$$
\mu(I)\leq \sum_{i=1}^{m}\mu(J_{i})
\lesssim (\frac{\ell(I)}{r})^{n-k}r^{\alpha}
=\ell(I)^{n-k}r^{\alpha -n+k}.
$$
As before, $\mu(I)\lesssim \ell(I)^{n-k}
\lim_{r\rightarrow 0}r^{\alpha -n+k}=0$ for every cube $I\in \mathcal D^{n-k}_{\partial}(Q)$ of dimension $n-k$. This shows that $\mu(\partial \mathcal{D}^{n-k+1}(Q))=0$ and so, $\langle Tf_{k,\partial},g_{k,\partial}\rangle=0$, 
finishing the proof of the second part of the theorem.
To completely finish the result, we need Proposition 
\ref{Ttrunc2T}. 

\end{proof}

The next result shows the way the truncated operators dominate the original operator. The proof is obtained by modifying a reasoning in the first chapter of \cite{ST}, where the measure is doubling and the original operator is assumed to be bounded. 

\begin{proposition}\label{Ttrunc2T}
Let $T_{\gamma ,Q}$ be the uniformly bounded smooth truncated operators of Definition \ref{TgammaQ}. 
Let $k=n-[\alpha]-\delta(\alpha-[\alpha])$.
Then for $f,g\in L^{2}(\mu)$ and $\epsilon >0$ 
there exist functions $(f_i)_{i=1}^k$, $(g_i)_{i=1}^k$, 
$(f_i)_{i=0}^{k}$, $(g_i)_{i=1}^{k}$
with 
$\|f_i\|_{L^{2}(\mu)}, \|f_{i,\partial}\|_{L^{2}(\mu)}\leq
\|f\|_{L^{2}(\mu)}$, $\|g_i\|_{L^{2}(\mu)}, \|g_{i,\partial}\|_{L^{2}(\mu)}\leq 
\|g\|_{L^{2}(\mu)}$ and 
$M_0\in \mathbb N$ such that 
for all $M>M_0$, 
\begin{align*}
|\langle TP_M^{\perp} f,P_M^{\perp}g\rangle |
&\lesssim \sup_{\gamma, Q}\sum_{i=1}^k
%|\langle T_{\gamma  ,Q}f_i,g_i\rangle |
|\langle T_{\gamma  ,Q}f_{i-1,\partial }, g_i\rangle | +
|\langle T_{\gamma  ,Q}f_i, g_{i,\partial}\rangle |
\\
&
+\epsilon \|f\|_{L^{2}(\mu)}\|g\|_{L^{2}(\mu)}.
\end{align*}
\end{proposition}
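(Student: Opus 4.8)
The plan is to adapt, essentially line by line, the decomposition carried out in the proof of Theorem~\ref{Mainresult2}, isolating the one place where the boundedness of the smooth truncations $T_{\gamma,Q}$ is genuinely used. First I would reduce to the case in which $f$ and $g$ are bounded, compactly supported and have mean zero with respect to $\mu$: such functions are dense in $L^2(\mu)$, the pieces $P_M^\perp f$, $P_M^\perp g$ then remain bounded and compactly supported (recall $\mathcal D_M$ is a finite family, so $P_M$ has finite rank), and for a fixed truncation $T_{\gamma,Q}$ is bounded by Lemma~\ref{truncaT}, so both sides of the inequality depend continuously enough on $f,g$. With $f,g$ so reduced, I fix $Q\in\mathcal C$ with $c(Q)=0$ and $\ell(Q)>2$ large enough that $\supp f\cup\supp g\subset 2^{-1}Q$, take $0<\gamma<\ell(Q)$, and regard $\langle TP_M^\perp f,P_M^\perp g\rangle$ as the limit of $\langle T_{\gamma,Q}P_M^\perp f,P_M^\perp g\rangle$ as $\gamma\to 0$ and $\ell(Q)\to\infty$; since $T_{\gamma,Q}$ is bounded, Lemma~\ref{orthorep} applies to it and the whole apparatus of the proof of Theorem~\ref{Mainresult2} is available, uniformly in $\gamma,Q$.

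The core of the argument is then the dimension-reduction scheme of that proof. Choosing $N>N_1$ as in \eqref{Small} and writing $M_N=N-\log\ell(Q)$, one decomposes $P_{2M}^\perp P_{M_N}f=\sum_{i=1}^k f_i+f_{i,\partial}$ and $P_{M}^\perp P_{M_N}g=\sum_{i=1}^k g_i+g_{i,\partial}$, where $f_i$ is the Haar expansion of $f_{i-1,\partial}$ on the grid $\mathcal D^i(Q)$ restricted to the open cubes, $f_{i,\partial}$ is its restriction to the skeleton $\partial\mathcal D^i(Q)$, and by convention $f_{0,\partial}=P_{2M}^\perp P_{M_N}f$ (similarly for $g$, with $g_{0,\partial}=P_M^\perp P_{M_N}g$); the bounds $\|f_i\|_{L^2(\mu)},\|f_{i,\partial}\|_{L^2(\mu)}\le\|f\|_{L^2(\mu)}$ follow from $\|P_M^\perp\|\le 1$ and the Bessel property of the Haar frame (Lemmas~\ref{PMortho},~\ref{Planche}). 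Iterating \eqref{fdecom}--\eqref{fdecom2} expresses $\langle T_{\gamma,Q}P_{2M}^\perp P_{M_N}f,P_M^\perp P_{M_N}g\rangle$ as $\sum_{i=1}^k\bigl(\langle T_{\gamma,Q}f_{i-1,\partial},g_i\rangle+\langle T_{\gamma,Q}f_i,g_{i,\partial}\rangle\bigr)+\langle T_{\gamma,Q}f_{k,\partial},g_{k,\partial}\rangle$. The last term is zero because $\bigcap_{i=1}^k\partial\mathcal D^i(Q)$ is a countable union of affine subspaces of dimension $n-k=[\alpha]-\delta(\alpha-[\alpha])$ and hence $\mu$-null, by the power growth of $\mu$, exactly as at the end of the proof of Theorem~\ref{Mainresult2}. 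Inside each of the $2k$ surviving pairings, the distant, nested and paraproduct contributions $D_1,D_2,N_2,N_3,P_4,P_5$, together with the ``good'' part of the borderline term $B_6$, are bounded by $\epsilon\,\|f\|_{L^2(\mu)}\|g\|_{L^2(\mu)}$ using only the compact C-Z kernel conditions (through Propositions~\ref{twobumplemma1},~\ref{twobumplemma2}), the testing condition, Carleson's inequality (Lemma~\ref{Carleson}) and Bessel; because every bound there is obtained from a sum that is absolutely convergent over pairs of cubes whose parents are separated, the smooth cut-off $1-\phi(|t-x|/\gamma)$ plays no role and the very same estimates hold with $T$ in place of $T_{\gamma,Q}$, with constants independent of $\gamma$ and $Q$.

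What is left is precisely the near-diagonal ``$S_2$'' part of each $B_6$: after the re-parametrization used in the proof of Theorem~\ref{Mainresult2}, a finite sum of pairings $\langle T\chi_{I_{i,k}},\chi_J\rangle$ with $I_{i,k},J$ disjoint cubes contained in $Q$ whose closures may meet, so that the integral of the kernel is not absolutely convergent for $T$ itself. This is where the truncation cannot be dispensed with: grouping the cubes $J$ according to the signs of the real and imaginary parts of the corresponding dual pairs, one recognizes these sums as $\langle T_{\gamma,Q}\,\rho,\sigma\rangle$ with $\rho$ an indicator combination that is a bounded multiple of $f_{i-1,\partial}$ and $\sigma$ one that is a bounded multiple of $g_i$ (and symmetrically with the roles of $f$ and $g$ exchanged), both supported on a set $\tilde S\subset C_N$; replacing $T$ by $T_{\gamma,Q}$ introduces no error off $\tilde S$ once $\gamma$ is small relative to the cube sizes, and on $\tilde S$ the defect is controlled by $\mu(C_N)$, which is as small as we please by Lemma~\ref{zeroborder}. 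Taking the supremum over $\gamma,Q$ turns these remainders into the first sum on the right-hand side of the statement, and, absorbing the passage from $P_{2M}^\perp$ to $P_M^\perp$ and from $P_{M_N}f$ back to $f$ (both legitimate since $\|P_{2M}^\perp-P_M^\perp\|$ and $\|f-P_{M_N}f\|_{L^2(\mu)}$ are small), yields the inequality for all $M>M_0$. I expect the main obstacle to be exactly this last step of book-keeping: verifying that the sign-grouped indicator sums really are the claimed pieces $f_{i-1,\partial},f_i$ (paired against $g_i,g_{i,\partial}$) up to harmless constants and $L^2(\mu)$-normalization, and that the order of the choices --- fix $\epsilon$, then $M_0$ and the grids, then $N$ (hence $N_0$ and $M_N$), then the truncation parameters --- keeps every estimate uniform in $M>M_0$ and in $\gamma,Q$, following the pattern of \cite{ST}.
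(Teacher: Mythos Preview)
Your plan contains a circular step. You write that you will ``regard $\langle TP_M^{\perp}f,P_M^{\perp}g\rangle$ as the limit of $\langle T_{\gamma,Q}P_M^{\perp}f,P_M^{\perp}g\rangle$ as $\gamma\to 0$'', and later that the non-$S_2$ pieces satisfy ``the very same estimates with $T$ in place of $T_{\gamma,Q}$''. But this convergence is precisely what the proposition is meant to supply; nothing in the hypotheses guarantees that $\langle T\chi_I,\chi_J\rangle=\lim_{\gamma\to 0}\langle T_{\gamma,Q}\chi_I,\chi_J\rangle$ when $I$ and $J$ are adjacent dyadic cubes sharing a face (the kernel integral is not absolutely convergent there, and $T$ is only specified on such pairs through its abstract linear action, not through the kernel). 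Likewise, the wavelet decomposition via Lemma~\ref{orthorep} requires a bounded operator, so it applies to $T_{\gamma,Q}$ but not, a priori, to $T$; you cannot run the $D_i,N_i,P_i,B_6$ analysis directly on $T$ and then ``compare term by term''.

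The paper's argument is structurally different and supplies exactly the missing ingredient. Uniform boundedness of $T_{\gamma,Q}$ gives, by weak-$*$ compactness, a sequence $\gamma_j\to 0$ and a bounded operator $T_0$ with $T_{\gamma_j,Q}\to T_0$ weakly on $L^2(\mu)$. The heart of the proof is the identification of $D=T-T_0$: for $g$ vanishing on $\partial\mathcal D^i(Q)$ one shows $\langle D\chi_I,g\rangle=\langle \chi_I\,D\chi_{I'},g\rangle$ for every dyadic $I\subset 2^{-1}Q$ and a fixed large cube $I'\supset 2Q$ (equation~\eqref{lastclaim}). This says that, on the relevant test pairs, $D$ acts as pointwise multiplication by $a_i=D\chi_{I'}$. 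The proof of \eqref{lastclaim} is delicate: one approximates $\chi_I$ from outside by $\chi_{I_\lambda}$, splits $I_\lambda\cap(I^c)_\lambda$ into dyadic shells around $\partial I$, and on each shell chooses $\gamma_{j_r}$ so small that the truncated kernel vanishes there; Fatou and the weak convergence of $T_{\gamma_{j_r},Q}$ to $T_0$ then kill the off-diagonal contribution, while the diagonal limit is $\langle\chi_{\partial I}\,D\chi_I,g\rangle=0$ because $g$ vanishes on the skeleton. Once $D$ is multiplication by $a_i$, the testing condition on $T$ and on $T_{\gamma,Q}$ (Lemma~\ref{truncaT2}) gives $|\langle a_i\chi_I,\chi_{\tilde J}\rangle|\lesssim F_T(J)\mu(\tilde J)<\epsilon\,\mu(\tilde J)$, and summing over the simple-function decompositions of $f_{i-1,\partial},g_i$ yields the $\epsilon\|f\|_2\|g\|_2$ error. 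The decomposition \eqref{usualdecomp} then transfers $\langle T\,\cdot,\cdot\rangle$ to $\langle T_{\gamma_{j_i},Q}\,\cdot,\cdot\rangle$ pairing by pairing, with a different $j_i$ for each of the $2k$ pairings, and a supremum over $\gamma,Q$ absorbs all of them. Your outline never isolates this multiplication-operator step, and without it the passage from $T_{\gamma,Q}$ to $T$ on adjacent cubes is unjustified.
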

\begin{proof}
Let $f,g\in L^{2}(\mu)$ and $\epsilon >0$ be fixed. 
We can assume $f,g$ are compactly supported and have integral zero. 

Let $Q\in \mathcal C$ such that 
$\sup f\cup \sup g\subset 2^{-1}Q$. 
Since $T_{\gamma ,Q}$ are uniformly bounded,  there exists a sequence $(\gamma_j)$ converging to zero and an  operator
$T_0$ bounded on $L^2(\mu)$ such that 
the operators $T_{\gamma_j ,Q}$ weakly converge to $T_0$ in $L^2(\mu)$, namely
$$
\lim_{j\rightarrow \infty }\langle (T_{\gamma_j ,Q}-T_0)f,g\rangle =0
$$
and $
\lim_{j\rightarrow \infty }\| (T_{\gamma_j ,Q}-T_0)f\|_{L^2(\mu)} =0 .
$

Let $M>M_0>0$, $N>0$ and $M_N$ the parameters fixed at the beginning of the proof of Theorem \ref{Mainresult2}. 
Let  the functions $P_{2M}^{\perp}P_{M_N}f$ and 
$P_{M}^{\perp}P_{M_N}g$. We remind the decompositions 
$P_{2M}^{\perp}P_{M_N}f=\sum_{i=1}^{k}f_i+f_{i,\partial}$, $P_{M}^{\perp}P_{M_N}g=\sum_{i=1}^{k}g_i+g_{i,\partial}$,
given in \eqref{fdecom} and 
\eqref{fdecom2} as follows: 
\begin{itemize}
\item First,
$P_{2M}^{\perp}P_{M_N}f=f_1+f_{1,\partial}$,  where 
$f_{1,\partial}=(P_M^{\perp}P_{M_N}f)\chi_{\partial D^1(Q)}$, and 
$P_M^{\perp}P_{M_N}g=g_1+g_{1,\partial}$, where 
$g_{1,\partial}=(P_M^{\perp}P_{M_N}g)\chi_{\partial D^1(Q)}$.
\item For $i\in \{1, \ldots, k\}$,
$f_{i,\partial}=f_{i+1}+f_{i+1,\partial}$ 
where $f_{i+1,\partial}=f_{i,\partial}\chi_{\partial \mathcal{D}^{i+1}(Q)}$, and  
$g_{i,\partial}=g_{i+1}+g_{i+1,\partial}$ 
where $g_{i+1,\partial}=g_{i,\partial}\chi_{\partial \mathcal{D}^{i+1}(Q)}$.
\end{itemize}

With this, if we denote $f_{0,\partial}=P_{2M}^{\perp}P_{M_N}f$,  
%and $g_0=P_{M}^{\perp}P_{M_N}g$, 
by the recursive definitions just provided we can write 
\begin{align}\label{usualdecomp}
\langle TP_{2M}^{\perp}P_{M_N}f, P_{M}^{\perp}P_{M_N}g\rangle 
&=\sum_{i=1}^{k}\langle Tf_{i-1,\partial }, g_i\rangle +
\langle Tf_i, g_{i,\partial}\rangle
\\
&
\nonumber
+\langle Tf_{k,\partial}, g_{k,\partial}\rangle ,
\end{align}
where $f_i, g_i$ are zero on $\partial \mathcal D^i(Q)$ and $\langle Tf_{k,\partial}, g_{k,\partial}\rangle=0$ as we saw before. 

We now prove that for $i\in \{ 1,\ldots, k\}$ and 
$g$ such that is zero on $\partial \mathcal D^i(Q)$
we have 
\begin{align}\label{boundeddif}
\langle Tf,g\rangle = \langle T_0f,g\rangle +
\langle a_if,g\rangle ,
\end{align}
where $a_i$ is 
such that $|\langle a_if,g\rangle|\lesssim \epsilon  \|f\|_{L^{2}(\mu)}\|g\|_{L^{2}(\mu)}$. Similarly  
$
\langle f,T^*g\rangle = \langle f,T_0^*g\rangle +
\langle f,b_ig\rangle
$
for every function $f$ that is zero on $\partial \mathcal D^i(Q)$, where $b_i$ is 
such that $|\langle f,b_ig\rangle|\lesssim \epsilon \|f\|_{L^{2}(\mu)}\|g\|_{L^{2}(\mu)}$. 

Assuming these equalities, we can prove the statement. Let $j_i$ large enough so that 
$|\langle (T_0-T_{\gamma_{j_i} ,Q})f_{i-1,\partial }, g_i\rangle|<\epsilon $. Then
\begin{align}\label{Tfirst}
|\langle Tf_{i-1,\partial }, g_i\rangle|&\leq |\langle T_{\gamma_{j_i} ,Q}f_{i-1,\partial }, g_i\rangle |
+|\langle (T_0-T_{\gamma_{j_i} ,Q})f_{i-1,\partial }, g_i\rangle|
\\
&
\nonumber
+|\langle a_if_{i-1,\partial }, g_i\rangle|
\\
\nonumber
&\leq \langle T_{\gamma_{j_i} ,Q}
f_{i-1,\partial }, g_i\rangle|
+2\epsilon \|f\|_{L^{2}(\mu)}\|g\|_{L^{2}(\mu)},
\end{align}
and similar for 
$|\langle Tf_{i,\partial }, g_{i,\partial}\rangle|$. Then
with \eqref{usualdecomp} and \eqref{Tfirst}, we get
\begin{align*}
|\langle TP_{2M}^{\perp}P_{M_N}f, P_{M}^{\perp}P_{M_N}g\rangle | 
&\lesssim
\sum_{i=1}^k
\sup_{\gamma, Q}|\langle T_{\gamma  ,Q}f_{i-1,\partial }, g_i\rangle | +
|\langle T_{\gamma  ,Q}f_i, g_{i,\partial}\rangle |
\\
&
+2k\epsilon \|f\|_{L^{2}(\mu)}\|g\|_{L^{2}(\mu)},
\end{align*}
which is comparable with the statement. 

We now work to prove \eqref{boundeddif}. Let $D=T-T_0$. 
Then we prove that $\langle Df,g\rangle=\langle a_i f,g\rangle$. 
Let $I'\in \mathcal C$ such that 
$2Q\subset I'$ and let $I\in \mathcal D(2^{-1}Q)$. We first show that, 
for all $g\in L^2(\mu)$ such that $g$ is zero on $\partial \mathcal D^i(Q)$, 
\begin{align}\label{lastclaim}
\langle D(\chi_I), g\rangle
=\langle \chi_{I}D(\chi_{I'}), g\rangle .
\end{align}
For this we proceed as follows. 
We first assume that $g$ satisfies the additional condition such that $\dist (\sup g, I)>0$. 
For $\epsilon' >0$, let $j_I\in \mathbb N$ be large enough so that 
$|\langle (T_{\gamma_{j_I},Q}-T_0)(\chi_I), g\rangle | <\epsilon' $ and 
$\dist (\sup g, I)>2\gamma_{j_I}$. Then 
$$
\langle D(\chi_I), g\rangle
=\langle (T-T_{\gamma_{j_I},Q})(\chi_I), g\rangle+
\langle (T_{\gamma_{j_I},Q}-T_0)(\chi_I), g\rangle .
$$
Since $\sup g\cap I=\emptyset $, $x\in \sup g\subset 2^{-1}Q$ and $t\in I\subset 2^{-1}Q$, we have
$$
\langle (T-T_{\gamma_{j_I},Q})(\chi_I), g\rangle
=\int \int_{I}K(t,x)\phi(\frac{|t-x|}{\gamma_{j_I}})d\mu(t)g(x)d\mu(x)
=0
$$
due to the facts that 
and $\supp \phi\subset [-2,2]$ and 
$|t-x|\geq \dist (I,\sup g)>2\gamma_{j_I}$ 
. Then 
$$
|\langle D(\chi_I), g\rangle|
=|\langle (T_{\gamma_{j_I},Q}-T_0)(\chi_I), g\rangle|
<\epsilon' .
$$
Since the inequality holds for all $\epsilon' >0$, we conclude that $\langle D(\chi_I), g\rangle=0$ 
for all $g\in L^2(\mu)$ such that $\dist (\sup g, I)>0$. 

Now for all $\lambda >0$ we denote 
$I_{\lambda }=\{ x\in \mathbb R^n / \dist(x,I)<\lambda \ell(I) \}
%=(1+\lambda )I
$. 
By previous reasoning, since 
$\dist (\sup (1-\chi_{I_\lambda }), I)\geq \lambda \ell(I)>0$, we have  
for all $g\in L^2(\mu)$ that are zero in $\partial \mathcal D(Q)$,
\begin{align}\label{break}
\langle D(\chi_I), g\rangle
&=\langle (1-\chi_{I_\lambda })D(\chi_I),g\rangle 
+\langle \chi_{I_\lambda }D(\chi_I)
,g\rangle 
\\
&
\nonumber
=\langle \chi_{I_\lambda }D(\chi_I), g\rangle .
\end{align}

Now, if we denote $I^c=I'\setminus I$, we can write
$$
\langle  \chi_{I_\lambda }D(\chi_I),g\rangle =\langle D(\chi_{I'}),g \chi_{I_\lambda } \rangle -\langle D(\chi_{I^c}), g \chi_{I_\lambda }\rangle .
$$ By previous reasoning we have that 
$\langle D(\chi_{I^c}), h\rangle =0$
for all $h\in L^2(\mu)$ such that $\dist (\sup h, I^c)>0$.
Then, as in \eqref{break},  
$$
\langle D(\chi_{I^c}), g\chi_{I_\lambda}\rangle 
=\langle \chi_{(I^c)_\lambda }D(\chi_{I^c}), g\chi_{I_\lambda} \rangle .
$$
%in particular, for 
%all $g\in L^2(\mu)$ such that $\dist (\supp g, {\rm int}(I)\cap I')>0$. 
With this, for $\lambda >0$ and $g\in L^2(\mu)$
that is zero on $\partial \mathcal D^i(Q)$, we have  
$$
\langle D(\chi_I), g\rangle
=\langle \chi_{I_\lambda }D(\chi_{I'}), g\rangle
-\langle \chi_{I_\lambda }\chi_{(I^c)_\lambda }D(\chi_I), g\rangle. 
$$
And since $\lambda >0$ is arbitrary, we get 
\begin{align}\label{limdif}
\langle D(\chi_I), g\rangle
=\lim_{\lambda \rightarrow 0}\langle \chi_{I_\lambda }D(\chi_{I'}), g\rangle
-\lim_{\lambda \rightarrow 0}\langle \chi_{I_\lambda }\chi_{(I^c)_\lambda }D(\chi_I), g\rangle .
\end{align}

For the first term, we reason as follows. 
By the testing condition on $T$ and the boundedness of 
$T_{\gamma ,Q}$, we have $\| \chi_{I'}D(\chi_{I'})\|_{L^{2}(\mu)}
\lesssim \mu(I')^{\frac{1}{2}}$. Then
$D(\chi_{I'})g$ 
is integrable on $I'$. 
Since $I_{\lambda }\subset I'$, 
$|\chi_{I_\lambda }D(\chi_{I'})g|\leq 
\chi_{I'}|D(\chi_{I'})g|\in L^1(\mu)$. Moreover, 
$\lim_{\lambda \rightarrow 0}\chi_{I_\lambda }
=\chi_{\bar I}$, 
where 
$\bar I\in \mathcal C$ is the closed cube defined by the closure of $I$.
Then, by Lebesgue's Dominated Theorem, 
$$
\lim_{\lambda \rightarrow 0}
\langle \chi_{I_\lambda }D(\chi_{I'}), g\rangle
=\langle \chi_{\bar I}D(\chi_{I'}), g\rangle, 
=\langle \chi_{I}D(\chi_{I'}), g\rangle .
$$
The last equality holds because $g$ is zero on $\partial \mathcal D^i(Q)$.

For the second term in \eqref{limdif}, we work  differently. 
Let $I_{-\lambda }=(1-\lambda )I$.
Then
\begin{align}\label{limdif2}
\langle \chi_{I_\lambda }\chi_{(I^c)_\lambda }D(\chi_I), g\rangle
=\langle \chi_{I\setminus I_{-\lambda }}D(\chi_I), g\rangle\
+\langle \chi_{I_\lambda \setminus \bar I}D(\chi_I), g\rangle .
\end{align}
The first new term can be treated as before:  
$\| \chi_{I}D(\chi_{I})\|_{L^{2}(\mu)}
\lesssim \mu(I)^{\frac{1}{2}}$ and so, 
$D(\chi_{I})g$ is integrable on $I$.
Moreover, $\lim_{\lambda \rightarrow 0}\chi_{I_{-\lambda}}
=\chi_{\partial I}$ and 
$|\chi_{I\setminus I_{-\lambda }}D(\chi_{I})g|\leq 
\chi_{I}|D(\chi_{I})g|\in L^1(\mu)$.
Then, by 
Lebesgue's Dominated Theorem
$$
\lim_{\lambda \rightarrow 0}
\langle \chi_{I\setminus I_{-\lambda }}D(\chi_{I}), g\rangle
=\langle \chi_{\partial I}D(\chi_{I}), g\rangle =0,
$$
since $g$ is zero on $\partial \mathcal D^i(Q)$. 

For the second term in \eqref{limdif2}, we proceed as follows. 
Let $S_{r}=\{x\in I_\lambda \setminus \bar I: 
2^{-(r+1)}\ell(I_\lambda \setminus \bar I)< \dist(x,I)\leq 2^{-r}\ell(I_\lambda \setminus \bar I)\}$.
Then since 
$I_\lambda \setminus \bar =\bigcup_{r=0}^{\infty }S_r$, we have
\begin{align*}
\langle \chi_{I_\lambda \setminus \bar I}
D(\chi_I), g\rangle
=\langle \lim_{R\rightarrow \infty }
\sum_{r=0}^R\chi_{S_r}
D(\chi_I), g\rangle
\end{align*}
Then, by Fatou's lemma,
\begin{align}\label{Fatou}
|\langle \chi_{I_\lambda \setminus \bar I}
D(\chi_I), g\rangle |
&\leq \langle \lim_{R\rightarrow \infty }
\sum_{r=0}^R\chi_{S_r}
|D(\chi_I)|, |g|\rangle
\\
\nonumber
&\leq  \liminf_{R\rightarrow \infty } 
\sum_{r=0}^R\langle \chi_{S_r}
|D(\chi_I)|, |g|\rangle
\end{align}

Given 
$\epsilon'>0$, we choose for $j_r$, dependent on $r$, $\lambda $, $\epsilon'$, $I$ and $g$ such that
$2\gamma_{j_r}<2^{-(r+1)}\ell(I_{\lambda}\setminus \bar I)$ and 
$\|(T_{\gamma_{j_{r}},Q}-T_0)(\chi_I)\|_{L^2(\mu)} \| g\|_{L^2(\mu)}
<\epsilon'2^{-r}$. 
Now we decompose as
\begin{align*}
\langle \chi_{S_r}
|D(\chi_I)|, |g|\rangle
&\leq \langle \chi_{S_r}
|(T-T_{\gamma_{j_{r}},Q})(\chi_I)|, |g|\rangle
\\&
+\langle \chi_{S_r}
|(T_{\gamma_{j_{r}},Q}-T_0)(\chi_I), |g|\rangle. 
\end{align*}

The second term can be bounded by 
\begin{align*}
%\langle \chi_{S_r}
%|(T_{\gamma_{j_{r}},Q}-T_0)(\chi_I)|, |g|\rangle|
%\leq 
\|(T_{\gamma_{j_{r}},Q}-T_0)(\chi_I)\|_{L^2(\mu)} \| g\|_{L^2(\mu)}
<\epsilon'2^{-r} 
\end{align*}
For the first term, we denote 
$D_r=T-T_{\gamma_{j_r},Q}$. Since 
$S_r\cap I=\emptyset $, 
\begin{align*}
\langle \chi_{S_r}
|D_r(\chi_I)|, |g|\rangle
=\int_{S_r}
\Big|\int_I K(t,x)\phi(\frac{|t-x|}{\gamma_{j_r}})d\mu(t)\Big| |g(x)|
d\mu(x)=0
\end{align*}
since  
$2\gamma_{j_r}<^{-(r+1)}\ell(I_{\lambda}\setminus \bar I)< \dist(x, I)\leq |t-x|$. 
With both estimates we continue the estimate in \eqref{Fatou} as 
\begin{align*}
|\langle \chi_{I_\lambda \setminus \bar I}
D(\chi_I), g\rangle |
&\leq  \sum_{r=0}^{\infty }
\epsilon' 2^{-r}
\lesssim \epsilon'
\end{align*}
for all $\epsilon'>0$. Then
$\langle \chi_{I_\lambda \setminus \bar I}
D(\chi_I), g\rangle =0$. Finally, by combining the decompositions in \eqref{limdif} and \eqref{limdif2}, we have 
% \begin{align*}
% &=\langle \chi_{2I\setminus \bar I}
% D_j(\chi_I\chi_{I'}), g\rangle
% -\langle \chi_{2I\setminus I_\epsilon }
% D_j(\chi_I\chi_{I'}), g\rangle
% \end{align*}
% For the new first term we reason as follows:
% $g=\chi_J$ with $J\neq I$, $\ell(I)=\ell(J)$ and $\dist(I,J)=0$, problem
% \begin{align*}
% |\langle \chi_{2I\setminus \bar I}
% D_j(\chi_I\chi_{I'}), g\rangle|
% &=|\langle \chi_I, D_j^*(\chi_{2I\setminus \bar I}g)\rangle|
% \\
% &
% \leq |\langle \chi_I, D_j^*(\chi_{2I}g)\rangle|
% +|\langle \chi_I, D_j^*(\chi_{\bar I}g)\rangle|
% \\
% &
% \leq 
% \end{align*}
% For the second term, since 
% $(I_\epsilon \setminus \bar I)\cap I=\emptyset $, we can use the integral representation of $T$ to write 
% \begin{align*}
% \langle \chi_{2I\setminus I_\epsilon }
% D_j(\chi_I\chi_{I'}), g\rangle
% =\int_{2I\setminus I_\epsilon }
% \int_I K(t,x)\phi(\frac{|t-x|}{\gamma_j})d\mu(t)g(x)
% d\mu(x)=0
% \end{align*}
% if $j$ is large enough with respect to $\epsilon $. 
%and so (?)
$
\langle D(\chi_I), g\rangle
=\langle \chi_{I}D(\chi_{I'}), g\rangle 
%-\langle \chi_{\partial(I\cap I')}D(\chi_I\chi_{I'}), g\rangle .
$, 
which is the equality claimed in \eqref{lastclaim}. 

% Then for all $g\in L^2(\mu)$ such that 
% $g\chi_{\partial \mathcal D}=0$, we have
% $$
% \langle D(\chi_I\chi_{I'}), g\rangle
% =\langle \chi_{(I\cap I')}D(\chi_{I'}), g\rangle .
% $$
Now we use \eqref{lastclaim} to prove \eqref{boundeddif}. By telescoping sums, we can write that $f_{i,\partial}
=\sum_{j}\langle f\rangle_{I_j}\chi_{I_j}$ with $I_j\in (\mathcal D^i)_M^c(Q)$ such that 
$I_j\subset Q\subset 2^{-1}I'$ and small enough so that
$F_T(I_j)<\epsilon $. Then we are going to consider only functions with the described decomposition. 
In that case, by \eqref{lastclaim},
\begin{align*}
\langle D(f), g\rangle
&=\sum_{j}\langle f\rangle_{I_j}\langle D(\chi_{I_j}), g\rangle
=\sum_{j}\langle f\rangle_{I_j}\langle \chi_{I_j}D(\chi_{I'}), g\rangle
\\
&
=\langle fD(\chi_{I'}), g\rangle 
=\langle a_if, g\rangle.
\end{align*}
with $a_i=D(\chi_{I'})$. We now show 
that 
\begin{align*}
|\langle a_if, g\rangle|
\lesssim \epsilon \|f\|_{L^{2}(\mu)}\|g\|_{L^{2}(\mu)}.
\end{align*}
%To see this we proceed as follows: 
Let $I,J\in \mathcal D^i(Q)$, $\tilde J\in \mathcal {\tilde D}^i(Q)$ with $\ell(I)=\ell(J)$ and such that 
$\tilde J$ is the interior of $J$. By the definition of 
$a_i$ and \eqref{lastclaim} again, we have
\begin{align}\label{last}
\langle a_i\chi_I, \chi_{\tilde J}\rangle
&=\langle \chi_{I\cap J}D(\chi_{I'}), \chi_{\tilde J}\rangle 
=\langle D(\chi_{I\cap J}), \chi_{\tilde J}\rangle
%=\langle \chi_{I\cap J}, D^*\chi_J\rangle
\end{align}
Now, if $I\cap J=\emptyset $, we get 
$\langle a_i\chi_I, \chi_{\tilde J}\rangle =0$. 
Otherwise, since $\ell(I)=\ell(J)$, we have $I=J$. Then, by \eqref{last}, the testing condition on $T$ and the uniform boundedness of $T_{\gamma, Q}$, we get
%if $J$ is the interior of $I$,
% \begin{align*}
% \langle a_i\chi_I, \chi_J\rangle
% &=\langle \chi_{I}, D^*\chi_I\rangle
% \end{align*}
% with 
\begin{align*}
|\langle a_i\chi_I, \chi_{\tilde J}\rangle|
&=|\langle D\chi_{J}, \chi_{\tilde J}\rangle|
\leq |\langle T\chi_{J}, \chi_{\tilde J}\rangle|
+|\langle T_{\gamma, Q}\chi_{J}, \chi_{\tilde J}\rangle|
\\&
\lesssim (\|\chi_{J}T\chi_{J}\|_{L^{2}(\mu)}+\|\chi_{J}T_{\gamma, Q}\chi_{J}\|_{L^{2}(\mu)})\mu(\tilde J)
\\
&
\lesssim F_T(J)\mu(\tilde J)<\epsilon \mu(\tilde J).
\end{align*}
We denote now  
$F=\sum_{I\in \mathcal D^i(Q)_{N}}\langle f\rangle_{I} \chi_{I}$ and 
$G=\sum_{\tilde I\in \mathcal {\tilde D}^i(Q)_{N}}\langle g\rangle_{\tilde I} \chi_{\tilde I}$, with 
$F_T(I)<\epsilon $ and $\tilde I$ being the interior of $I$. Then 
\begin{align*}
|\langle a_iF, G\rangle|
&
=|\sum_{I\in \mathcal D^i(Q)_{N}}\sum_{\tilde J\in \mathcal {\tilde D}^i(Q)_{N}}\langle f\rangle_I \langle g\rangle_{\tilde J}\langle a_i\chi_I, \chi_{\tilde J}\rangle|
\\
&
\leq \sum_{I\in \mathcal D^i(Q)_{N}}|\langle f\rangle_I | |\langle g\rangle_{\tilde I} |
|\langle D\chi_{I}, \chi_{\tilde I}\rangle|
\\
&
\lesssim \epsilon \sum_{I\in \mathcal D^i(Q)_{N}}|\langle f\rangle_I | |\langle g\rangle_{\tilde I} |\mu(\tilde{I})
\\
&
\lesssim \epsilon \Big(\sum_{I\in \mathcal D^i(Q)_{N}}|\langle f\rangle_I |^2
\mu(I)^{\frac{1}{2}}\Big)^{\frac{1}{2}}
\Big(\sum_{\tilde I\in \mathcal {\tilde D}^i(Q)_{N}}|\langle g\rangle_{\tilde I} |^2
\mu(\tilde I)^{\frac{1}{2}}\Big)^{\frac{1}{2}}
\\
&
\lesssim \epsilon \|f\|_{L^{2}(\mu)}\|g\|_{L^{2}(\mu)}.
\end{align*}

% that $a$ is a bounded function with $\|a \|_{L^\infty (\mu)}\lesssim F_T(I')$. 
% Let $\epsilon >0$ and we choose 
% $I\in \mathcal D$ such that $I\subset I'$ and small enough so that $F_K(I)\rho_{\mu}(I)^{\frac{1}{p}}+F_T(2I)<\epsilon $. Then
% $$
% |\langle a, \chi_{I}\rangle |
% \leq |\langle D(\chi_{I'}-\chi_{2I}), \chi_{I}\rangle |
% +|\langle D(\chi_{2I}), \chi_{I}\rangle |
% $$
% For the second term we have
% \begin{align*}
% |\langle D(\chi_{2I}), \chi_{I}\rangle |
% &\leq \| \chi_ID(\chi_{2I})\|_{L^2(\mu)}\mu(I)^\frac{1}{2}
% \leq \| \chi_{2I}D(\chi_{2I})\|_{L^2(\mu)}\mu(2I)^\frac{1}{2}
% \\
% &
% \leq (\| \chi_{2I}T\chi_{2I}\|_{L^2(\mu)}+
% \| \chi_{2I}T_0\chi_{2I}\|_{L^2(\mu)})
% \mu(2I)^\frac{1}{2}
% \\
% &
% \lesssim \mu(2I)F_T(2I)<\epsilon \mu(2I)
% <\epsilon/2 \mu(I) (?)
% \end{align*}
% On the other hand, 
% \begin{align*}
% |\langle D(\chi_{I'}-\chi_{2I}), \chi_{I}\rangle |
% &=|\langle \chi_{I'\setminus 2I}, D^*\chi_{I}\rangle |
% \\
% &
% \leq \| \chi_{I'\setminus 2I}D\chi_I\|_{L^p(\mu)}
% \mu(I'\setminus 2I)^\frac{1}{p'}
% \\
% &
% \leq F_K(I)\mu(I)^\frac{1}{p}
% \mu(I'\setminus 2I)^\frac{1}{p'}
% \\
% &
% \leq F_K(I)\rho_{\mu}(I)^\frac{1}{p}
% \ell(I)^\frac{\alpha }{p}
% \mu(I'\setminus 2I)^\frac{1}{p'}
% \\
% &
% \leq \epsilon
% \ell(I)^\frac{\alpha }{p}
% \mu(I'\setminus 2I)^\frac{1}{p'}
% \end{align*}
% with 

\end{proof}

\bibliographystyle{amsplain}
\bibliography{refs}

\end{document}